\theoremstyle{plain}
\newtheorem{theorem}{Theorem}[section]
\newtheorem{proposition}[theorem]{Proposition}
\newtheorem{lemma}[theorem]{Lemma}
\newtheorem{corollary}[theorem]{Corollary}
\theoremstyle{definition}
\newtheorem{definition}[theorem]{Definition}
\theoremstyle{remark}
\newtheorem{remark}[theorem]{Remark}
\numberwithin{equation}{section}
\renewcommand{\eqref}[1]{(\ref{#1})}
\newcommand{\field}[1]{\mathbb{#1}}
\newcommand{\C}{\field{C}}
\newcommand{\Z}{\field{Z}}
\newcommand{\Ob}{\mathrm{Ob}}
\newcommand{\VectH}{\mathrm{Vect}_H}
\newcommand{\Hom}{\mathrm{Hom}}
\newcommand{\Rep}{\mathrm{Rep}\,}
\newcommand{\id}{\mathrm{id}}
\newcommand{\Alg}{\mathrm{Alg}}
\newcommand{\End}{\mathrm{End}}
\newcommand{\Map}{\mathrm{Map}}
\newcommand{\DR}{\mathrm{DR}}
\newcommand{\PDR}{\mathrm{PDR}}
\newcommand{\tildeotimes}{\widetilde{\otimes}}
\newcommand{\barotimes}{\bar{\otimes}}
\newcommand{\doublebarTheta}{\widetilde{\varphi}^l_3}
\newcommand{\picheck}{\check{\pi}}
\newcommand{\pibar}{\bar{\pi}}
\newcommand{\pitilde}{\widetilde{\pi}}
\begin{document}
\title{\bf FRT Construction
\\
for\\
Dynamical Yang-Baxter Maps}
\author{Youichi Shibukawa${}^a$
and Mitsuhiro Takeuchi${}^b$
\\[.4cm]
{\small ${}^a$Department of Mathematics, Faculty of Science,
Hokkaido University,}
\\
{\small Sapporo 060-0810, Japan}
\\
{\small ${}^b$Institute of Mathematics, University of Tsukuba,}
\\
{\small Tsukuba 305, Japan}
\\[.4cm]
{\small e-mail address: shibu@math.sci.hokudai.ac.jp;
takeu@math.tsukuba.ac.jp}}
\maketitle
\begin{abstract}
Notions of an $(H, X)$-bialgebroid
and of its dynamical
representation are proposed.
The dynamical representations of each $(H, X)$-bialgebroid form
a tensor category.
Every dynamical Yang-Baxter map
$R(\lambda)$ satisfying suitable conditions,
a generalization of the set-theoretical solution to the quantum
Yang-Baxter equation,
gives birth to an $(H, X)$-bialgebroid $A_R$.
The categories of L-operators for $R(\lambda)$
and of dynamical representations of $A_R$
are isomorphic as tensor categories.
\end{abstract}
\vspace*{.2cm}
\noindent{Keywords:} bialgebroids; dynamical representations;
dynamical Yang-Baxter 
maps; FRT construction;
L-operators; tensor categories.
\\[.3cm]
\noindent{2000 Mathematics Subject Classification(s):}
primary 16W35; secondary 16W30, 17B37, 18D10, 20G42, 81R50.
\\[.3cm]
{\allowdisplaybreaks
\section{Introduction}
Representations of every bialgebra
form a tensor category.
Faddeev, Reshetikhin, and Takhtajan
\cite{faddeev}
constructed some bialgebras,
called FRT bialgebras,
by means of
constant R-matrices,
solutions to the quantum Yang-Baxter equation
(QYBE for short)
\cite{baxter1972,baxter1982,yang1967,yang1968}.
This FRT construction is categorically characterized as follows.
The tensor category of L-operators
for the
R-matrix $R$ is isomorphic to that of representations
of the FRT bialgebra associated with $R$.

The FRT construction for the dynamical R-matrix,
a solution to the quantum dynamical Yang-Baxter equation
(QDYBE for short)
\cite{etingof2005,felder,gervais},
was developed in
\cite[Sections 4.1, 4.2, and 4.4]{etingof1998}.
This construction yields
bialgebroids \cite{bohm,lu,xu}
called dynamical quantum groups,
each of which provides 
a tensor category consisting of its dynamical representations.
This tensor category is isomorphic to that of L-operators.

In addition, the FRT construction for the Yang-Baxter map
\cite{drinfeld,lu2000,veselov,weinstein},
a set-theoretical solution
to the QYBE, was studied in \cite[Section 2.9]{etingof1999}.

The notion of a dynamical Yang-Baxter map
was introduced in previous papers
\cite{shibukawa05,shibukawa07} 
as a generalization of the Yang-Baxter map.
Let $H$ and $X$ be nonempty sets,
together with a map
$(\cdot): H\times X\ni (\lambda, x)\mapsto \lambda\cdot x\in H$.
\begin{definition}\label{intro:definition:dybm}
A map $R(\lambda): X\times X\to X\times X$ $(\lambda\in H)$ 
is a
dynamical Yang-Baxter map
associated with $H$, $X$, and $(\cdot)$,
iff
the map $R(\lambda)$ satisfies
a version of the QDYBE.
\begin{equation}\label{introduction:eq:QDYBE}
R^{23}(\lambda)R^{13}(\lambda\cdot X^{(2)})R^{12}(\lambda)
=
R^{12}(\lambda\cdot X^{(3)})R^{13}(\lambda)R^{23}
(\lambda
\cdot X^{(1)})\quad(\forall \lambda\in H).
\end{equation}
Here,
$R^{12}(\lambda)$,
$R^{12}(\lambda\cdot X^{(3)})$,
and $R^{23}(\lambda
\cdot X^{(1)})$
are the following maps
from $X\times X\times X$
to itself\/$:$
for $x, y, z\in X$,
\begin{eqnarray*}
&&
R^{12}(\lambda)(x, y, z)=(R(\lambda)(x, y), z);
\\
&&
R^{12}(\lambda\cdot X^{(3)})(x, y, z)
=
R^{12}(\lambda\cdot z)(x, y, z)
=
(R(\lambda\cdot z)(x, y), z);
\\
&&
R^{23}(\lambda
\cdot X^{(1)})(x, y, z)
=(x, R(\lambda\cdot x)(y, z)).
\end{eqnarray*}
The other maps are similarly defined.
\end{definition}

A generalization of the bijective 1-cocycle
\cite{etingof1999,lu2000,takeuchi2003}
gives birth to the dynamical Yang-Baxter map.
In \cite{shibukawa05},
we constructed dynamical Yang-Baxter maps
by means of the groups and loops
\cite[Definition I.1.10]{pflugfelder}.
Moreover,
the paper \cite{shibukawa07} established a characterization of 
the dynamical Yang-Baxter maps with some conditions
by making use of left quasigroups and
ternary operations
(See Section \ref{section:catlop}).

It is natural to try to apply the
FRT construction to the dynamical Yang-Baxter map.

In this paper, we propose notions of 
an $(H, X)$-bialgebroid
and of its dynamical representation.
The dynamical representations of each $(H, X)$-bialgebroid
form a tensor category;
the proof is based on some concepts in category theory.
We construct an $(H, X)$-bialgebroid $A_R$
associated with a bijective dynamical Yang-Baxter map
$R(\lambda)$.
This is
an analogue of the FRT construction.
We show that the tensor category
of dynamical representations of $A_R$
is isomorphic to that
of L-operators for $R(\lambda)$.

The organization of this paper is as follows.
In Section \ref{section:catvecth},
we introduce a $\C$-linear tensor category 
$\VectH$,
which plays a basic role in this paper
(cf.\ the category $\mathcal{V}_{\mathfrak h}$
\cite[Section 3.1]{etingof1998}
and the dynamical category
\cite[Section 4.2]{donin}).

We assume that
every translation map 
$\cdot x: H\ni\lambda\mapsto\lambda\cdot x\in H$
$(x\in X)$
is bijective.
This assumption produces a group
$W_X^H$ anti-isomorphic to the subgroup of $\mathrm{Aut}(H)$
generated by the set $\{ \cdot x; x\in X\}$.
In
Section \ref{section:hxalg},
we propose a notion of an $(H, X)$-algebra
(Definition \ref{hxalg:definition:hxalg}),
which is similar to that of a $W_X^H\times W_X^H$-graded
algebra.
A major difference between them is that 
the $(H, X)$-algebra is not always a direct sum of its homogeneous components
(See the end of Section \ref{section:ar}).

The category
$\Alg_{(H, X)}$ consisting of $(H, X)$-algebras
and their homomorphisms
is a pre-tensor category
(Definition \ref{hxbialg:definition:pretensorcat}),
a generalization of the tensor category.
We define an $(H, X)$-bialgebroid
as an analogue of the coalgebra object of the tensor category.
\begin{definition}[Definition \ref{hxbialg:definition:hxbialg}]
\label{introduction:definition:hxbialg}
An $(H, X)$-bialgebroid $A$ is an 
object of the category $\Alg_{(H, X)}$, together
with 
morphisms
$\Delta: A\to A\tildeotimes A$
and
$\varepsilon: A\to I_{H, X}$
of $\Alg_{(H, X)}$
such that\/$:$
\begin{eqnarray*}
&&
a^l_{A, A, A}\circ\Delta\tildeotimes\id_A\circ\Delta
=a^r_{A, A, A}\circ\id_A\tildeotimes\Delta\circ\Delta;
\\
&&
l_A\circ\varepsilon\tildeotimes\id_A\circ\Delta
=
\id_A
=
r_A\circ\id_A\tildeotimes\varepsilon\circ\Delta.
\end{eqnarray*}
\end{definition}
For the object $I_{H, X}$,
the functor $\tildeotimes$,
and
the natural transformations
$a^l$, $a^r$, $l$, $r$,
see Sections \ref{section:exphxalg} and \ref{section:hxbialg}.
This $(H, X)$-bialgebroid is a bialgebroid
(Remark \ref{hxbialg:remark:hxbialgbialg}).

We assume that the set $X$ is finite.
In Section \ref{section:ar}, we construct an $(H, X)$-bialgebroid $A_R$
associated with a bijective dynamical Yang-Baxter map $R(\lambda)$
satisfying the weight zero condition
\eqref{catlop:eq:weightzero}.

Let $D_{H, X}(V)$
$(V\in \Ob(\VectH))$
denote the $(H, X)$-algebra
defined in Section \ref{section:hxalg}
(See Proposition \ref{exphxalg:proposition:DHXV}).
By a dynamical representation
of an $(H, X)$-algebra $A$,
we mean a pair $(V, \pi)$
of an object $V$ of $\VectH$
and an $(H, X)$-algebra homomorphism
$\pi: A\to D_{H, X}(V)$
(See Section \ref{section:dynrep}).

In Section \ref{section:tpdynrep},
we first define
a crucial $(H, X)$-algebra homomorphism
\[
D_{H, X}(V)\tildeotimes D_{H, X}(W)\to
D_{H, X}(V\barotimes W)
\quad(V, W\in \Ob(\VectH)),
\]
whose definition is a bit technical, because we make use of 
seemingly infinite sum operations
(Proposition
\ref{tpdynrep:proposition:theta}).
With the aid of this canonical homomorphism,
we show
that dynamical representations of each $(H, X)$-bialgebroid $A$
form a tensor category
$\DR(A)$
(Theorem \ref{tydynrep:theorem:tensorcategory});
Section 
\ref{section:prtencat} describes its proof,
in which the pre-tensor categories,
categorical bialgebroids (Definition \ref{hxbialg:definition:catbialg}),
and pre-tensor functors (Definition \ref{prtencat:definition:pretensorfunc})
play an essential role.

The tensor category $\VectH$,
together
with a bijective dynamical
Yang-Baxter map $R(\lambda)$
satisfying the weight zero condition
\eqref{catlop:eq:weightzero}, yields
another tensor category $\Rep R$,
called a centralizer category
(cf.~\cite[Proof of Theorem 3.7]{takeuchi2005}),
consisting of L-operators for $R(\lambda)$
(See Section \ref{section:catlop}).
In the final section, Section \ref{section:isom},
we prove our main result.
\begin{theorem}[Corollary \ref{isomorph:theorem:theorem}]
\label{intro:theorem:theorem}
The tensor categories $\Rep R$ and
$\DR(A_R)$ are canonically isomorphic
with each other.
\end{theorem}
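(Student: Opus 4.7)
The plan is to exhibit a pair of mutually inverse tensor functors between $\Rep R$ and $\DR(A_R)$, both acting as the identity on underlying objects of $\VectH$, so that the isomorphism is literally canonical rather than merely a coherent equivalence.

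First I would construct the functor $\Phi:\DR(A_R)\to \Rep R$. Because $A_R$ is built by an FRT-type presentation in Section \ref{section:ar}, it carries a distinguished family of generators encoding the matrix coefficients of a universal L-operator $L^{\mathrm{univ}}$. Given a dynamical representation $\pi:A_R\to D_{H,X}(V)$, applying $\pi$ to these generators yields an operator $L^{V}(\lambda)$, and the defining relations of $A_R$, together with the weight zero condition \eqref{catlop:eq:weightzero}, guarantee that $L^{V}(\lambda)$ is an L-operator for $R(\lambda)$ in the sense of Section \ref{section:catlop}. On morphisms, $\Phi$ sends an intertwiner of dynamical representations to an intertwiner of the associated L-operators, since in both cases the condition is expressed by the same relations on the universal generators.

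Next I would construct the inverse functor $\Psi:\Rep R\to \DR(A_R)$ by invoking the universal property of $A_R$. Given an L-operator $L^{V}(\lambda)$, the assignment sending the universal generators to the matrix coefficients of $L^{V}(\lambda)$ respects the FRT-type relations, this being exactly the statement that $L^{V}(\lambda)$ satisfies the L-operator axioms; hence there is a unique $(H,X)$-algebra homomorphism $\pi_{V}:A_R\to D_{H,X}(V)$. Intertwiners of L-operators translate directly into intertwiners of the corresponding dynamical representations. Because $\Phi$ and $\Psi$ are both the identity on the underlying object of $\VectH$ and each inverts the other at the level of generators of $A_R$, the compositions $\Phi\circ\Psi$ and $\Psi\circ\Phi$ equal the identity functors on the nose.

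Finally I would verify that both functors preserve the tensor structure. On the L-operator side the tensor product of L-operators is constructed with the help of $R(\lambda)$ as a braiding, while on the bialgebroid side the tensor product of dynamical representations is defined via the comultiplication $\Delta$ of $A_R$ composed with the canonical homomorphism $D_{H,X}(V)\tildeotimes D_{H,X}(W)\to D_{H,X}(V\barotimes W)$ of Proposition \ref{tpdynrep:proposition:theta}. The comultiplication $\Delta$ on $A_R$ was chosen in Section \ref{section:ar} precisely so that $L^{\mathrm{univ}}$ satisfies a dynamical analogue of $\Delta(L)_{V\barotimes W}=L_{13}L_{23}$, which is exactly the tensor-product rule for L-operators; matching the units via $\varepsilon$ and $I_{H,X}$ is immediate. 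The main obstacle I expect is the tensor compatibility bookkeeping: one has to reconcile the seemingly infinite sum operations hidden in the canonical homomorphism of Section \ref{section:tpdynrep} with the concrete composition of L-operators in $\Rep R$, and to confirm that the associators $a^l, a^r$ and unitors $l, r$ of the pre-tensor structure on $\Alg_{(H,X)}$ intertwine correctly with the corresponding coherence data of $\VectH$ under $\Phi$ and $\Psi$.
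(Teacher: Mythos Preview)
Your proposal is correct and follows essentially the same route as the paper: construct mutually inverse strict tensor functors $F:\DR(A_R)\to\Rep R$ and $G:\Rep R\to\DR(A_R)$, with $F$ reading off the L-operator from the images $\pi(L_{ab}+I_R)$ and $G$ invoking the universal property of the FRT presentation of $A_R$; both act as the identity on underlying objects of $\VectH$, so $FG$ and $GF$ are literally identities. Two minor corrections: the tensor product of L-operators in $\Rep\sigma$ (equation~\eqref{catlop:eq:tensor}) is just the composite $L_V\boxtimes L_W$ built from associators and does not use $R$ as a braiding, and the paper explicitly notes that neither the QDYBE nor the weight zero condition is needed for the isomorphism itself---they only ensure the categories are nontrivial.
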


To end Introduction, it is worth pointing out that
we do not need \eqref{introduction:eq:QDYBE}
in Sections \ref{section:ar}
and \ref{section:isom}.
A role of \eqref{introduction:eq:QDYBE},
a version of the QDYBE,
is to imply that
the tensor categories
$\Rep R$
and $\DR(A_R)$
are non-trivial
in general
(See
Remark \ref{catlop:remark:nontrivial}).

\noindent{\bf Conventions.}
For a set $V$,
we will denote by
$\C V$ the $\C$-vector space whose basis is $V$.
Its element $g$ will be written as
\begin{equation}\label{intro:equation:coeff}
g=\sum_{v\in V}vg_v
\end{equation}
with complex coefficients $g_v$.
For two sets $V$ and $W$,
a $\C$-linear map
$f: \C V\to \C W$ will be identified with its matrix
$(f_{wv})_{(w, v)\in W\times V}$;
we will use the notation
\begin{equation}
\label{intro:equation:matrix}
f(v)=\sum_{w\in W}wf_{wv}
\quad(v\in V)
\end{equation}
with complex coefficients $f_{wv}$.
\section{$\C$-linear tensor category $\VectH$}
\label{section:catvecth}
Let $H$ be a nonempty set.
A $\C$-linear tensor category $\VectH$, which we will introduce
in this section,
plays a basic role in this paper.
We follow the notation of \cite[Chapter XI]{kassel}.

An object of $\VectH$ is, by definition, a pair $(V, \cdot_V)$
of a nonempty set $V$
and 
a map
$\cdot_V: H\times V\ni(\lambda, v)\mapsto \lambda\cdot_V v\in H$.
For simplicity of notation,
we write $(\cdot)$
and $V$
instead of $\cdot_V$
and $(V, \cdot_V)$,
respectively.
Furthermore,
we denote $\lambda\cdot v$
$(\lambda\in H, v\in V)$
briefly by $\lambda v$.
$\Ob(\VectH)$ is
the class of all objects of $\VectH$.

Let $V$ and $W$
be objects of $\VectH$.
A morphism $f: V\to W$ of $\VectH$
is a map $f: H\to \Hom_\C(\C V, \C W)$
such that
\begin{equation}\label{catvech:eq:defmorphism}
\lambda w=\lambda v,
\mbox{ unless }
f(\lambda)_{wv}=0
\quad(\lambda\in H, v\in V, w\in W).
\end{equation}
For the complex number $f(\lambda)_{wv}$, see
\eqref{intro:equation:matrix}.
Let $\Hom(\VectH)$ denote the class of all morphisms of $\VectH$.
For the
morphism $f: V\to W$,
the objects $V$ and $W$ are called the source 
$s(f)$ and the target $b(f)$, respectively.

For each object $V$ of $\VectH$,
define the map $\id_V: H\to \Hom_\C(\C V, \C V)$
by $\id_V(\lambda)=\id_{\C V}$.

Let $f$ and $g$ be morphisms of $\VectH$
satisfying $b(f)=s(g)$.
We define the composition 
$g\circ f\in\Hom_{\VectH}(s(f), b(g))$
by
$(g\circ f)(\lambda)=g(\lambda)\circ f(\lambda)$
$(\lambda\in H)$.
\begin{proposition}
$\VectH$ is a category
of
$\Ob(\VectH)$
and
$\Hom(\VectH)$,
together with
the identity $\id$,
the source $s$,
the target $b$,
and the composition $\circ$.
\end{proposition}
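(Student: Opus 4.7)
The plan is to verify the four category axioms in turn: (i) that for each pair of composable morphisms the composite map $g\circ f$ is again a morphism of $\VectH$, (ii) that $\id_V$ is a morphism of $\VectH$ for every object $V$, (iii) associativity of composition, and (iv) the left and right identity laws. Steps (iii) and (iv) are immediate because composition and the identity are defined pointwise in $\lambda \in H$, so they reduce to the corresponding facts for ordinary $\C$-linear maps of vector spaces.

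The only substantive step is (i), i.e.\ showing that the weight-zero condition \eqref{catvech:eq:defmorphism} is preserved under composition. Given $f: V \to W$ and $g: W \to U$ with $b(f) = s(g) = W$, fix $\lambda \in H$, $v \in V$, $u \in U$, and suppose $(g\circ f)(\lambda)_{uv} \neq 0$. Expanding $(g\circ f)(\lambda) = g(\lambda)\circ f(\lambda)$ via matrix multiplication gives
\[
(g\circ f)(\lambda)_{uv} = \sum_{w \in W} g(\lambda)_{uw}\, f(\lambda)_{wv},
\]
so there is some $w \in W$ with $g(\lambda)_{uw} \neq 0$ and $f(\lambda)_{wv} \neq 0$. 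By \eqref{catvech:eq:defmorphism} applied to $g$ and to $f$, we then have $\lambda u = \lambda w$ and $\lambda w = \lambda v$, hence $\lambda u = \lambda v$, as required.

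For (ii), note that $\id_V(\lambda)_{v'v} = \delta_{v',v}$, which is zero unless $v' = v$, in which case $\lambda v' = \lambda v$ trivially, so $\id_V$ is indeed a morphism of $\VectH$. The associativity $(h\circ g)\circ f = h\circ (g\circ f)$ and the identity relations $\id_{b(f)} \circ f = f = f \circ \id_{s(f)}$ follow by evaluating both sides at each $\lambda \in H$ and invoking the corresponding properties in the category of complex vector spaces.

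I do not anticipate any genuine obstacle; the only point requiring care is confirming that the pointwise matrix-product definition of composition respects the support condition \eqref{catvech:eq:defmorphism}, which is handled by the short argument above using the bilinearity of matrix multiplication.
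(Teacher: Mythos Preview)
Your proof is correct. The paper states this proposition without proof, treating it as routine; your verification of the category axioms, in particular that the support condition \eqref{catvech:eq:defmorphism} is preserved under composition, is exactly the content one would supply.
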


The next task is to explain a tensor product
$\barotimes: \VectH\times\VectH\to\VectH$.
Let $V$ and $W$ be objects of the category $\VectH$.
We define the set $V\barotimes W$
and the map $\cdot_{V\barotimes W}: H\times
(V\barotimes W)
\to H$
by:
\begin{eqnarray*}
&&V\barotimes W=V\times W;
\\
&&\lambda\cdot_{V\barotimes W}(v, w)
=(\lambda\cdot_{W}w)\cdot_{V}v
\quad
(\lambda \in H, v\in V, w\in W).
\end{eqnarray*}
Clearly, the pair $V\barotimes W:=(V\barotimes W,
\cdot_{V\barotimes W})$
is an object of $\VectH$.

We may identify
\begin{equation}\label{catvecth:equation:otimes}
\C(V\barotimes W)=\C V\otimes \C W
\end{equation}
by $(v, w)=v\otimes w$
$(v\in V, w\in W)$.

Let $f$ and $g$ be morphisms of the category $\VectH$.
We denote by $f\barotimes g$
the following map from $H$ to
$\Hom_\C(\C (s(f)\barotimes s(g)),
\C (b(f)\barotimes b(g)))$
(See \eqref{catvecth:equation:otimes}).
\[
f\barotimes g(\lambda)(u, v)
=
f(\lambda v)(u)\otimes
g(\lambda)(v)
\quad(\lambda\in H, u\in s(f), v\in s(g)).
\]
\begin{proposition}
$f\barotimes g\in \Hom_{\VectH}
(s(f)\bar{\otimes}s(g), b(f)\bar{\otimes} b(g))$.
\end{proposition}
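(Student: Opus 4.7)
The plan is to verify the defining condition \eqref{catvech:eq:defmorphism} for $f\barotimes g$, namely that for every $\lambda\in H$, every $(u,v)\in s(f)\barotimes s(g)$, and every $(u',v')\in b(f)\barotimes b(g)$, whenever the matrix coefficient $(f\barotimes g)(\lambda)_{(u',v'),(u,v)}$ is nonzero, the equality $\lambda\cdot_{b(f)\barotimes b(g)}(u',v')=\lambda\cdot_{s(f)\barotimes s(g)}(u,v)$ must hold. The source and target of $f\barotimes g$ are already correct at the object level by the definitions, so this is the only thing to check.

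First I would unpack the matrix coefficient. Using the identification \eqref{catvecth:equation:otimes} and the notation \eqref{intro:equation:matrix}, expanding $f(\lambda v)(u)\otimes g(\lambda)(v)$ in the basis $\{(u',v')\}$ gives
\begin{equation*}
(f\barotimes g)(\lambda)_{(u',v'),(u,v)} = f(\lambda v)_{u'u}\, g(\lambda)_{v'v}.
\end{equation*}
Hence this coefficient is nonzero only when both factors are nonzero.

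Next I would apply the morphism condition \eqref{catvech:eq:defmorphism} for $g$ and $f$ in turn. From $g(\lambda)_{v'v}\neq 0$ we get $\lambda\cdot_{b(g)}v'=\lambda\cdot_{s(g)}v$; call this common element $\mu\in H$. From $f(\lambda v)_{u'u}\neq 0$, i.e.\ $f(\mu)_{u'u}\neq 0$, we obtain $\mu\cdot_{b(f)}u'=\mu\cdot_{s(f)}u$. Combining these with the definition of $\cdot_{V\barotimes W}$ yields
\begin{equation*}
\lambda\cdot_{b(f)\barotimes b(g)}(u',v')=(\lambda\cdot_{b(g)}v')\cdot_{b(f)}u'=\mu\cdot_{b(f)}u'=\mu\cdot_{s(f)}u=(\lambda\cdot_{s(g)}v)\cdot_{s(f)}u=\lambda\cdot_{s(f)\barotimes s(g)}(u,v),
\end{equation*}
which is exactly the required condition.

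I do not expect any real obstacle: the argument is essentially the observation that the definition of $\cdot_{V\barotimes W}$ was tailor-made so that the two successive applications of \eqref{catvech:eq:defmorphism}—first to the ``outer'' factor $g(\lambda)$ and then to the ``inner'' factor $f(\lambda v)$—fit together. The only point to be careful about is the order: $g$ must be applied at $\lambda$ while $f$ is applied at $\lambda v$, reflecting the asymmetric formula $\lambda\cdot_{V\barotimes W}(v,w)=(\lambda\cdot_W w)\cdot_V v$.
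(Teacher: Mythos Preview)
Your proof is correct and follows essentially the same approach as the paper: compute the matrix coefficient as the product $f(\lambda v)_{u'u}\,g(\lambda)_{v'v}$, apply \eqref{catvech:eq:defmorphism} to $g$ at $\lambda$ to identify $\lambda v'=\lambda v$, and then apply \eqref{catvech:eq:defmorphism} to $f$ at this common value to finish. The paper's write-up is slightly terser but the argument is the same.
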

\begin{proof}
It is sufficient to prove that
$f\barotimes g$ satisfies \eqref{catvech:eq:defmorphism}.
Because of \eqref{catvech:eq:defmorphism} for
the morphism $g$,
$\lambda\cdot v_1
=\lambda\cdot v_2$,
unless
$g(\lambda)_{v_1v_2}=0$;
hence,
\begin{eqnarray*}
(f\barotimes g)(\lambda)_{(u_1, v_1)(u_2, v_2)}
&=&
f(\lambda\cdot v_2)_{u_1u_2}
g(\lambda)_{v_1v_2}
\\
&=&
f(\lambda\cdot v_1)_{u_1u_2}
g(\lambda)_{v_1v_2}.
\end{eqnarray*}
On account of \eqref{catvech:eq:defmorphism} for
the morphisms $f$ and $g$,
$\lambda\cdot(u_1, v_1)
=
\lambda\cdot(u_2, v_2)$,
unless
$(f\barotimes g)(\lambda)_{(u_1, v_1)(u_2, v_2)}=0$.
This completes the proof.
\end{proof}

The associativity constraint $a: \barotimes(\barotimes\times\id)\to
\barotimes(\id\times\barotimes)$
is given by
\begin{eqnarray*}
&&a_{U, V, W}(\lambda)((u, v), w)=(u, (v, w))
\\
&&(U, V, W\in \Ob(\VectH),
\lambda\in H,
((u, v), w)\in(U\bar{\otimes}V)
\bar{\otimes}W).
\end{eqnarray*}

Let $I_{\VectH}$ denote the set consisting of one element,
together with the map
$\cdot_{I_{\VectH}}: H\times I_{\VectH}\to H$
defined by
$\lambda\cdot_{I_{\VectH}}e=\lambda$
$(\lambda\in H, I_{\VectH}=\{ e\})$.
The pair
$I_{\VectH}:=(I_{\VectH}, \cdot_{I_{\VectH}})$
is an object of $\VectH$.

The left and the right unit constraints
$l: \barotimes(I_{\VectH}\times\id)\to\id$,
$r: \barotimes(\id\times I_{\VectH})\to\id$
with respect to $I_{\VectH}$ are given by
\begin{eqnarray*}
&&
l_V(\lambda)(e, v)=r_V(\lambda)(v, e)=v
\\
&&
(V\in \Ob(\VectH), \lambda\in H, v\in V,
I_{\VectH}=\{ e\}).
\end{eqnarray*}
\begin{proposition}\label{prop:seth:sethtensor}
$(\VectH, \barotimes, I_{\VectH}, a, l, r)$
is a tensor category.
\end{proposition}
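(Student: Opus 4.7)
The plan is to verify the axioms of a tensor category in the standard order: bifunctoriality of $\barotimes$, the fact that $a, l, r$ are well-defined natural isomorphisms in $\VectH$, and finally the pentagon and triangle coherence identities. Essentially all of these checks reduce to set-theoretic manipulations, because the underlying data of $a$, $l$, $r$ are the canonical bijections between iterated Cartesian products, and the matrix coefficients of morphisms such as $f\barotimes g$ are products of matrix coefficients of $f$ and $g$.

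I would first prove that $\barotimes$ is a bifunctor. Since $\id_V \barotimes \id_W$ evaluated on $(v,w)$ gives $\id_{\C V}(\lambda\cdot_W w)(v) \otimes \id_{\C W}(\lambda)(w) = v\otimes w$, we have $\id_V\barotimes \id_W = \id_{V\barotimes W}$. For compositions, given morphisms $f_1, f_2$ and $g_1, g_2$ that are composable, direct computation from the definition shows $(f_2\circ f_1)\barotimes(g_2\circ g_1)(\lambda)(u,v) = f_2(\lambda\cdot v)\circ f_1(\lambda\cdot v)(u) \otimes g_2(\lambda)\circ g_1(\lambda)(v)$; this coincides with $((f_2\barotimes g_2)\circ(f_1\barotimes g_1))(\lambda)(u,v)$, noting that the intermediate element $v'$ produced by $g_1$ only contributes when $\lambda\cdot v' = \lambda\cdot v$ by \eqref{catvech:eq:defmorphism}.

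Next I would check that $a_{U,V,W}$, $l_V$, $r_V$ are morphisms of $\VectH$. For the associator, using the definition of $\cdot_{V\barotimes W}$, the action on both brackings equals $((\lambda\cdot_W w)\cdot_V v)\cdot_U u$, so the rebracketing preserves the $H$-action, and \eqref{catvech:eq:defmorphism} holds trivially. For $l_V$, the action $\lambda\cdot_{I_{\VectH}\barotimes V}(e,v) = (\lambda\cdot_V v)\cdot_{I_{\VectH}} e = \lambda\cdot_V v$ equals the action on the target. The case of $r_V$ is analogous. Each of these morphisms is supported on a bijection of basis sets, so their inverses are obtained by inverting that bijection, and they are isomorphisms in $\VectH$. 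Naturality of $a$ in each slot, and of $l$ and $r$, is a direct expansion: for morphisms $f, g, h$ one checks that both $a_{b(f),b(g),b(h)}\circ(f\barotimes g)\barotimes h$ and $f\barotimes(g\barotimes h)\circ a_{s(f),s(g),s(h)}$ evaluated at $\lambda$ on $((u,v),w)$ produce $f((\lambda\cdot w)\cdot v)(u)\otimes g(\lambda\cdot w)(v)\otimes h(\lambda)(w)$, with the difference being only the bracketing.

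Finally, the pentagon and triangle axioms reduce to the statements that the canonical rebracketings of quadruple Cartesian products satisfy Mac Lane's pentagon and that the canonical identifications involving the singleton $I_{\VectH}$ agree with the unit constraints in the bifunctor; both hold on the nose, basis-element by basis-element. The main (and only) subtle point to watch out for is the correct placement of the $H$-action shifts in the definition of $f\barotimes g$, since these shifts are what force one to use $\cdot_{V\barotimes W}(v,w) = (\lambda\cdot_W w)\cdot_V v$ rather than $(\lambda\cdot_V v)\cdot_W w$; once consistency of this convention is confirmed, every diagram commutes by the corresponding identity of underlying maps of sets.
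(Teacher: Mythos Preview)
Your proposal is correct. The paper states this proposition without proof, so your direct verification of the tensor-category axioms---bifunctoriality of $\barotimes$ (using \eqref{catvech:eq:defmorphism} for the composition law), the constraint maps being natural isomorphisms, and the pentagon and triangle identities---is exactly the expected argument, and your observation that the only genuine subtlety is the ordering convention $\lambda\cdot_{V\barotimes W}(v,w)=(\lambda\cdot_W w)\cdot_V v$ is on point.
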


This tensor category $\VectH$ is also a $\C$-linear category.
\begin{proposition}\label{vecth:proposition:morphismsum}
Every $\Hom_{\VectH}(V, W)$ is a $\C$-vector space
with respect to\/$:$
\begin{eqnarray*}
&&(f+g)(\lambda)=f(\lambda)+g(\lambda);
\\
&&
(cf)(\lambda)=cf(\lambda)
\quad
(\lambda\in H, f, g\in \Hom_{\VectH}(V, W),
c\in\C).
\end{eqnarray*}
In addition, both the composition and the tensor product are $\C$-bilinear.
\end{proposition}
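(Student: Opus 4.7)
The plan is to first verify that the proposed pointwise sum $f+g$ and scalar multiple $cf$ actually land in $\Hom_{\VectH}(V,W)$, i.e.\ that they satisfy the compatibility condition \eqref{catvech:eq:defmorphism}; once this is done, all vector space axioms for $\Hom_{\VectH}(V,W)$ descend pointwise from the already-known vector space structure on $\Hom_\C(\C V,\C W)$.

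For the compatibility check, I will read \eqref{catvech:eq:defmorphism} in its contrapositive form: if $\lambda w\neq\lambda v$, then $f(\lambda)_{wv}=0$. Suppose $\lambda w\neq\lambda v$. Then $f(\lambda)_{wv}=g(\lambda)_{wv}=0$ by hypothesis on $f$ and $g$, whence $(f+g)(\lambda)_{wv}=f(\lambda)_{wv}+g(\lambda)_{wv}=0$ and $(cf)(\lambda)_{wv}=cf(\lambda)_{wv}=0$. Thus $f+g, cf\in\Hom_{\VectH}(V,W)$. All $\C$-vector space axioms (associativity and commutativity of $+$, existence of the zero morphism $0(\lambda)=0$, existence of additive inverses, distributivity, compatibility of scalar multiplication) now follow by evaluating both sides at each $\lambda\in H$ and invoking the corresponding axiom in $\Hom_\C(\C V,\C W)$.

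Bilinearity of composition is immediate from the pointwise definition $(g\circ f)(\lambda)=g(\lambda)\circ f(\lambda)$ and the $\C$-bilinearity of composition of $\C$-linear maps. For $\C$-bilinearity of $\barotimes$, I will simply unpack the definition
\[
(f\barotimes g)(\lambda)(u,v)=f(\lambda v)(u)\otimes g(\lambda)(v).
\]
Given $f_1,f_2\in\Hom_{\VectH}(V_1,W_1)$ and $g\in\Hom_{\VectH}(V_2,W_2)$,
\[
((f_1+f_2)\barotimes g)(\lambda)(u,v)=\bigl(f_1(\lambda v)(u)+f_2(\lambda v)(u)\bigr)\otimes g(\lambda)(v),
\]
which by $\C$-bilinearity of the ordinary tensor product on $\C$-vector spaces equals $(f_1\barotimes g+f_2\barotimes g)(\lambda)(u,v)$. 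The same calculation, with the scalar $c$ pulled out of the first or second tensor factor, gives $(cf)\barotimes g=c(f\barotimes g)=f\barotimes(cg)$, and the additivity in the second argument is analogous, using bilinearity of $\otimes$ in its second slot together with the linearity of $g\mapsto g(\lambda)$.

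No real obstacle is expected; the only mild subtlety is the asymmetric shift $\lambda\mapsto\lambda v$ appearing in the first tensor factor, which could look like it might interfere with linearity, but since the shift acts on the argument of $f$ rather than on $f$ itself and the tensor product is bilinear, it causes no trouble.
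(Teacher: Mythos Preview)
Your proof is correct; the paper states this proposition without proof, and your direct pointwise verification is exactly the natural argument one would supply.
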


We will show some objects and morphisms of $\VectH$.
Let $V=(V, \cdot)$ be an object of $\VectH$.
\begin{proposition}\label{vecth:proposition:subobject}
For any nonempty subset $S\subset V$,
$S:=(S, \cdot_{S})$ is again an object of $\VectH$.
Here
$\cdot_S$ is the restriction of $(\cdot)$
on the set $H\times S$\/$;$
that is,
$\lambda\cdot_S s=\lambda\cdot s$
$(\lambda\in H, s\in S)$.
\end{proposition}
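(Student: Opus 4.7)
The plan is to simply verify that the pair $(S, \cdot_S)$ meets the two requirements listed in the definition of an object of $\VectH$ given at the start of Section \ref{section:catvecth}: namely, the first component must be a nonempty set, and the second component must be a map from $H \times (\text{first component})$ into $H$.

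First, I would observe that the first requirement is immediate from the hypothesis that $S$ is a nonempty subset of $V$. For the second requirement, I would note that since $(\cdot) = \cdot_V$ is a map $H \times V \to H$ and $H \times S \subset H \times V$, the restriction $\cdot_S := (\cdot_V)|_{H \times S}$ is a well-defined map $H \times S \to H$, explicitly given by $\lambda \cdot_S s = \lambda \cdot_V s$ for all $\lambda \in H$ and $s \in S$. These two facts together say exactly that $(S, \cdot_S)$ is an object of $\VectH$.

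There is no real obstacle here; the statement is a direct unfolding of definitions, and its point seems to be recording that $\VectH$ admits subobjects of this particularly simple kind so that later constructions (which presumably apply this proposition to subsets arising from a dynamical Yang-Baxter map or from components of a graded decomposition) can invoke it without comment.
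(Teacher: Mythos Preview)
Your proposal is correct and matches the paper's treatment: the paper states this proposition without proof, since it is immediate from the definition of an object of $\VectH$ as a pair of a nonempty set together with a map from $H$ times that set into $H$. Your unfolding of the two requirements is exactly the trivial verification the paper leaves implicit.
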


For $v\in V$,
we denote by
$i_V^{\{ v\}}: \{ v\}\to V$
and $p_{\{ v\}}^V: V\to \{ v\}$
the following morphisms of $\VectH$.
\begin{equation}\label{vecth:eq:injproj}
i_V^{\{ v\}}(\lambda)(v)
=v,
\quad
p_{\{ v\}}^V(\lambda)(w)
=\delta_{vw}v
\quad(\lambda\in H, w\in V).
\end{equation}
Here $\delta_{vw}$ is Kronecker's delta symbol.
These morphisms satisfy
\begin{equation}\label{catvecth:eq:projinj}
p_{\{ v'\}}^V
\circ i_V^{\{ v\}}
=
\left\{
\begin{array}{ll}
\id_{\{ v\}},&\mbox{ if }v=v';
\\
0_{\{ v'\}\{ v\}},&\mbox{ otherwise}.
\end{array}
\right.
\end{equation}
Here,
$0_{\{ v'\}\{ v\}}$
is the unit element of
$\Hom_{\VectH}(\{ v\}, \{ v'\})$.
Moreover,
if the set $V$
is finite, then
\begin{equation}\label{vecth:eq:ipidentity}
\sum_{v\in V}i_V^{\{ v\}}\circ
p_{\{ v\}}^V=\id_V.
\end{equation}

Let $V=(V, \cdot_V)$
and $W=(W, \cdot_W)$
be objects of $\VectH$
such that
$|V|=|W|=1$.
From
\eqref{catvech:eq:defmorphism},
$V$ is isomorphic to $W$,
if and only if
\begin{equation}\label{catvecth:equation:isomonepoint}
\lambda\cdot_V v=\lambda\cdot_W w
\quad(\forall\lambda\in H; V=\{ v\}, W=\{ w\}).
\end{equation}
In fact, the following $\iota_W^V: V\to W$
is an isomorphism,
if $V$ and $W$ satisfy \eqref{catvecth:equation:isomonepoint}.
\begin{equation}
\label{catvecth:equation:onepoint}
\iota_W^V(\lambda)(v)=w
\quad(\lambda\in H).
\end{equation}
\section{Centralizer categories}
\label{section:catlop}
Let $(C, \otimes, I_C, a, l, r)$
be a tensor category
\cite[Definition XI.2.1]{kassel}.
This section clarifies that
this tensor category $C$, together with an object $X_C$ and
a morphism $\sigma: X_C\otimes X_C
\to X_C\otimes X_C$,
gives birth to another tensor category
$\Rep \sigma$ consisting of L-operators for the morphism $\sigma$.
We call it a centralizer category
(cf. the center \cite[Section XIII.4]{kassel}).
Our centralizer category is isomorphic to the one in
\cite[Proof of Theorem 3.7]{takeuchi2005}
as tensor categories,
if $\sigma$ is a Yang-Baxter operator on $X_C$
(Definition \ref{catlop:definition:YBop})
in
the tensor category of vector spaces.
As a corollary of this construction, the tensor category
$\VectH$ in Section \ref{section:catvecth}
produces the centralizer category
$\Rep R$
of L-operators for the bijective dynamical
Yang-Baxter map $R(\lambda)$
satisfying the weight zero condition
\eqref{catlop:eq:weightzero}.

We will first introduce a category
$\Rep \sigma$.
An object
of $\Rep \sigma$
is, by definition, a pair $(V, L_V)$,
called an L-operator,
of
an object $V$ of $C$ and
an isomorphism
$L_V: V\otimes X_C\to
X_C\otimes V$ of $C$
satisfying 
\begin{eqnarray}
&&\nonumber
a_{X_C,X_C,V}\circ(\sigma\otimes\id_V)
\circ a_{X_C,X_C,V}^{-1}\circ(\id_{X_C}\otimes L_V)
\circ a_{X_C,V,X_C}\circ(L_V\otimes\id_{X_C})
\\\nonumber
&=&
(\id_{X_C}\otimes L_V)\circ a_{X_C,V,X_C}\circ (L_V\otimes\id_{X_C})
\circ a_{V,X_C,X_C}^{-1}\circ(\id_V\otimes\sigma)\circ
\\\label{catlop:eq:RLL=LLR}
&&
\circ a_{V,X_C,X_C}.
\end{eqnarray}
Let $\Ob(\Rep \sigma)$ denote the class of all objects
of $\Rep \sigma$.

Let $L_V=(V, L_V)$ and $L_W=(W, L_W)$
be objects of $\Rep \sigma$.
A morphism $f: L_V\to L_W$ of $\Rep \sigma$
is a morphism $f: V\to W$ of $C$
such that
\begin{equation}\label{catlop:eq:morphism}
(\id_{X_C}\otimes f)\circ L_V
=
L_W\circ (f\otimes\id_{X_C}).
\end{equation}
We write $\Hom(\Rep \sigma)$
for the class of all morphisms of $\Rep \sigma$.

For a morphism $f: L_V\to L_W$ of $\Rep \sigma$,
the objects $L_V$ and $L_W$ are called the source 
$s(f)$ and the target $b(f)$ of the
morphism $f$, respectively.

We define the identity $\id$ of $\Rep \sigma$ by
$\id_{L_V}=\id_{V}$.
The composition
$g\circ f: s(f)\to b(g)$
of morphisms $f$ and $g$ of $\Rep \sigma$
is the same as that of
the category $C$.
\begin{proposition}
$\Rep \sigma$ is a category.
\end{proposition}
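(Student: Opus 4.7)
The plan is to verify that the data $(\Ob(\Rep\sigma), \Hom(\Rep\sigma), \id, s, b, \circ)$ defined in the excerpt satisfy the axioms of a category. Because the source, target, identity, and composition of $\Rep\sigma$ are inherited verbatim from the underlying tensor category $C$ (applied to underlying objects and morphisms), associativity of composition and the left/right unit laws come for free from the corresponding properties of $C$. The real work is therefore to show that $\Hom(\Rep\sigma)$ is closed under taking identities and composing, i.e.\ that the intertwining condition \eqref{catlop:eq:morphism} is preserved.

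For identities, I would check \eqref{catlop:eq:morphism} with $f=\id_V$, $L_W=L_V$: by bifunctoriality of $\otimes$ in $C$ one has $\id_{X_C}\otimes\id_V=\id_{X_C\otimes V}$ and $\id_V\otimes\id_{X_C}=\id_{V\otimes X_C}$, so both sides collapse to $L_V$. For composition, given $f:L_V\to L_W$ and $g:L_W\to L_U$ in $\Rep\sigma$, I would again use bifunctoriality of $\otimes$ to split $\id_{X_C}\otimes(g\circ f)=(\id_{X_C}\otimes g)\circ(\id_{X_C}\otimes f)$ and $(g\circ f)\otimes\id_{X_C}=(g\otimes\id_{X_C})\circ(f\otimes\id_{X_C})$. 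Then applying \eqref{catlop:eq:morphism} first to $f$ and subsequently to $g$ slides $L_V$ past $f$ to obtain $L_W$, and then past $g$ to obtain $L_U$, yielding the required intertwining for $g\circ f$.

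I do not anticipate any substantive obstacle here: the argument is a routine bookkeeping check whose only ingredient is the bifunctoriality of the tensor product of $C$. In particular, the L-operator condition \eqref{catlop:eq:RLL=LLR} does not enter at all, as it only constrains which pairs $(V,L_V)$ qualify as objects of $\Rep\sigma$ and plays no role in verifying the categorical axioms.
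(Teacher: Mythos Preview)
Your proposal is correct and matches the paper's treatment: the paper states the proposition without proof, implicitly leaving it as a routine verification, and your outline supplies precisely those routine details (closure of identities and compositions under the intertwining condition \eqref{catlop:eq:morphism} via bifunctoriality of $\otimes$, with associativity and unit laws inherited from $C$).
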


The next task is to give a tensor product
$\boxtimes: \Rep \sigma\times \Rep \sigma
\to \Rep \sigma$.
Let $L_V=(V, L_V)$ and $L_W=(W, L_W)$
be objects of the category $\Rep \sigma$.
Write
\begin{eqnarray}
\nonumber
L_V\boxtimes L_W
&=&
a_{X_C,V,W}\circ (L_V\otimes\id_{W})
\circ a_{V,X_C,W}^{-1}\circ(\id_{V}
\otimes L_W)
\circ a_{V,W,X_C}
\\\label{catlop:eq:tensor}
&
\in&\Hom_{C}
((V\otimes W)\otimes X_C,
X_C\otimes(V\otimes W)).
\end{eqnarray}
To simplify notation,
we use the same symbol
$L_V\boxtimes L_W$ for the pair
$(V\otimes W, L_V\boxtimes L_W)$.

Let $f: L_V\to L_W$ and $g: L_{V'}\to L_{W'}$ be morphisms of
$\Rep \sigma$.
We set
$f\boxtimes g=f\otimes g
\in\Hom_{C}(V\otimes V',
W\otimes W')$.
Since $f$ and $g$ satisfy \eqref{catlop:eq:morphism},
$f\boxtimes g\in \Hom_{\Rep \sigma}(L_V\boxtimes L_{V'},
L_W\boxtimes L_{W'})$.

We write $L_{I_C}
=r_{X_C}^{-1}\circ l_{X_C}
:I_C\otimes X_C\to X_C\otimes I_C\in\Hom(C)$.
Here, $l_{X_C}: I_C\otimes X_C\to X_C$
and $r_{X_C}: X_C\otimes I_C\to X_C$ are the isomorphisms
given by the left and the right unit constraints $l, r$ of
$C$.
Let $I_{\Rep \sigma}$ denote
the pair $(I_C, L_{I_C})$.

The associativity constraint $a: \boxtimes(\boxtimes\times\id)
\to\boxtimes(\id\times\boxtimes)$,
and 
the left and the right unit constraints $l$, $r$ are
defined by those of the tensor category $C$:
$a_{L_U, L_V, L_W}=a_{U,V,W}$,
$l_{L_V}=l_V$,
and
$r_{L_V}=r_V$.
\begin{proposition}\label{lop:theorem}
$(\Rep \sigma,
\boxtimes,
I_{\Rep \sigma},
a, l, r)$ 
is a tensor category.
\end{proposition}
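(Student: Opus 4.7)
The plan is to verify, in turn, that $\boxtimes$ is a bifunctor landing in $\Rep\sigma$, that $I_{\Rep\sigma}$ is an object of $\Rep\sigma$, that the associativity and unit constraints inherited from $C$ are morphisms in $\Rep\sigma$, and finally that the pentagon and triangle coherence axioms carry over from $C$.

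The first task is the only one with real content on objects. I would check that if $L_V$ and $L_W$ satisfy the L-operator equation \eqref{catlop:eq:RLL=LLR}, then so does $L_V\boxtimes L_W$, viewed as a map $(V\otimes W)\otimes X_C\to X_C\otimes(V\otimes W)$. Substituting the definition \eqref{catlop:eq:tensor} of $L_V\boxtimes L_W$ into both sides of \eqref{catlop:eq:RLL=LLR} with $V$ replaced by $V\otimes W$, I would use the naturality of $a$ in $C$ to reorganize parentheses and then apply \eqref{catlop:eq:RLL=LLR} for $L_W$ followed by \eqref{catlop:eq:RLL=LLR} for $L_V$ (the standard "fuse the two L-operators and invoke the relation twice" argument). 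Functoriality of $\boxtimes$ on morphisms is inherited from that of $\otimes$ in $C$, and the observation that $f\otimes g$ automatically satisfies \eqref{catlop:eq:morphism} when $f$ and $g$ do is already made in the text.

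For the unit, $L_{I_C}=r_{X_C}^{-1}\circ l_{X_C}$ obeys \eqref{catlop:eq:RLL=LLR}: using the triangle axiom in $C$ and the naturality of $l,r$ with respect to $\sigma$, both sides of \eqref{catlop:eq:RLL=LLR} collapse (modulo associativity and unit constraints) to $\sigma$ itself, whence the equation reduces to $\sigma=\sigma$. To see that $a_{L_U,L_V,L_W}=a_{U,V,W}$, $l_{L_V}=l_V$, and $r_{L_V}=r_V$ satisfy the intertwining condition \eqref{catlop:eq:morphism}, I would unfold \eqref{catlop:eq:tensor} on both sides; for associativity this produces a large diagram whose arrows are associativity constraints of $C$ and tensor products of $L_U,L_V,L_W$, and it commutes by the pentagon in $C$ combined with naturality of $a$. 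The unit cases are analogous, using the triangle axiom and the naturality of $l,r$ applied to each $L_V$.

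The pentagon and triangle coherence axioms for $(\Rep\sigma,\boxtimes,I_{\Rep\sigma},a,l,r)$ are then automatic, because the constraints are by definition those of $C$ and equality of morphisms in $\Rep\sigma$ is equality of the underlying morphisms in $C$. The main obstacle is the object part of Step 1: the L-operator equation for $L_V\boxtimes L_W$ is a very long composition of associativity constraints, and the bookkeeping of parentheses needed to recognise inside it the two instances of \eqref{catlop:eq:RLL=LLR} for $L_V$ and $L_W$ is essentially all of the work. I would carry this out as a single commutative diagram in $C$, identifying two hexagonal subdiagrams corresponding to the L-operator equations for $L_V$ and $L_W$ and filling the remainder with naturality squares for $a$.
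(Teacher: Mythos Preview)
Your proposal is correct and follows essentially the same route as the paper, which also reduces everything to coherence and naturality in $C$ and declares most of it ``straightforward.'' The one point the paper singles out explicitly is that the identities $l_{V\otimes W}\circ a_{I_C,V,W}=l_V\otimes\id_W$ and $(\id_V\otimes r_W)\circ a_{V,W,I_C}=r_{V\otimes W}$ from \cite[Lemma~XI.2.2]{kassel} are what make \eqref{catlop:eq:RLL=LLR} hold for $L_{I_C}$ and \eqref{catlop:eq:morphism} hold for $l_{L_V},r_{L_V}$; your phrase ``triangle axiom and naturality'' is morally right but these derived identities are the precise tool you need there.
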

\begin{proof}
For any pair $(V, W)$ of objects of $C$,
$l_{V\otimes W}\circ a_{I_C,V,W}=l_V\otimes
\id_W$
and
$\id_V\otimes r_W\circ a_{V,W,I_C}=
r_{V\otimes W}$
(See \cite[Lemma XI.2.2]{kassel}),
which imply
\eqref{catlop:eq:RLL=LLR}
for $L_{I_C}$
and \eqref{catlop:eq:morphism} for
$l_{L_V}$ and 
$r_{L_V}$.
The rest of the proof is straightforward.
\end{proof}

Let us introduce the notion of a Yang-Baxter operator
in $C$
(See \cite[pp.323]{kassel}),
which produces an object of $\Rep \sigma$.
\begin{definition}\label{catlop:definition:YBop}
An automorphism $\sigma$ on $X_C\otimes X_C$
is a Yang-Baxter operator on $X_C$, iff
$\sigma$ satisfies
\[
(\id\barotimes\sigma)a(\sigma\barotimes\id)a^{-1}
(\id\barotimes\sigma)a
=
a(\sigma\barotimes\id)a^{-1}(\id\barotimes\sigma)
a(\sigma\barotimes\id).
\]
Here, $a=a_{X_C,X_C,X_C}$ and $\id=\id_{X_C}$.
\end{definition}
\begin{proposition}\label{catlop:proposition:equivYBop}
$(X_C, \sigma)\in\Ob(\Rep \sigma)$,
if and only if
$\sigma$ is a Yang-Baxter operator on $X_C$.
\end{proposition}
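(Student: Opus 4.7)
The plan is to observe that the proposition is essentially a direct specialization of the defining condition \eqref{catlop:eq:RLL=LLR} to the case $V = X_C$, $L_V = \sigma$. An object of $\Rep\sigma$ is by definition a pair consisting of an object $V$ of $C$ and an isomorphism $L_V\colon V\otimes X_C\to X_C\otimes V$ satisfying \eqref{catlop:eq:RLL=LLR}; on the other hand, a Yang-Baxter operator on $X_C$ (Definition \ref{catlop:definition:YBop}) is an automorphism of $X_C\otimes X_C$ satisfying the hexagon-like identity stated there. Both data are packaged into the same shape, so one should just check that these conditions coincide under the substitution.

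First I would specialize \eqref{catlop:eq:RLL=LLR} by taking $V = X_C$ and $L_V = \sigma$. All the associativity constraints that appear, namely $a_{X_C,X_C,V}$, $a_{X_C,V,X_C}$, and $a_{V,X_C,X_C}$, collapse to the single morphism $a_{X_C,X_C,X_C}$, and similarly $\id_V = \id_{X_C}$. After this substitution, the left-hand side becomes
\[
a\circ(\sigma\otimes\id)\circ a^{-1}\circ(\id\otimes\sigma)\circ a\circ(\sigma\otimes\id),
\]
and the right-hand side becomes
\[
(\id\otimes\sigma)\circ a\circ(\sigma\otimes\id)\circ a^{-1}\circ(\id\otimes\sigma)\circ a,
\]
with $a = a_{X_C,X_C,X_C}$ and $\id = \id_{X_C}$, which is precisely the Yang-Baxter equation of Definition \ref{catlop:definition:YBop}.

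For the ``if'' direction, if $\sigma$ is a Yang-Baxter operator, then by definition it is an automorphism of $X_C\otimes X_C$, so in particular an isomorphism $X_C\otimes X_C\to X_C\otimes X_C$, and the identity it satisfies is exactly what is needed to conclude that $(X_C, \sigma)$ satisfies \eqref{catlop:eq:RLL=LLR}; hence it is an object of $\Rep\sigma$. Conversely, if $(X_C,\sigma)\in\Ob(\Rep\sigma)$, then by definition $\sigma$ is an isomorphism and satisfies \eqref{catlop:eq:RLL=LLR}, which under the above specialization gives the Yang-Baxter identity, so $\sigma$ is a Yang-Baxter operator.

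There is essentially no obstacle beyond bookkeeping: the only thing to be careful about is that the three associativity constraints in \eqref{catlop:eq:RLL=LLR} genuinely reduce to the same $a_{X_C,X_C,X_C}$ once $V$ is replaced by $X_C$, and that ``isomorphism'' and ``automorphism'' match in this case since the source and target of $\sigma$ are the same object $X_C\otimes X_C$. No use of Proposition \ref{lop:theorem} or any further structure of $\Rep\sigma$ is needed.
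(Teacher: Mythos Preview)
Your proposal is correct and is exactly the intended argument: the paper states this proposition without proof precisely because it is the direct specialization you describe, with \eqref{catlop:eq:RLL=LLR} for $(V,L_V)=(X_C,\sigma)$ coinciding with the Yang--Baxter identity of Definition~\ref{catlop:definition:YBop} once all three associators reduce to $a_{X_C,X_C,X_C}$.
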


We now
explain the category $\Rep R$.
Fix a nonempty set $H$.
Let $X$ be a nonempty set,
together with a map
$(\cdot): H\times X\ni (\lambda, x)\mapsto \lambda\cdot x\in H$.
It is clear that
\begin{proposition}\label{lop:proposition:X}
$X=(X, \cdot)\in \Ob(\VectH)$.
\end{proposition}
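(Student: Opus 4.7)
The plan is to observe that this proposition is purely a matter of unpacking definitions, so the ``proof'' amounts to checking the two data-level conditions spelled out in Section \ref{section:catvecth}. Recall that an object of $\VectH$ was defined to be a pair $(V, \cdot_V)$ consisting of a nonempty set $V$ together with a map $\cdot_V: H \times V \to H$, $(\lambda, v) \mapsto \lambda \cdot_V v$. No further axioms (associativity, unitality, bijectivity, etc.) are imposed at this stage of the paper.

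First I would verify the set-level condition: by hypothesis $X$ is a nonempty set, so this is immediate. Next I would verify the existence of the required structure map: we are explicitly given the map $(\cdot): H \times X \to H$, $(\lambda, x) \mapsto \lambda \cdot x$ as part of the data in the statement, so this furnishes $\cdot_X := (\cdot)$. Combining the two gives the pair $(X, \cdot)$, which matches the template of an object of $\VectH$ verbatim.

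There is really no obstacle here; the word ``clear'' in the statement is appropriate, and the argument can be compressed into a one-line remark that $(X, \cdot)$ satisfies the definition of $\Ob(\VectH)$ by inspection. The only thing worth flagging is a minor notational point: the paper systematically abbreviates $(V, \cdot_V)$ by $V$ and $\lambda \cdot_V v$ by $\lambda v$, so upon identifying $\cdot_X$ with the given $(\cdot)$, we obtain precisely the object denoted $X$ in subsequent sections, and the translation maps $\cdot x: H \to H$ referred to in the introduction coincide with the ones induced from this $(H, X)$-object structure.
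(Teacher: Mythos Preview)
Your proposal is correct and matches the paper's treatment: the paper simply prefaces the proposition with ``It is clear that'' and gives no further argument, since an object of $\VectH$ is by definition nothing more than a nonempty set equipped with a map $H\times V\to H$, and $(X,\cdot)$ is exactly such a pair. Your unpacking of the definition is precisely the content behind the word ``clear.''
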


Let $R(\lambda)$ $(\lambda\in H)$ be a bijective
dynamical Yang-Baxter map
(Definition \ref{intro:definition:dybm})
satisfying the weight zero condition
(cf.\ \cite{felder}).
\begin{equation}\label{catlop:eq:weightzero}
(\lambda\cdot v)\cdot u
=
(\lambda\cdot x)\cdot y,
\mbox{ if }
(u, v)=R(\lambda)(x, y)
\quad(\lambda\in H, x, y\in X).
\end{equation}
\begin{remark}\label{catlop:remark:weightzeroinvariance}
The weight zero condition
\eqref{catlop:eq:weightzero}
is a generalization of the invariance condition in
\cite[$(3.4)$]{shibukawa07}.
\end{remark}

This dynamical Yang-Baxter map 
$R(\lambda)$
produces a Yang-Baxter operator on $X$ in the tensor category
$\VectH$.
Let $\sigma_R$ denote the map from $H$ to
$\Hom_\C(\C X\barotimes X, \C X\barotimes X)$
defined by
\begin{equation}\label{catlop:eq:sigmar}
\sigma_R(\lambda)(x, y)=R(\lambda)(y, x)
\quad
(\lambda\in H, (x, y)\in X\barotimes X).
\end{equation}
\begin{proposition}\label{lop:proposition:sigmaR}
The above map $\sigma_R$
is a Yang-Baxter operator on $X$ in $\VectH$.
\end{proposition}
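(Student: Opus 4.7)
The plan is to verify the three conditions packed into Definition~\ref{catlop:definition:YBop}: that $\sigma_R(\lambda)$ is a morphism of $\VectH$, that it is invertible, and that it satisfies the braid relation.

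First I would check that $\sigma_R$ is a morphism $X \barotimes X \to X \barotimes X$ in $\VectH$. By the definition of the $H$-action on $X \barotimes X$, the condition \eqref{catvech:eq:defmorphism} amounts to the equality $(\lambda\cdot v)\cdot u = (\lambda\cdot y)\cdot x$ whenever $\sigma_R(\lambda)(x,y) = (u,v)$, i.e., whenever $(u,v) = R(\lambda)(y,x)$. This is precisely the weight-zero condition \eqref{catlop:eq:weightzero} applied to the input $(y,x)$. Bijectivity of $\sigma_R(\lambda)$ is immediate from bijectivity of $R(\lambda)$, and its inverse is again a morphism of $\VectH$ because the same weight-zero equation, read from the output side, shows that $R(\lambda)^{-1}$ preserves the relevant $H$-action. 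Hence $\sigma_R(\lambda)$ is an automorphism of $X \barotimes X$ in $\VectH$.

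Second, I would compute both sides of the braid-type identity in Definition~\ref{catlop:definition:YBop} on a generic triple $((x,y),z) \in (X\barotimes X)\barotimes X$. The associators $a_{X,X,X}^{\pm 1}$ act as the identity on the underlying triples (a direct check shows both bracketings carry the same action $\lambda\cdot((x,y),z) = ((\lambda\cdot z)\cdot y)\cdot x$), and the tensor-product rule $f\barotimes g(\lambda)(u,v) = f(\lambda\cdot v)(u)\otimes g(\lambda)(v)$ introduces the spectral-parameter shifts $\lambda \mapsto \lambda\cdot v$ that are the hallmark of the dynamical category. Unwinding $\sigma_R(\mu)(p,q) = R(\mu)(q,p)$ throughout, the three-fold compositions on the two sides of the braid relation correspond, factor by factor, to the three-fold compositions on the two sides of the QDYBE \eqref{introduction:eq:QDYBE}, with the indices $1,2,3$ corresponding to the three slots of $X\barotimes X\barotimes X$ read in the appropriate order.

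The main obstacle will be the bookkeeping of spectral-parameter shifts and of the re-indexing induced by the swap implicit in $\sigma_R$: each application of $\sigma_R\barotimes\id$ or $\id\barotimes\sigma_R$ both permutes two factors and replaces $\lambda$ by $\lambda\cdot(\text{second entry})$, so one must match these shifts with the shifts $\lambda\cdot X^{(i)}$ appearing in \eqref{introduction:eq:QDYBE}. The weight-zero condition intervenes a second time here: after an intermediate step the second entry changes from, say, $y$ to some $v$, but $\lambda\cdot v = \lambda\cdot y$, so the subsequent shift in the braid computation agrees with the shift $\lambda\cdot X^{(j)}$ prescribed by \eqref{introduction:eq:QDYBE}. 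Once this identification is made the two sides of the braid relation reduce term-by-term to the two sides of the QDYBE, completing the proof.
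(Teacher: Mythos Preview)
Your outline is correct and is essentially what the paper has in mind; the paper itself gives no proof of Proposition~\ref{lop:proposition:sigmaR}, leaving the verification to the reader, so your sketch supplies more detail than the paper does.

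One small correction: the weight-zero condition is \emph{not} needed a second time in the braid-relation check. If you set $(u_1,u_2,u_3)=(x_3,x_2,x_1)$ and trace through both sides, the shifts produced by the $\VectH$ tensor product match the shifts $\lambda\cdot X^{(i)}$ in \eqref{introduction:eq:QDYBE} on the nose, because the paper defines $R^{12}(\lambda\cdot X^{(3)})$, $R^{13}(\lambda\cdot X^{(2)})$, $R^{23}(\lambda\cdot X^{(1)})$ to shift by the \emph{current} value in the indicated slot, not the original one. So no appeal to $\lambda\cdot v=\lambda\cdot y$ is required; the braid relation for $\sigma_R$ is literally the QDYBE for $R$ rewritten under the index reversal. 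Weight zero enters only to make $\sigma_R$ (and its inverse) a morphism of $\VectH$, exactly as you say in your first step.
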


From Propositions \ref{lop:theorem}, \ref{catlop:proposition:equivYBop},
\ref{lop:proposition:X},
and
\ref{lop:proposition:sigmaR},
\begin{proposition}\label{catlop:prop:repR}
$\Rep R:=\Rep \sigma_R$
is a tensor category, together with an object
$(X, \sigma_R)$.
\end{proposition}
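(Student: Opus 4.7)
The proof is a direct assembly of the four cited propositions; no new verification is required at this stage. The plan is first to realize the tensor-categorical data required by the centralizer construction of Section \ref{section:catlop} inside the specific tensor category $\VectH$, and then to invoke the general result.

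Concretely, I would proceed as follows. Proposition \ref{lop:proposition:X} produces the object $X_C := X$ of $\VectH$, so the centralizer construction of Proposition \ref{lop:theorem} applies with $C = \VectH$. Proposition \ref{lop:proposition:sigmaR} asserts that $\sigma_R : X \barotimes X \to X \barotimes X$ is a Yang-Baxter operator in $\VectH$, which by Proposition \ref{catlop:proposition:equivYBop} is equivalent to $(X, \sigma_R) \in \Ob(\Rep \sigma_R)$. Proposition \ref{lop:theorem} then shows that $\Rep \sigma_R$ equipped with $\boxtimes$, $I_{\Rep \sigma_R}$, and the associativity/unit constraints inherited from $\VectH$ is a tensor category. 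Setting $\Rep R := \Rep \sigma_R$ yields both assertions of the proposition simultaneously.

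The only non-trivial step in this chain is the preceding Proposition \ref{lop:proposition:sigmaR}, i.e.\ the verification that the hexagon identity in $\VectH$ holds for $\sigma_R$. This is where I would expect the main obstacle to sit. Expanding the hexagon using the associator $a_{X,X,X}(\lambda)((u,v),w) = (u,(v,w))$ and the tensor product $\barotimes$ of $\VectH$, whose dynamical action is $\lambda \cdot (v,w) = (\lambda \cdot w) \cdot v$, produces precisely the shifted arguments $\lambda \cdot X^{(i)}$ appearing in the dynamical Yang-Baxter equation \eqref{introduction:eq:QDYBE}, after accounting for the fact that $\sigma_R$ is $R$ composed with the flip. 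The weight zero condition \eqref{catlop:eq:weightzero} is invoked to confirm that each side of the hexagon is a genuine morphism of $\VectH$, i.e.\ that it satisfies \eqref{catvech:eq:defmorphism}, so that the equality can be meaningfully compared inside $\Hom_{\VectH}(X \barotimes X \barotimes X, X \barotimes X \barotimes X)$.
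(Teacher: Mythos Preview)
Your proposal is correct and matches the paper's approach exactly: the paper states the proposition as an immediate consequence of Propositions \ref{lop:theorem}, \ref{catlop:proposition:equivYBop}, \ref{lop:proposition:X}, and \ref{lop:proposition:sigmaR}, with no additional argument. Your second paragraph, unpacking why $\sigma_R$ is a Yang-Baxter operator, is accurate commentary on the content of Proposition \ref{lop:proposition:sigmaR} but lies outside the proof of the present proposition, which takes that result as already established.
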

\begin{remark}\label{catlop:remark:nontrivial}
If $\sharp(X)\geq 2$, then
the objects $I_{\Rep R}$
and $(X, \sigma_R)$ are not isomorphic;
hence,
$\Rep R$ is non-trivial.
\end{remark}

In \cite[Section 6]{shibukawa05},
we constructed
bijective dynamical Yang-Baxter maps
satisfying the weight zero condition
in the case that $H=X$
(See Remark \ref{catlop:remark:weightzeroinvariance}).
The following is one more example,
which is useful in Section \ref{section:ar}.

Let $Q_5$ denote the set
$\{ 0, 1, 2, 3, 4\}$
of 5 elements with
the binary operation $(\cdot)$ given by
Table \ref{catlop:table:Q5}.
\begin{table}
\centering
\begin{tabular}{c|ccccc}
& 0 & 1 & 2 & 3 & 4 \\
\hline
0 & 4 & 3 & 2 & 1 & 0 \\
1 & 3 & 1 & 0 & 2 & 4 \\
2 & 0 & 2 & 3 & 4 & 1 \\
3 & 1 & 0 & 4 & 3 & 2 \\
4 & 2 & 4 & 1 & 0 & 3 \\
\end{tabular}
\caption{The binary operation $(\cdot)$ on $Q_5$\label{catlop:table:Q5}
}
\end{table}
Here, $0\cdot 2=2$.
Since each element of $Q_5$ appears once and only once
in each row and in each column of Table \ref{catlop:table:Q5},
this $Q_5$ is a quasigroup
\cite[Definition I.1.1 and Theorem I.1.3]{pflugfelder}.
\begin{definition}\label{catlop:definition:quasigroup}
A nonempty set $Q$ with a binary operation $(\cdot)$
is a quasigroup,
iff
the translation maps $a\cdot$
and $\cdot a$ on $Q$ are bijective for all $a\in Q$\/$:$
\begin{equation}\label{catlop:eq:defquasigroup}
a\cdot: Q\ni x\mapsto a\cdot x\in Q;\ 
\cdot a: Q\ni x\mapsto x\cdot a\in Q.
\end{equation}
\end{definition}

Let $\mu$ denote the ternary operation on $\Z/5\Z$
defined by
\[
\mu(a, b, c)=a-b+c
\quad(a, b, c\in \Z/5\Z).
\]
The isomorphism $\pi: Q_5\ni a\mapsto \bar{a}\in\Z/5\Z$,
together with
the quasigroup $Q_5$ and
the ternary system $(\Z/5\Z, \mu)$,
gives birth to a bijective dynamical Yang-Baxter map
$R^{Q_5}(\lambda)$
associated with $Q_5$, $Q_5$, and $(\cdot)$
satisfying the weight zero condition
\cite[Section 3]{shibukawa07}.
In fact,
the dynamical Yang-Baxter map
$R^{Q_5}(\lambda)$ is defined as follows:
for $\lambda, a, b\in Q_5$,
\begin{eqnarray*}
&&R^{Q_5}(\lambda)(a, b)=(\eta_\lambda(b)(a), \xi_\lambda(a)(b)),
\\
&&
\xi_\lambda(a)(b)=\lambda\setminus\pi^{-1}(\mu(\pi(\lambda),
\pi(\lambda\cdot a),
\pi((\lambda\cdot a)\cdot b))),
\\
&&
\eta_\lambda(b)(a)=(\lambda\cdot\xi_\lambda(a)(b))
\setminus((\lambda\cdot a)\cdot b).
\end{eqnarray*}
Here we denote by
$a\setminus: Q_5\ni b\mapsto a\setminus b\in Q_5$
$(a\in Q_5)$
the inverse of the translation map $a\cdot$
\eqref{catlop:eq:defquasigroup}
on $Q_5$.
Moreover, $R^{Q_5}(\lambda)$ satisfies the unitary condition
\cite[Section 7]{shibukawa07}
and really depends on the parameter $\lambda$:
\begin{equation}\label{catlop:eq:RQ501243}
R^{Q_5}(0)(1, 2)=(4, 3)
\mbox{ and }
R^{Q_5}(1)(1, 2)=(4, 2).
\end{equation}
\section{$(H, X)$-algebras}
\label{section:hxalg}
Let $H$ and $X$ be nonempty sets, together
with a map
$(\cdot): H\times X\ni(\lambda, x)\mapsto \lambda\cdot x\in H$.
In order to define $(H, X)$-bialgebroids,
this section is devoted to introducing $(H, X)$-algebras
(cf.\ $\mathfrak h$-algebras \cite{etingof1998,koelink2007}),
which form a category $\Alg_{(H, X)}$.

From now on we make the following assumption.
For any $x\in X$,
the translation map
\begin{equation}\label{hxalg:eq:assumebijec}
\cdot x: H\ni\lambda\mapsto\lambda\cdot x\in H
\end{equation}
is bijective.
For example,
every quasigroup $Q$
(Definition \ref{catlop:definition:quasigroup})
satisfies the assumption in the case that $H=X=Q$.

This assumption produces a group $W_X^H$
acting faithfully on the set $H$ from the right.
By the assumption,
the translation map
$\cdot x$
\eqref{hxalg:eq:assumebijec}
is an element of
the group $Aut^{a}(H)$ of all bijections on the set $H$, together
with the product $fg=g\circ f$
$(f, g\in Aut^{a}(H))$;
hence, there exists a unique homomorphism $F$
from the free
group $W_X$ on the set $X$ to $Aut^{a}(H)$
such that $F(x)(\lambda)=\lambda\cdot x$
for $\lambda\in H$ and $x\in X$
(This symbol $F$ is a temporary notation).
Let $\lambda\in H$ and $\alpha\in W_X$.
We denote by $\lambda\cdot \alpha$
the element $F(\alpha)(\lambda)\in H$.
This $(\cdot)$ is a right action of $W_X$ on $H$,
and consequently the factor group $W_X^H$ of $W_X$
by the kernel $\{ \beta\in W_X;
\lambda\cdot \beta
=\lambda \ (\forall \lambda\in H)\}$ acts faithfully
on $H$.
For simplicity of notation,
we write the induced right action of $W_X^H$
as $\lambda \alpha$ $(\lambda\in H, \alpha\in W_X^H)$.
\begin{proposition}\label{hxalg:proposition:wxh}
$W_X^H=(W_X^H, \cdot)$ is an object of $\VectH$.
\end{proposition}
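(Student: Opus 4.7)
The statement asserts only that the pair $(W_X^H, \cdot)$ fits the definition of an object of $\VectH$ given in Section~\ref{section:catvecth}, so the plan is to check the two defining conditions: that $W_X^H$ is a nonempty set, and that there is a well-defined map $\cdot: H\times W_X^H\to H$.

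Nonemptiness is immediate because $W_X^H$ is a (factor) group and hence contains the identity element. For the map, the preceding paragraph already constructs a right action of the free group $W_X$ on $H$ via the unique homomorphism $F: W_X\to \mathrm{Aut}^{a}(H)$ extending $x\mapsto (\cdot x)$, which exists by the universal property of the free group together with the bijectivity assumption~\eqref{hxalg:eq:assumebijec}. The only point requiring actual verification is that this right action descends to a well-defined map $H\times W_X^H\to H$, i.e. that if $\alpha,\alpha'\in W_X$ represent the same element of $W_X^H=W_X/K$ with $K=\{\beta\in W_X:\lambda\cdot\beta=\lambda\ (\forall\lambda\in H)\}$, then $\lambda\cdot\alpha=\lambda\cdot\alpha'$ for every $\lambda\in H$. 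Writing $\alpha'=\alpha\beta$ with $\beta\in K$, this follows from the identity $\lambda\cdot(\alpha\beta)=(\lambda\cdot\alpha)\cdot\beta=\lambda\cdot\alpha$, where the first equality uses that $(\cdot)$ is a right action of $W_X$ on $H$ and the second uses the defining property of $K$.

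Once this well-definedness is in hand, we simply observe that the pair $(W_X^H,\cdot)$ matches the data specified in the definition of an object of $\VectH$, and the proposition follows. No obstacles are expected; the statement is essentially a packaging of the construction carried out in the paragraph preceding it.
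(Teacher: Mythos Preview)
Your proposal is correct and matches the paper's approach: the paper states this proposition without proof, treating it as immediate from the construction in the preceding paragraph, and your argument simply spells out the routine verification that the induced right action of $W_X^H$ on $H$ is well defined and that $W_X^H$ is nonempty.
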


Let $M_H$ denote the $\C$-algebra
of all $\C$-valued
functions on $H$.
\begin{definition}\label{hxalg:definition:hxalg}
A unital 
$\C$-algebra $A$,
together with a set
$\{ A_{\alpha, \beta}; \alpha, \beta\in W_X^H\}$
consisting of $\C$-subspaces
of $A$
and $\C$-algebra homomorphisms
$\mu_l^A, \mu_r^A: M_H\to A$,
is called an $(H, X)$-algebra, iff\/$:$
\begin{enumerate}
\item
$A=\sum_{\alpha, \beta\in W_X^H}A_{\alpha, \beta}$\/$;$
\item
$A_{\alpha, \beta}A_{\gamma, \delta}\subset
A_{\alpha\gamma, \beta\delta}$
$(\forall\alpha, \beta, \gamma, \delta\in W_X^H)$\/$;$
\item
$1_A\in A_{1, 1}$\/$;$
\item
$\mu_l^A(f), \mu_r^A(f)\in A_{1, 1}$
for any $f\in M_H$,
and
\begin{equation}
\label{hxalg:eq:commutemulmur}
\mu_l^{A}(f)\mu_r^{A}(g)=\mu_r^{A}(g)\mu_l^{A}(f)
\quad(\forall f, g\in M_H);
\end{equation}
\item
if $a\in A_{\alpha, \beta}$, then
\begin{equation}\label{hxalg:eq:bialgebroid}
a\mu_l^{A}(f)=\mu_l^{A}(T_\alpha(f))a,
\ \mbox{and}\ 
a\mu_r^{A}(f)=\mu_r^{A}(T_\beta(f))a
\quad(\forall f\in M_H).
\end{equation}
\end{enumerate}
Here, $1$ is the unit element
of the factor group $W_X^H$.
$T_\alpha(f)\in M_H$ is
defined by 
\begin{equation}\label{hxalg:eq:Ta}
T_\alpha(f)(\lambda)=f(\lambda\alpha)
\quad(\lambda\in H).
\end{equation}
\end{definition}

In this paper, $\C$-algebra homomorphisms always respect the unit element.
We note that the sum in $(1)$ is not always a direct sum
(See the end of Section \ref{section:ar}).
\begin{definition}
\label{hxlag:proposition:hxalghom}
The map $\varphi: A\to B$ is an $(H, X)$-algebra homomorphism,
iff
$\varphi$ is a $\C$-algebra homomorphism
such that
\[
\varphi(A_{\alpha,\beta})\subset B_{\alpha,\beta},
\ 
\varphi\circ\mu_l^{A}=\mu_l^{B},
\ 
\mbox{and }
\varphi\circ\mu_r^{A}=\mu_r^{B}.
\]
\end{definition}

We will denote by $\Ob(\Alg_{(H, X)})$ and $\Hom(\Alg_{(H, X)})$
the classes of all $(H, X)$-algebras and of
all $(H, X)$-algebra homomorphisms, respectively.
\begin{proposition}
$\Alg_{(H, X)}$
is a category.
\end{proposition}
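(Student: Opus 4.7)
The plan is to verify the three standard axioms for a category (well-defined composition, associativity of composition, and identity laws) in the data $(\Ob(\Alg_{(H,X)}), \Hom(\Alg_{(H,X)}))$, using as composition the ordinary set-theoretic composition of underlying maps and as the identity on $A$ the identity map $\id_A: A \to A$. Most of this reduces to invoking the fact that $\C$-algebras with $\C$-algebra homomorphisms already form a category, so the only thing that really needs verification is that the extra $(H, X)$-structure (the family $\{A_{\alpha, \beta}\}$ together with the moment maps $\mu_l^A, \mu_r^A$) is preserved under composition and that the identity respects this structure.

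First I would check that $\id_A$ is an $(H,X)$-algebra homomorphism: it is a $\C$-algebra homomorphism, $\id_A(A_{\alpha,\beta}) = A_{\alpha,\beta} \subset A_{\alpha,\beta}$, and trivially $\id_A \circ \mu_l^A = \mu_l^A$ and $\id_A \circ \mu_r^A = \mu_r^A$, so the defining conditions of Definition \ref{hxlag:proposition:hxalghom} are satisfied. Second, given $\varphi: A \to B$ and $\psi: B \to C$ in $\Hom(\Alg_{(H,X)})$, I would show that $\psi \circ \varphi$ is again an $(H,X)$-algebra homomorphism. Compositions of $\C$-algebra homomorphisms are $\C$-algebra homomorphisms, and by the two homomorphism conditions
\[
(\psi\circ\varphi)(A_{\alpha,\beta}) = \psi(\varphi(A_{\alpha,\beta})) \subset \psi(B_{\alpha,\beta}) \subset C_{\alpha,\beta},
\]
while $(\psi\circ\varphi)\circ\mu_l^A = \psi\circ(\varphi\circ\mu_l^A) = \psi\circ\mu_l^B = \mu_l^C$, and similarly on the right, so the structure is preserved.

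Associativity of $\circ$ and the unit laws $\varphi\circ\id_A = \varphi = \id_B\circ\varphi$ are immediate from the corresponding identities for composition of maps of sets, hence in particular hold in the category of $\C$-algebras and so \emph{a fortiori} at the level of $(H,X)$-algebra homomorphisms, since the composition is the same operation. Finally, source and target of a composed morphism are the expected ones.

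There is no real obstacle here: the definition of an $(H, X)$-algebra homomorphism is formulated so that each of its required properties (being a $\C$-algebra map, sending $A_{\alpha,\beta}$ into $B_{\alpha,\beta}$, commuting with $\mu_l$ and $\mu_r$) is manifestly stable under composition and is satisfied by the identity. The only mild point to be careful about is the grading condition, which is preserved because inclusions of subspaces push forward under any map; everything else is formal.
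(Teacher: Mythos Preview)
Your proposal is correct and is exactly the routine verification one expects here; the paper itself states this proposition without proof, treating it as immediate from the definitions, so your argument is simply the natural elaboration of what the paper leaves implicit.
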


We give a construction which assigns an $(H, X)$-algebra $A_{W_X^H}$
to any unital $\C$-algebra $A$,
together with
$\C$-algebra homomorphisms
$\mu_l^A, \mu_r^A: M_H\to A$
satisfying \eqref{hxalg:eq:commutemulmur}.
Let $(A_{W_X^H})_{\alpha,\beta}$
$(\alpha, \beta\in W_X^H)$
denote the set of all elements $a$ of $A$ satisfying 
\eqref{hxalg:eq:bialgebroid}.
This is a $\C$-subspace of $A$,
and
$A_{W_X^H}:=\sum_{\alpha,\beta\in W_X^H}
(A_{W_X^H})_{\alpha,\beta}$
with
$\mu_l^A$ and $\mu_r^A$
is an $(H, X)$-algebra.

The unital $\C$-algebra
$E(V):=\End_{\C}(\Map(H, \C V))$
$(V\in\Ob(\VectH)$
has the following $\C$-algebra homomorphisms
$\mu_l^{E(V)}, \mu_r^{E(V)}: M_H\to E(V)$:
\begin{eqnarray}
&&\label{hxalg:eq:muleV}
\mu_l^{E(V)}(f)(g)(\lambda)=\sum_{v\in V}
vf(\lambda v)g(\lambda)_v;
\\
&&\label{hxalg:eq:mureV}
\mu_r^{E(V)}(f)(g)(\lambda)=f(\lambda)g(\lambda)
\\
&&\nonumber
\qquad\qquad
(\lambda\in H, f\in M_H, g\in\Map(H, \C V)).
\end{eqnarray}
For $g(\lambda)_v\in\C$, see \eqref{intro:equation:coeff}.
Because these homomorphisms satisfy
\eqref{hxalg:eq:commutemulmur},
we obtain an $(H, X)$-algebra $E(V)_{W_X^H}$.

To end this section, we assign to any object $V$
of the category $\VectH$
an object $D_{H, X}(V)$
of $\Alg_{(H, X)}$,
which produces a dynamical representation in Section
\ref{section:dynrep}.

Let $\alpha$ and $\beta$ be elements of the group $W_X^H$.
We define the $\C$-linear map $\Gamma_{\alpha,\beta}^V:
\Hom_{\VectH}(V\barotimes \{\beta\}, \{\alpha\}\barotimes V)
\to\End_\C(\Map(H, \C V))$
(See Proposition \ref{vecth:proposition:morphismsum})
by 
\begin{eqnarray*}
&&\Gamma_{\alpha,\beta}^V(u)(g)(\lambda)
=
\sum_{v\in V}v\sum_{w\in V}
u(\lambda)_{(\alpha,v)(w,\beta)}
g(\lambda\beta)_{w}
\\
&&
\quad(u\in\Hom_{\VectH}
(V\barotimes \{\beta\}, \{\alpha\}\barotimes V),
g\in\Map(H, \C V),
\lambda\in H).
\end{eqnarray*}
For $\{\alpha\}$
and
$u(\lambda)_{(\alpha,v)(w,\beta)}$,
see Propositions \ref{vecth:proposition:subobject},
\ref{hxalg:proposition:wxh},
and
\eqref{intro:equation:matrix},
respectively.
\begin{proposition}\label{hxalg:proposition:unique}
$\Gamma_{\alpha,\beta}^V$ is injective.
\end{proposition}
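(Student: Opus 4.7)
The plan is to establish injectivity directly by exhibiting, for each $\lambda_0 \in H$ and $w_0 \in V$, a test function $g \in \Map(H, \C V)$ that isolates the column of $u(\lambda_0)$ indexed by $(w_0,\beta)$. The key observation is that $\Map(H, \C V)$ has no constraints whatsoever — it is the full set of maps — so we have complete freedom in specifying $g$ pointwise.

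First I would unpack the definition. For $u \in \Hom_{\VectH}(V\barotimes\{\beta\}, \{\alpha\}\barotimes V)$ and fixed $\lambda$ and $w$, only finitely many $v \in V$ give nonzero matrix coefficients $u(\lambda)_{(\alpha,v)(w,\beta)}$, because $u(\lambda)$ is a $\C$-linear map into the free $\C$-vector space on $\{\alpha\}\barotimes V$, so each column $u(\lambda)(w,\beta)$ is a finite linear combination of basis vectors. Now suppose $\Gamma_{\alpha,\beta}^V(u) = 0$, fix $\lambda_0 \in H$ and $w_0 \in V$, and take $g_{w_0} \in \Map(H, \C V)$ to be the constant map $\mu \mapsto w_0$, so that $g_{w_0}(\mu)_{w} = \delta_{w,w_0}$ for every $\mu \in H$. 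Evaluating at $\lambda_0$ yields
\[
0 = \Gamma_{\alpha,\beta}^V(u)(g_{w_0})(\lambda_0) = \sum_{v\in V} v\, u(\lambda_0)_{(\alpha,v)(w_0,\beta)},
\]
a finite sum in $\C V$. Since $\{v : v\in V\}$ is a basis of $\C V$, each coefficient $u(\lambda_0)_{(\alpha,v)(w_0,\beta)}$ must vanish.

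Because $\lambda_0$ and $w_0$ were arbitrary, every matrix entry of $u(\lambda)$ is zero, so $u(\lambda) = 0$ for every $\lambda \in H$, i.e.\ $u = 0$. I do not anticipate any real obstacle: the argument reduces to the standard fact that a $\C$-linear map is determined by its matrix, combined with the observation that $\Map(H, \C V)$ is large enough to isolate a single value at a single point of $H$. It is worth noting that neither the morphism condition \eqref{catvech:eq:defmorphism} imposed on $u$ nor the bijectivity assumption \eqref{hxalg:eq:assumebijec} plays any role in the proof of injectivity per se.
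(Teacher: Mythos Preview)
Your argument is correct: the constant test function $g_{w_0}$ isolates the column indexed by $(w_0,\beta)$ (note that $g_{w_0}(\lambda_0\beta)_w=\delta_{w,w_0}$, which is what is actually needed since the definition involves $g(\lambda\beta)$, and your choice of a constant function handles this automatically), and linear independence of the basis $V$ in $\C V$ then forces all matrix entries of $u(\lambda_0)$ to vanish. The paper states Proposition~\ref{hxalg:proposition:unique} without proof, treating it as routine; your direct verification is exactly the expected argument.
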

We write
$D_{H, X}(V)_{\alpha,\beta}$
for the image of the map $\Gamma_{\alpha,\beta}^V$.

Let $u\in
\Hom_{\VectH}(V\barotimes \{\beta\}, \{\alpha\}\barotimes V)$
and
$v\in\Hom_{\VectH}(V\barotimes \{\delta\}, \{\gamma\}\barotimes V)$
$(\alpha, \beta, \gamma, \delta\in W_X^H)$.
Write $u*_Vv$
for the following morphism of $\VectH$
whose source is $V\barotimes \{\beta\delta\}$
and whose target is $\{\alpha\gamma\}\barotimes V$.
\begin{eqnarray}\nonumber
u*_V v
&=&
\iota^{\{\gamma\}\barotimes\{\alpha\}}_{\{\alpha\gamma\}}
\barotimes
\id_V
\circ
a_{\{\gamma\}\{\alpha\}V}^{-1}
\circ
\id_{\{\gamma\}}\barotimes u
\circ
a_{\{\gamma\}V\{\beta\}}
\circ
\\\label{hxalg:equation:*V}
&&
\circ
v\barotimes\id_{\{\beta\}}
\circ
a_{V\{\delta\}\{\beta\}}^{-1}
\circ
\id_V\barotimes\iota_{\{\delta\}\barotimes\{\beta\}}^{\{\beta\delta\}}.
\end{eqnarray}
For
$\iota^{\{\gamma\}\barotimes\{\alpha\}}_{\{\alpha\gamma\}}$,
see \eqref{catvecth:equation:onepoint}.

The proof of Proposition \ref{hxalg:prop:product} is straightforward.
\begin{proposition}\label{hxalg:prop:product}
$\Gamma_{\alpha,\beta}^V(u)\Gamma_{\gamma,\delta}^V(v)
=
\Gamma_{\alpha\gamma,\beta\delta}^V(u*_Vv)
$.
\end{proposition}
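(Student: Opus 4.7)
The plan is to verify the identity by direct computation: unfold both sides using the definition of $\Gamma^V_{\alpha,\beta}$ and reduce the problem to a matrix equality, then obtain that matrix equality by tracing the definition \eqref{hxalg:equation:*V} of $u*_V v$ step by step on a basis vector.

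First I would compute the left-hand side. Setting $h=\Gamma^V_{\gamma,\delta}(v)(g)$, we have $h(\mu)_{v'}=\sum_{w\in V} v(\mu)_{(\gamma,v')(w,\delta)}g(\mu\delta)_w$, so applying $\Gamma^V_{\alpha,\beta}(u)$ and swapping summation order gives
\[
(\Gamma^V_{\alpha,\beta}(u)\Gamma^V_{\gamma,\delta}(v))(g)(\lambda)
=\sum_{v\in V} v\sum_{w\in V}\Bigl(\sum_{v'\in V} u(\lambda)_{(\alpha,v)(v',\beta)}\,v(\lambda\beta)_{(\gamma,v')(w,\delta)}\Bigr)g(\lambda\beta\delta)_w .
\]
Comparing with the definition of $\Gamma^V_{\alpha\gamma,\beta\delta}$, it then suffices to prove the matrix identity
\[
(u*_V v)(\lambda)_{(\alpha\gamma,v)(w,\beta\delta)}
=\sum_{v'\in V} u(\lambda)_{(\alpha,v)(v',\beta)}\,v(\lambda\beta)_{(\gamma,v')(w,\delta)} .
\]

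To verify this I would feed the basis vector $(w,\beta\delta)$ through the six-step composition defining $u*_V v$, using the tensor-product formula $(f\barotimes g)(\mu)(x,y)=f(\mu\cdot_{s(g)}y)(x)\otimes g(\mu)(y)$ from Section \ref{section:catvecth}. The map $\id_V\barotimes\iota^{\{\beta\delta\}}_{\{\delta\}\barotimes\{\beta\}}$ sends $(w,\beta\delta)\mapsto(w,(\delta,\beta))$, the associator $a^{-1}$ rebrackets it as $((w,\delta),\beta)$, and the morphism $v\barotimes\id_{\{\beta\}}$, evaluated at $\lambda$, contributes $v(\lambda\cdot_{\{\beta\}}\beta)(w,\delta)\otimes\beta=v(\lambda\beta)(w,\delta)\otimes\beta$. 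Expanding $v(\lambda\beta)(w,\delta)=\sum_{v'}(\gamma,v')\,v(\lambda\beta)_{(\gamma,v')(w,\delta)}$, applying $a$, then $\id_{\{\gamma\}}\barotimes u$ (evaluated at $\lambda$, since its $\id_{\{\gamma\}}$-factor just returns $\gamma$), and finally $a^{-1}$ followed by $\iota^{\{\gamma\}\barotimes\{\alpha\}}_{\{\alpha\gamma\}}\barotimes\id_V$, produces exactly $\sum_{v,v'}(\alpha\gamma,v)\,u(\lambda)_{(\alpha,v)(v',\beta)}v(\lambda\beta)_{(\gamma,v')(w,\delta)}$, yielding the required matrix identity.

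The computation is in principle routine, but the main obstacle is bookkeeping of the parameter shifts through the associators and external tensor products, in particular the key shift at the $v\barotimes\id_{\{\beta\}}$ step where $v$ must be evaluated at $\lambda\beta$. This relies on the identification $\lambda\cdot_{\{\beta\}}\beta=\lambda\beta$ coming from Proposition \ref{hxalg:proposition:wxh} (viewing $\{\beta\}$ as a subobject of $W_X^H$) and on the fact that the singletons $\{\alpha\}$, $\{\gamma\}$ carry the restricted $W_X^H$-action, so that $\iota^{\{\gamma\}\barotimes\{\alpha\}}_{\{\alpha\gamma\}}$ is a morphism in $\VectH$ by \eqref{catvecth:equation:isomonepoint}. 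Once these identifications are checked, every intermediate map is either a rebracketing or acts as the identity on the $\{\cdot\}$-factors, so both sides collapse to the desired sum.
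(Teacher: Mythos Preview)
Your proposal is correct and is exactly the straightforward direct computation the paper has in mind; the paper itself omits the proof, remarking only that it is straightforward. The one cosmetic point is the overloaded symbol $v$ (used both for the morphism $v\in\Hom_{\VectH}(V\barotimes\{\delta\},\{\gamma\}\barotimes V)$ and for a running basis element of $V$), which you may want to disambiguate in the displayed formulas, but this is purely notational and does not affect the argument.
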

From this proposition,
$D_{H, X}(V)_{\alpha,\beta}D_{H, X}(V)_{\gamma,\delta}$
$\subset
D_{H, X}(V)_{\alpha\gamma,\beta\delta}$\/$;$
furthermore,
\begin{proposition}\label{exphxalg:proposition:DHXV}
$D_{H, X}(V):=\sum_{\alpha,\beta\in W_X^H}
D_{H, X}(V)_{\alpha,\beta}$ with
$\mu_l^{D_{H, X}(V)}:=\mu_l^{E(V)}$
$\eqref{hxalg:eq:muleV}$
and
$\mu_r^{D_{H, X}(V)}:=\mu_r^{E(V)}$
$\eqref{hxalg:eq:mureV}$
is an object of $\Alg_{(H, X)}$.
\end{proposition}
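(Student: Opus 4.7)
The plan is to verify the five axioms of Definition \ref{hxalg:definition:hxalg} for $A = D_{H,X}(V)$. Axiom (1) holds by the very definition as a sum, axiom (2) is Proposition \ref{hxalg:prop:product}, and the commutation relation \eqref{hxalg:eq:commutemulmur} appearing in (4) is immediate from \eqref{hxalg:eq:muleV} and \eqref{hxalg:eq:mureV}, since the scalar factors $f(\lambda v)$ and $g(\lambda)$ commute in $\C$. What remains is to exhibit preimages under $\Gamma^V_{1,1}$ for $1_{E(V)}$, $\mu_l^{E(V)}(f)$ and $\mu_r^{E(V)}(f)$ (axioms (3) and the rest of (4)), and to verify the bimodule relation \eqref{hxalg:eq:bialgebroid} (axiom (5)).

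For (3) and (4) I would take the three candidates
\[
u_0(\lambda)_{(1,v)(w,1)} = \delta_{vw},\quad
u_{l,f}(\lambda)_{(1,v)(w,1)} = \delta_{vw}\, f(\lambda v),\quad
u_{r,f}(\lambda)_{(1,v)(w,1)} = \delta_{vw}\, f(\lambda).
\]
Because $1 \in W_X^H$ acts trivially on $H$, the $H$-actions on $\{1\} \barotimes V$ and $V \barotimes \{1\}$ both send $(1,v)$ and $(w,1)$ to $\lambda v$ and $\lambda w$ respectively, so the diagonal support $v=w$ makes all three morphisms satisfy \eqref{catvech:eq:defmorphism}. A direct substitution in the definition of $\Gamma^V_{1,1}$ then shows that these three morphisms go to $1_{E(V)}$, $\mu_l^{E(V)}(f)$ and $\mu_r^{E(V)}(f)$, so all three elements lie in $D_{H,X}(V)_{1,1}$.

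Axiom (5) is the heart of the proof and reduces to a single observation about \eqref{catvech:eq:defmorphism}. Writing $a = \Gamma^V_{\alpha,\beta}(u)$ and unwinding the definitions of $\Gamma^V_{\alpha,\beta}$, \eqref{hxalg:eq:muleV}, and $T_\alpha$ from \eqref{hxalg:eq:Ta}, the identity $a\,\mu_l^{E(V)}(f) = \mu_l^{E(V)}(T_\alpha f)\,a$ collapses to the requirement that
\[
u(\lambda)_{(\alpha,v)(w,\beta)} \bigl(f(\lambda\beta w) - f(\lambda v \alpha)\bigr) = 0
\]
for every $\lambda, v, w$ and $f \in M_H$. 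But \eqref{catvech:eq:defmorphism} applied to $u$ forces the $H$-actions on $(\alpha,v) \in \{\alpha\} \barotimes V$ and on $(w,\beta) \in V \barotimes \{\beta\}$, which are respectively $\lambda v\alpha$ and $\lambda\beta w$, to coincide on the support of $u(\lambda)$, so the bracketed difference vanishes. The analogue for $\mu_r^{E(V)}$ is simpler, since the scalar $f(\lambda\beta)$ pulls out of the sum. This shows each $D_{H,X}(V)_{\alpha,\beta}$ lies inside $(E(V)_{W_X^H})_{\alpha,\beta}$ and completes the verification.

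The only real obstacle is the careful bookkeeping in axiom (5): one must correctly track how the one-point factors $\{\alpha\}$ and $\{\beta\}$ twist the $H$-action on the tensor, and recognize that the shifts $T_\alpha, T_\beta$ on $M_H$ encode exactly this twisting. Everything else is routine once the explicit formulas have been unwound.
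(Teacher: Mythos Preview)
Your proof is correct and follows the same approach the paper intends: the paper itself gives no detailed argument for Proposition~\ref{exphxalg:proposition:DHXV}, citing only Proposition~\ref{hxalg:prop:product} for axiom~(2) and leaving the remaining verifications implicit. You have correctly supplied those details, in particular the explicit preimages $u_0$, $u_{l,f}$, $u_{r,f}$ under $\Gamma^V_{1,1}$ and the key use of \eqref{catvech:eq:defmorphism} to kill the factor $f(\lambda\beta w)-f(\lambda v\alpha)$ in axiom~(5).
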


$D_{H, X}(V)_{\alpha,\delta}$
is relevant to
$D_{H, X}(V)_{\beta,\gamma}$
$(\alpha, \beta, \gamma, \delta\in W_X^H)$.
Let $f$ be an element of $M_H$.
If $\alpha, \beta\in W_X^H$
satisfy that $\lambda\alpha=\lambda\beta$
unless $f(\lambda)= 0$,
then
we can
define the morphism
$f_{\alpha,\beta}: \{\beta\}\to\{\alpha\}$
of $\VectH$
by
$f_{\alpha,\beta}(\lambda)(\beta)=f(\lambda)\alpha$
$(\lambda\in H)$.

Let $g$ be an element of $M_H$.
We assume that $\gamma, \delta(\in W_X^H)$
satisfy that
$\lambda\gamma=\lambda\delta$
unless
$g(\lambda)= 0$.
A simple computation shows 
\begin{proposition}\label{hxalg:prop:point}
For $u\in
\Hom_{\VectH}(V\barotimes\{\gamma\}, \{\beta\}\barotimes V)$,
\[
\Gamma_{\alpha,\delta}^{V}
(f_{\alpha,\beta}\barotimes\id_V\circ u\circ
\id_V\barotimes g_{\gamma,\delta})
=
\mu_l^{D_{H, X}(V)}(f)\mu_r^{D_{H, X}(V)}(g)
\Gamma_{\beta,\gamma}^V(u).
\]
\end{proposition}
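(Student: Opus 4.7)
The plan is to evaluate both sides of the asserted identity on an arbitrary $h\in\Map(H,\C V)$ at an arbitrary $\lambda\in H$ and compare coefficients in the basis $V$ of $\C V$.

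For the right-hand side, I would first unpack $\Gamma_{\beta,\gamma}^V(u)(h)$ by the defining formula, then use that $\mu_r^{E(V)}(g)$ acts as pointwise multiplication by $g(\lambda)$ and that $\mu_l^{E(V)}(f)$ multiplies the $v$-coefficient by $f(\lambda v)$ (formulas \eqref{hxalg:eq:muleV} and \eqref{hxalg:eq:mureV}). Composing gives, after a short calculation,
\[
\bigl(\mu_l^{E(V)}(f)\,\mu_r^{E(V)}(g)\,\Gamma_{\beta,\gamma}^V(u)\bigr)(h)(\lambda)
= \sum_{v,w\in V} v\, f(\lambda v)\, g(\lambda)\, u(\lambda)_{(\beta,v)(w,\gamma)}\, h(\lambda\gamma)_w.
\]

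For the left-hand side, set $\phi := f_{\alpha,\beta}\barotimes\id_V\circ u\circ\id_V\barotimes g_{\gamma,\delta}$ and compute $\phi(\lambda)(w,\delta)$ step by step. Using the formula $(F\barotimes G)(\lambda)(x,y)=F(\lambda\cdot y)(x)\otimes G(\lambda)(y)$ and the definition $g_{\gamma,\delta}(\lambda)(\delta)=g(\lambda)\gamma$, one finds $(\id_V\barotimes g_{\gamma,\delta})(\lambda)(w,\delta)=g(\lambda)(w,\gamma)$. Applying $u(\lambda)$ yields $g(\lambda)\sum_{v}(\beta,v)\,u(\lambda)_{(\beta,v)(w,\gamma)}$, and finally $(f_{\alpha,\beta}\barotimes\id_V)(\lambda)(\beta,v)=f(\lambda v)(\alpha,v)$. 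Reading off the matrix element one obtains
\[
\phi(\lambda)_{(\alpha,v)(w,\delta)} = f(\lambda v)\,g(\lambda)\,u(\lambda)_{(\beta,v)(w,\gamma)},
\]
and substituting into the definition of $\Gamma_{\alpha,\delta}^V$ gives
\[
\Gamma_{\alpha,\delta}^V(\phi)(h)(\lambda)
= \sum_{v,w\in V} v\, f(\lambda v)\, g(\lambda)\, u(\lambda)_{(\beta,v)(w,\gamma)}\, h(\lambda\delta)_w.
\]

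The two expressions differ only in $h(\lambda\delta)_w$ versus $h(\lambda\gamma)_w$, and this is exactly where the standing hypothesis on $g$ is used: since $\lambda\gamma=\lambda\delta$ whenever $g(\lambda)\neq 0$, every summand with a nonzero $g(\lambda)$ factor already satisfies $h(\lambda\delta)_w=h(\lambda\gamma)_w$, while the remaining summands vanish on both sides. The main obstacle is purely bookkeeping: threading the $\lambda$-dependence correctly through the tensor product of morphisms over the one-point objects $\{\alpha\},\{\beta\},\{\gamma\},\{\delta\}$ (whose $H$-action is the one inherited from $W_X^H$ via Proposition \ref{hxalg:proposition:wxh}), and pinpointing the hypothesis on $g$ as exactly the ingredient that matches the two sides.
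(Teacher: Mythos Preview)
Your proposal is correct and follows exactly the approach the paper intends: the paper merely says ``A simple computation shows'' the proposition, and your direct evaluation of both sides on an arbitrary $h\in\Map(H,\C V)$ at an arbitrary $\lambda\in H$, together with the observation that the hypothesis on $g$ reconciles $h(\lambda\delta)$ with $h(\lambda\gamma)$, is precisely that computation carried out in full.
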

\begin{remark}
What we have discussed and we will discuss in the following
holds by replacing the group $W_X^H$
with an arbitrary group $G$
acting on $H$ from the right.
In this case the notion in Definition \ref{hxalg:definition:hxalg}
will be called a $G$-algebra,
and they form a category $\Alg_G$.
Just as Proposition \ref{exphxalg:proposition:DHXV},
we can construct a $G$-algebra $D_G(V)$ for any object $V$ of
$\VectH$.
\end{remark}
\section{Properties of $\Alg_{(H, X)}$}
\label{section:exphxalg}
In this section,
we will proceed with the study of the category $\Alg_{(H, X)}$,
which is essential in defining $(H, X)$-bialgebroids.

Let us first explain an object
$I_{H, X}$ of $\Alg_{(H, X)}$,
which is relevant to the object
$D_{H, X}(I_{\VectH})$
(See Propositions \ref{prop:seth:sethtensor} and 
\ref{exphxalg:proposition:DHXV}).
For $f\in M_H$,
we define
$f^\star\in\End_\C(M_H)$
by
\begin{equation}\label{exphxalg:eq:fhat}
f^\star(g)(\lambda)=f(\lambda)g(\lambda)
\quad(g\in M_H, \lambda\in H).
\end{equation}
Let $(I_{H, X})_{\alpha,\beta}$
$(\alpha, \beta\in W_X^H)$
denote the following $\C$-subspace of
$\End_\C(M_H)$.
\[
(I_{H, X})_{\alpha,\beta}
=
\left\{
\begin{array}{ll}
\{ f^\star T_\alpha;
f\in M_H\},
&
\mbox{if }
\alpha=\beta,
\\
\{ 0\},
&
\mbox{otherwise}.
\end{array}
\right.
\]
For $T_\alpha$, see \eqref{hxalg:eq:Ta}.
We set $I_{H, X}=\sum_{\alpha,\beta\in W_X^H}
(I_{H, X})_{\alpha,\beta}$,
which is a $\C$-subalgebra of $\End_\C(M_H)$.
We define the maps
$\mu_l^{I_{H, X}}, \mu_r^{I_{H, X}}:
M_H\to(I_{H, X})_{1,1}$
by
\[
\mu_l^{I_{H, X}}(f)
=\mu_r^{I_{H, X}}(f)
=f^\star T_{1}.
\]
\begin{proposition}\label{exphxalg:proposition:IHX}
The $\C$-algebra $I_{H, X}$,
together with
$\{(I_{H, X})_{\alpha,\beta};
\alpha, \beta\in W_X^H\}$,
$\mu_l^{I_{H, X}}$,
and $\mu_r^{I_{H, X}}$,
is an object of $\Alg_{(H, X)}$
$($See Definition $\ref{hxalg:definition:hxalg})$.
\end{proposition}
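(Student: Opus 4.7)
My plan is to verify the five axioms of Definition \ref{hxalg:definition:hxalg} one at a time, with everything reduced to a single key computation of the composition law in $\End_\C(M_H)$ restricted to operators of the form $f^\star T_\alpha$.

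The central calculation I would do first is: for any $f, g \in M_H$, $\alpha, \beta \in W_X^H$, and $h \in M_H$, $\lambda \in H$,
\begin{equation*}
(f^\star T_\alpha)(g^\star T_\beta)(h)(\lambda)
= f(\lambda)\, g(\lambda\alpha)\, h(\lambda\alpha\beta)
= \bigl(f\cdot T_\alpha(g)\bigr)^\star T_{\alpha\beta}(h)(\lambda),
\end{equation*}
so that $(f^\star T_\alpha)(g^\star T_\beta) = \bigl(f\cdot T_\alpha(g)\bigr)^\star T_{\alpha\beta}$. This single identity gives most of the statement: it shows that $I_{H, X}$ is closed under multiplication (hence is indeed a $\C$-subalgebra of $\End_\C(M_H)$), and that $(I_{H, X})_{\alpha,\alpha}\,(I_{H, X})_{\gamma,\gamma}\subset (I_{H, X})_{\alpha\gamma,\alpha\gamma}$, confirming axiom (2); the only other case of axiom (2) is when one of the factors is zero, which is trivial. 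Axiom (1) holds by the very definition of $I_{H, X}$.

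For axiom (3), the unit of $\End_\C(M_H)$ is $\id_{M_H} = 1_{M_H}^\star T_1$ (where $1_{M_H}$ is the constant function $1$), which lies in $(I_{H, X})_{1,1}$. For axiom (4), by construction $\mu_l^{I_{H, X}}(f) = \mu_r^{I_{H, X}}(f) = f^\star T_1 \in (I_{H, X})_{1,1}$; both are $\C$-algebra homomorphisms because the key identity with $\alpha=\beta=1$ collapses to $(f^\star T_1)(g^\star T_1)=(fg)^\star T_1$, and they respect the unit; finally \eqref{hxalg:eq:commutemulmur} is immediate from commutativity of $M_H$.

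It remains to check axiom (5). For $a = f^\star T_\alpha \in (I_{H, X})_{\alpha,\alpha}$ and $g \in M_H$, the key identity gives
\begin{equation*}
a\,\mu_l^{I_{H, X}}(g) = (f^\star T_\alpha)(g^\star T_1) = \bigl(f\cdot T_\alpha(g)\bigr)^\star T_\alpha,
\end{equation*}
while
\begin{equation*}
\mu_l^{I_{H, X}}(T_\alpha(g))\,a = (T_\alpha(g))^\star T_1 \cdot f^\star T_\alpha = \bigl(T_\alpha(g)\cdot f\bigr)^\star T_\alpha,
\end{equation*}
and these agree by commutativity of $M_H$. The computation for $\mu_r^{I_{H, X}}$ is identical, using that $\beta=\alpha$ on the nonzero homogeneous components. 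I do not anticipate a genuine obstacle here: the only place requiring any care is keeping track of the twist by $T_\alpha$ in the key identity, but once that is in hand, every axiom drops out by inspection.
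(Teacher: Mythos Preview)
Your proof is correct: the key composition identity $(f^\star T_\alpha)(g^\star T_\beta)=(f\cdot T_\alpha(g))^\star T_{\alpha\beta}$ is right, and each axiom of Definition~\ref{hxalg:definition:hxalg} follows from it exactly as you outline. The paper itself states Proposition~\ref{exphxalg:proposition:IHX} without proof, so your direct verification is precisely the routine check the authors leave to the reader.
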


We can
construct a morphism
$\varphi_0: I_{H, X}\to D_{H, X}(I_{\VectH})$
of $\Alg_{(H, X)}$.
Define the $\C$-linear isomorphism
$\bar{\varphi}_0$
from $M_H$ to $\Map(H, \C I_{\VectH})$
by $\bar{\varphi}_0(f)(\lambda)=f(\lambda)e$
$(I_{\VectH}=\{ e\}, f\in M_H, \lambda\in H)$.
For $u\in\End_\C(M_H)$, we set
$\varphi_0(u)=\bar{\varphi}_0\circ u\circ(\bar{\varphi}_0)^{-1}$.
This $\C$-linear isomorphism 
$\varphi_0$
satisfies that
$\varphi_0(I_{H, X})\subset
D_{H, X}(I_{\VectH})$.
To simplify notation,
we continue to write
$\varphi_0$ for
$\varphi_0 |_{I_{H, X}}$.
\begin{proposition}\label{exphxalg:proposition:iota}
$\varphi_0: I_{H, X}\to D_{H, X}(I_{\VectH})\in\Hom(\Alg_{(H, X)})$.
\end{proposition}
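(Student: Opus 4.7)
The plan is to verify the three defining properties of an $(H,X)$-algebra homomorphism from Definition \ref{hxlag:proposition:hxalghom}: that $\varphi_0$ is a $\C$-algebra map, that it respects the bigrading, and that it intertwines the moment maps $\mu_l$ and $\mu_r$. The first is immediate, since by construction $\varphi_0$ is conjugation by the $\C$-linear isomorphism $\bar\varphi_0$, so it automatically preserves products and the unit; what has to be checked is that the image of $I_{H,X}$ lies inside $D_{H,X}(I_{\VectH})$, which is precisely the grading condition.

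For the grading compatibility, I note that the only nonzero homogeneous components of $I_{H,X}$ sit on the diagonal and are spanned by operators of the form $f^\star T_\alpha$ with $f\in M_H$ and $\alpha\in W_X^H$. First I would unfold the definition of $\varphi_0$ on such an element: since $\bar\varphi_0^{-1}(g)(\lambda)=g(\lambda)_e$ (where $I_{\VectH}=\{e\}$), a short direct computation gives
\[
\varphi_0(f^\star T_\alpha)(g)(\lambda)=f(\lambda)\,g(\lambda\alpha)_e\,e.
\]
Next I would exhibit a preimage of this operator under $\Gamma^{I_{\VectH}}_{\alpha,\alpha}$, namely the candidate morphism $u\in\Hom_{\VectH}(I_{\VectH}\barotimes\{\alpha\},\{\alpha\}\barotimes I_{\VectH})$ defined by $u(\lambda)(e,\alpha)=f(\lambda)(\alpha,e)$. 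The only nontrivial step is to confirm that $u$ is a legitimate morphism of $\VectH$, which, in view of \eqref{catvech:eq:defmorphism}, amounts to the identity $(\lambda\alpha)\cdot e=(\lambda\cdot e)\cdot\alpha=\lambda\alpha$ in $H$; this holds automatically because $H$ acts trivially on $I_{\VectH}$. The off-diagonal components $(I_{H,X})_{\alpha,\beta}$ with $\alpha\neq\beta$ are zero, so nothing further is needed there.

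Finally, for the moment maps, we have $\mu_l^{I_{H,X}}(f)=\mu_r^{I_{H,X}}(f)=f^\star T_1$. Specializing the formula above at $\alpha=1$ yields $\varphi_0(f^\star T_1)(g)(\lambda)=f(\lambda)g(\lambda)_e\,e$, and this matches both $\mu_l^{E(I_{\VectH})}(f)$ and $\mu_r^{E(I_{\VectH})}(f)$ from \eqref{hxalg:eq:muleV} and \eqref{hxalg:eq:mureV} once one observes that for $V=I_{\VectH}$ the sum over $V$ collapses to a single term and $\lambda\cdot e=\lambda$; so both identities are settled simultaneously. The only step with any content is the bookkeeping of the preimage $u$ under $\Gamma^{I_{\VectH}}_{\alpha,\alpha}$, but this is entirely trivialized by $I_{\VectH}$ being a one-point set with trivial $H$-action.
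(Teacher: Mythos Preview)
Your proof is correct and is exactly the direct verification that the paper leaves to the reader (the paper states the proposition without proof, having only asserted beforehand that $\varphi_0(I_{H,X})\subset D_{H,X}(I_{\VectH})$). Your explicit identification of $\varphi_0(f^\star T_\alpha)$ as $\Gamma^{I_{\VectH}}_{\alpha,\alpha}(u)$ with $u(\lambda)(e,\alpha)=f(\lambda)(\alpha,e)$, together with the check of the moment maps at $\alpha=1$, is precisely what is needed.
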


Let $A$ and $B$ be objects of $\Alg_{(H, X)}$.
The next task is to
define an object
$A\tildeotimes B$ of $\Alg_{(H, X)}$,
called a
matrix tensor product
(cf.~\cite[Section 4]{etingof1998}),
which produces a functor
$\tildeotimes: \Alg_{(H, X)}\times\Alg_{(H, X)}\to\Alg_{(H, X)}$.

Write
$I_2=I_2(A, B)$
for the right ideal of $A\otimes_\C B$
generated by the elements 
\begin{equation}\label{exp:eq:I2}
\{\mu_r^{A}(f)\otimes 1_B
-1_A\otimes\mu_l^{B}(f);
f\in M_H\}.
\end{equation}
Let $\alpha$ and $\beta$ be elements of the group
$W_X^{H}$.
We denote by $(A\tildeotimes B)_{\alpha,\beta}$
the $\C$-subspace of $A\otimes_\C B/I_2$
generated by
$
\{ \xi_1\otimes\xi_2+I_2;
\xi_1\in A_{\alpha,\gamma},
\xi_2\in B_{\gamma,\beta}
\ (\exists\gamma\in W_X^H)\}$.
We set
$A\tildeotimes B=
\sum_{\alpha, \beta\in W_X^{H}}
(A\tildeotimes B)_{\alpha,\beta}
(\subset (A\otimes_\C B)/I_2)$.

For $x=\xi_x+I_2,
y=\xi_y+I_2\in A\tildeotimes B$,
define a product on $A\tildeotimes B$
by $xy=\xi_x\xi_y+I_2$.
Although $I_2$ is a right ideal,
this definition of the product makes sense.

The maps $\mu_l^{A\tildeotimes B},
\mu_r^{A\tildeotimes B}: M_H\to
(A\tildeotimes B)_{1,1}$ are given as follows.
For $f\in M_H$,
$\mu_l^{A\tildeotimes B}(f)
=\mu_l^{A}(f)\otimes 1_B+I_2$,
and
$\mu_r^{A\tildeotimes B}(f)
=1_A\otimes\mu_r^{B}(f)+I_2$.
By taking account of
\eqref{hxalg:eq:commutemulmur}
and \eqref{exp:eq:I2},
these maps are 
$\C$-algebra 
homomorphisms from $M_H$ to $A\tildeotimes B$.
\begin{proposition}\label{exphxalg:proposition:AtildeotimesB}
$A\tildeotimes B$ with
$\{ (A\tildeotimes B)_{\alpha,\beta}; 
\alpha, \beta\in W_X^H\}$,
$\mu_l^{A\tildeotimes B}$,
$\mu_r^{A\tildeotimes B}$,
and the unit $1_{A\tildeotimes B}=1_A\otimes 1_B+I_2$
is an object of $\Alg_{(H, X)}$.
\end{proposition}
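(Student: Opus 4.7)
The plan is to verify the five defining conditions of Definition \ref{hxalg:definition:hxalg} for $A\tildeotimes B$. Conditions (1), (3), and (4) are essentially built into the construction: the grading sum is how $A\tildeotimes B$ is defined, and since $1_A\in A_{1,1}$, $1_B\in B_{1,1}$, $\mu_l^A(f)\in A_{1,1}$, $\mu_r^B(f)\in B_{1,1}$, the images $1_A\otimes 1_B+I_2$, $\mu_l^A(f)\otimes 1_B+I_2$, $1_A\otimes\mu_r^B(f)+I_2$ all lie in $(A\tildeotimes B)_{1,1}$. The commutation \eqref{hxalg:eq:commutemulmur} for $A\tildeotimes B$ follows because $\mu_l^{A\tildeotimes B}(f)\mu_r^{A\tildeotimes B}(g)\equiv\mu_l^A(f)\otimes\mu_r^B(g)\pmod{I_2}$, which is symmetric in the placement of the factors.

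The heart of the argument, and the step that needs real care, is showing that the multiplication $xy:=\xi_x\xi_y+I_2$ is well defined. Since $I_2$ is only a right ideal by construction, right-invariance $I_2\cdot\xi_y\subset I_2$ is automatic, but we must separately show $\xi_x\cdot I_2\subset I_2$ whenever $\xi_x$ represents an element of $A\tildeotimes B$. The key computation is the following: take a homogeneous representative $\xi_x=a\otimes b$ with $a\in A_{\alpha,\gamma}$, $b\in B_{\gamma,\beta}$, and a generator $\mu_r^A(f)\otimes 1_B-1_A\otimes\mu_l^B(f)$ of $I_2$. Using \eqref{hxalg:eq:bialgebroid} for $A$ (with second index $\gamma$) and for $B$ (with first index $\gamma$), I compute
\begin{eqnarray*}
(a\otimes b)(\mu_r^A(f)\otimes 1_B)
&=&\mu_r^A(T_\gamma f)a\otimes b,\\
(a\otimes b)(1_A\otimes\mu_l^B(f))
&=&a\otimes\mu_l^B(T_\gamma f)b,
\end{eqnarray*}
so the difference equals $(\mu_r^A(T_\gamma f)\otimes 1_B-1_A\otimes\mu_l^B(T_\gamma f))(a\otimes b)\in I_2$. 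The crucial cancellation is that both sides produce the \emph{same} shift $T_\gamma$, which is precisely why $\gamma$ must be common to the two tensor factors in the definition of $(A\tildeotimes B)_{\alpha,\beta}$. Extending by right-multiplication (since $I_2$ is a right ideal) gives $\xi_x\cdot I_2\subset I_2$ for arbitrary $\xi_x$ lifting an element of $A\tildeotimes B$.

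Once well-definedness is established, condition (2) is a direct bookkeeping check: if $\xi_1\in A_{\alpha,\gamma}$, $\xi_2\in B_{\gamma,\beta}$, $\eta_1\in A_{\alpha',\gamma'}$, $\eta_2\in B_{\gamma',\beta'}$, then $(\xi_1\xi_2)(\eta_1\eta_2)$ lifts to $\xi_1\eta_1\otimes\xi_2\eta_2$ with $\xi_1\eta_1\in A_{\alpha\alpha',\gamma\gamma'}$ and $\xi_2\eta_2\in B_{\gamma\gamma',\beta\beta'}$, hence lies in $(A\tildeotimes B)_{\alpha\alpha',\beta\beta'}$. For condition (5), a representative $a\otimes b$ of $(A\tildeotimes B)_{\alpha,\beta}$ with $a\in A_{\alpha,\gamma}$, $b\in B_{\gamma,\beta}$ satisfies
$(a\otimes b)(\mu_l^A(f)\otimes 1_B)=\mu_l^A(T_\alpha f)a\otimes b$
by applying \eqref{hxalg:eq:bialgebroid} in the first factor, giving $x\cdot\mu_l^{A\tildeotimes B}(f)=\mu_l^{A\tildeotimes B}(T_\alpha f)\cdot x$ modulo $I_2$; the corresponding identity for $\mu_r^{A\tildeotimes B}$ uses \eqref{hxalg:eq:bialgebroid} in the second factor with shift $T_\beta$.

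The main obstacle is the well-definedness of multiplication modulo a right (not two-sided) ideal; everything else is a straightforward check once that is in hand. It is worth noting that the matching index $\gamma$ built into the definition of $(A\tildeotimes B)_{\alpha,\beta}$ is not a cosmetic convention but exactly the condition that makes the two applications of \eqref{hxalg:eq:bialgebroid} produce the same shift $T_\gamma$, which is what allows $\xi_x\cdot I_2\subset I_2$.
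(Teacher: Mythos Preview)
Your proof is correct and follows exactly the approach the paper indicates but does not spell out: the paper simply remarks that ``Although $I_2$ is a right ideal, this definition of the product makes sense'' and that $\mu_l^{A\tildeotimes B}$, $\mu_r^{A\tildeotimes B}$ are $\C$-algebra homomorphisms ``by taking account of \eqref{hxalg:eq:commutemulmur} and \eqref{exp:eq:I2}'', leaving the rest to the reader. Your computation showing $\xi_x\cdot I_2\subset I_2$ via the common shift $T_\gamma$ is precisely the verification the paper suppresses, and the remaining checks of Definition~\ref{hxalg:definition:hxalg} are routine as you say.
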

\begin{remark}
On account of \eqref{hxalg:eq:bialgebroid}
and \eqref{exp:eq:I2},
the matrix tensor product $A\tildeotimes B$ of $(H, X)$-algebras
$A$ and $B$ is a $\C$-subalgebra
of the $\C$-algebra
$A\times_{M_H}B$
\cite{takeuchi}
(See \cite[Remark 2.1]{koelink2007}).
\end{remark}

Two morphisms
$f: A\to C$ and
$g: B\to D$ of $\Alg_{(H, X)}$
induce a
morphism
$f\tildeotimes g: A\tildeotimes B\to C\tildeotimes D$
of $\Alg_{(H, X)}$.
Let $f\tildeotimes g$ denote the $\C$-linear map
from $A\otimes_\C B/I_2(A, B)$
to 
$C\otimes_\C D/I_2(C, D)$
defined by
\[
(f\tildeotimes g)(\xi+I_2(A, B))
=
(f\otimes g)(\xi)+I_2(C, D)
\quad
(\xi\in A\otimes_\C B).
\]
Here, $I_2(A, B)\subset A\otimes_\C B$
is the right ideal generated by the elements
\eqref{exp:eq:I2}.
Since
$(f\otimes g)(I_2(A, B))\subset
I_2(C, D)$,
the definition of the map $f\tildeotimes g$
is unambiguous.
Because $f\tildeotimes g
((A\tildeotimes B)_{\alpha,\beta})\subset
(C\tildeotimes D)_{\alpha,\beta}$
for all $\alpha, \beta\in W_X^H$,
we use the same symbol $f\tildeotimes g$
for the $\C$-linear map 
$(f\tildeotimes g)|_{A\tildeotimes B}: A\tildeotimes B
\to C\tildeotimes D$.
\begin{proposition}\label{exphxalg:proposition:tensor}
The map $f\tildeotimes g$ is a morphism of $\Alg_{(H, X)}$.
Moreover,
$\tildeotimes: \Alg_{(H, X)}\times\Alg_{(H, X)}\to\Alg_{(H, X)}$
is a functor.
\end{proposition}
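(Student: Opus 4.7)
The plan is to discharge, in order, the four requirements for $f\tildeotimes g$ to lie in $\Hom(\Alg_{(H,X)})$ (well-definedness on the quotient, algebra homomorphism, grading preservation, compatibility with $\mu_l,\mu_r$), and then the two functor axioms (preservation of identities and of composition). Most of these are essentially formal once the preceding structures are in place, so I would mainly make sure that the right-ideal subtlety in the definition of $\tildeotimes$ does not cause any hidden problem.

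First I would confirm that the formula $(f\tildeotimes g)(\xi+I_2(A,B))=(f\otimes g)(\xi)+I_2(C,D)$ descends to the quotient. On a generator $\mu_r^{A}(h)\otimes 1_B-1_A\otimes\mu_l^{B}(h)$ of $I_2(A,B)$, the morphism properties of $f$ and $g$ yield $\mu_r^{C}(h)\otimes 1_D-1_C\otimes\mu_l^{D}(h)$, which is a generator of $I_2(C,D)$. Since $f\otimes g$ is a $\C$-algebra map on $A\otimes_\C B$, and since $I_2(A,B)$ is only a \emph{right} ideal, I would propagate the inclusion by checking $(f\otimes g)(\eta\cdot(a\otimes b))=(f\otimes g)(\eta)\cdot(f(a)\otimes g(b))\in I_2(C,D)$ for $\eta$ a generator, which is immediate. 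This gives $(f\otimes g)(I_2(A,B))\subset I_2(C,D)$, as the paper already asserts.

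Next, $f\tildeotimes g$ preserves the unit since $(f\otimes g)(1_A\otimes 1_B)=1_C\otimes 1_D$, and it is multiplicative by lifting: the product on $A\tildeotimes B$ is defined from any lift in $A\otimes_\C B$ (Proposition \ref{exphxalg:proposition:AtildeotimesB}), so multiplicativity of $f\otimes g$ there descends. Grading preservation $(f\tildeotimes g)((A\tildeotimes B)_{\alpha,\beta})\subset(C\tildeotimes D)_{\alpha,\beta}$ is already recorded in the paragraph before the statement, and follows from $f(A_{\alpha,\gamma})\subset C_{\alpha,\gamma}$, $g(B_{\gamma,\beta})\subset D_{\gamma,\beta}$. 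Compatibility with $\mu_l,\mu_r$ is a one-line check from the defining formulas: $(f\tildeotimes g)(\mu_l^{A\tildeotimes B}(h))=f(\mu_l^{A}(h))\otimes 1_D+I_2=\mu_l^{C}(h)\otimes 1_D+I_2=\mu_l^{C\tildeotimes D}(h)$, and symmetrically for $\mu_r$.

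Finally, for functoriality, $\id_A\tildeotimes\id_B$ acts as the identity on representatives, hence equals $\id_{A\tildeotimes B}$. Given further morphisms $f':C\to C'$, $g':D\to D'$, the identity $(f'\otimes g')\circ(f\otimes g)=(f'\circ f)\otimes(g'\circ g)$ on $A\otimes_\C B$ passes to the quotient, yielding $(f'\tildeotimes g')\circ(f\tildeotimes g)=(f'\circ f)\tildeotimes(g'\circ g)$. The only step that requires any thought is the well-definedness on the quotient, and it is essentially forced by the requirement $f\circ\mu_r^{A}=\mu_r^{C}$, $g\circ\mu_l^{B}=\mu_l^{D}$ in the definition of an $(H,X)$-algebra homomorphism; I expect no real obstacle elsewhere.
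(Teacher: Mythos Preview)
Your proposal is correct and is exactly the routine verification the paper leaves implicit (no proof is given in the paper for this proposition). Each step you outline---well-definedness via $(f\otimes g)(I_2(A,B))\subset I_2(C,D)$, multiplicativity by lifting, grading preservation, compatibility with $\mu_l,\mu_r$, and the two functor axioms---is the expected one, and your care with the right-ideal issue is appropriate and sufficient.
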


Let $A$, $B$, and $C$
be objects of $\Alg_{(H, X)}$.
The final task of this section is to introduce
a functor
$-\tildeotimes -\tildeotimes -: \Alg_{(H, X)}\times\Alg_{(H, X)}
\times\Alg_{(H, X)}\to\Alg_{(H, X)}$.
We first define
$(H, X)$-algebras
$A\tildeotimes B\tildeotimes C^l$
and $A\tildeotimes B\tildeotimes C^r$,
whose definitions are different from those of
the $(H, X)$-algebras
$(A\tildeotimes B)\tildeotimes C$
and $A\tildeotimes (B\tildeotimes C)$, respectively.

We use the symbol $I_3^l=I_3^l(A, B, C)$
for the right ideal
of $(A\otimes_\C B)\otimes_\C C$
generated by
the elements
\begin{eqnarray}
&&\nonumber
\{ (\mu_r^{A}(f)\otimes 1_B)\otimes 1_C
-(1_A\otimes\mu_l^{B}(f))\otimes 1_C; f\in M_H\}
\\
&&\cup
\{
(1_A\otimes\mu_r^{B}(g))\otimes 1_C
-(1_A\otimes 1_B)\otimes\mu_l^{C}(g); g\in M_H\}.
\label{exphxalg:eq:ABCl}
\end{eqnarray}

Let $\alpha$ and $\beta$ be elements of the group
$W_X^{H}$.
Let 
$(A\tildeotimes B\tildeotimes C^l)_{\alpha,\beta}$
denote the $\C$-subspace of 
$(A\otimes_\C B)\otimes_\C C/I_3^l$
generated by
$\{ (\xi_1\otimes\xi_2)\otimes \xi_3+I_3^l;
\xi_1\in A_{\alpha,\gamma},
\xi_2\in B_{\gamma,\delta},
\xi_3\in C_{\delta,\beta}
\ 
(\exists \gamma, \delta\in W_X^H)
\}$.
Set
$A\tildeotimes B\tildeotimes C^l=
\sum_{\alpha, \beta\in W_X^{H}}
(A\tildeotimes B\tildeotimes C^l)_{\alpha,\beta}
(\subset (A\otimes_\C B)\otimes_\C C/I_3^l)$.

For $x=\xi_x+I_3^l,
y=\xi_y+I_3^l\in A\tildeotimes B\tildeotimes C^l$,
we define a product on $A\tildeotimes B\tildeotimes C^l$
by $xy=\xi_x\xi_y+I_3^l$.
This product is well defined,
though $I_3^l$ is a right ideal.

We denote by $\mu_l^{A\tildeotimes B\tildeotimes C^l}$
and
$\mu_r^{A\tildeotimes B\tildeotimes C^l}$
the following maps from $M_H$ to
$(A\tildeotimes B\tildeotimes C^l)_{1,1}$.
For $f\in M_H$:
\[
\mu_l^{A\tildeotimes B\tildeotimes C^l}(f)
=(\mu_l^{A}(f)\otimes 1_B)\otimes 1_C+I_3^l;
\ 
\mu_r^{A\tildeotimes B\tildeotimes C^l}(f)
=(1_A\otimes 1_B)\otimes\mu_r^{C}(f)+I_3^l.
\]
Because of 
\eqref{hxalg:eq:commutemulmur}
and \eqref{exphxalg:eq:ABCl},
these maps are
$\C$-algebra homomorphisms from $M_H$ to
$A\tildeotimes B\tildeotimes C^l$.
\begin{proposition}
The $\C$-algebra $A\tildeotimes B\tildeotimes C^l$,
together with
$\{ (A\tildeotimes B\tildeotimes C^l)_{\alpha,\beta}; 
\alpha, \beta\in W_X^H\}$,
$\mu_l^{A\tildeotimes B\tildeotimes C^l}$,
$\mu_r^{A\tildeotimes B\tildeotimes C^l}$,
and the unit
$1_{A\tildeotimes B\tildeotimes C^l}
=(1_A\otimes 1_B)\otimes 1_C+I_3^l$,
is an object of $\Alg_{(H, X)}$.
\end{proposition}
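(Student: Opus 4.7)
The plan is to verify the five axioms of Definition \ref{hxalg:definition:hxalg} for $A\tildeotimes B\tildeotimes C^l$, following the template of Proposition \ref{exphxalg:proposition:AtildeotimesB}. Axioms (1) and (3) are built into the definition, and the grading multiplicativity (axiom (2)) follows from the bigradings of $A$, $B$, $C$ applied to a product of homogeneous generators: the intermediate indices telescope through the three tensor factors and produce a homogeneous element of the expected bidegree. For axiom (4), that $\mu_l^{A\tildeotimes B\tildeotimes C^l}(f)$ and $\mu_r^{A\tildeotimes B\tildeotimes C^l}(g)$ lie in the $(1,1)$-component is immediate from the definitions, and their commutativity is automatic since the two operators act on disjoint tensor factors (the $A$-slot and the $C$-slot).

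The substantive step, which I expect to be the main obstacle, is showing that the product $xy=\xi_x\xi_y+I_3^l$ is well defined on the quotient. Since $I_3^l$ is a right ideal, changing the representative $\xi_y$ creates no ambiguity; the nontrivial direction is that left multiplication by a homogeneous element preserves $I_3^l$. I would take $\xi_x=(\xi_1\otimes\xi_2)\otimes\xi_3$ with $\xi_1\in A_{\alpha,\gamma}$, $\xi_2\in B_{\gamma,\delta}$, $\xi_3\in C_{\delta,\beta}$, and consider a generator of $I_3^l$ from the first family in \eqref{exphxalg:eq:ABCl}. Using axiom (5) for $A$ and $B$ to slide $\mu_r^A(f)$ past $\xi_1$ and $\mu_l^B(f)$ past $\xi_2$ (replacing $f$ by $T_\gamma(f)$), one rewrites
\[
\xi_x\cdot\bigl((\mu_r^A(f)\otimes 1_B)\otimes 1_C - (1_A\otimes\mu_l^B(f))\otimes 1_C\bigr)
= \bigl((\mu_r^A(T_\gamma(f))\otimes 1_B)\otimes 1_C - (1_A\otimes\mu_l^B(T_\gamma(f)))\otimes 1_C\bigr)\cdot\xi_x,
\]
which lies in $I_3^l$ by the right-ideal property. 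The argument for the second family of generators in \eqref{exphxalg:eq:ABCl} is analogous, using axiom (5) for $B$ and $C$ with $g$ replaced by $T_\delta(g)$.

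With well-definedness secured, axiom (5) is a direct computation. For $a=((\xi_1\otimes\xi_2)\otimes\xi_3)+I_3^l$ with $\xi_1\in A_{\alpha,\gamma}$ and $\xi_3\in C_{\delta,\beta}$, axiom (5) for $A$ gives
\[
a\,\mu_l^{A\tildeotimes B\tildeotimes C^l}(f)=((\mu_l^A(T_\alpha(f))\xi_1)\otimes\xi_2\otimes\xi_3)+I_3^l=\mu_l^{A\tildeotimes B\tildeotimes C^l}(T_\alpha(f))\,a,
\]
and symmetrically axiom (5) for $C$, applied to $\xi_3\in C_{\delta,\beta}$, yields the corresponding identity for $\mu_r^{A\tildeotimes B\tildeotimes C^l}$ with $T_\beta(f)$. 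This completes the verification.
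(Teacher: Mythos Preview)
Your approach is correct and is precisely what the paper expects; the paper states this proposition without proof, treating it as parallel to Proposition~\ref{exphxalg:proposition:AtildeotimesB}, and your verification fills in exactly that parallel argument.

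One small slip in the exposition: you have the roles of $\xi_x$ and $\xi_y$ reversed in the well-definedness discussion. Since $I_3^l$ is a \emph{right} ideal, it is changing the representative of $\xi_x$ that is automatic (because $(\xi_x-\xi_x')\xi_y\in I_3^l$), whereas changing $\xi_y$ produces the term $\xi_x(\xi_y-\xi_y')$, which requires the nontrivial fact that left multiplication by a homogeneous element preserves $I_3^l$. Your displayed computation correctly establishes exactly this nontrivial fact, so the mathematics is right; only the framing sentence needs the swap.
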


The definition of the $(H, X)$-algebra
$A\tildeotimes B\tildeotimes C^r$ is similar.
Let $I_3^r$ denote the right ideal of 
$A\otimes_\C (B\otimes_\C C)$
generated by
\begin{eqnarray*}
&&
\{ \mu_r^{A}(f)\otimes (1_B\otimes 1_C)
-1_A\otimes(\mu_l^{B}(f)\otimes 1_C); f\in M_H\}
\\
&&\cup
\{
1_A\otimes(\mu_r^{B}(g)\otimes 1_C)
-1_A\otimes (1_B\otimes\mu_l^{C}(g)); g\in M_H\}.
\end{eqnarray*}
For $\alpha, \beta\in W_X^H$,
we write
$(A\tildeotimes B\tildeotimes C^r)_{\alpha,\beta}$
for the $\C$-subspace of 
$A\otimes_\C (B\otimes_\C C)/I_3^r$
generated by
$\{ \xi_1\otimes(\xi_2\otimes \xi_3)+I_3^r;
\xi_1\in A_{\alpha,\gamma},
\xi_2\in B_{\gamma,\delta},
\xi_3\in C_{\delta,\beta}
\ 
(\exists \gamma, \delta\in W_X^H)
\}$;
in addition, set
$A\tildeotimes B\tildeotimes C^r=
\sum_{\alpha, \beta\in W_X^{H}}
(A\tildeotimes B\tildeotimes C^r)_{\alpha,\beta}$.

For $x=\xi_x+I_3^r,
y=\xi_y+I_3^r\in A\tildeotimes B\tildeotimes C^r$,
we define a product on
$A\tildeotimes B\tildeotimes C^r$
by $xy=\xi_x\xi_y+I_3^r$.

We denote by $\mu_l^{A\tildeotimes B\tildeotimes C^r}$
and 
$\mu_r^{A\tildeotimes B\tildeotimes C^r}$
the following maps from $M_H$ to
$(A\tildeotimes B\tildeotimes C^r)_{1,1}$.
For $f\in M_H$:
\[\mu_l^{A\tildeotimes B\tildeotimes C^r}(f)
=\mu_l^{A}(f)\otimes(1_B\otimes 1_C)+I_3^r;
\
\mu_r^{A\tildeotimes B\tildeotimes C^r}(f)
=1_A\otimes(1_B\otimes\mu_r^{C}(f))+I_3^r.
\]
\begin{proposition}
The $\C$-algebra $A\tildeotimes B\tildeotimes C^r$, together
with
$\{ (A\tildeotimes B\tildeotimes C^r)_{\alpha,\beta}; 
\alpha, \beta\in W_X^H\}$,
$\mu_l^{A\tildeotimes B\tildeotimes C^r}$,
$\mu_r^{A\tildeotimes B\tildeotimes C^r}$,
and the unit
$1_{A\tildeotimes B\tildeotimes C^r}
=1_A\otimes (1_B\otimes 1_C)+I_3^r$,
is an object of $\Alg_{(H, X)}$.
\end{proposition}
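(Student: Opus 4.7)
The plan is to verify the five defining axioms of Definition~\ref{hxalg:definition:hxalg} in the same spirit as the preceding proposition for $A\tildeotimes B\tildeotimes C^l$ and the binary case in Proposition~\ref{exphxalg:proposition:AtildeotimesB}. Axiom (1) is built into the definition $A\tildeotimes B\tildeotimes C^r=\sum_{\alpha,\beta}(A\tildeotimes B\tildeotimes C^r)_{\alpha,\beta}$, axiom (3) is the stated choice of unit, and for axiom (4) the image of $\mu_l^{A\tildeotimes B\tildeotimes C^r}$ and $\mu_r^{A\tildeotimes B\tildeotimes C^r}$ lies in the $(1,1)$-component by construction, while commutativity follows from $(\mu_l^A(f)\otimes(1_B\otimes 1_C))(1_A\otimes(1_B\otimes\mu_r^C(g)))=\mu_l^A(f)\otimes(1_B\otimes\mu_r^C(g))$, which is symmetric in $f$ and $g$.

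For axiom (2), I pick representatives $a\otimes(b\otimes c)$ and $a'\otimes(b'\otimes c')$ of homogeneous elements of types $(\alpha,\beta)$ and $(\gamma,\delta)$, with $a\in A_{\alpha,\gamma_1}$, $b\in B_{\gamma_1,\delta_1}$, $c\in C_{\delta_1,\beta}$ and similarly primed. Their product $(aa')\otimes((bb')\otimes(cc'))+I_3^r$ is homogeneous of type $(\alpha\gamma,\beta\delta)$ by applying axiom (2) of Definition~\ref{hxalg:definition:hxalg} separately in $A$, $B$, and $C$. Axiom (5) then follows by moving $\mu_l^A(f)$, respectively $\mu_r^C(f)$, past $a$, respectively $c$, using axiom (5) for $A$ and for $C$; the factors $1_B\otimes 1_C$ and $1_A\otimes 1_B$ are inert.

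The main subtle point is that the product on the quotient is well defined although $I_3^r$ is only a right ideal; this is the step I expect to require the most care. It suffices to show that left multiplication by any homogeneous representative $\xi_x=a\otimes(b\otimes c)$ as above sends the generators of $I_3^r$ into $I_3^r$. For a generator of the first type, axiom (5) for $A$ and $B$ give
\[
\xi_x\bigl(\mu_r^A(f)\otimes(1_B\otimes 1_C)-1_A\otimes(\mu_l^B(f)\otimes 1_C)\bigr)
=\bigl(\mu_r^A(T_{\gamma_1}f)\otimes(1_B\otimes 1_C)-1_A\otimes(\mu_l^B(T_{\gamma_1}f)\otimes 1_C)\bigr)\xi_x,
\]
and the right-hand side lies in $I_3^r$ because $I_3^r$ is a right ideal. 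The analogous computation using axiom (5) for $B$ and $C$ handles the second family of generators with $T_{\delta_1}g$ in place of $T_{\gamma_1}f$. This confirms that $\xi_x\xi_y+I_3^r$ depends only on the classes of $\xi_x$ and $\xi_y$, completing the verification.
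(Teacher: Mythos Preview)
Your proof is correct and follows exactly the approach the paper has in mind. The paper states this proposition (and its $^l$ analogue, and the binary case Proposition~\ref{exphxalg:proposition:AtildeotimesB}) without proof, merely remarking in each case that the product is well defined ``though $I_3^r$ is a right ideal''; your argument spells out precisely this point by showing $\xi_x\,I_3^r\subset I_3^r$ for homogeneous $\xi_x$ via axiom~(5), which is the intended routine verification.
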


Let $\bar{\Gamma}$ denote the $\C$-linear isomorphism from
$A\otimes_\C(B\otimes_\C C)$ to
$(A\otimes_\C B)\otimes_\C C$
satisfying
$\bar{\Gamma}(a\otimes(b\otimes c))
=(a\otimes b)\otimes c$.
This isomorphism $\bar{\Gamma}$
implies
an isomorphism $\Gamma:
A\tildeotimes B\tildeotimes C^r\to
A\tildeotimes B\tildeotimes C^l$
of $\Alg_{(H, X)}$.
Here and subsequently,
we let $A\tildeotimes B\tildeotimes C$ stand for
the $(H, X)$-algebra
$A\tildeotimes B\tildeotimes C^l
(=\Gamma(A\tildeotimes B\tildeotimes C^r))$.

Let
$f: A\to A'$, $g: B\to B'$, and $h: C\to C'$
be morphisms of $\Alg_{(H, X)}$.
Define
the map
$(f\tildeotimes g\tildeotimes h)^{\widetilde{\ }}:
(A\otimes_\C B)\times C\to (A'\otimes_\C B')\otimes_\C C'/I_3^l{}'$
by
\[
(f\tildeotimes g\tildeotimes h)^{\widetilde{\ }}(w, c)=
(f\otimes g)(w)\otimes h(c)+I_3^l{}'
\quad(w\in A\otimes_\C B, c\in C).
\]
Here,
$I_3^l{}'=I_3^l(A', B', C')$
\eqref{exphxalg:eq:ABCl}.

This map is $\C$-bilinear; for this reason,
there exists a unique $\C$-linear map
$(f\tildeotimes g\tildeotimes h)^{\bar{\ }}:
(A\otimes_\C B)\otimes_\C C
\to (A'\otimes_\C B')\otimes_\C C'/I_3^l{}'$
such that
$(f\tildeotimes g\tildeotimes h)^{\bar{\ }}(w\otimes c)
=
(f\tildeotimes g\tildeotimes h)^{\widetilde{\ }}(w, c)$.
We denote by $f\tildeotimes g\tildeotimes h$
the following map
from
$(A\otimes_\C B)\otimes_\C C/I_3^l$
to $(A'\otimes_\C B')\otimes_\C C'/I_3^l{}'$:
$f\tildeotimes g\tildeotimes h(\xi+I_3^l)=
(f\tildeotimes g\tildeotimes h)^{\bar{\ }}(\xi)
\quad(\xi\in (A\otimes_\C B)\otimes_\C C)$.
Here, $I_3^l=I_3^l(A, B, C)$.
By Definition
\ref{hxlag:proposition:hxalghom},
$(f\tildeotimes g\tildeotimes h)^{\bar{\ }}(I_3^l)\subset\{ I_3^l{}'\}$,
and the definition of the map
$f\tildeotimes g\tildeotimes h$
makes sense as a result.

The map
$f\tildeotimes g\tildeotimes h$
satisfies
$f\tildeotimes g\tildeotimes h
((A\tildeotimes B\tildeotimes C)_{\alpha,\beta})
\subset (A'\tildeotimes B'\tildeotimes C')_{\alpha,\beta}$
$(\alpha, \beta
\in W_X^H)$.
By abuse of notation,
we continue to write $f\tildeotimes g\tildeotimes h$
for the restriction
$f\tildeotimes g\tildeotimes h|_{A\tildeotimes B\tildeotimes C}:
A\tildeotimes B\tildeotimes C\to A'\tildeotimes B'\tildeotimes C'$.
\begin{proposition}\label{hxbialg:proposition:-tilde-tilde-}
$-\tildeotimes -\tildeotimes -: \Alg_{(H, X)}\times\Alg_{(H, X)}
\times\Alg_{(H, X)}\to\Alg_{(H, X)}$
is a functor.
\end{proposition}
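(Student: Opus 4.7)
The plan is to split the verification into two halves: (i) confirm that for each triple of morphisms $f\colon A\to A'$, $g\colon B\to B'$, $h\colon C\to C'$ in $\Alg_{(H,X)}$ the map $f\tildeotimes g\tildeotimes h$ just constructed is itself a morphism of $\Alg_{(H,X)}$, and (ii) verify the two functoriality axioms.

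For (i), most of the ingredients are already in place in the paragraphs preceding the proposition. The only items still to be checked are: (a) $f\tildeotimes g\tildeotimes h$ preserves the multiplication and the unit, and (b) it intertwines the maps $\mu_l$ and $\mu_r$. For (a), the unit goes to the unit because $f(1_A)=1_{A'}$, $g(1_B)=1_{B'}$, $h(1_C)=1_{C'}$, and the product $(\xi_x+I_3^l)(\xi_y+I_3^l)=\xi_x\xi_y+I_3^l$ is preserved because the ordinary map $(f\otimes g)\otimes h$ on $(A\otimes_\C B)\otimes_\C C$ is multiplicative and descends to the quotient (this uses that $I_3^l(A,B,C)$ is a right ideal, combined with the already-observed inclusion $((f\otimes g)\otimes h)(I_3^l(A,B,C))\subset I_3^l(A',B',C')$). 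For (b), it suffices to note that $\mu_l^{A\tildeotimes B\tildeotimes C}(f)=(\mu_l^A(f)\otimes 1_B)\otimes 1_C+I_3^l$ and the analogous formula on the right, so intertwining with $\mu_l$ reduces to $f\circ\mu_l^A=\mu_l^{A'}$ and intertwining with $\mu_r$ reduces to $h\circ\mu_r^C=\mu_r^{C'}$; both hold by Definition \ref{hxlag:proposition:hxalghom}.

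For (ii), both axioms descend from the analogous facts about the ordinary triple tensor product over $\C$. Identity preservation is immediate: $\id_A\tildeotimes\id_B\tildeotimes\id_C$ sends $(a\otimes b)\otimes c+I_3^l$ to itself and so agrees with $\id_{A\tildeotimes B\tildeotimes C}$. For compatibility with composition, given further morphisms $f'\colon A'\to A''$, $g'\colon B'\to B''$, $h'\colon C'\to C''$, the equality
\[
(f'\circ f)\tildeotimes(g'\circ g)\tildeotimes(h'\circ h)
=(f'\tildeotimes g'\tildeotimes h')\circ (f\tildeotimes g\tildeotimes h)
\]
is checked on a representative element $(a\otimes b)\otimes c+I_3^l(A,B,C)$, where both sides evaluate to $(f'f(a)\otimes g'g(b))\otimes h'h(c)+I_3^l(A'',B'',C'')$; the representative calculation passes to the sum $\sum_{\alpha,\beta}(A\tildeotimes B\tildeotimes C)_{\alpha,\beta}$ by $\C$-linearity.

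The main technical point throughout is bookkeeping with the right ideals $I_3^l$: one must repeatedly invoke the observation that $(f\otimes g\otimes h)$ sends the generators of $I_3^l(A,B,C)$ into $I_3^l(A',B',C')$ (so the quotient map is well-defined) and, because these ideals are only one-sided, be careful that every identity is checked on representatives from the correct subspace. Once that bookkeeping is organized, all of the verifications above are routine and no further input is needed beyond the morphism axioms for $f$, $g$, $h$.
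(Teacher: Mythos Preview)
Your proposal is correct. The paper itself states this proposition without proof, treating the verification as routine once the construction preceding it (well-definedness of the quotient map and preservation of the bigrading) is in place; your write-up supplies exactly the remaining routine checks---multiplicativity, unitality, compatibility with $\mu_l$ and $\mu_r$, and the two functor axioms---in the only natural way.
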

\section{$(H, X)$-bialgebroids}
\label{section:hxbialg}
This section is devoted to introducing 
$(H, X)$-bialgebroids
(cf.\ the $\mathfrak h$-bialgebroid
\cite[Section 4.1]{etingof1998}
and the
$\times_{M_H}$-bialgebra \cite[Definition 4.5]{takeuchi});
the definition is based on 
a notion of a pre-tensor category,
which is important in the proof of
Theorem \ref{tydynrep:theorem:tensorcategory}.
\begin{definition}\label{hxbialg:definition:pretensorcat}
A pre-tensor category is a category $C$,
together with functors
$\tildeotimes: C\times C\to C$,
$-\tildeotimes -\tildeotimes -: C\times C\times C\to C$,
an object $I$ of $C$,
and
natural transformations
$a^l: \tildeotimes(\tildeotimes\times\id)\to
-\tildeotimes -\tildeotimes -$,
$a^r: \tildeotimes(\id\times \tildeotimes)\to
-\tildeotimes -\tildeotimes -$,
$l: \tildeotimes(I\times\id)\to\id$,
$r: \tildeotimes(\id\times I)\to\id$.
We denote it by $(C, \tildeotimes, -\tildeotimes -\tildeotimes -,
I, a^l, a^r, l, r)$.
\end{definition}

The pre-tensor category is a generalization of the tensor category.
\begin{proposition}\label{hxbialg:proposition:alghxpretensor}
The category $\Alg_{(H, X)}$ is a pre-tensor category.
\end{proposition}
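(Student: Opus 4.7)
The plan is to exhibit the natural transformations $a^l$, $a^r$, $l$, $r$ as morphisms in $\Alg_{(H,X)}$ and verify naturality; the functors $\tildeotimes$, $-\tildeotimes-\tildeotimes-$ and the unit object $I_{H,X}$ are already in hand by Propositions \ref{exphxalg:proposition:tensor}, \ref{hxbialg:proposition:-tilde-tilde-}, and \ref{exphxalg:proposition:IHX}. Since Definition \ref{hxbialg:definition:pretensorcat} requires only natural transformations (and no pentagon/triangle coherence), the verification is entirely local.

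For the associators I would define $a^l_{A,B,C}\colon (A\tildeotimes B)\tildeotimes C\to A\tildeotimes B\tildeotimes C$ on generators by
\[
((a\otimes b)+I_2(A,B))\otimes c+I_2(A\tildeotimes B,C)\;\longmapsto\;(a\otimes b)\otimes c+I_3^l(A,B,C),
\]
and $a^r_{A,B,C}\colon A\tildeotimes(B\tildeotimes C)\to A\tildeotimes B\tildeotimes C$ as the analogous descent to $A\tildeotimes B\tildeotimes C^r$ followed by the isomorphism $\Gamma$ of Section \ref{section:exphxalg}. For the unit constraints I would set $l_A(f^\star T_\alpha\otimes a+I_2)=\mu_l^A(f)\,a$ whenever $a\in A_{\alpha,\beta}$, and $r_A(a\otimes f^\star T_\beta+I_2)=\mu_r^A(f)\,a$ whenever $a\in A_{\alpha,\beta}$, extended linearly. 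Specialising $f=1_{M_H}$ and $\alpha=1$ (resp.\ $\beta=1$) recovers the identity on generators $1_{I_{H,X}}\otimes a$ (resp.\ $a\otimes 1_{I_{H,X}}$), as required.

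The main technical obstacle is well-definedness. For $a^l$ the key observation is that the generators of $I_2(A,B)$ tensored on the right by $C$ are right multiples of the ``$A$–$B$'' generators of $I_3^l$, while the generators of $I_2(A\tildeotimes B,C)$ lift to the ``$B$–$C$'' generators of $I_3^l$; since $I_3^l$ is a right ideal, every right multiple in sight is absorbed. The argument for $a^r$ is symmetric, via $I_3^r$ and $\Gamma$. For $l_A$ and $r_A$ the task is to check that the defining relators of $I_2(I_{H,X},A)$ and $I_2(A,I_{H,X})$, together with their right multiples, map to zero; this is where the bialgebroid axioms \eqref{hxalg:eq:commutemulmur}, \eqref{hxalg:eq:bialgebroid} and the explicit product $(f^\star T_\alpha)(g^\star T_\beta)=(f\cdot T_\alpha(g))^\star T_{\alpha\beta}$ of Section \ref{section:exphxalg} play the decisive role. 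Indeed, for a right multiple $(f^\star T_1\otimes 1_A-T_1\otimes\mu_l^A(f))\cdot(g^\star T_\alpha\otimes a)$ of the generator of $I_2(I_{H,X},A)$, one obtains $(fg)^\star T_\alpha\otimes a-g^\star T_\alpha\otimes\mu_l^A(f)a$, which $l_A$ sends to $\mu_l^A(fg)a-\mu_l^A(g)\mu_l^A(f)a=0$ by commutativity of $M_H$.

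Once well-definedness is established, the remaining $(H,X)$-algebra axioms (unit-preservation, multiplicativity, preservation of the $W_X^H\times W_X^H$-grading, and commutation with $\mu_l,\mu_r$) are direct calculations on homogeneous generators; multiplicativity of $l_A$ is the only mildly non-trivial point and uses \eqref{hxalg:eq:bialgebroid} to move $\mu_l^A(T_\alpha g)$ past an element of $A_{\alpha,\beta}$. Naturality of each of the four transformations is then automatic: in every case the map is induced by the identity on the underlying tensor factors, so the naturality squares with respect to $f\tildeotimes g$ and $f\tildeotimes g\tildeotimes h$ of Propositions \ref{exphxalg:proposition:tensor} and \ref{hxbialg:proposition:-tilde-tilde-} commute by inspection.
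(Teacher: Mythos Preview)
Your proposal is correct and follows essentially the same route as the paper. The paper constructs $a^l$, $a^r$, $l$, $r$ in exactly the way you describe (Propositions \ref{hxbialg:proposition:al}, \ref{hxbialg:proposition:ar}, \ref{hxalg:proposition:lr} and Lemma \ref{bialg:lemma:tildeI}); the only cosmetic difference is that the paper defines the unit constraints intrinsically by $\bar{l}_A(U,a)=\mu_l^A(U(1_{M_H}))\,a$ and $\bar{r}_A(a,U)=\mu_r^A(U(1_{M_H}))\,a$ on all of $I_{H,X}\otimes_\C A$ before passing to the quotient, which makes the well-definedness check (your computation for $I_2(I_{H,X},A)$) slightly cleaner since one need not worry about possible non-uniqueness of the decomposition $U=f^\star T_\alpha$.
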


Taking Propositions
\ref{exphxalg:proposition:IHX},
\ref{exphxalg:proposition:tensor},
and
\ref{hxbialg:proposition:-tilde-tilde-}
into account, we need only construct natural
transformations
$a^l$, $a^r$, $l$, and $r$
for the proof.

Let $A$, $B$, and $C$ be objects of $\Alg_{(H, X)}$.
Let us introduce the temporary notation $F$ for
the $\C$-bilinear map
from
$A\otimes_\C B\times C$ to 
$(A\otimes_\C B)\otimes_\C C$
defined by
$F(x, c)=x\otimes c$
$(x\in A\otimes_\C B, c\in C)$.
We write $\bar{F}$ for
the following map
from $A\otimes_\C B/I_2(A, B)\times C$
to $((A\otimes_\C B)\otimes_\C C)/I_3^l$.
\[
\bar{F}(x+I_2(A, B), c)=F(x, c)+I_3^l
\quad(x\in A\otimes_\C B).
\]
For $I_2(A, B)$ and $I_3^l=I_3^l(A, B, C)$,
see \eqref{exp:eq:I2}
and \eqref{exphxalg:eq:ABCl}.
The definition of this map
$\bar{F}$ is unambiguous,
because $F(I_2(A, B), C)\subset I_3^l$.
By abuse of notation,
we use the same symbol $\bar{F}$
for the map $\bar{F}|_{A\tildeotimes B\times C}:
A\tildeotimes B\times C\to
((A\otimes_\C B)\otimes_\C C)/I_3^l$.
Since $\bar{F}$ is $\C$-bilinear,
there uniquely exists a $\C$-linear map
$\bar{\Phi}_1: (A\tildeotimes B)\otimes_\C C
\to ((A\otimes_\C B)\otimes_\C C)/I_3^l$
such that
$\bar{\Phi}_1(x\otimes c)
=\bar{F}(x, c)$
for $x\in A\tildeotimes B$ and $c\in C$.

This $\bar{\Phi}_1$ yields the $\C$-linear map
$a^l_{A, B, C}: (A\tildeotimes B)\otimes_\C C/
I_2(A\tildeotimes B, C)
\to ((A\otimes_\C B)\otimes_\C C)/I_3^l$
defined by
$a^l_{A, B, C}(x+I_2(A\tildeotimes B, C))
=\bar{\Phi}_1(x)$
$(x\in (A\tildeotimes B)\otimes_\C C)$.
Because the map $a^l_{A, B, C}$ satisfies
$a^l_{A, B, C}((A\tildeotimes B)\tildeotimes C)\subset
A\tildeotimes B\tildeotimes C$
$(=A\tildeotimes B\tildeotimes C^l)$,
we continue to write
$a^l_{A, B, C}$
for the $\C$-linear map
$a^l_{A, B, C}|_{(A\tildeotimes B)\tildeotimes C}:
(A\tildeotimes B)\tildeotimes C\to
A\tildeotimes B\tildeotimes C$.
\begin{proposition}\label{hxbialg:proposition:al}
The map
$a^l_{A, B, C}$ is a morphism of $\Alg_{(H, X)}$.
Furthermore,
$a^l: \tildeotimes(\tildeotimes\times\id)\to
-\tildeotimes -\tildeotimes -$
is a natural transformation.
\end{proposition}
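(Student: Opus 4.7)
The plan is to separately verify: first, that $a^l_{A,B,C}$ is well-defined on the quotient $(A\tildeotimes B)\tildeotimes C$ and takes values in $A\tildeotimes B\tildeotimes C$; second, that it is an $(H,X)$-algebra homomorphism; and third, that the collection $\{a^l_{A,B,C}\}_{A,B,C}$ is natural in its three arguments. The construction preceding the statement has already taken care of everything up to $\bar{\Phi}_1$, so what remains is a sequence of routine verifications on generators.

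For well-definedness of the final quotient, I would check that $\bar{\Phi}_1$ annihilates the generators of $I_2(A\tildeotimes B,C)$. Fix $f\in M_H$; using $\mu_r^{A\tildeotimes B}(f)=1_A\otimes\mu_r^{B}(f)+I_2(A,B)$ and $1_{A\tildeotimes B}=1_A\otimes 1_B+I_2(A,B)$, the generator $\mu_r^{A\tildeotimes B}(f)\otimes 1_C-1_{A\tildeotimes B}\otimes\mu_l^{C}(f)$ is mapped to $(1_A\otimes\mu_r^{B}(f))\otimes 1_C-(1_A\otimes 1_B)\otimes\mu_l^{C}(f)+I_3^l$, which is exactly one of the defining generators of $I_3^l$ listed in \eqref{exphxalg:eq:ABCl}, hence zero. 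For the grading, $a^l_{A,B,C}$ sends the generator $(a\otimes b+I_2(A,B))\otimes c+I_2(A\tildeotimes B,C)$ with $a\in A_{\alpha,\gamma}$, $b\in B_{\gamma,\delta}$, $c\in C_{\delta,\beta}$ to $(a\otimes b)\otimes c+I_3^l$, which lies in $(A\tildeotimes B\tildeotimes C)_{\alpha,\beta}$, so the map does restrict to $(A\tildeotimes B)\tildeotimes C\to A\tildeotimes B\tildeotimes C$ and preserves the bigrading.

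For the homomorphism properties, both products are inherited from that of $(A\otimes_\C B)\otimes_\C C$ modulo the respective right ideals, and the two unit elements are both represented by $(1_A\otimes 1_B)\otimes 1_C$, so $a^l_{A,B,C}$ respects products and units on representatives. For the intertwining with $\mu_l,\mu_r$, unwinding the definitions shows that $\mu_l^{(A\tildeotimes B)\tildeotimes C}(f)$ is the class of $(\mu_l^{A}(f)\otimes 1_B)\otimes 1_C$, which maps to the same expression modulo $I_3^l$, and this is the definition of $\mu_l^{A\tildeotimes B\tildeotimes C}(f)$; the verification for $\mu_r$ is symmetric, using $\mu_r^{(A\tildeotimes B)\tildeotimes C}(f)=1_{A\tildeotimes B}\otimes\mu_r^{C}(f)+I_2(A\tildeotimes B,C)$.

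Finally, for naturality, fix morphisms $\phi:A\to A'$, $\psi:B\to B'$, $\chi:C\to C'$ of $\Alg_{(H,X)}$. Applied to a representative $(a\otimes b+I_2(A,B))\otimes c+I_2(A\tildeotimes B,C)$, both composites $(\phi\tildeotimes\psi\tildeotimes\chi)\circ a^l_{A,B,C}$ and $a^l_{A',B',C'}\circ((\phi\tildeotimes\psi)\tildeotimes\chi)$ produce $(\phi(a)\otimes\psi(b))\otimes\chi(c)+I_3^l(A',B',C')$, so they agree on all of $(A\tildeotimes B)\tildeotimes C$ by $\C$-linearity. No serious obstacle appears; the main care required is tracking precisely which right ideal each representative sits modulo at each step, since the passage through $\bar{F}$, $\bar{\Phi}_1$, and the last quotient involves three nested right ideals and the naturality square requires following the morphisms across all of them simultaneously.
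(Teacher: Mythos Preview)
Your proposal is correct and is precisely the routine verification the paper has in mind; the paper states this proposition without proof, and your outline of well-definedness on the quotient, preservation of the bigrading, compatibility with products, units, $\mu_l$, $\mu_r$, and naturality on representatives is exactly what is needed. One small point worth tightening: when you check that $\bar{\Phi}_1$ kills $I_2(A\tildeotimes B,C)$, you only treat the generators, whereas $I_2$ is the right ideal they generate; the extension to the full ideal follows because right-multiplying a generator of $I_3^l$ by any $(y\otimes d)$ in $(A\otimes_\C B)\otimes_\C C$ stays in $I_3^l$, and $\bar{\Phi}_1$ respects this right action on representatives.
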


We will use the temporary notation
$\Phi_2': A\tildeotimes(B\tildeotimes C)\to
A\tildeotimes B\tildeotimes C^r$ for the morphism
of $\Alg_{(H, X)}$
defined similarly to $a^l_{A, B, C}$:
\begin{eqnarray*}
&&\Phi_2'
((a\otimes(b\otimes c+I_2(B, C)))
+I_2(A, B\tildeotimes C))
=
a\otimes (b\otimes c)+I_3^r
\\
&&
(a\in A_{\alpha,\gamma},
b\in B_{\gamma,\delta}, c\in C_{\delta,\beta},
\alpha, \beta, \gamma, \delta\in W_X^H).
\end{eqnarray*}
Let $a^r_{A, B, C}$ denote the morphism
$\Gamma\circ\Phi_2':
A\tildeotimes(B\tildeotimes C)\to
A\tildeotimes B\tildeotimes C$
of $\Alg_{(H, X)}$.
For the isomorphism $\Gamma: A\tildeotimes B\tildeotimes C^r
\to A\tildeotimes B\tildeotimes C$, see
the previous section.
\begin{proposition}\label{hxbialg:proposition:ar}
This
$a^r: \tildeotimes(\id\times\tildeotimes)\to
-\tildeotimes -\tildeotimes -$
is a natural transformation.
\end{proposition}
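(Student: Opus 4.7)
The plan is to mirror, for the right-bracketed side, the construction carried out for $a^l$ in Proposition \ref{hxbialg:proposition:al}, and then transport the result to $A\tildeotimes B\tildeotimes C$ via the isomorphism $\Gamma$. First I would verify that $\Phi_2'$ is a well-defined morphism in $\Alg_{(H, X)}$. Starting from the $\C$-bilinear map $F': A\times(B\otimes_\C C)\to A\otimes_\C(B\otimes_\C C)$, $F'(a, y)=a\otimes y$, one uses $F'(A, I_2(B, C))\subset I_3^r$ to descend to a map on $A\times(B\otimes_\C C/I_2(B, C))$, restricts to $A\times(B\tildeotimes C)$, extends through the universal property of $\otimes_\C$ to $A\otimes_\C(B\tildeotimes C)$, and finally checks that the image of $I_2(A, B\tildeotimes C)$ vanishes in $A\otimes_\C(B\otimes_\C C)/I_3^r$. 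Compatibility of the resulting map $\Phi_2'$ with gradings, moment maps, units, and multiplication is then read off directly on a monomial $a\otimes(b\otimes c+I_2(B, C))+I_2(A, B\tildeotimes C)$ with $a\in A_{\alpha,\gamma}$, $b\in B_{\gamma,\delta}$, $c\in C_{\delta,\beta}$, exactly as in Proposition \ref{hxbialg:proposition:al}.

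Next I would confirm that $\Gamma: A\tildeotimes B\tildeotimes C^r\to A\tildeotimes B\tildeotimes C^l$ is an isomorphism of $(H, X)$-algebras: the $\C$-linear bracket shift $\bar{\Gamma}(a\otimes(b\otimes c))=(a\otimes b)\otimes c$ carries the generators of $I_3^r$ bijectively onto those of $I_3^l$, preserves each homogeneous subspace $(\cdot)_{\alpha,\beta}$, is multiplicative on monomials, and intertwines $\mu_l^\bullet$ and $\mu_r^\bullet$. Composing, $a^r_{A, B, C}=\Gamma\circ\Phi_2'$ is a morphism of $\Alg_{(H, X)}$.

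It remains to verify naturality. Given morphisms $f: A\to A'$, $g: B\to B'$, $h: C\to C'$ in $\Alg_{(H, X)}$, the same recipe as in the paragraph before Proposition \ref{hxbialg:proposition:-tilde-tilde-}, with $I_3^l$ replaced by $I_3^r$, produces a morphism $f\tildeotimes g\tildeotimes h^r: A\tildeotimes B\tildeotimes C^r\to A'\tildeotimes B'\tildeotimes C'{}^r$ sending $a\otimes(b\otimes c)+I_3^r(A, B, C)$ to $f(a)\otimes(g(b)\otimes h(c))+I_3^r(A', B', C')$. Two naturality squares then need to be checked, each by a one-line computation on monomials: first,
\[
\Gamma_{A', B', C'}\circ(f\tildeotimes g\tildeotimes h^r)=(f\tildeotimes g\tildeotimes h)\circ\Gamma_{A, B, C},
\]
and second,
\[
(\Phi_2')_{A', B', C'}\circ(f\tildeotimes(g\tildeotimes h))=(f\tildeotimes g\tildeotimes h^r)\circ(\Phi_2')_{A, B, C}.
\]
Composing the two commutativities yields the naturality square for $a^r$.

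The main obstacle is purely book-keeping: at each stage one must track which right ideal among $I_2(\cdot, \cdot)$, $I_3^l$, $I_3^r$ is in play and check that the intended $\C$-linear or $\C$-bilinear map carries one ideal into another so that the descent to the quotient is legitimate. Once this is done the verification of the morphism axioms for $\Phi_2'$, the isomorphism property of $\Gamma$, and each of the two naturality squares above reduces to inspecting the effect on a single homogeneous monomial $a\otimes(b\otimes c)$, where everything is immediate.
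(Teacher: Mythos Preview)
Your proposal is correct and follows precisely the route the paper itself indicates: the paper does not give a proof of this proposition, but it explicitly defines $\Phi_2'$ ``similarly to $a^l_{A,B,C}$'' and sets $a^r_{A,B,C}=\Gamma\circ\Phi_2'$, leaving the verification that this yields a natural transformation as a routine parallel to Proposition~\ref{hxbialg:proposition:al}. Your breakdown into (i) well-definedness of $\Phi_2'$ as an $(H,X)$-algebra morphism, (ii) the already-stated fact that $\Gamma$ is an $(H,X)$-algebra isomorphism, and (iii) the two naturality squares checked on homogeneous monomials is exactly how this routine verification is meant to go.
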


What is left is to introduce 
natural transformations
$l: \tildeotimes(I_{H, X}\times\id)
\to \id$
and
$r: \tildeotimes(\id\times I_{H, X})
\to \id$
(For $I_{H, X}$, see Proposition
\ref{exphxalg:proposition:IHX}).

Let $A$ be an object of $\Alg_{(H, X)}$.
We write $\bar{l}_A$ (resp.~$\bar{r}_A$)
for the following map
from $I_{H, X}\times A$ to $A$
(resp.~from $A\times I_{H, X}$ to $A$).
For $U\in I_{H, X}$ and $a\in A$,
\[
\bar{l}_A(U, a)=\mu_l^{A}(U(1_{M_H}))a
\quad
(\mbox{resp.}\ 
\bar{r}_A(a, U)=\mu_r^{A}(U(1_{M_H}))a).
\]
Here, $1_{M_H}\in M_H$ is the unit element of the $\C$-algebra $M_H$:
\begin{equation}\label{hxbialg:eq:1MH}
1_{M_H}(\lambda)=1
\quad(\lambda\in H).
\end{equation}
Because $\bar{l}_A$ (resp.~$\bar{r}_A$) is $\C$-bilinear,
there exists a unique $\C$-linear map
$\widetilde{l}_A: I_{H, X}\otimes_\C A\to A$
(resp.~
$\widetilde{r}_A: A\otimes_\C I_{H, X}\to A$)
such that $\widetilde{l}_A(U\otimes a)
=\bar{l}_A(U, a)$
(resp.~
$\widetilde{r}_A(a\otimes U)
=\bar{r}_A(a, U)$).
\begin{lemma}\label{bialg:lemma:tildeI}
$\widetilde{l}_A(I_2(I_{H, X},
A))=\widetilde{r}_A(I_2(A, I_{H, X}))
=\{ 0\}$.
\end{lemma}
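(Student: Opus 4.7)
The plan is to verify that the $\C$-linear maps $\widetilde{l}_A$ and $\widetilde{r}_A$ annihilate each element of the right ideals $I_2(I_{H,X},A)$ and $I_2(A,I_{H,X})$ by checking this on arbitrary right multiples of the distinguished generators \eqref{exp:eq:I2}. Linearity then completes the proof.

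First I would record two preparatory facts. Since the identity element $1\in W_X^H$ acts trivially on $H$, formula \eqref{hxalg:eq:Ta} gives $T_1=\id_{M_H}$, so $\mu_l^{I_{H,X}}(f)=\mu_r^{I_{H,X}}(f)=f^\star$ for every $f\in M_H$. Combining this with the definition \eqref{exphxalg:eq:fhat} of $f^\star$ yields, for any $U\in I_{H,X}$, the identity
\[
(f^\star\circ U)(1_{M_H})=f\cdot U(1_{M_H})\in M_H,
\]
where the right-hand side is the pointwise product of functions and $1_{M_H}$ is as in \eqref{hxbialg:eq:1MH}.

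Next, I would apply $\widetilde{l}_A$ to a typical generator of $I_2(I_{H,X},A)$, namely
\[
\bigl(\mu_r^{I_{H,X}}(f)\otimes 1_A-1_{I_{H,X}}\otimes\mu_l^A(f)\bigr)(U\otimes b)
=f^\star U\otimes b-U\otimes\mu_l^A(f)b.
\]
Applying $\widetilde{l}_A$ and using the preparatory identity gives
\[
\mu_l^A\bigl(f\cdot U(1_{M_H})\bigr)\,b-\mu_l^A\bigl(U(1_{M_H})\bigr)\,\mu_l^A(f)\,b.
\]
Because $\mu_l^A$ is a $\C$-algebra homomorphism and $M_H$ is commutative, both summands equal $\mu_l^A(f)\mu_l^A(U(1_{M_H}))b$, so their difference vanishes. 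A completely parallel computation, now using the identity $(U\circ f^\star)(1_{M_H})=U(1_{M_H})\cdot f$ (which again follows from $T_1=\id$) and the fact that $\mu_r^A$ is a homomorphism, shows that $\widetilde{r}_A$ annihilates every right multiple $\bigl(\mu_r^A(f)\otimes 1_{I_{H,X}}-1_A\otimes\mu_l^{I_{H,X}}(f)\bigr)(a\otimes V)$ of the generators of $I_2(A,I_{H,X})$.

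There is no real obstacle: the whole argument is a bookkeeping exercise built on two ingredients, namely the triviality of $T_1$ (which reduces $\mu_{l/r}^{I_{H,X}}$ to $(\,\cdot\,)^\star$) and the commutativity of $M_H$ combined with multiplicativity of $\mu_l^A$ and $\mu_r^A$. The only point requiring care is remembering that $I_2$ is a \emph{right} ideal, so the generators in \eqref{exp:eq:I2} must be multiplied on the right by an arbitrary element $U\otimes b$ (resp.\ $a\otimes V$) before testing vanishing, which is precisely what the computation above does.
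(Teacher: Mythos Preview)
Your argument is correct and essentially identical to the paper's: both test vanishing on an arbitrary right multiple of a generator, compute $(\mu_r^{I_{H,X}}(f)U)(1_{M_H})$ as the pointwise product $f\cdot U(1_{M_H})$, and conclude via multiplicativity of $\mu_l^A$ (resp.\ $\mu_r^A$) together with commutativity of $M_H$.

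One small slip worth fixing: the identity you quote for the $\widetilde{r}_A$ case, $(U\circ f^\star)(1_{M_H})=U(1_{M_H})\cdot f$, is false for general $U\in I_{H,X}$ (take $U=T_\alpha$ with $\alpha\neq 1$: the left side is $T_\alpha(f)$, the right side is $f$). Fortunately you never need it. Since $\mu_l^{I_{H,X}}(f)=f^\star$, the second tensor factor after right-multiplying the generator by $a\otimes V$ is $f^\star V=f^\star\circ V$, and so the \emph{same} identity $(f^\star\circ V)(1_{M_H})=f\cdot V(1_{M_H})$ that you already established handles the $\widetilde{r}_A$ case as well.
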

\begin{proof}
We will only show that
$\widetilde{l}_A(I_2(I_{H, X},
A))\subset\{ 0\}$.
It suffices to prove
\begin{eqnarray}\label{bialg:eq:tildeI}
&&
\widetilde{l}_A((\mu_r^{I_{H, X}}(f)\otimes 1_A
-1_{I_{H, X}}\otimes\mu_l^{A}(f))(U\otimes a))=0
\\
&&\nonumber
(f\in M_H, U\in I_{H, X}, a\in A).
\end{eqnarray}
By the definition of $\widetilde{l}_A$,
\begin{equation}\label{bialg:eq:tildeI2}
\mbox{LHS of \eqref{bialg:eq:tildeI}}
=
\mu_l^{A}((\mu_r^{I_{H, X}}(f)U)(1_{M_H}))a
-
\mu_l^{A}(U(1_{M_H}))\mu_l^{A}(f)a.
\end{equation}
Simple calculation shows that
$(\mu_r^{I_{H, X}}(f)U)(1_{M_H})=U(1_{M_H})f$.
Since
$\mu_l^{A}$ is a $\C$-algebra homomorphism
by Definition \ref{hxalg:definition:hxalg},
(RHS of \eqref{bialg:eq:tildeI2})$=0$.
This is the desired conclusion.
\end{proof}

We define the $\C$-linear maps $l_A:
I_{H, X}\otimes_\C A/I_2
(I_{H, X}, A)\to A$
and
$r_A:
A\otimes_\C I_{H, X}/I_2
(A, I_{H, X})\to A$
by
$l_A(\xi+I_2
(I_{H, X}, A))=\widetilde{l}_A(\xi)$
$(\xi\in I_{H, X}\otimes_\C A)$
and
$r_A(\xi'+I_2
(A, I_{H, X}))=\widetilde{r}_A(\xi')$
$(\xi'\in A\otimes_\C I_{H, X})$.
Lemma
\ref{bialg:lemma:tildeI}
makes these definitions allowable. 
For simplicity of notation, we continue to write
$l_A$ and $r_A$
for the $\C$-linear maps
$l_A|_{I_{H, X}\tildeotimes A}: I_{H, X}\tildeotimes A
\to A$
and
$r_A|_{A\tildeotimes I_{H, X}}: A\tildeotimes I_{H, X}
\to A$, respectively.
\begin{proposition}\label{hxalg:proposition:lr}
$l_A, r_A\in\Hom(\Alg_{(H, X)})$.
Moreover,
$l: \tildeotimes(I_{H, X}\times\id)\to\id$
and
$r: \tildeotimes(\id\times I_{H, X})\to\id$
are natural transformations.
\end{proposition}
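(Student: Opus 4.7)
The plan is to verify the four conditions defining a morphism of $\Alg_{(H,X)}$ for $l_A$ (and symmetrically for $r_A$), and then check naturality squares. The four things to establish for $l_A$ are: (i) it respects the $W_X^H\times W_X^H$-grading; (ii) it is a $\C$-algebra homomorphism; (iii) it intertwines $\mu_l$; (iv) it intertwines $\mu_r$. The key observation throughout is that $(I_{H,X})_{\alpha,\beta}=\{0\}$ unless $\alpha=\beta$, so every homogeneous generator of $I_{H,X}\tildeotimes A$ can be represented as $f^\star T_\alpha\otimes a+I_2$ with $a\in A_{\alpha,\beta}$; and the evaluation at $1_{M_H}$ satisfies $(f^\star T_\alpha)(1_{M_H})=f$, since $T_\alpha(1_{M_H})=1_{M_H}$ by \eqref{hxbialg:eq:1MH} and \eqref{hxalg:eq:Ta}.

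For (i), given the representation above, $l_A(f^\star T_\alpha\otimes a+I_2)=\mu_l^A(f)a$; since $\mu_l^A(f)\in A_{1,1}$ and $a\in A_{\alpha,\beta}$, condition (2) of Definition \ref{hxalg:definition:hxalg} places this product in $A_{\alpha,\beta}$, as required. For (iii) and (iv), direct substitution of $\mu_l^{I_{H,X}\tildeotimes A}(f)=f^\star T_1\otimes 1_A+I_2$ and $\mu_r^{I_{H,X}\tildeotimes A}(f)=T_1\otimes\mu_r^A(f)+I_2$ gives $l_A(\mu_l^{I_{H,X}\tildeotimes A}(f))=\mu_l^A(f)\cdot 1_A=\mu_l^A(f)$ and $l_A(\mu_r^{I_{H,X}\tildeotimes A}(f))=\mu_l^A(1_{M_H})\mu_r^A(f)=\mu_r^A(f)$, using that $\mu_l^A$ is unital.

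The main obstacle is (ii), multiplicativity. Writing two homogeneous generators $U_1\otimes a_1+I_2$ and $U_2\otimes a_2+I_2$ with $U_i=f_i^\star T_{\alpha_i}$ and $a_1\in A_{\alpha_1,\gamma_1}$, I need to compare
\[
l_A\bigl((U_1\otimes a_1)(U_2\otimes a_2)+I_2\bigr)=\mu_l^A\bigl((U_1U_2)(1_{M_H})\bigr)\,a_1a_2
\]
with $\mu_l^A(f_1)\,a_1\,\mu_l^A(f_2)\,a_2$. Using $(U_1U_2)(1_{M_H})=f_1\cdot T_{\alpha_1}(f_2)$ together with the multiplicativity of $\mu_l^A$, and then applying the bialgebroid axiom \eqref{hxalg:eq:bialgebroid} in the form $a_1\mu_l^A(f_2)=\mu_l^A(T_{\alpha_1}(f_2))a_1$, the two expressions coincide. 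This is where condition (5) of Definition \ref{hxalg:definition:hxalg} does the crucial work.

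The same program carries over to $r_A$: represent homogeneous elements as $a\otimes g^\star T_\beta+I_2$ with $a\in A_{\alpha,\beta}$, observe $(g^\star T_\beta)(1_{M_H})=g$, and use the right half of \eqref{hxalg:eq:bialgebroid} to verify multiplicativity. Finally, naturality of $l$ (respectively $r$) reduces to the identity $\varphi\circ\mu_l^A=\mu_l^B$ (respectively $\varphi\circ\mu_r^A=\mu_r^B$) from Definition \ref{hxlag:proposition:hxalghom}: for $\varphi:A\to B$ in $\Alg_{(H,X)}$,
\[
\varphi\bigl(l_A(U\otimes a+I_2)\bigr)=\varphi(\mu_l^A(U(1_{M_H}))a)=\mu_l^B(U(1_{M_H}))\varphi(a)=l_B\bigl((\id_{I_{H,X}}\tildeotimes\varphi)(U\otimes a+I_2)\bigr),
\]
and symmetrically for $r$. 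This completes the proposition.
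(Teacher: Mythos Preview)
Your proof is correct. The paper does not supply a proof of this proposition---it is stated and then immediately used to conclude the proof of Proposition~\ref{hxbialg:proposition:alghxpretensor}---so your direct verification of the $(H,X)$-algebra-homomorphism axioms and the naturality squares is exactly the routine check the authors left to the reader. The crucial use of axiom~(5) of Definition~\ref{hxalg:definition:hxalg} to pass $\mu_l^A(f_2)$ across $a_1$ (and symmetrically $\mu_r^A$ for $r_A$) is the only nontrivial point, and you handle it correctly.
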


The proof of
Proposition
\ref{hxbialg:proposition:alghxpretensor}
is therefore complete;
that is,
$\Alg_{(H, X)}$, together with
$\tildeotimes, -\tildeotimes -\tildeotimes -,
I_{H, X}, a^l, a^r, l, r$, is a pre-tensor category.

A categorical bialgebroid of the pre-tensor category is an analogue of the coalgebra object 
of the tensor category.
\begin{definition}\label{hxbialg:definition:catbialg}
A categorical bialgebroid $A$ of a pre-tensor category $C$
is an object of $C$, together with
morphisms
$\Delta: A\to A\tildeotimes A$,
called coproduct,
and
$\varepsilon: A\to I$,
called counit,
of $C$
such that\/$:$
\begin{eqnarray}
&&\label{hxbialg:eq:catbialg1}
a^l_{A, A, A}\circ\Delta\tildeotimes\id_A\circ\Delta
=a^r_{A, A, A}\circ\id_A\tildeotimes\Delta\circ\Delta;
\\
&&\label{hxbialg:eq:catbialg2}
l_A\circ\varepsilon\tildeotimes\id_A\circ\Delta
=
\id_A
=
r_A\circ\id_A\tildeotimes\varepsilon\circ\Delta.
\end{eqnarray}
\end{definition}

This definition agrees with
the $\times_B$-bialgebra
\cite[Definition 4.5]{takeuchi},
which is equivalent to the bialgebroid
\cite{brzezinski};
in fact, $\times_B$-bialgebra
is exactly a categorical bialgebroid
of the pre-tensor category
$\Alg_{B\otimes \bar{B}}$
consisting of algebras over $B\otimes\bar{B}$,
together with functors
$\times_B$, $-\times_B-\times_B-$,
an object $\End(B)$,
and
natural transformations
$\alpha$, $\alpha'$,
$\theta'$, $\theta$
(For notations, see
\cite[Section 3]{takeuchi}).
\begin{definition}\label{hxbialg:definition:hxbialg}
An $(H, X)$-bialgebroid is a categorical bialgebroid
of the pre-tensor category $\Alg_{(H, X)}$.
\end{definition}
\begin{remark}
\label{hxbialg:remark:hxbialgbialg}
The $(H, X)$-bialgebroid is a bialgebroid
\cite{lu}:
the coproduct is exactly the same as
$\Delta: A\to A\tildeotimes A\subset A\otimes_\C A/I_2(A, A)$;
the counit $\varepsilon'$ is defined by
$\varepsilon'(a)=\varepsilon(a)(1_{M_H})\in M_H$
$(a\in A)$;
the left and the right actions of $M_H$ on
$A$
are respectively given by the multiplications of 
$\mu_l^{A}(f)$ and $\mu_r^{A}(f)$ $(f\in M_H)$
from the left.
\end{remark}
\section{$(H, X)$-bialgebroid $A_R$}
\label{section:ar}
In this section, we construct
an $(H, X)$-bialgebroid
$A_\sigma$
associated with
a morphism $\sigma: X\barotimes X\to X\barotimes X$ of $\VectH$
(cf. \cite{etingof1998}).
As a result,
every
bijective
dynamical Yang-Baxter map
$R(\lambda)$
satisfying the weight zero condition
\eqref{catlop:eq:weightzero}
gives birth to
the $(H, X)$-bialgebroid
$A_R:=A_{\sigma_R}$
(For $\sigma_R$, see 
Proposition \ref{lop:proposition:sigmaR}).

For the definition, we require the set $X$ to be finite;
and we will assume that $X$ is finite, whenever
we deal with the $(H, X)$-bialgebroid $A_\sigma$.

Let $HX$ denote the set
\begin{equation}\label{ar:eq:HX}
M_H\otimes_\C M_H\sqcup\{ L_{ab}; a, b\in X\}
\sqcup\{ (L^{-1})_{ab}; a, b\in X\}.
\end{equation}
Here $M_H$ is the $\C$-algebra of all $\C$-valued functions on $H$.
Moreover, we write $\C\{ HX\}$ for
the free $\C$-algebra
on the set $HX$
\cite[Section I.2]{kassel}.
\begin{definition}\label{ar:definition:ar}
$A_\sigma$ is the quotient of $\C\{ HX\}$
by the two-sided ideal $I_\sigma$
whose generators are the following $(1)$--$(5)$.
\begin{enumerate}
\item
$\xi+\xi'-(\xi+\xi')$
$(\xi, \xi'\in M_H\otimes_\C M_H)$,

$c\xi-(c\xi)$
$(c\in\C, \xi\in M_H\otimes_\C M_H)$,

$\xi\xi'-(\xi\xi')$
$(\xi, \xi'\in M_H\otimes_\C M_H)$.

Here the symbol $+$ in $\xi+\xi'$ means the addition in
the algebra $\C\{ HX\}$,
while
the symbol $+$ in $(\xi+\xi')(\in HX)$ is the addition in
the algebra $M_H\otimes_\C M_H$.
The notations of the scalar products and products
in the other generators
are similar.
\item
$\sum_{c\in X}L_{ac}(L^{-1})_{cb}-\delta_{ab}\emptyset$,

$\sum_{c\in X}(L^{-1})_{ac}L_{cb}-\delta_{ab}\emptyset$
$(a, b\in X)$.

Here $\emptyset(\in\C\{ HX\})$ and $\delta_{ab}$
mean the empty word
and Kronecker's delta symbol, respectively.
\item
$(T_{[a]}(f)\otimes 1_{M_H})L_{ab}-L_{ab}(f\otimes 1_{M_H})$,

$(1_{M_H}\otimes T_{[b]}(f))L_{ab}-L_{ab}(1_{M_H}\otimes f)$,

$(f\otimes 1_{M_H})(L^{-1})_{ab}
-(L^{-1})_{ab}(T_{[b]}(f)\otimes 1_{M_H})$,

$(1_{M_H}\otimes f)(L^{-1})_{ab}-(L^{-1})_{ab}(1_{M_H}\otimes T_{[a]}(f))$
$(a, b\in X, f\in M_H)$.

For $T_{[a]}(f)$ and $1_{M_H}$,
see
$\eqref{hxalg:eq:Ta}$
and $\eqref{hxbialg:eq:1MH}$, respectively.
\item
$\sum_{x, y\in X}(\sigma_{ac}^{xy}\otimes 1_{M_H})L_{yd}L_{xb}
-
\sum_{x, y\in X}(1_{M_H}\otimes \sigma_{xy}^{bd})L_{cy}L_{ax}$
$(a, b, c, d\in X)$.

Here $\sigma_{ac}^{xy}\in M_H$ is defined by
$\sigma_{ac}^{xy}(\lambda)=\sigma(\lambda)_{(a, c) (x, y)}$
(See \eqref{intro:equation:matrix}).
\item
$\emptyset-1_{M_H}\otimes 1_{M_H}$.
\end{enumerate}
\end{definition}
The sums in the generators $(2)$ and $(4)$ make sense,
because the set $X$ is finite.

Let $\widetilde{HX}$ denote the set of all words
$x_1x_2\cdots x_n$ in the alphabet $HX(\ni x_i)$
including the empty word $\emptyset$.
We define the map $w_l: \widetilde{HX}\to W_X^H$ 
as follows
(For $W_X^H$, see Section \ref{section:hxalg}):
if $x\in HX$, then
\[
w_l(x)=
\left\{
\begin{array}{ll}
1,&x\in M_H\otimes_\C M_H,
\\  
{[a]},&x=L_{ab},
\\
{[b^{-1}]},&x=(L^{-1})_{ab};
\end{array}
\right.
\]
if $w=x_1x_2\cdots x_n\in \widetilde{HX}$, then
$w_l(w)=w_l(x_1)w_l(x_2)\cdots w_l(x_n)$;
and we set $w_l(\emptyset)=1$.

The map  $w_r: \widetilde{HX}\to W_X^H$
is similarly defined:
if $x\in HX$, then
\[
w_r(x)=
\left\{
\begin{array}{ll}
1,&x\in M_H\otimes_\C M_H,
\\  
{[b]},&x=L_{ab},
\\
{[a^{-1}]},&x=(L^{-1})_{ab};
\end{array}
\right.
\]
if $w=x_1x_2\cdots x_n\in \widetilde{HX}$, then
$w_r(w)=w_r(x_1)w_r(x_2)\cdots w_r(x_n)$;
and we write $w_r(\emptyset)=1$.

Let $\alpha$ and $\beta$ be elements of $W_X^H$.
We write $\C\{ HX\}_{\alpha, \beta}$
and
$(A_\sigma)_{\alpha, \beta}$
for the subspaces of $\C\{ HX\}$ 
and
$A_\sigma$
generated by
the sets
$\{ w; w\in \widetilde{HX},
w_l(w)=\alpha, w_r(w)=\beta\}$
and
$\{ w+I_\sigma; w\in \widetilde{HX},
w_l(w)=\alpha, w_r(w)=\beta\}$,
respectively.
Obviously, $\C\{ HX\}=\oplus_{\alpha,\beta\in W_X^H}
\C\{ HX\}_{\alpha,\beta}$, while
the algebra $A_\sigma$ is not always a direct sum of the
components
$\{ (A_\sigma)_{\alpha, \beta};
\alpha,
\beta\in W_X^H\}$.
For more details, see 
the end of this section.

The maps $\mu_l^{A_\sigma}, \mu_r^{A_\sigma}: M_H\to
(A_\sigma)_{1,1}$ are defined as follows.
For $f\in M_H$,
\begin{equation}\label{ar:eq:mulmur}
\mu_l^{A_\sigma}(f)=f\otimes 1_{M_H}+I_\sigma;
\mu_r^{A_\sigma}(f)=1_{M_H}\otimes f+I_\sigma.
\end{equation}
\begin{proposition}
$A_\sigma$, together with $\{ (A_\sigma)_{\alpha, \beta};
\alpha, \beta\in W_X^H\}$,
$\mu_l^{A_\sigma}$, and
$\mu_r^{A_\sigma}$,
is an object of $\Alg_{(H, X)}$
$($See Definition $\ref{hxalg:definition:hxalg})$.
\end{proposition}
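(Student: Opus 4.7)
The plan is to verify the five axioms of Definition~\ref{hxalg:definition:hxalg} in turn, exploiting the five families of generators defining the ideal $I_\sigma$. Axioms (1), (2), and (3) are formal consequences of the construction: every word in $\widetilde{HX}$ carries a unique bigrading under the multiplicative maps $w_l, w_r$, with $w_l(\emptyset) = w_r(\emptyset) = 1$. So the free algebra splits as $\C\{HX\} = \bigoplus_{\alpha,\beta} \C\{HX\}_{\alpha,\beta}$, concatenation of words multiplies the bigrading, and passing to cosets modulo $I_\sigma$ immediately yields $A_\sigma = \sum_{\alpha,\beta} (A_\sigma)_{\alpha,\beta}$, the inclusion $(A_\sigma)_{\alpha,\beta}(A_\sigma)_{\gamma,\delta} \subset (A_\sigma)_{\alpha\gamma, \beta\delta}$, and $1_{A_\sigma} = \emptyset + I_\sigma \in (A_\sigma)_{1,1}$.

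For axiom (4), the well-definedness and multiplicativity of $\mu_l^{A_\sigma}$ and $\mu_r^{A_\sigma}$ as $\C$-algebra homomorphisms $M_H \to A_\sigma$ is precisely the content of relations (1) and (5) of Definition~\ref{ar:definition:ar}: they ensure that the algebra structure of $M_H \otimes_\C M_H$ descends to the quotient and that $\emptyset$ represents the unit $1_{M_H} \otimes 1_{M_H}$. The images lie in $(A_\sigma)_{1,1}$ because both $f \otimes 1_{M_H}$ and $1_{M_H} \otimes f$ are single letters of bigrading $(1,1)$. The commutativity \eqref{hxalg:eq:commutemulmur} reduces to the commutativity of $M_H$: $(f \otimes 1)(1 \otimes g) = f \otimes g = (1 \otimes g)(f \otimes 1)$, both sides identified in $A_\sigma$ via relation (1).

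The substance of the proposition is axiom (5), which I establish by induction on word length: for every word $w \in \widetilde{HX}$ of bigrading $(\alpha, \beta)$, I claim $w \, \mu_l^{A_\sigma}(f) \equiv \mu_l^{A_\sigma}(T_\alpha(f)) \, w$ and $w \, \mu_r^{A_\sigma}(f) \equiv \mu_r^{A_\sigma}(T_\beta(f)) \, w$ modulo $I_\sigma$. For single letters, this is exactly what the four equations in family (3) of Definition~\ref{ar:definition:ar} assert, after using the semigroup identity $T_\alpha \circ T_\beta = T_{\alpha\beta}$ together with $[c^{-1}] = [c]^{-1}$ in $W_X^H$ to recast the two $(L^{-1})_{cd}$-relations into the required twisted-commutation form; for letters in $M_H \otimes_\C M_H$ (grading $(1,1)$, so $T_\alpha = \id$), the claim follows from relation (1) and the commutativity of $M_H$. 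The inductive step $w = uv$ is a two-step slide: $uv \cdot \mu_l^{A_\sigma}(f) \equiv u \cdot \mu_l^{A_\sigma}(T_{w_l(v)}(f)) \cdot v \equiv \mu_l^{A_\sigma}(T_{w_l(u) w_l(v)}(f)) \cdot uv = \mu_l^{A_\sigma}(T_\alpha(f)) \cdot uv$, applying the hypothesis twice and the semigroup law for $T$; the $\mu_r^{A_\sigma}$ case is symmetric.

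The main point of care is translating the four equations in family (3) of $I_\sigma$ into the uniform statement (5) across both letter types $L_{ab}$ (bigrading $([a],[b])$) and $(L^{-1})_{ab}$ (bigrading $([b^{-1}],[a^{-1}])$), keeping the twist factor $T_\alpha$ aligned with the source letter's bigrading rather than with its subscript indices. Beyond this bookkeeping, the verification uses only the generators (1)--(5) of $I_\sigma$; notably, neither the weight zero condition \eqref{catlop:eq:weightzero} nor relation (4) of the ideal (the $RLL=LLR$ type relation) is invoked for this proposition, consistent with the closing remark of the Introduction that the QDYBE plays no role in the construction of $A_R$ itself.
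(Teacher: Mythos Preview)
Your verification is correct and complete; it is exactly the routine check the paper has in mind, which is why the paper states the proposition without proof. Your careful alignment of the relations in family~(3) with the required twisted-commutation form for both $L_{ab}$ and $(L^{-1})_{ab}$, and the inductive slide using $T_\alpha\circ T_\beta=T_{\alpha\beta}$, are precisely the points one must track, and you have done so.
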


The next task is to define the coproduct $\Delta:
A_\sigma\to A_\sigma\tildeotimes A_\sigma$.
Let $\Delta_{HX}$ denote
the following map from
$HX$ to 
$A_\sigma\otimes_\C A_\sigma$:
\begin{eqnarray*}
&&
\Delta_{HX}(\xi)
=
\mu_l^{A_\sigma}\otimes\mu_r^{A_\sigma}(\xi)
\quad(\xi\in M_H\otimes_\C M_H);
\\
&&
\Delta_{HX}(L_{ab})
=
\sum_{c\in X}
L_{ac}+I_\sigma\otimes L_{cb}+I_\sigma;
\\
&&
\Delta_{HX}((L^{-1})_{ab})
=\sum_{c\in X}(L^{-1})_{cb}+I_\sigma\otimes (L^{-1})_{ac}+I_\sigma.
\end{eqnarray*}
This map uniquely induces a $\C$-algebra homomorphism
$\bar{\Delta}: \C\{ HX\}
\to A_\sigma\otimes_\C A_\sigma$
such that
$\bar{\Delta}(v)=\Delta_{HX}(v)$
$(v\in HX)$.
\begin{proposition}\label{ar:proposition:DIRI2}
$\bar{\Delta}(I_\sigma)\subset I_2=I_2(A_\sigma, A_\sigma)$
$($See $\eqref{exp:eq:I2}$$)$.
\end{proposition}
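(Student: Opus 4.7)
The plan is to factor through the canonical projection $\pi:A_\sigma\otimes_\C A_\sigma\to A_\sigma\tildeotimes A_\sigma=(A_\sigma\otimes_\C A_\sigma)/I_2$. By Proposition \ref{exphxalg:proposition:AtildeotimesB} the target carries a well-defined $\C$-algebra structure even though $I_2$ is only a right ideal, so $\pi$ is a $\C$-algebra homomorphism and the composite $\widetilde{\Delta}:=\pi\circ\bar{\Delta}:\C\{HX\}\to A_\sigma\tildeotimes A_\sigma$ is a $\C$-algebra homomorphism whose kernel is a \emph{two-sided} ideal of the free algebra $\C\{HX\}$. Since the claim $\bar{\Delta}(I_\sigma)\subset I_2$ is equivalent to $I_\sigma\subset\ker\widetilde{\Delta}$, it suffices to verify $\bar{\Delta}(g)\in I_2$ on each of the five families of generators $g$ of $I_\sigma$ listed in Definition \ref{ar:definition:ar}.

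Four of the five families actually give $\bar{\Delta}(g)=0$ already in $A_\sigma\otimes_\C A_\sigma$. Type (1) vanishes because $\bar{\Delta}$ restricted to $M_H\otimes_\C M_H$ coincides with $\mu_l^{A_\sigma}\otimes\mu_r^{A_\sigma}$, which is a $\C$-algebra homomorphism by \eqref{hxalg:eq:commutemulmur}. Type (5) vanishes because relation (5) of $A_\sigma$ gives $\mu_l^{A_\sigma}(1_{M_H})=\mu_r^{A_\sigma}(1_{M_H})=1_{A_\sigma}$, so both $\bar{\Delta}(\emptyset)$ and $\bar{\Delta}(1_{M_H}\otimes 1_{M_H})$ equal $1_{A_\sigma}\otimes 1_{A_\sigma}$. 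Type (2) vanishes because expanding $\bar{\Delta}\bigl(\sum_c L_{ac}(L^{-1})_{cb}\bigr)=\sum_{c,d,e}L_{ad}(L^{-1})_{eb}\otimes L_{dc}(L^{-1})_{ce}$ and contracting the second tensor slot via relation (2) of $A_\sigma$ collapses it to $\delta_{ab}1_{A_\sigma}\otimes 1_{A_\sigma}=\bar{\Delta}(\delta_{ab}\emptyset)$. Type (3) is handled analogously: one expands $\bar{\Delta}$ on both sides and uses relation (3) of $A_\sigma$ to commute $\mu_l^{A_\sigma}(f)$ or $\mu_r^{A_\sigma}(f)$ past $L_{ab}$ or $(L^{-1})_{ab}$ in whichever tensor slot it sits in, producing the same expression on both sides.

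The genuine obstacle is the RLL-type generator (4), where $I_2$ really enters. The plan is to expand the left-hand side as $\sum_{x,y,u,v}\mu_l^{A_\sigma}(\sigma_{ac}^{xy})L_{yu}L_{xv}\otimes L_{ud}L_{vb}$, apply relation (4) of $A_\sigma$ inside the first tensor factor to rewrite the inner sum $\sum_{x,y}\mu_l^{A_\sigma}(\sigma_{ac}^{xy})L_{yu}L_{xv}$ as $\sum_{x,y}\mu_r^{A_\sigma}(\sigma_{xy}^{vu})L_{cy}L_{ax}$, then invoke $\mu_r^{A_\sigma}(f)\otimes 1\equiv 1\otimes\mu_l^{A_\sigma}(f)\pmod{I_2}$ to swap the resulting scalar across the tensor, and finally apply relation (4) of $A_\sigma$ a second time inside the now-right tensor slot to convert $\sum_{u,v}\mu_l^{A_\sigma}(\sigma_{xy}^{vu})L_{ud}L_{vb}$ into $\sum_{u,v}\mu_r^{A_\sigma}(\sigma_{vu}^{bd})L_{yu}L_{xv}$. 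A careful renaming of the four summation indices then identifies the result exactly with $\bar{\Delta}\bigl(\sum_{x,y}(1_{M_H}\otimes\sigma_{xy}^{bd})L_{cy}L_{ax}\bigr)=\sum_{x,y,p,q}L_{cp}L_{aq}\otimes\mu_r^{A_\sigma}(\sigma_{xy}^{bd})L_{py}L_{qx}$. The main difficulty is exactly this bookkeeping: two applications of relation (4), one in each tensor slot, must straddle a single $I_2$-swap, and one must observe that $\bar{\Delta}(1_{M_H}\otimes f)=1_{A_\sigma}\otimes\mu_r^{A_\sigma}(f)$ lands with $\mu_r^{A_\sigma}(f)$ in the \emph{second} tensor factor, which is precisely where the swapped, twice-rewritten left-hand side ends up.
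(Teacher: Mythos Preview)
Your proof is correct and follows essentially the same approach as the paper's: reduce to generators (the paper phrases this as $\bar{\Delta}(\C\{HX\})I_2\subset I_2$, which is equivalent to your observation that $\pi\circ\bar{\Delta}$ is an algebra homomorphism into $A_\sigma\tildeotimes A_\sigma$), note that generators (1)--(3) and (5) give $\bar{\Delta}(g)=0$, and handle (4) by applying relation~(4) once in each tensor slot. The only stylistic difference is that the paper packages the type-(4) computation more compactly, writing the difference directly as
\[
\sum_{x,y,e,f}\bigl(\mu_r^{A_\sigma}(\sigma_{xy}^{ef})\otimes 1_{A_\sigma}-1_{A_\sigma}\otimes\mu_l^{A_\sigma}(\sigma_{xy}^{ef})\bigr)\bigl(L_{cy}L_{ax}+I_\sigma\otimes L_{fd}L_{eb}+I_\sigma\bigr)\in I_2,
\]
whereas you arrive at the same conclusion by chaining two applications of relation~(4) through a single $I_2$-swap; the underlying computation is identical.
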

\begin{proof}
Since the right ideal $I_2$ satisfies that
$\bar{\Delta}(\C\{ HX\})I_2\subset I_2$,
it suffices to prove that
$\bar{\Delta}(x)\in I_2$
for every generator $x$ in Definition \ref{ar:definition:ar}.

If $x$ is one of the generators
$(1)$--$(3)$ and $(5)$,
then 
$\bar{\Delta}(x)=0_{A_\sigma\otimes_\C A_\sigma}\in I_2$.
Because
$\sum_{x, y\in X}(\sigma_{ac}^{xy}\otimes 1_{M_H})L_{yd}L_{xb}
-\sum_{x, y\in X}(1_{M_H}\otimes
\sigma_{xy}^{bd})L_{cy}L_{ax}\in I_\sigma$,
the definitions of $\bar{\Delta}$,
$\mu_l^{A_\sigma}$, and
$\mu_r^{A_\sigma}$ induce that
\begin{eqnarray*}
&&\bar{\Delta}(\sum_{x, y\in X}(\sigma_{ac}^{xy}\otimes 1_{M_H})
L_{yd}L_{xb}
-\sum_{x, y\in X}(1_{M_H}\otimes \sigma_{xy}^{bd})L_{cy}L_{ax})
\\
&=&
\sum_{x, y, e, f\in X}
(\mu_r^{A_\sigma}(\sigma_{xy}^{ef})\otimes 1_{A_\sigma}-
1_{A_\sigma}\otimes\mu_l^{A_\sigma}(\sigma_{xy}^{ef}))
(L_{cy}L_{ax}+I_\sigma\otimes L_{fd}L_{eb}+I_\sigma)
\\
&\in&
I_2.
\end{eqnarray*}
This completes the proof.
\end{proof}

We define the $\C$-linear map $\widetilde{\Delta}:
A_\sigma\to A_\sigma\otimes_\C A_\sigma/I_2$
by 
\[
\widetilde{\Delta}(v+I_\sigma)=\bar{\Delta}(v)+I_2
\quad(v\in\C\{ HX\}).
\]
By virtue of Proposition \ref{ar:proposition:DIRI2},
the map $\widetilde{\Delta}$ is well defined.

Since
$\bar{\Delta}(\C\{ HX\}_{\alpha,\beta})
\subset
\sum_{\gamma\in W_X^H}
(A_\sigma)_{\alpha,\gamma}\otimes(A_\sigma)_{\gamma,\beta}$
$(\alpha, \beta\in W_X^H)$,
$\widetilde{\Delta}$
satisfies
$\widetilde{\Delta}(A_\sigma)\subset A_\sigma\tildeotimes A_\sigma$.
Set $\Delta=\widetilde{\Delta}|_{A_\sigma}:
A_\sigma\to A_\sigma\tildeotimes A_\sigma$.
On account of Definition
\ref{hxlag:proposition:hxalghom},
\begin{proposition}
$\Delta$ is a morphism of $\Alg_{(H, X)}$.
\end{proposition}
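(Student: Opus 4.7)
The plan is to verify directly the three conditions of Definition~\ref{hxlag:proposition:hxalghom}: that $\Delta$ is a $\C$-algebra homomorphism, that it respects the $W_X^H\times W_X^H$-grading, and that it intertwines the maps $\mu_l^{\bullet}, \mu_r^{\bullet}$ on the two sides. The hard work (that the defining relations are preserved) has already been done in Proposition~\ref{ar:proposition:DIRI2}, so what remains is bookkeeping.

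First I would argue that $\Delta$ is a $\C$-algebra homomorphism. By the universal property of the free algebra $\C\{HX\}$, the map $\bar\Delta:\C\{HX\}\to A_\sigma\otimes_\C A_\sigma$ is a $\C$-algebra homomorphism. Proposition~\ref{ar:proposition:DIRI2} lets $\bar\Delta$ descend to the $\C$-algebra homomorphism $\widetilde\Delta:A_\sigma\to A_\sigma\otimes_\C A_\sigma/I_2$. Since the product on $A_\sigma\tildeotimes A_\sigma$ is induced from that of the ambient quotient $A_\sigma\otimes_\C A_\sigma/I_2$ (Proposition~\ref{exphxalg:proposition:AtildeotimesB}), once we know that the image of $\widetilde\Delta$ lies inside $A_\sigma\tildeotimes A_\sigma$, the restriction $\Delta=\widetilde\Delta|_{A_\sigma}$ is automatically a $\C$-algebra homomorphism.

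Second I would establish the grading condition, which also supplies the containment $\widetilde\Delta(A_\sigma)\subset A_\sigma\tildeotimes A_\sigma$. Since $\C\{HX\}=\bigoplus_{\alpha,\beta}\C\{HX\}_{\alpha,\beta}$ and $\bar\Delta$ is multiplicative with $w_l,w_r$ multiplicative on words, it suffices to check on each letter $v\in HX$ that $\Delta_{HX}(v)\in\sum_{\gamma\in W_X^H}(A_\sigma)_{w_l(v),\gamma}\otimes_\C(A_\sigma)_{\gamma,w_r(v)}$. For $\xi\in M_H\otimes_\C M_H$ (grades $(1,1)$), the image $\mu_l^{A_\sigma}\otimes\mu_r^{A_\sigma}(\xi)$ lies in $(A_\sigma)_{1,1}\otimes_\C(A_\sigma)_{1,1}$ by~\eqref{ar:eq:mulmur}. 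For $L_{ab}$ (grades $([a],[b])$), the summand $L_{ac}\otimes L_{cb}$ sits in $(A_\sigma)_{[a],[c]}\otimes_\C(A_\sigma)_{[c],[b]}$, so $\gamma=[c]$ works. For $(L^{-1})_{ab}$ (grades $([b^{-1}],[a^{-1}])$), the summand $(L^{-1})_{cb}\otimes(L^{-1})_{ac}$ lies in $(A_\sigma)_{[b^{-1}],[c^{-1}]}\otimes_\C(A_\sigma)_{[c^{-1}],[a^{-1}]}$, so $\gamma=[c^{-1}]$ works. Summing over $c$ and passing to the quotient modulo $I_2$ puts $\Delta(v+I_\sigma)$ into $(A_\sigma\tildeotimes A_\sigma)_{w_l(v),w_r(v)}$, as required.

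Third I would check the identities $\Delta\circ\mu_l^{A_\sigma}=\mu_l^{A_\sigma\tildeotimes A_\sigma}$ and $\Delta\circ\mu_r^{A_\sigma}=\mu_r^{A_\sigma\tildeotimes A_\sigma}$. For $f\in M_H$,
\[
\Delta(\mu_l^{A_\sigma}(f))
=\bar\Delta(f\otimes 1_{M_H})+I_2
=\mu_l^{A_\sigma}(f)\otimes\mu_r^{A_\sigma}(1_{M_H})+I_2
=\mu_l^{A_\sigma}(f)\otimes 1_{A_\sigma}+I_2,
\]
which is $\mu_l^{A_\sigma\tildeotimes A_\sigma}(f)$ by the definition in Section~\ref{section:exphxalg}; the computation for $\mu_r$ is symmetric. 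I expect no real obstacle: the only point requiring care is using the first group of relations in Definition~\ref{ar:definition:ar} to ensure that $\xi\mapsto\xi+I_\sigma$ really is a $\C$-algebra map from $M_H\otimes_\C M_H$ into $A_\sigma$, so that $\Delta_{HX}$ restricted to $M_H\otimes_\C M_H$ is consistent with the quotient structure; this is automatic from the generators~(1).
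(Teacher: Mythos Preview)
Your proposal is correct and follows exactly the route the paper takes: the paper records the grading containment $\bar{\Delta}(\C\{HX\}_{\alpha,\beta})\subset\sum_{\gamma}(A_\sigma)_{\alpha,\gamma}\otimes(A_\sigma)_{\gamma,\beta}$ just before the proposition and then simply invokes Definition~\ref{hxlag:proposition:hxalghom}, leaving the $\mu_l,\mu_r$ compatibility and the algebra-homomorphism check implicit. You have merely written out those omitted verifications in full.
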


The next is to construct the counit $\varepsilon:
A_\sigma\to I_{H, X}$
(For $I_{H, X}$, see Proposition \ref{exphxalg:proposition:IHX}).
Let $\varepsilon_{HX}$ denote
the following map from
$HX$ to 
$I_{H, X}$:
\begin{eqnarray*}
&&
\varepsilon_{HX}(\xi)
=
m(\xi)^\star T_{1}
\in(I_{H, X})_{1,1}
=(I_{H, X})_{w_l(\xi),w_r(\xi)}
\quad(\xi\in M_H);
\\
&&
\varepsilon_{HX}(L_{ab})
=
\delta_{ab}T_{[a]}
(=\delta_{ab}T_{[b]})\in
(I_{H, X})_{[a],[b]}
=
(I_{H, X})_{w_l(L_{ab}),w_r(L_{ab})};
\\
&&
\varepsilon_{HX}((L^{-1})_{ab})
=\delta_{ab}T_{[b^{-1}]}
(=\delta_{ab}T_{[a^{-1}]})
\\
&&\qquad\qquad\qquad\quad
\in
(I_{H, X})_{[b^{-1}],[a^{-1}]}
=
(I_{H, X})_{w_l((L^{-1})_{ab}),w_r((L^{-1})_{ab})}.
\end{eqnarray*}
Here, $m: M_H\otimes_\C M_H\to M_H$ is a temporary
notation of the multiplication
defined by $m(f\otimes g)=fg$
$(f, g\in M_H)$;
for $m(\xi)^\star$, see \eqref{exphxalg:eq:fhat}.
This map $\varepsilon_{HX}$
uniquely induces a $\C$-algebra homomorphism
$\bar{\varepsilon}: \C\{ HX\}
\to I_{H, X}$
such that
$\bar{\varepsilon}(v)=\varepsilon_{HX}(v)$
$(v\in HX)$.
\begin{proposition}\label{ar:proposition:eIR0}
$\bar{\varepsilon}(I_\sigma)=\{ 0\}$.
\end{proposition}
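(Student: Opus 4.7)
Since $\bar{\varepsilon}$ is a $\C$-algebra homomorphism and $I_\sigma$ is the two-sided ideal generated by the elements $(1)$--$(5)$ of Definition \ref{ar:definition:ar}, it suffices to verify that $\bar{\varepsilon}$ annihilates each generator. The plan is to do this by direct calculation, repeatedly using the multiplication rule
\[
(f^\star T_\alpha)(g^\star T_\beta)=(f\cdot T_\alpha(g))^\star T_{\alpha\beta}
\qquad (f,g\in M_H,\ \alpha,\beta\in W_X^H)
\]
in $(I_{H,X})_{\alpha\beta,\alpha\beta}$, together with $T_\alpha T_\beta = T_{\alpha\beta}$.

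For the family $(1)$, note that the multiplication map $m\colon M_H\otimes_\C M_H\to M_H$ is a $\C$-algebra homomorphism, and the assignment $f\mapsto f^\star T_1$ is a $\C$-algebra homomorphism $M_H\to (I_{H,X})_{1,1}$; their composition is the restriction of $\varepsilon_{HX}$ to $M_H\otimes_\C M_H$, which takes care of all the relations in $(1)$. Relation $(5)$ is likewise immediate since $\bar\varepsilon$ preserves the unit and $1_{M_H}^\star T_1=T_1=1_{I_{H,X}}$. For $(2)$, one computes $\bar\varepsilon(L_{ac}(L^{-1})_{cb})=\delta_{ac}\delta_{cb}\,T_{[a][a^{-1}]}=\delta_{ac}\delta_{cb}\,T_1$, so summing over $c$ yields $\delta_{ab}T_1=\delta_{ab}\bar\varepsilon(\emptyset)$; the second relation of $(2)$ is analogous. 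For $(3)$, each of the four relations reduces, after cancelling the factor $\delta_{ab}$, to the identity $T_{[a]}(f)^\star T_{[a]}=T_{[a]}\cdot f^\star T_1$, which follows from the multiplication rule above (or its counterpart with $[a]$ replaced by $[b]$, $[a^{-1}]$, $[b^{-1}]$).

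The main content lies in $(4)$, where the weight zero structure of $\sigma$ enters. Applying $\bar\varepsilon$ to $\sum_{x,y\in X}(\sigma_{ac}^{xy}\otimes 1_{M_H})L_{yd}L_{xb}$, only the term $(x,y)=(b,d)$ survives (because $\varepsilon_{HX}(L_{yd})=\delta_{yd}T_{[d]}$ and $\varepsilon_{HX}(L_{xb})=\delta_{xb}T_{[b]}$), yielding $(\sigma_{ac}^{bd})^\star T_{[d][b]}$. Similarly the second sum contracts to $(\sigma_{ac}^{bd})^\star T_{[c][a]}$. Thus one has to prove
\[
(\sigma_{ac}^{bd})^\star T_{[d][b]}=(\sigma_{ac}^{bd})^\star T_{[c][a]}
\qquad\text{in }\End_\C(M_H).
\]
Evaluating both sides on $g\in M_H$ at $\lambda\in H$ reduces the claim to: for every $\lambda$, either $\sigma_{ac}^{bd}(\lambda)=0$ or $\lambda[c][a]=\lambda[d][b]$. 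But $\sigma_{ac}^{bd}(\lambda)=\sigma(\lambda)_{(a,c)(b,d)}$, and because $\sigma$ is a morphism of $\VectH$, condition \eqref{catvech:eq:defmorphism} applied with $V=W=X\barotimes X$ says precisely that this coefficient vanishes unless $\lambda\cdot_{X\barotimes X}(a,c)=\lambda\cdot_{X\barotimes X}(b,d)$, i.e.\ $(\lambda\cdot c)\cdot a=(\lambda\cdot d)\cdot b$, which in $W_X^H$ is $\lambda[c][a]=\lambda[d][b]$. This is the only non-routine step, and it is exactly where the structure of $\VectH$-morphisms does the work.
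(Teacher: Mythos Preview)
Your proof is correct and follows essentially the same approach as the paper's own proof: both reduce to checking the generators of $I_\sigma$, and for the crucial family~$(4)$ both collapse the sums via the Kronecker deltas from $\varepsilon_{HX}(L_{ab})=\delta_{ab}T_{[a]}$ and then use that $\sigma$ is a morphism of $\VectH$ to conclude $\sigma_{ac}^{bd}(\lambda)=0$ unless $(\lambda c)a=(\lambda d)b$. Your write-up is slightly more detailed in that you also spell out the verifications for families $(1)$--$(3)$ and $(5)$, which the paper leaves implicit.
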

\begin{proof}
We shall have established this proposition,
if we prove that $\bar{\varepsilon}=0$
on every generator of the ideal $I_\sigma$.

We will show the proof only for the generator (4)
in Definition \ref{ar:definition:ar}.
Let $g$ be an element of $M_H$.
By the definition of $\bar{\varepsilon}$,
\begin{eqnarray*}
&&\bar{\varepsilon}
(\sum_{x, y\in X}(\sigma_{ac}^{xy}\otimes 1_{M_H})L_{yd}L_{xb}
-\sum_{x, y\in X}(1_{M_H}\otimes \sigma_{xy}^{bd})L_{cy}L_{ax})
(g)(\lambda)
\\
&&
=
\sigma_{ac}^{bd}(\lambda)(g((\lambda d)b)-
g((\lambda c)a))
\quad(\lambda\in H).
\end{eqnarray*}
Because $\sigma$ is a morphism of $\VectH$,
$\sigma_{ac}^{bd}(\lambda)=0$
unless $(\lambda d)b=(\lambda c)a$.
Hence,
\[\bar{\varepsilon}
(\sum_{x, y\in X}(\sigma_{ac}^{xy}\otimes 1_{M_H})L_{yd}L_{xb}
-\sum_{x, y\in X}(1_{M_H}\otimes \sigma_{xy}^{bd})L_{cy}L_{ax})=0,
\]
which is the desired conclusion.
\end{proof}

We denote by $\varepsilon$
the following $\C$-linear map from
$A_\sigma$ to $I_{H, X}$.
\[
\varepsilon(v+I_\sigma)=\bar{\varepsilon}(v)
\quad(v\in\C\{ HX\}).
\]
Proposition \ref{ar:proposition:eIR0}
makes
this definition allowable.
\begin{proposition}
The map $\varepsilon$ is a morphism of $\Alg_{(H, X)}$.
\end{proposition}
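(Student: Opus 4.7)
The proof breaks into three routine verifications corresponding to the three conditions in Definition \ref{hxlag:proposition:hxalghom}: that $\varepsilon$ is a $\C$-algebra homomorphism, that it respects the $(W_X^H \times W_X^H)$-grading, and that it intertwines the maps $\mu_l$ and $\mu_r$. Since $\varepsilon$ is already well defined on $A_\sigma$ by Proposition \ref{ar:proposition:eIR0}, and $\bar{\varepsilon}: \C\{HX\} \to I_{H,X}$ is a $\C$-algebra homomorphism by the universal property of the free algebra (it is built from the map $\varepsilon_{HX}$), the algebra homomorphism property of $\varepsilon$ is automatic.

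For the grading condition, I would first observe that it suffices to show $\bar{\varepsilon}(\C\{HX\}_{\alpha,\beta}) \subset (I_{H,X})_{\alpha,\beta}$, and that because $I_{H,X}$ is itself an $(H,X)$-algebra (Proposition \ref{exphxalg:proposition:IHX}), its components satisfy $(I_{H,X})_{\alpha,\beta}(I_{H,X})_{\gamma,\delta} \subset (I_{H,X})_{\alpha\gamma,\beta\delta}$. Thus it is enough to verify, generator by generator, that for each $x \in HX$ we have $\varepsilon_{HX}(x) \in (I_{H,X})_{w_l(x),w_r(x)}$. Inspecting the three cases in the definition of $\varepsilon_{HX}$: for $\xi \in M_H \otimes_\C M_H$ we get $m(\xi)^\star T_1 \in (I_{H,X})_{1,1}$; for $L_{ab}$ we get $\delta_{ab} T_{[a]}$, which is either $0$ or lies in $(I_{H,X})_{[a],[a]} = (I_{H,X})_{[a],[b]}$; and analogously for $(L^{-1})_{ab}$. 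The grading is then preserved on words by induction on length.

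For compatibility with $\mu_l^{A_\sigma}$ and $\mu_r^{A_\sigma}$, I would compute directly using \eqref{ar:eq:mulmur}. For $f \in M_H$,
\[
\varepsilon(\mu_l^{A_\sigma}(f)) = \bar{\varepsilon}(f \otimes 1_{M_H}) = m(f \otimes 1_{M_H})^\star T_1 = f^\star T_1 = \mu_l^{I_{H,X}}(f),
\]
and the analogous computation yields $\varepsilon(\mu_r^{A_\sigma}(f)) = (1_{M_H} \cdot f)^\star T_1 = f^\star T_1 = \mu_r^{I_{H,X}}(f)$, using that $\mu_l^{I_{H,X}} = \mu_r^{I_{H,X}}$ sends $f$ to $f^\star T_1$.

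None of the steps is really an obstacle; the closest thing to a subtle point is the grading verification, where one must be careful that the counit's target components are indexed in the order $(w_l, w_r)$ consistent with the conventions of Section \ref{section:hxalg}. In particular, for $L_{ab}$ the factor $\delta_{ab}$ forces the image to be zero precisely when $[a] \neq [b]$ might fail to match the trivial component, so the nontrivial contribution sits correctly in $(I_{H,X})_{[a],[b]}$; the same remark applies to $(L^{-1})_{ab}$. Once these placements are checked, the proof is complete.
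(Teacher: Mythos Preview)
Your proof is correct and follows exactly the routine the paper intends; in fact, the paper states this proposition without proof, having already annotated in the definition of $\varepsilon_{HX}$ that each value lands in $(I_{H,X})_{w_l(x),w_r(x)}$, which is the heart of your grading check. The remaining verifications (algebra homomorphism via the free algebra, compatibility with $\mu_l,\mu_r$) are exactly as you describe.
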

It is immediate that
\eqref{hxbialg:eq:catbialg1}
and
\eqref{hxbialg:eq:catbialg2}
hold on
the generators
$f\otimes g+I_\sigma$
$(f, g\in M_H)$,
$L_{ab}+I_\sigma$, and $(L^{-1})_{ab}+I_\sigma$
$(a, b\in W_X^H)$
of $A_\sigma$.
From Definitions \ref{hxbialg:definition:catbialg}
and \ref{hxbialg:definition:hxbialg},
\begin{proposition}\label{ar:proposition:arhxbialg}
$A_\sigma$ is an $(H, X)$-bialgebroid.
\end{proposition}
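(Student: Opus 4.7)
The plan is to invoke Definitions \ref{hxbialg:definition:catbialg} and \ref{hxbialg:definition:hxbialg}, which reduce the claim to verifying the coassociativity identity \eqref{hxbialg:eq:catbialg1} and the counit identity \eqref{hxbialg:eq:catbialg2} for the triple $(A_\sigma,\Delta,\varepsilon)$. Since $\Delta$ and $\varepsilon$ have just been shown to be morphisms of $\Alg_{(H,X)}$, and since $a^l_{A_\sigma,A_\sigma,A_\sigma}$, $a^r_{A_\sigma,A_\sigma,A_\sigma}$, $l_{A_\sigma}$, $r_{A_\sigma}$ are $\C$-algebra homomorphisms by Propositions \ref{hxbialg:proposition:al}, \ref{hxbialg:proposition:ar}, and \ref{hxalg:proposition:lr}, both sides of each identity are $\C$-algebra homomorphisms out of $A_\sigma$; it therefore suffices to verify the identities on a generating set of the $\C$-algebra $A_\sigma$. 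A convenient choice consists of the classes $\mu_l^{A_\sigma}(f)$ and $\mu_r^{A_\sigma}(g)$, which by \eqref{ar:eq:mulmur} exhaust the image of $M_H\otimes_\C M_H+I_\sigma$, together with $L_{ab}+I_\sigma$ and $(L^{-1})_{ab}+I_\sigma$ for $a,b\in X$.

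First I would dispose of the generators of the form $\mu_l^{A_\sigma}(f)$ and $\mu_r^{A_\sigma}(g)$. Because $\Delta$ and $\varepsilon$ are morphisms in $\Alg_{(H,X)}$, one has $\Delta\circ\mu_l^{A_\sigma}=\mu_l^{A_\sigma\tildeotimes A_\sigma}$ and $\varepsilon\circ\mu_l^{A_\sigma}=\mu_l^{I_{H,X}}$, and similarly for $\mu_r$; both axioms then collapse tautologically to identities that are built into the definitions of $a^l$, $a^r$, $l$, $r$ in Sections \ref{section:exphxalg} and \ref{section:hxbialg}.

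The substantive case is the generator $L_{ab}+I_\sigma$; the argument for $(L^{-1})_{ab}+I_\sigma$ is parallel, using the formulas for $\Delta_{HX}$ and $\varepsilon_{HX}$ on $(L^{-1})_{ab}$. Unwinding the definition of $\Delta_{HX}$ yields
\begin{equation*}
(\Delta\tildeotimes\id_{A_\sigma})\circ\Delta(L_{ab}+I_\sigma)
=\sum_{c,d\in X}(L_{ad}+I_\sigma)\otimes(L_{dc}+I_\sigma)\otimes(L_{cb}+I_\sigma)
\end{equation*}
as an element of $(A_\sigma\tildeotimes A_\sigma)\tildeotimes A_\sigma$, while $(\id_{A_\sigma}\tildeotimes\Delta)\circ\Delta(L_{ab}+I_\sigma)$ produces the same iterated sum as an element of $A_\sigma\tildeotimes(A_\sigma\tildeotimes A_\sigma)$. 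Applying $a^l_{A_\sigma,A_\sigma,A_\sigma}$ and $a^r_{A_\sigma,A_\sigma,A_\sigma}$ carries these two expressions into the common target $A_\sigma\tildeotimes A_\sigma\tildeotimes A_\sigma$ via the isomorphism $\Gamma$ of Section \ref{section:exphxalg}, where they agree term by term, establishing \eqref{hxbialg:eq:catbialg1}. For \eqref{hxbialg:eq:catbialg2}, applying $\varepsilon\tildeotimes\id_{A_\sigma}$ to $\Delta(L_{ab}+I_\sigma)=\sum_c (L_{ac}+I_\sigma)\otimes(L_{cb}+I_\sigma)$ retains only the $c=a$ summand, giving $T_{[a]}\otimes(L_{ab}+I_\sigma)$; then $l_{A_\sigma}$ maps this to $\mu_l^{A_\sigma}(T_{[a]}(1_{M_H}))(L_{ab}+I_\sigma)=L_{ab}+I_\sigma$, because $T_{[a]}(1_{M_H})=1_{M_H}$ by \eqref{hxalg:eq:Ta} and \eqref{hxbialg:eq:1MH} and $\mu_l^{A_\sigma}$ is unital. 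The right counit identity is entirely symmetric.

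The chief obstacle is purely bookkeeping: one must keep careful track of the right ideals $I_2(A_\sigma,A_\sigma)$, $I_3^l(A_\sigma,A_\sigma,A_\sigma)$, $I_3^r(A_\sigma,A_\sigma,A_\sigma)$ used to build the matrix tensor products, and invoke the isomorphism $\Gamma$ to identify the left- and right-associated versions of $A_\sigma\tildeotimes A_\sigma\tildeotimes A_\sigma$. Once this is in hand, each axiom reduces to an elementary equality of finite sums indexed over $X$, immediate from the defining formulas for $\Delta_{HX}$ and $\varepsilon_{HX}$.
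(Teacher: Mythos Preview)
Your proposal is correct and follows essentially the same approach as the paper: reduce the coassociativity and counit axioms to checking on the algebra generators $f\otimes g+I_\sigma$, $L_{ab}+I_\sigma$, $(L^{-1})_{ab}+I_\sigma$, using that all maps involved are $\C$-algebra homomorphisms. The paper's proof is a single sentence declaring this verification immediate; you have simply spelled out the computations it leaves implicit.
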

Let $R(\lambda)$ be
a bijective
dynamical Yang-Baxter map
satisfying the weight zero condition
\eqref{catlop:eq:weightzero}.
If the morphism $\sigma=\sigma_R$ \eqref{catlop:eq:sigmar},
then
the generators $(4)$ of the ideal $I_{\sigma_R}$ are
\begin{equation}
\sum_{x, y\in X}(R_{ac}^{xy}\otimes 1_{M_H})L_{xb}L_{yd}
-
\sum_{x, y\in X}(1_{M_H}\otimes R_{xy}^{bd})L_{cy}L_{ax}
\label{ar:eq:gen4Ir}
\quad
(a, b, c, d\in X).
\end{equation}
Here $R_{ac}^{xy}\in M_H$ is the following function.
\[
R_{ac}^{xy}(\lambda)=\left\{
\begin{array}{ll}
1,&(a, c)=R(\lambda)(x, y),
\\
0,& \mbox{otherwise}.
\end{array}
\right.
\]
We will use the notations $I_R$ and $A_R$
instead of $I_{\sigma_R}$ and $A_{\sigma_R}$, respectively.
From Proposition \ref{ar:proposition:arhxbialg},
\begin{corollary}\label{ar:remark:R}
$A_R$ is an $(H, X)$-bialgebroid.
\end{corollary}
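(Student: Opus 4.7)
The plan is to obtain Corollary \ref{ar:remark:R} as a direct specialization of Proposition \ref{ar:proposition:arhxbialg}. All of the machinery developed in Section \ref{section:ar} --- the $(H,X)$-algebra structure on $A_\sigma$, the coproduct $\Delta$, and the counit $\varepsilon$ --- was built for an arbitrary morphism $\sigma\colon X\barotimes X\to X\barotimes X$ of $\VectH$, and Proposition \ref{ar:proposition:arhxbialg} already verifies the coassociativity \eqref{hxbialg:eq:catbialg1} and counit \eqref{hxbialg:eq:catbialg2} axioms in full generality, with no use of a Yang--Baxter-type condition on $\sigma$ itself.

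Thus, the first and essentially only step is to invoke Proposition \ref{lop:proposition:sigmaR}, which guarantees that for a bijective dynamical Yang--Baxter map $R(\lambda)$ satisfying the weight zero condition \eqref{catlop:eq:weightzero}, the map $\sigma_R\colon H\to \Hom_\C(\C X\barotimes X,\C X\barotimes X)$ defined by \eqref{catlop:eq:sigmar} is a Yang--Baxter operator on $X$ in $\VectH$; in particular, $\sigma_R$ is a morphism of $\VectH$, which is the only hypothesis required in Definition \ref{ar:definition:ar}. Consequently $A_R := A_{\sigma_R}$ falls within the scope of Proposition \ref{ar:proposition:arhxbialg}, and the $(H,X)$-bialgebroid structure on $A_R$ is inherited verbatim.

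The only bookkeeping worth carrying out is to check that the abstract generator $(4)$ of $I_\sigma$, expressed in terms of matrix coefficients $\sigma_{ac}^{xy}$, specializes to the explicit form \eqref{ar:eq:gen4Ir} in terms of $R_{ac}^{xy}$. From $\sigma_R(\lambda)(x,y) = R(\lambda)(y,x)$ and the matrix convention \eqref{intro:equation:matrix}, one reads off $(\sigma_R)_{ac}^{xy}(\lambda) = R_{ac}^{yx}(\lambda)$; substituting into generator $(4)$ of Definition \ref{ar:definition:ar} and relabelling the summation indices $x\leftrightarrow y$ produces precisely \eqref{ar:eq:gen4Ir}. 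No genuine obstacle arises, since the entire content of the corollary is packaged into the previous proposition and the translation of indices is a routine verification.
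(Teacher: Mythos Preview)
Your proposal is correct and follows exactly the paper's approach: the corollary is deduced immediately from Proposition \ref{ar:proposition:arhxbialg} once one observes (via Proposition \ref{lop:proposition:sigmaR}) that $\sigma_R$ is a morphism of $\VectH$. One small bookkeeping remark: the relabelling needed to pass from generator $(4)$ of Definition \ref{ar:definition:ar} to \eqref{ar:eq:gen4Ir} also involves swapping the free indices $b\leftrightarrow d$ (not only the dummy indices $x\leftrightarrow y$), which is harmless since \eqref{ar:eq:gen4Ir} is a family over all $a,b,c,d\in X$.
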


We now recall
the bijective dynamical Yang-Baxter
map $R^{Q_5}(\lambda)$
$(\lambda\in Q_5)$
constructed at the end of Section \ref{section:catlop}.
To shorten notation, we write $R(\lambda)$ for $R^{Q_5}(\lambda)$.
Since $Q_5$ is a quasigroup
(See Definition \ref{catlop:definition:quasigroup}),
$Q_5$ satisfies the assumption
in the beginning of Section
\ref{section:hxalg}; that is,
for any $x\in Q_5$,
the translation map \eqref{hxalg:eq:assumebijec}
$\cdot x: Q_5
\ni\lambda\mapsto\lambda\cdot x\in Q_5$
is bijective.
Before ending this section,
we show
that the
$(Q_5, Q_5)$-bialgebroid
$A_R$
is not a direct sum of the components
$\{ (A_R)_{\alpha,\beta}; \alpha, \beta\in W_{Q_5}^{Q_5}\}$.
This property is a characteristic feature that
distinguish the $(H, X)$-bialgebroid $A_R$ from
the dynamical quantum group
\cite[Section 4.4]{etingof1998}.

We consider the element
$(R_{43}^{12}\otimes 1_{M_{Q_5}})L_{11}L_{22}
+I_R\in A_R$.
By the definition of the counit $\varepsilon: A_R\to I_{H, X}$,
\[
\varepsilon((R_{43}^{12}\otimes 1_{M_{Q_5}})L_{11}L_{22}
+I_R)=(R_{43}^{12})^\star T_{[1]}T_{[2]}.
\]
It follows from \eqref{catlop:eq:RQ501243} that $R_{43}^{12}(0)=1$,
and 
$(R_{43}^{12})^\star T_{[1]}T_{[2]}(1_{M_{Q_5}})(0)=1$
as a result.
Therefore, $(R_{43}^{12}\otimes 1_{M_{Q_5}})L_{11}L_{22}
+I_R\neq 0_{A_R}$.

The relation \eqref{ar:eq:gen4Ir} yields that
\begin{eqnarray}
\nonumber
(R_{43}^{12}\otimes 1_{M_{Q_5}})L_{11}L_{22}
+I_R
&=&
-\sum_{x, y\in Q_5\atop (x, y)\neq (1, 2)}
(R_{43}^{xy}\otimes 1_{M_{Q_5}})L_{x1}L_{y2}
+I_R
\\
&&
+\sum_{x', y'\in Q_5}(1_{M_{Q_5}}\otimes R_{x'y'}^{12})L_{3y'}L_{4x'}
+I_R.
\label{ar:eq:RLL=LLRinAR}
\end{eqnarray}
From the definitions of $w_l$ and $w_r$,
\begin{eqnarray*}
&&
(R_{43}^{xy}\otimes 1_{M_{Q_5}})L_{x1}L_{y2}
+I_R
\in (A_R)_{[xy],[12]}
\quad(x, y\in Q_5),
\\
&&
(1_{M_{Q_5}}\otimes R_{x'y'}^{12})L_{3y'}L_{4x'}
+I_R
\in (A_R)_{[34],[y'x']}
\quad(x', y'\in Q_5).
\end{eqnarray*}

Because of Table \ref{catlop:table:Q5},
the equation $[xy]=[12]$ in $W_{Q_5}^{Q_5}$
$(x, y\in Q_5)$ has a unique solution 
$(x, y)=(1, 2)$.
It follows from
\eqref{ar:eq:RLL=LLRinAR}
that
the $(Q_5, Q_5)$-bialgebroid
$A_R$
is not a direct sum of the components
$\{ (A_R)_{\alpha,\beta}; \alpha, \beta\in W_{Q_5}^{Q_5}\}$.

A crucial point is the weight zero condition
\eqref{catlop:eq:weightzero}
(cf.\ \cite[Section 3.2]{hartwig}),
which does not always imply that
the function
$R_{43}^{12}=0$ unless $[12]=[34]$.

This example
gives birth to the condition $(1)$ in Definition
\ref{hxalg:definition:hxalg}
(cf.\ $\mathfrak h$-algebras \cite[Section 4.1]{etingof1998}
and $\mathfrak h$-prealgebras \cite[Section 2.1]{koelink2007}).
In addition,
this condition $(1)$ suggests that
we employ
morphisms
$a^l_{A, A, A}$, $a^r_{A, A, A}$, $l_A$, and $r_A$
of $\Alg_{(H, X)}$ in
Definition \ref{hxbialg:definition:hxbialg}.
\section{Dynamical representations}
\label{section:dynrep}
A dynamical representation of an $(H, X)$-algebra
$A$ is, by definition,
a pair $\pi_V=(V, \pi_V)$
of $V\in \Ob(\VectH)$
and
$\pi_V: A\to  D_{H, X}(V)\in\Hom(\Alg_{(H, X)})$
(cf.\ \cite[Section 4.2]{etingof1998}).
For $D_{H, X}(V)$, see Proposition
\ref{exphxalg:proposition:DHXV}.

This section deals with two dynamical representations:
one is the trivial representation of an arbitrary
$(H, X)$-bialgebroid
produced by its counit $\varepsilon$;
and the other is
the basic representation
of the $(H, X)$-bialgebroid
$A_\sigma$
given by a Yang-Baxter operator $\sigma$ on $X$
in $\VectH$.
We also explain a category of dynamical representations
before studying a tensor product of the dynamical representations
in the next section.

Let $A$ be an $(H, X)$-bialgebroid
(Definitions \ref{introduction:definition:hxbialg}
and \ref{hxbialg:definition:hxbialg}).
In view of Proposition
\ref{exphxalg:proposition:iota},
the map $\pi_{I_{\VectH}}:=\varphi_0\circ\varepsilon:
A\to D_{H, X}(I_{\VectH})$
is a morphism of $\Alg_{(H, X)}$
(For $I_{\VectH}$, see Section \ref{section:catvecth}).
Hence
\begin{proposition}\label{dynrep:proposition:piIVectH}
The pair 
$(I_{\VectH}, \pi_{I_{\VectH}})$
is a dynamical 
representation of the $(H, X)$-bialgebroid $A$.
\end{proposition}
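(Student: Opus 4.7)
The statement unpacks into two requirements from the definition of a dynamical representation of $A$: that $I_{\VectH}$ be an object of $\VectH$, and that $\pi_{I_{\VectH}}$ be a morphism $A\to D_{H,X}(I_{\VectH})$ in $\Alg_{(H,X)}$. The first is immediate from Section \ref{section:catvecth}, where $I_{\VectH}$ is explicitly constructed as an object of $\VectH$.

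For the second requirement, the plan is to exhibit $\pi_{I_{\VectH}}$ as a composition of two already-established morphisms of $\Alg_{(H,X)}$. Since $A$ is an $(H,X)$-bialgebroid (Definitions \ref{introduction:definition:hxbialg} and \ref{hxbialg:definition:hxbialg}), its counit $\varepsilon: A \to I_{H,X}$ is by definition a morphism of $\Alg_{(H,X)}$. Proposition \ref{exphxalg:proposition:iota} gives that $\varphi_0: I_{H,X} \to D_{H,X}(I_{\VectH})$ is also a morphism of $\Alg_{(H,X)}$. Because $\Alg_{(H,X)}$ is a category, the composition $\pi_{I_{\VectH}} = \varphi_0 \circ \varepsilon$ is again a morphism of $\Alg_{(H,X)}$, exactly as needed.

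There is no real obstacle here: all the nontrivial content has been discharged upstream, namely in the construction of $\varphi_0$ (Proposition \ref{exphxalg:proposition:iota}) and in the hypothesis that $A$ is a bialgebroid (which supplies $\varepsilon$ as an $(H,X)$-algebra homomorphism via Definition \ref{hxbialg:definition:catbialg}). The proof is therefore essentially a one-line invocation of these two facts together with the closure of $\Hom(\Alg_{(H,X)})$ under composition.
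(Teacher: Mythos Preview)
Your proof is correct and follows exactly the paper's own argument: the paper simply notes that $\pi_{I_{\VectH}}:=\varphi_0\circ\varepsilon$ is a morphism of $\Alg_{(H,X)}$ by Proposition~\ref{exphxalg:proposition:iota} (for $\varphi_0$) together with the fact that the counit $\varepsilon$ of the bialgebroid is a morphism of $\Alg_{(H,X)}$, and concludes immediately.
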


Assume that the set $X$ is finite,
and let $\sigma$ be a Yang-Baxter operator on $X$
in $\VectH$
(Definition \ref{catlop:definition:YBop}).
We next define
a dynamical representation
$(X, \pi_\sigma)$
of the $(H, X)$-bialgebroid $A_\sigma$
(Proposition \ref{ar:proposition:arhxbialg}).
For $X=(X, \cdot)$, see Proposition
\ref{lop:proposition:X}.
Let $\bar{\pi}_\sigma$
denote
the following map from $HX$ \eqref{ar:eq:HX}
to $D_{H, X}(X)$ that is $\C$-linear on $M_H\otimes_\C M_H$:
\begin{eqnarray*}
&&
\bar{\pi}_\sigma(f\otimes g)=\mu_l^{D_{H, X}(X)}(f)
\mu_r^{D_{H, X}(X)}(g);
\\
&&
\bar{\pi}_\sigma(L_{ab})
=\Gamma_{[a],[b]}^{X}(\bar{\pi}_\sigma(L_{ab})_{[a],[b]}),
\\
&&
\bar{\pi}_\sigma(L_{ab})_{[a],[b]}(\lambda)(x, [b])
=\sum_{y\in X}
([a], y)\sigma(\lambda)_{(a, y)(x, b)};
\\
&&
\bar{\pi}_\sigma((L^{-1})_{ab})
=\Gamma_{[b^{-1}],[a^{-1}]}^{X}
(\bar{\pi}_\sigma((L^{-1})_{ab})_{[b^{-1}],[a^{-1}]}),\\
&&
\bar{\pi}_\sigma((L^{-1})_{ab})_{[b^{-1}],[a^{-1}]}(\lambda)(x, [a^{-1}])
=\sum_{y\in X}
([b^{-1}], y)
(\sigma(\lambda[a^{-1}])^{-1})_{(y, a)(b, x)}. 
\end{eqnarray*}
For $\mu_l^{D_{H, X}(X)}$,
$\mu_r^{D_{H, X}(X)}$,
and
$\sigma(\lambda)_{(a, y)(x, b)}$,
see Proposition \ref{exphxalg:proposition:DHXV}
and
\eqref{intro:equation:matrix}, respectively.
This map $\bar{\pi}_\sigma$ induces a morphism
$\pi_\sigma: A_\sigma\to D_{H, X}(X)$ of $\Alg_{(H, X)}$
(See Remark \ref{isomorph:remark:same}).
As a result,
\begin{proposition}\label{dynrep:proposition:DHXXpiR}
For a Yang-Baxter operator $\sigma$ on $X$
in $\VectH$,
$(X, \pi_\sigma)$
is a dynamical representation of the $(H, X)$-bialgebroid $A_\sigma$.
\end{proposition}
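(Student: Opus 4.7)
The plan is to define $\pi_\sigma$ by first extending $\bar{\pi}_\sigma$ from the generating set $HX$ to a $\C$-algebra homomorphism $\widetilde{\pi}_\sigma: \C\{HX\} \to D_{H,X}(X)$ via the universal property of the free algebra, and then verifying that $\widetilde{\pi}_\sigma(I_\sigma) = \{0\}$ so that the induced map $\pi_\sigma: A_\sigma \to D_{H,X}(X)$ is well defined. Before this, I would check that the formulas for $\bar{\pi}_\sigma(L_{ab})_{[a],[b]}$ and $\bar{\pi}_\sigma((L^{-1})_{ab})_{[b^{-1}],[a^{-1}]}$ really define morphisms of $\VectH$ with the stated source and target; the weight condition \eqref{catvech:eq:defmorphism} is inherited from the fact that $\sigma$, and hence $\sigma^{-1}$, is a morphism of $\VectH$. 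This guarantees that the outputs lie in the correct graded pieces $D_{H,X}(X)_{[a],[b]}$ and $D_{H,X}(X)_{[b^{-1}],[a^{-1}]}$ via Proposition \ref{hxalg:proposition:unique}.

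To check $\widetilde{\pi}_\sigma(I_\sigma) = \{0\}$, I would handle the five families of generators in Definition \ref{ar:definition:ar} separately. Family (1) reduces to the fact that $\bar{\pi}_\sigma|_{M_H \otimes_\C M_H} = \mu_l^{D_{H,X}(X)} \mu_r^{D_{H,X}(X)}$ is a $\C$-algebra homomorphism, which is forced by \eqref{hxalg:eq:commutemulmur}. Family (5) is immediate. Family (3), the commutation of $L$ and $L^{-1}$ with images of $M_H$, is precisely Proposition \ref{hxalg:prop:point} applied with appropriate choices $f_{[a],[a]}$, $g_{[b],[b]}$ (and the $L^{-1}$ analogues). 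Family (2), the invertibility $\sum_c L_{ac}(L^{-1})_{cb} = \delta_{ab}\emptyset$, is handled by computing the product via Proposition \ref{hxalg:prop:product}: the $*_X$-products of the relevant morphisms telescope through the canonical one-point isomorphisms \eqref{catvecth:equation:onepoint} and, since $\sigma \circ \sigma^{-1} = \id$, collapse to $\delta_{ab}$ times the identity on the appropriate graded piece.

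The main obstacle is family (4), the hexagonal $RLL = LLR$ relations $\sum_{x,y}(\sigma_{ac}^{xy} \otimes 1_{M_H}) L_{yd} L_{xb} - \sum_{x,y}(1_{M_H} \otimes \sigma_{xy}^{bd}) L_{cy} L_{ax}$. Applying $\widetilde{\pi}_\sigma$ to either summand and assembling the two $L$-factors through Proposition \ref{hxalg:prop:product}, both sides reassemble, modulo left/right multiplication by the $\mu_l$ and $\mu_r$ images encoding the coefficient functions $\sigma_{ac}^{xy}$ (controlled again by Proposition \ref{hxalg:prop:point}), into a threefold composition of copies of $\sigma$ on $X \barotimes X \barotimes X$ interlaced with the associativity constraints $a_{X,X,X}$. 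The resulting equation in $\VectH$ is exactly the braided Yang-Baxter equation of Definition \ref{catlop:definition:YBop}; this is where the assumption that $\sigma$ is a Yang-Baxter operator is genuinely used, and the bookkeeping of indices against the definition \eqref{hxalg:equation:*V} of $*_X$ is the only real computation required.

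Finally, once $\pi_\sigma$ descends to $A_\sigma$, its being a morphism of $\Alg_{(H,X)}$ requires checking compatibility with $\mu_l$, $\mu_r$ and preservation of the grading. The former is immediate from the formula of $\bar{\pi}_\sigma$ on $M_H \otimes_\C M_H$ together with \eqref{ar:eq:mulmur}. The latter holds on generators by the source/target analysis of the first step and propagates to all words of $A_\sigma$ by Proposition \ref{hxalg:prop:product}. Hence $\pi_\sigma \in \Hom(\Alg_{(H,X)})$, so $(X, \pi_\sigma)$ is a dynamical representation of the $(H,X)$-bialgebroid $A_\sigma$, as asserted.
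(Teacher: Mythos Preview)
Your approach is correct and, at the level of the underlying computations, coincides with what the paper does. The paper, however, organizes the argument differently: rather than verifying $\widetilde{\pi}_\sigma(I_\sigma)=\{0\}$ directly for the specific pair $(X,\sigma)$, it defers to Remark~\ref{isomorph:remark:same}, which identifies $(X,\pi_\sigma)$ with $G(X,\sigma)$ for the general functor $G:\Rep\sigma\to\DR(A_\sigma)$ built in Section~\ref{section:isom}. There, Propositions~\ref{isomorph:proposition:pitildeIR} and~\ref{isomorph:proposition:GLV} show that $\widetilde{\pi}(I_\sigma)=\{0\}$ and $G(L_V)\in\Ob(\DR(A_\sigma))$ for \emph{any} object $(V,L_V)$ of $\Rep\sigma$; the generator~(4) check there uses the $RLL=LLR$ axiom \eqref{catlop:eq:RLL=LLR}, which for $(V,L_V)=(X,\sigma)$ is precisely the Yang--Baxter equation you invoke. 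So your direct verification is exactly the specialization of the paper's general argument, done in place rather than as a corollary of the later machinery.

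One small correction: your appeal to Proposition~\ref{hxalg:prop:point} for family~(3) is slightly misdirected. The vanishing of $\widetilde{\pi}_\sigma$ on those generators follows immediately from the bigrading relation \eqref{hxalg:eq:bialgebroid} in $D_{H,X}(X)$, since $\widetilde{\pi}_\sigma(L_{ab})\in D_{H,X}(X)_{[a],[b]}$ already forces $\widetilde{\pi}_\sigma(L_{ab})\mu_l(f)=\mu_l(T_{[a]}(f))\widetilde{\pi}_\sigma(L_{ab})$ and similarly for $\mu_r$. This does not affect the validity of your argument.
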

With the aid of Propositions
\ref{lop:proposition:sigmaR}
and \ref{dynrep:proposition:DHXXpiR},
$(X, \pi_{\sigma_R})$
is a dynamical representation of the $(H, X)$-bialgebroid
$A_R$
(Corollary \ref{ar:remark:R}).

To end this section,
we will introduce a category $\DR(A)$
of the dynamical representations of an $(H, X)$-algebra
$A$.
An object of $\DR(A)$ is, by definition, 
a dynamical representation
of $A$.
We denote by $\Ob(\DR(A))$ the class of all objects.

Let $\pi_V=(V, \pi_V)$ and
$\pi_W=(W, \pi_W)$
be objects of $\DR(A)$.
A morphism $f: \pi_V\to \pi_W$
of $\DR(A)$ is a morphism
$f: V\to W$ of the category $\VectH$
such that
\begin{equation}\label{dynrep:eq:defmorphism}
\pi_W(a)\circ m_f=m_f\circ\pi_V(a)
\quad(\forall a\in A).
\end{equation}
Here,
$m_f: \Map(H, \C V)\to \Map(H, \C W)$
is the following $\C$-linear map.
\begin{equation}\label{dynrep:eq:mf}
m_f(g)(\lambda)=f(\lambda)(g(\lambda))
\quad(g\in\Map(H, \C V), \lambda\in H).
\end{equation}
Write $\Hom(\DR(A))$ for the class of 
all morphisms of $\DR(A)$.
For a morphism $f: \pi_V\to\pi_W\in\Hom(\DR(A))$,
we call the objects $\pi_V$ and $\pi_W$ the source $s(f)$
and the target $b(f)$ of the morphism $f$, respectively.

Let $\id_{\pi_V}: \pi_V\to\pi_V$
denote
the morphism $\id_V: V\to V
\in\Hom(\VectH)$.

For morphisms $f$ and $g$ of $\DR(A)$
such that $b(f)=s(g)$,
$m_{g\circ f}=m_g\circ m_f$.
Hence, the morphism $g\circ f$ of the category $\VectH$
is a morphism of $\DR(A)$.
We define the composition $\circ$ of $\DR(A)$
by that of $\VectH$.
\begin{proposition}\label{dynrep:proposition:catDRA}
$\DR(A)$ is a category.
\end{proposition}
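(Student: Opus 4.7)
The proposition asserts that $\DR(A)$ is a category, so the plan is just to check the three axioms: (i) the identity morphisms lie in $\Hom(\DR(A))$, (ii) composition of morphisms of $\DR(A)$ is again a morphism of $\DR(A)$, and (iii) the composition is associative and unital. Since the underlying morphisms, identities, and composition of $\DR(A)$ are inherited from the category $\VectH$ (Proposition in Section \ref{section:catvecth}), the axiom (iii) will follow at once, and the real content is (i) and (ii), i.e. that the intertwining condition \eqref{dynrep:eq:defmorphism} is preserved by identities and composition.

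For (i), fix $\pi_V = (V, \pi_V) \in \Ob(\DR(A))$. The definition \eqref{dynrep:eq:mf} immediately gives $m_{\id_V}(g)(\lambda) = \id_V(\lambda)(g(\lambda)) = g(\lambda)$, so $m_{\id_V} = \id_{\Map(H, \C V)}$. Then \eqref{dynrep:eq:defmorphism} reduces to $\pi_V(a) = \pi_V(a)$ for every $a \in A$, which is trivially satisfied; so $\id_{\pi_V} := \id_V$ is indeed a morphism $\pi_V \to \pi_V$ in $\DR(A)$.

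For (ii), let $f : \pi_V \to \pi_W$ and $g : \pi_W \to \pi_U$ be morphisms of $\DR(A)$. The key observation, already stated in the text just above the proposition, is the identity $m_{g \circ f} = m_g \circ m_f$. This follows directly by unwinding \eqref{dynrep:eq:mf} and using the fact that in $\VectH$ the composition $g \circ f$ is defined pointwise by $(g \circ f)(\lambda) = g(\lambda) \circ f(\lambda)$. Using this identity together with the intertwining relations for $f$ and $g$, one computes
\begin{equation*}
\pi_U(a) \circ m_{g \circ f} = \pi_U(a) \circ m_g \circ m_f = m_g \circ \pi_W(a) \circ m_f = m_g \circ m_f \circ \pi_V(a) = m_{g \circ f} \circ \pi_V(a)
\end{equation*}
for every $a \in A$, which is exactly \eqref{dynrep:eq:defmorphism} for $g \circ f$. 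Hence $g \circ f \in \Hom_{\DR(A)}(\pi_V, \pi_U)$.

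There is no serious obstacle here; the only point that requires a moment's care is verifying the multiplicative behaviour $m_{g \circ f} = m_g \circ m_f$ of the assignment $f \mapsto m_f$, which is an entirely routine consequence of the pointwise definition of composition in $\VectH$. With (i) and (ii) established, the associativity and unit axioms for $\DR(A)$ are inherited verbatim from $\VectH$, completing the proof.
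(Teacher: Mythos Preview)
Your proof is correct and follows essentially the same approach as the paper: the key point is the identity $m_{g\circ f}=m_g\circ m_f$, which the paper states just before the proposition and from which closure under composition follows, with the remaining axioms inherited from $\VectH$. You have simply made explicit the verification for identities and the chain of equalities for composition that the paper leaves to the reader.
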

\section{Tensor products of dynamical
representations}
\label{section:tpdynrep}
Let $A$ be an $(H, X)$-bialgebroid
(Definitions \ref{introduction:definition:hxbialg}
and \ref{hxbialg:definition:hxbialg}).
In this section, we explain the tensor product $\barotimes:
\DR(A)\times \DR(A)\to \DR(A)$, which makes the category $\DR(A)$
a tensor category
(cf.\ \cite[Section 4.2]{etingof1998}).

Let $V$ and
$W$
be objects of the category
$\VectH$
(See Section \ref{section:catvecth}).
We first define
$\varphi_2(V, W): 
D_{H, X}(V)\tildeotimes D_{H, X}(W)
\to
D_{H, X}(V\barotimes W)\in\Hom(\Alg_{(H, X)})$
(For $D_{H, X}(V)$ and $\tildeotimes$, see Propositions 
\ref{exphxalg:proposition:DHXV}
and \ref{exphxalg:proposition:AtildeotimesB},
respectively).

Let $g$ be an element of $\Map(H, \C (V\barotimes W))$.
By using $g(\lambda)_{(v_1, v_2)}$
\eqref{intro:equation:coeff}, we define the map
$g^{(v)}\in\Map(H, \C W)$ 
$(v\in V)$ by
\[
g^{(v)}(\lambda)=\sum_{v_2\in W}
v_2
g(\lambda)_{(v, v_2)}
\quad(\lambda\in H).
\]

Let $v\in V$.
For simplicity of notation,
we use the same letter $v$
for the constant map from $H$ to
$\C V$
whose value is always $v$;
that is,
$v(\lambda)=v$
$(\lambda\in H)$.

Let $U$ be an element of $\End_\C(\Map(H, \C V))$.
We denote by $U^{(1)}$
the following element
of $\End_\C(\Map(H, \C (V\barotimes W)))$.
\begin{eqnarray*}
U^{(1)}(g)(\lambda)&=&
\sum_{(v_1, v_2)\in V\barotimes W}
(v_1, v_2)
\sum_{v'_1\in V}
U(v'_1)(\lambda v_2)_{v_1}
g(\lambda)_{(v'_1, v_2)}
\\
&&
(g\in\Map(H, \C (V\barotimes W)),
\lambda\in H).
\end{eqnarray*}

Let $U$
be an element of
$D_{H, X}(W)$.
From the definition,
\[
U=\sum_{\alpha,\beta\in W_X^H}
\Gamma_{\alpha,\beta}^W(u_{\alpha\beta})
\quad
(\exists
u_{\alpha,\beta}\in\Hom_{\VectH}
(W\barotimes\{\beta\}, \{\alpha\}\barotimes W)).
\]
We define the element $U^{(2)}$
of $\End_\C(\Map(H, \C (V\barotimes W)))$
by
\begin{eqnarray*}
U^{(2)}(g)(\lambda)&=&
\sum_{(v_1, v_2)\in V\barotimes W}
(v_1, v_2)
\sum_{\alpha,\beta\in W_X^H\atop
v'_2\in W}
u_{\alpha,\beta}(\lambda)_{(\alpha, v_2)(v'_2, \beta)}
g(\lambda\beta)_{(v_1,v'_2)}
\\
&&
(g\in\Map(H, \C (V\barotimes W)),
\lambda\in H).
\end{eqnarray*}
Since
$U^{(2)}(g)(\lambda)=
\sum_{(v_1, v_2)\in V\barotimes W}
(v_1, v_2)
U(g^{(v_1)})(\lambda)_{v_2}$,
the definition of $U^{(2)}$ is unambiguous.

Because the map
\[
D_{H, X}(V)\times D_{H, X}(W)\ni(U_1, U_2)
\mapsto U_1^{(1)}\circ U_2^{(2)}
\in\End_\C(\Map(H, \C (V\barotimes W)))
\]
is $\C$-bilinear,
there exists a unique $\C$-linear map
$\bar{\varphi}_2(V, W)$
from $D_{H, X}(V)\otimes_\C D_{H, X}(W)$
to $\End_\C(\Map(H, \C (V\barotimes W)))$
such that 
\begin{equation}\label{tpdynrep:eq:varphi2}
\bar{\varphi}_2(V, W)(U_1\otimes U_2)=U_1^{(1)}\circ U_2^{(2)}
\quad(U_1\in D_{H, X}(V), U_2\in D_{H, X}(W)).
\end{equation}

We write $I_2$
for
$I_2(D_{H, X}(V), D_{H, X}(W))$
(See
\eqref{exp:eq:I2}).
The map $\bar{\varphi}_2(V, W)$
satisfies
$\bar{\varphi}_2(V, W)(I_2)=\{ 0\}$,
and
it consequently induces
the $\C$-linear map
\[
\varphi_2(V, W): 
D_{H, X}(V)\otimes_\C D_{H, X}(W)/I_2
\to \End_\C(\Map(H, \C (V\barotimes W)))
\]
defined by
$\varphi_2(V, W)(\xi+I_2)=\bar{\varphi}_2(V, W)(\xi)$
$(\xi\in D_{H, X}(V)\otimes_\C D_{H, X}(W))$.

Let $v:
V\barotimes\{\gamma\}\to\{\alpha\}\barotimes V$
and 
$w:
W\barotimes\{\beta\}\to\{\gamma\}\barotimes W$
be morphisms of $\VectH$
$(\alpha, \beta, \gamma\in W_X^H)$.
On account of \eqref{catlop:eq:tensor},
we define the morphism
$v\boxtimes_\gamma w:
(V\barotimes W)\barotimes\{\beta\}\to
\{\alpha\}\barotimes(V\barotimes W)$
of $\VectH$
by
\[
v\boxtimes_\gamma w
=
a_{\{\alpha\}VW}\circ(v\barotimes\id_W)\circ
a_{V\{\gamma\}W}^{-1}\circ
(\id_V\barotimes w)\circ
a_{VW\{\beta\}}.
\]
A simple computation shows
\begin{proposition}\label{tpdynrep:proposition:varphi2}
$\varphi_2(V, W)(\Gamma_{\alpha,\gamma}^V(v)\otimes
\Gamma_{\gamma,\beta}^W(w)+I_2)
=\Gamma_{\alpha,\beta}^{V\barotimes W}(v\boxtimes_\gamma w)$.
\end{proposition}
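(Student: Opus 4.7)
The plan is to evaluate both sides on an arbitrary $g\in\Map(H,\C(V\barotimes W))$ at an arbitrary $\lambda\in H$ and verify that the matrix coefficients of $(u_1,u_2)\in V\barotimes W$ agree. Since $\varphi_2(V,W)$ was defined through the $\C$-bilinear pairing $(U_1,U_2)\mapsto U_1^{(1)}\circ U_2^{(2)}$, the left-hand side equals $\Gamma_{\alpha,\gamma}^V(v)^{(1)}\circ\Gamma_{\gamma,\beta}^W(w)^{(2)}$, so I first unfold this composition using the explicit formulas for $U^{(1)}$, $U^{(2)}$ and for $\Gamma$.

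For the left-hand side, I would start by computing $\Gamma_{\gamma,\beta}^W(w)^{(2)}(g)(\lambda)$ from the definition of $U^{(2)}$ specialized to the single-homogeneous component; this produces a sum over $(v_1,v_2)\in V\barotimes W$ and $v'_2\in W$ of $w(\lambda)_{(\gamma,v_2)(v'_2,\beta)}\,g(\lambda\beta)_{(v_1,v'_2)}$. Then applying $\Gamma_{\alpha,\gamma}^V(v)^{(1)}$, where the key fact is that $\Gamma_{\alpha,\gamma}^V(v)(v'_1)(\mu)_{v_1}=v(\mu)_{(\alpha,v_1)(v'_1,\gamma)}$ on the constant function $v'_1$, I obtain a double sum whose $(u_1,u_2)$-coefficient is
\[
\sum_{v_1\in V,\,v_2\in W} v(\lambda u_2)_{(\alpha,u_1)(v_1,\gamma)}\,w(\lambda)_{(\gamma,u_2)(v_2,\beta)}\,g(\lambda\beta)_{(v_1,v_2)}.
\]

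For the right-hand side I would use the definition of $\Gamma_{\alpha,\beta}^{V\barotimes W}$ to reduce to computing the matrix coefficients of $v\boxtimes_\gamma w$. The associativity constraints $a_{\{\alpha\}VW}$, $a^{-1}_{V\{\gamma\}W}$, $a_{VW\{\beta\}}$ in $\VectH$ merely rebracket triples, so propagating the basis element $((v_1,v_2),\beta)$ through the composition reduces to a single application of the rule $(f\barotimes g)(\lambda)(u,v)=f(\lambda v)(u)\otimes g(\lambda)(v)$, first to $\id_V\barotimes w$ at parameter $\lambda$ and then to $v\barotimes\id_W$ at parameter $\lambda$. A short calculation then gives
\[
(v\boxtimes_\gamma w)(\lambda)_{(\alpha,(u_1,u_2))((v_1,v_2),\beta)}
=v(\lambda u_2)_{(\alpha,u_1)(v_1,\gamma)}\,w(\lambda)_{(\gamma,u_2)(v_2,\beta)},
\]
which when plugged into $\Gamma_{\alpha,\beta}^{V\barotimes W}$ reproduces exactly the expression above.

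The only mildly delicate step is keeping track of the parameters at which $v$ and $w$ are evaluated while the basis element is pushed through $(v\barotimes\id_W)\circ a_{V\{\gamma\}W}^{-1}\circ(\id_V\barotimes w)\circ a_{VW\{\beta\}}$; the shift $\lambda\mapsto\lambda u_2$ that appears in the $v$-factor must come out of $\id_V\barotimes w$ before the associativity rebracketing, and likewise $w$ retains its argument $\lambda$. Once this bookkeeping is in place, both sides are visibly identical, finishing the proof. No algebraic identity beyond the definitions of $\barotimes$ on morphisms and of $\Gamma_{\alpha,\beta}^{(\cdot)}$ is required.
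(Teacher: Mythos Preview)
Your proposal is correct and is exactly the ``simple computation'' the paper alludes to without writing out: you unfold $\Gamma_{\alpha,\gamma}^V(v)^{(1)}\circ\Gamma_{\gamma,\beta}^W(w)^{(2)}$ and the matrix entries of $v\boxtimes_\gamma w$ and match them coefficient by coefficient. The only wording to tighten is your remark about where the shift $\lambda\mapsto\lambda u_2$ originates: it is the application of $v\barotimes\id_W$ at $\lambda$ to $((v_1,\gamma),u_2)$ that produces $v(\lambda u_2)$, with the index $u_2$ having been created one step earlier by $\id_V\barotimes w$; your displayed formula is right regardless.
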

From Propositions
\ref{exphxalg:proposition:AtildeotimesB}
and
\ref{tpdynrep:proposition:varphi2},
\[\varphi_2(V, W)
((D_{H, X}(V)\tildeotimes D_{H, X}(W))_{\alpha,\beta})
\subset D_{H, X}(V\barotimes W)_{\alpha,\beta},
\]
and consequently
$\varphi_2(V, W)
(D_{H, X}(V)\tildeotimes D_{H, X}(W))
\subset D_{H, X}(V\barotimes W)$.
We continue to write $\varphi_2(V, W)$
for the map
$\varphi_2(V, W)|_{D_{H, X}(V)
\tildeotimes D_{H, X}(W)}$.
\begin{proposition}\label{tpdynrep:proposition:theta}
$\varphi_2(V, W): D_{H, X}(V)
\tildeotimes D_{H, X}(W)\to
D_{H, X}(V\barotimes W)$
is a morphism of $\Alg_{(H, X)}$.
\end{proposition}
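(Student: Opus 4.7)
The plan is to verify that $\varphi_2(V,W)$ satisfies the three conditions of Definition \ref{hxlag:proposition:hxalghom}: namely, it is a unital $\C$-algebra homomorphism, it preserves the bigrading (already noted before the proposition), and it intertwines $\mu_l$ and $\mu_r$. Since $\varphi_2(V,W)$ is manifestly $\C$-linear, the multiplicative content splits into checking multiplicativity and unit preservation, plus the two $\mu$-intertwining conditions.

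First I would handle multiplicativity. By $\C$-linearity and the definition of the bigraded decomposition, it suffices to verify the identity
\[
\varphi_2(V,W)(xy) = \varphi_2(V,W)(x)\,\varphi_2(V,W)(y)
\]
on pairs of elementary homogeneous elements $x=\Gamma^V_{\alpha_1,\gamma_1}(v_1)\otimes\Gamma^W_{\gamma_1,\beta_1}(w_1)+I_2$ and $y=\Gamma^V_{\alpha_2,\gamma_2}(v_2)\otimes\Gamma^W_{\gamma_2,\beta_2}(w_2)+I_2$, whose middle indices match as required by the definition of $(D_{H,X}(V)\tildeotimes D_{H,X}(W))_{\alpha,\beta}$. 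Applying Proposition \ref{hxalg:prop:product} componentwise in $D_{H,X}(V)\otimes_\C D_{H,X}(W)$ and then Proposition \ref{tpdynrep:proposition:varphi2} rewrites the left-hand side as $\Gamma^{V\barotimes W}_{\alpha_1\alpha_2,\beta_1\beta_2}((v_1*_V v_2)\boxtimes_{\gamma_1\gamma_2}(w_1*_W w_2))$. Applying Proposition \ref{tpdynrep:proposition:varphi2} first and then Proposition \ref{hxalg:prop:product} rewrites the right-hand side as $\Gamma^{V\barotimes W}_{\alpha_1\alpha_2,\beta_1\beta_2}((v_1\boxtimes_{\gamma_1}w_1)*_{V\barotimes W}(v_2\boxtimes_{\gamma_2}w_2))$. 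By the injectivity of $\Gamma^{V\barotimes W}_{\alpha_1\alpha_2,\beta_1\beta_2}$ (Proposition \ref{hxalg:proposition:unique}), multiplicativity therefore reduces to the purely categorical identity
\[
(v_1*_V v_2)\boxtimes_{\gamma_1\gamma_2}(w_1*_W w_2) = (v_1\boxtimes_{\gamma_1}w_1)*_{V\barotimes W}(v_2\boxtimes_{\gamma_2}w_2)
\]
in $\VectH$. This is the main obstacle: it has to be proved by expanding both sides using the definitions \eqref{hxalg:equation:*V} and the formula defining $\boxtimes_\gamma$, then shuffling the resulting composites of associators via the pentagon axiom and the naturality of $a$, and collapsing the iterated $\iota$-isomorphisms between the objects $\{\gamma_1\gamma_2\}$, $\{\gamma_1\}\barotimes\{\gamma_2\}$, $\{\beta_1\beta_2\}$, and $\{\alpha_1\alpha_2\}$ in $\VectH$. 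The computation is long but mechanical.

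For the unit, I would observe that $1_{D_{H,X}(V)}=\mu_l^{D_{H,X}(V)}(1_{M_H})$ equals $\Gamma^V_{1,1}(\eta_V)$ where $\eta_V : V\barotimes\{1\}\to\{1\}\barotimes V$ is the canonical isomorphism of Proposition \ref{hxalg:prop:point} (applied with $f=g=1_{M_H}$, $\alpha=\beta=\gamma=\delta=1$, $u=\eta_V$). The unit of $D_{H,X}(V)\tildeotimes D_{H,X}(W)$ is $\Gamma^V_{1,1}(\eta_V)\otimes\Gamma^W_{1,1}(\eta_W)+I_2$; Proposition \ref{tpdynrep:proposition:varphi2} maps it to $\Gamma^{V\barotimes W}_{1,1}(\eta_V\boxtimes_1\eta_W)$, and a direct check identifies $\eta_V\boxtimes_1\eta_W$ with $\eta_{V\barotimes W}$, giving the unit of $D_{H,X}(V\barotimes W)$.

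Finally, for the $\mu_l$ and $\mu_r$ conditions, I would compute directly. By Proposition \ref{exphxalg:proposition:AtildeotimesB},
\[
\mu_l^{D_{H,X}(V)\tildeotimes D_{H,X}(W)}(f)=\mu_l^{D_{H,X}(V)}(f)\otimes 1_{D_{H,X}(W)}+I_2,
\]
and similarly for $\mu_r$. Plugging these into \eqref{tpdynrep:eq:varphi2} and using the explicit formulas \eqref{hxalg:eq:muleV}, \eqref{hxalg:eq:mureV} for $\mu_l^{E(V)}$ and $\mu_r^{E(W)}$, together with the fact that $1_{D_{H,X}(W)}^{(2)}$ and $1_{D_{H,X}(V)}^{(1)}$ both act as the identity on $\Map(H,\C(V\barotimes W))$, one recovers precisely $\mu_l^{D_{H,X}(V\barotimes W)}(f)$ and $\mu_r^{D_{H,X}(V\barotimes W)}(f)$. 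With these four verifications in hand, $\varphi_2(V,W)$ is a morphism of $\Alg_{(H,X)}$.
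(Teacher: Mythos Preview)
The paper gives no proof of this proposition; it simply states the result and proceeds. Your argument is correct and is exactly the natural verification the authors are leaving to the reader: the bigrading preservation is already noted in the text, and your reduction of multiplicativity via Propositions \ref{hxalg:prop:product} and \ref{tpdynrep:proposition:varphi2} to the categorical identity $(v_1*_V v_2)\boxtimes_{\gamma_1\gamma_2}(w_1*_W w_2)=(v_1\boxtimes_{\gamma_1}w_1)*_{V\barotimes W}(v_2\boxtimes_{\gamma_2}w_2)$ in $\VectH$, together with the direct checks on the unit and on $\mu_l,\mu_r$, is the intended route. One small wording issue: Proposition \ref{hxalg:prop:point} does not itself define the canonical isomorphism $\eta_V:V\barotimes\{1\}\to\{1\}\barotimes V$; you should just say directly that $\eta_V(\lambda)(v,1)=(1,v)$ and verify $\Gamma^V_{1,1}(\eta_V)=\id$ from the definition of $\Gamma$.
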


The above proposition yields that,
for $\pi_V=(V, \pi_V), \pi_W=(W, \pi_W)\in \Ob(\DR(A))$,
$\pi_V\barotimes \pi_W:=
\varphi_2(V, W)\circ\pi_V\tildeotimes\pi_W
\circ\Delta: A\to D_{H, X}(V\barotimes W)$
is a morphism
of the category $\Alg_{(H, X)}$.
Therefore, $\pi_V\barotimes \pi_W:=
(V\barotimes W, \pi_V\barotimes \pi_W)$
is an object of $\DR(A)$, which is called a tensor product
of the objects $\pi_V$ and $\pi_W$.
\begin{proposition}\label{tpdynrep:prop:tensormorphism}
For $f, g\in\Hom(\DR(A))$,
the tensor product $f\barotimes g$ in $\VectH$ is a morphism
of $\DR(A)$
whose source is
$s(f)\barotimes s(g)$
and whose target is $b(f)\barotimes b(g)$.
\end{proposition}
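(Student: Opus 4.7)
Since $f \barotimes g$ already lies in $\Hom_{\VectH}(s(f) \barotimes s(g), b(f) \barotimes b(g))$ by the tensor-product construction of Section \ref{section:catvecth}, the only content of the proposition is the intertwining condition \eqref{dynrep:eq:defmorphism} for $f \barotimes g$ with respect to the tensor-product representations. Write $\pi_V = s(f)$, $\pi_{V'} = b(f)$, $\pi_W = s(g)$, $\pi_{W'} = b(g)$.

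The plan is to unfold $(\pi_V \barotimes \pi_W)(a) = \varphi_2(V, W) \circ (\pi_V \tildeotimes \pi_W)(\Delta(a))$ using \eqref{tpdynrep:eq:varphi2}: writing $\Delta(a) = \sum_i a_i \tildeotimes b_i$ as a sum of elementary tensors in $A \tildeotimes A$, one obtains
\[
(\pi_V \barotimes \pi_W)(a) = \sum_i \pi_V(a_i)^{(1)} \circ \pi_W(b_i)^{(2)},
\]
and similarly for the primed side. Combining this with the hypotheses $\pi_{V'}(a_i) \circ m_f = m_f \circ \pi_V(a_i)$ and $\pi_{W'}(b_i) \circ m_g = m_g \circ \pi_W(b_i)$, the proposition reduces to two general intertwining identities, which I would isolate as lemmas:
\begin{enumerate}
\item[(L1)] $(U')^{(1)} \circ m_{f \barotimes g} = m_{f \barotimes g} \circ U^{(1)}$ for every $U \in D_{H, X}(V)$ and $U' \in D_{H, X}(V')$ satisfying $U' \circ m_f = m_f \circ U$;
\item[(L2)] $(T')^{(2)} \circ m_{f \barotimes g} = m_{f \barotimes g} \circ T^{(2)}$ for every $T \in D_{H, X}(W)$ and $T' \in D_{H, X}(W')$ satisfying $T' \circ m_g = m_g \circ T$.
\end{enumerate}
Composing (L1) with (L2) produces the desired intertwining for every elementary tensor appearing in $\Delta(a)$, and then summing over $i$ finishes the argument.

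Both (L1) and (L2) are verified by a direct matrix-entry computation using the explicit formulas for $U^{(1)}$, $T^{(2)}$, $m_{f \barotimes g}$, and $(f \barotimes g)(\lambda)(u, v) = f(\lambda v)(u) \otimes g(\lambda)(v)$. I expect (L1) to be the main obstacle. The difficulty is that $U^{(1)}(h)(\lambda)$ evaluates $U$ at the shifted parameter $\lambda v_2$ for $v_2 \in W$, while $m_{f \barotimes g}(h)(\lambda)$ evaluates $f$ at $\lambda w$ for matrix indices $w \in W$ produced by $g$; these two shifts must be reconciled before the hypothesis $U' \circ m_f = m_f \circ U$ can be applied. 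The reconciliation comes from the morphism condition \eqref{catvech:eq:defmorphism} for $g$: one has $g(\lambda)_{v_2' w} = 0$ unless $\lambda v_2' = \lambda w$, so inside every nonzero term of the sum the two shifts coincide, allowing the intertwining hypothesis for $f$ to be invoked at a single common parameter. Identity (L2) is comparatively direct, because $T^{(2)}$ acts via $T$ on the slice $h^{(v_1)} \in \Map(H, \C W)$ and this action mirrors that of $m_g$ on the $W$-factor of $m_{f \barotimes g}$, modulo a uniform shift governed by the weight of $T$ through \eqref{hxalg:eq:bialgebroid}.
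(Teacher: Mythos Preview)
Your plan to split the intertwining for $\varphi_2(V,W)(\pi_V(a_i)\otimes\pi_W(b_i)+I_2)$ into the two separate identities (L1) and (L2) does not work: (L1) is false as stated. Take $U=\Gamma^{V}_{\alpha,\gamma}(u)$ and $U'=\Gamma^{V'}_{\alpha,\gamma}(u')$ with $\gamma\neq 1$. Unwinding the definition of $U^{(1)}$ (which applies $U$ to \emph{constant} functions and then evaluates at the shifted parameter $\lambda v_2$), the equality $(U')^{(1)}\circ m_{f\barotimes g}=m_{f\barotimes g}\circ U^{(1)}$ reduces, after your correct use of \eqref{catvech:eq:defmorphism} for $g$, to
\[
\sum_{u_1'} u'(\nu)_{(\alpha,u_1)(u_1',\gamma)}\,f(\nu)_{u_1'u_2}
=\sum_{u_2'} f(\nu)_{u_1u_2'}\,u(\nu)_{(\alpha,u_2')(u_2,\gamma)}
\qquad(\nu=\lambda v_2).
\]
However the hypothesis $U'\circ m_f=m_f\circ U$, applied to the constant map $u_2$, gives instead
\[
\sum_{u_1'} u'(\nu)_{(\alpha,u_1)(u_1',\gamma)}\,f(\nu\gamma)_{u_1'u_2}
=\sum_{u_2'} f(\nu)_{u_1u_2'}\,u(\nu)_{(\alpha,u_2')(u_2,\gamma)},
\]
because $\Gamma_{\alpha,\gamma}^{V'}(u')$ evaluates its argument at $\nu\gamma$, whereas the formula for $(U')^{(1)}$, by feeding in constant maps, has effectively discarded that shift. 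The two displays differ by $f(\nu)$ versus $f(\nu\gamma)$, and there is nothing in your hypotheses that forces these to agree. An analogous discrepancy occurs in (L2). The point is that $U^{(1)}$ and $T^{(2)}$ individually do not lie in $D_{H,X}(V\barotimes W)$ and are not $\VectH$-natural; only their composite $U^{(1)}\circ T^{(2)}$ is well behaved, via Proposition~\ref{tpdynrep:proposition:varphi2}.

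The paper avoids this by never separating the two factors. It first translates each hypothesis $\Gamma^{V'}_{\alpha,\gamma}(u')\circ m_f=m_f\circ\Gamma^{V}_{\alpha,\gamma}(u)$ into the categorical identity $u'\circ(f\barotimes\id_{\{\gamma\}})=(\id_{\{\alpha\}}\barotimes f)\circ u$ (Lemma~\ref{tpdynrep:lem:equivcommute}), then uses Proposition~\ref{tpdynrep:proposition:varphi2} to identify $\varphi_2(\Gamma\otimes\Gamma+I_2)$ with $\Gamma(v\boxtimes_\gamma w)$, and finally checks the single combined intertwining $(u'\boxtimes_\gamma w')\circ((f\barotimes g)\barotimes\id_{\{\beta\}})=(\id_{\{\alpha\}}\barotimes(f\barotimes g))\circ(u\boxtimes_\gamma w)$ in $\VectH$, which follows directly from the two categorical hypotheses and the definition of $\boxtimes_\gamma$. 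The repair to your argument is therefore to work with the pair $(U,T)$ jointly through $\boxtimes_\gamma$, not with $U^{(1)}$ and $T^{(2)}$ separately.
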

\begin{proof}
Let $\alpha, \gamma\in W_X^H$,
$f\in\Hom_{\VectH}(V, V')$,
$u\in\Hom_{\VectH}(V\barotimes\{\gamma\}, \{\alpha\}\barotimes V)$,
and
$u'\in\Hom_{\VectH}(V'\barotimes\{\gamma\}, \{\alpha\}\barotimes V')$.
\begin{lemma}\label{tpdynrep:lem:equivcommute}
$\Gamma_{\alpha,\gamma}^{V'}(u')\circ m_f
=m_f\circ\Gamma_{\alpha,\gamma}^{V}(u)$,
if and only if
$u'\circ(f\barotimes\id_{\{\gamma\}})
=(\id_{\{\alpha\}}\barotimes f)\circ u$.
\end{lemma}
Let $\beta\in W_X^H$,
$g\in\Hom_{\VectH}(W, W')$,
$w\in\Hom_{\VectH}(W\barotimes\{\beta\}, \{\gamma\}\barotimes W)$,
and
$w'\in\Hom_{\VectH}(W'\barotimes\{\beta\}, \{\gamma\}\barotimes W')$.
The above lemma and Proposition \ref{tpdynrep:proposition:varphi2}
imply that
\begin{lemma}
If
$\Gamma_{\alpha,\gamma}^{V'}(u')\circ m_f
=m_f\circ\Gamma_{\alpha,\gamma}^{V}(u)$
and
$\Gamma_{\gamma,\beta}^{W'}(w')\circ m_g
=m_g\circ\Gamma_{\gamma,\beta}^{W}(w)$,
then
\begin{eqnarray*}
&&
\varphi_2(V', W')(\Gamma_{\alpha,\gamma}^{V'}(u')\otimes 
\Gamma_{\gamma,\beta}^{W'}(w')+I_2)\circ
m_{f\barotimes g}
\\
&=&
m_{f\barotimes g}
\circ
\varphi_2(V, W)(\Gamma_{\alpha,\gamma}^{V}(u)\otimes 
\Gamma_{\gamma,\beta}^{W}(w)+I_2).
\end{eqnarray*}
\end{lemma}
This lemma immediately induces Proposition
\ref{tpdynrep:prop:tensormorphism}.
\end{proof}

For abbreviation,
we continue to write $f\barotimes g$
for the tensor product of the morphisms $f$ and $g$ of $\DR(A)$. 
\begin{proposition}\label{tpdynrep:proposition:barotimes}
$\barotimes: \DR(A)\times \DR(A)\to \DR(A)$ is a functor.
\end{proposition}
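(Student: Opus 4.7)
The plan is to reduce the functoriality of $\barotimes: \DR(A)\times \DR(A)\to \DR(A)$ to the already-established functoriality of $\barotimes: \VectH\times\VectH\to\VectH$ (Proposition \ref{prop:seth:sethtensor}), together with the assignment on objects/morphisms described immediately before the statement. The three things to check are: (i) objects go to objects, (ii) morphisms go to morphisms, (iii) the identity and composition axioms hold.

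First, I would observe that (i) is handled by the paragraph preceding Proposition \ref{tpdynrep:prop:tensormorphism}: for $\pi_V,\pi_W\in\Ob(\DR(A))$, the composite $\varphi_2(V,W)\circ\pi_V\tildeotimes\pi_W\circ\Delta$ is a morphism of $\Alg_{(H,X)}$ because each factor is, using Propositions \ref{tpdynrep:proposition:theta} and \ref{exphxalg:proposition:tensor} together with the fact that $\Delta$ is a morphism in $\Alg_{(H,X)}$ (Definition \ref{hxbialg:definition:hxbialg}). Item (ii) is exactly Proposition \ref{tpdynrep:prop:tensormorphism}, so $f\barotimes g$ already has the correct source and target in $\DR(A)$.

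For (iii), since $f\barotimes g$ is defined as the tensor product in $\VectH$, I would simply invoke Proposition \ref{prop:seth:sethtensor}: for any $\pi_V,\pi_W\in\Ob(\DR(A))$ we have $\id_{\pi_V}\barotimes\id_{\pi_W}=\id_V\barotimes\id_W=\id_{V\barotimes W}=\id_{\pi_V\barotimes\pi_W}$, and for composable morphisms $f,f',g,g'$ in $\DR(A)$ we have
\[
(f'\circ f)\barotimes(g'\circ g)=(f'\barotimes g')\circ(f\barotimes g),
\]
because the composition in $\DR(A)$ is the one inherited from $\VectH$ (Proposition \ref{dynrep:proposition:catDRA}) and this identity already holds in $\VectH$.

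I do not anticipate a substantive obstacle: the nontrivial work lies upstream, in Proposition \ref{tpdynrep:proposition:theta} (the existence of $\varphi_2(V,W)$ as an $(H,X)$-algebra homomorphism) and Proposition \ref{tpdynrep:prop:tensormorphism} (that tensoring morphisms of $\DR(A)$ yields a morphism of $\DR(A)$). Given these, the proof is a two-line bookkeeping argument that all functorial data on $\DR(A)\times \DR(A)$ coincide with the functorial data on $\VectH\times\VectH$ after applying the well-defined object-level rule $\pi_V\mapsto(V,\varphi_2(-,-)\circ\pi_V\tildeotimes-\circ\Delta)$. The only mild care needed is to confirm that identity morphisms in $\DR(A)$ project to identity morphisms in $\VectH$, which is immediate from the definition $\id_{\pi_V}:=\id_V$ in Section \ref{section:dynrep}.
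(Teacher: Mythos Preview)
Your proposal is correct and matches the paper's approach: the paper states Proposition~\ref{tpdynrep:proposition:barotimes} without proof, treating it as immediate once Proposition~\ref{tpdynrep:prop:tensormorphism} is in hand, and your argument spells out exactly the routine verification the paper leaves implicit.
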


Let $I_{\DR(A)}$ denote the object
$(I_{\VectH}, \pi_{I_{\VectH}})\in \Ob(\DR(A))$
(See Proposition \ref{dynrep:proposition:piIVectH}).
The final task of this section is to explain
the associativity constraint
$a: \barotimes(\barotimes\times\id)
\to\barotimes(\id\times\barotimes)$,
the left unit constraint $l: \barotimes(I_{\DR(A)}\times
\id)\to\id$,
and the right unit constraint
$r: \barotimes(\id\times I_{\DR(A)})\to\id$.
Let 
$\pi_i=(V_i, \pi_i)$
$(i=1, 2, 3)$
be objects of $\DR(A)$.
We denote by $a_{\pi_1,\pi_2,\pi_3}$,
$l_{\pi_1}$, and $r_{\pi_1}$
the following morphisms of $\VectH$:
\begin{eqnarray*}
&&
a_{\pi_1,\pi_2,\pi_3}=a_{V_1,V_2,V_3}:
(V_1\barotimes V_2)\barotimes V_3
\to 
V_1\barotimes (V_2\barotimes V_3);
\\
&&
l_{\pi_1}=l_{V_1}: I_{\VectH}\barotimes V_1\to V_1;
\ 
r_{\pi_1}=r_{V_1}: V_1\barotimes I_{\VectH}\to V_1.
\end{eqnarray*}
\begin{theorem}\label{tydynrep:theorem:tensorcategory}
If $A$ is an $(H, X)$-bialgebroid
$($Definition $\ref{hxbialg:definition:hxbialg})$,
then $\DR(A)$, together with
$\barotimes$, $I_{\DR(A)}$, $a$, $l$, and $r$,
is a tensor category.
\end{theorem}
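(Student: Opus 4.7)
The plan is to derive the theorem from abstract categorical machinery rather than by direct coherence calculations. The key idea is that the triple $(D_{H,X}, \varphi_2, \varphi_0)$ of Propositions \ref{exphxalg:proposition:iota} and \ref{tpdynrep:proposition:theta} behaves as a \emph{pre-tensor functor} from the tensor category $\VectH$ to the pre-tensor category $\Alg_{(H,X)}$ (Proposition \ref{hxbialg:proposition:alghxpretensor}); and that whenever one has such a pre-tensor functor into a pre-tensor category $C$, the category of ``$D$-representations'' of a categorical bialgebroid $A$ of $C$ inherits a tensor category structure. Since this general principle is the promised content of Section \ref{section:prtencat} (Definition \ref{prtencat:definition:pretensorfunc}), I would invoke it and check its hypotheses.

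Concretely I would proceed in four steps. \textbf{First}, I would verify the pre-tensor functor coherence for $(D_{H,X}, \varphi_2, \varphi_0)$: that $\varphi_2$ is natural in both arguments, and that the two ways of mapping $D_{H,X}(U) \tildeotimes D_{H,X}(V) \tildeotimes D_{H,X}(W)$ into $D_{H,X}(U \barotimes V \barotimes W)$, via $a^l$ followed by $\varphi_2(U \barotimes V, W) \circ \varphi_2(U,V) \tildeotimes \id$, and via $a^r$ followed by $\varphi_2(U, V \barotimes W) \circ \id \tildeotimes \varphi_2(V,W)$, agree after composition with the appropriate associator $m_{a_{U,V,W}}$; analogous unit compatibilities must be checked with $\varphi_0$ and $l_V, r_V$. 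Using the formula of Proposition \ref{tpdynrep:proposition:varphi2} and the description of $\boxtimes_\gamma$, these are unpacked directly from associativity of $\barotimes$ on $\VectH$ and Proposition \ref{hxalg:prop:product}.

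\textbf{Second}, I would use the categorical bialgebroid axioms \eqref{hxbialg:eq:catbialg1} and \eqref{hxbialg:eq:catbialg2} of $A$, together with the pre-tensor functor coherence from Step 1 and naturality of $a^l, a^r, l, r$ on $\Alg_{(H,X)}$ (Propositions \ref{hxbialg:proposition:al}, \ref{hxbialg:proposition:ar}, \ref{hxalg:proposition:lr}), to show that for any $\pi_1, \pi_2, \pi_3 \in \Ob(\DR(A))$ the equation
\begin{equation*}
(\pi_1 \barotimes (\pi_2 \barotimes \pi_3))(x) \circ m_{a_{V_1,V_2,V_3}} = m_{a_{V_1,V_2,V_3}} \circ ((\pi_1 \barotimes \pi_2) \barotimes \pi_3)(x)
\end{equation*}
holds for all $x \in A$, and similarly for $l$ and $r$. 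This is the crucial content: applying the representation to a ``delta-expanded'' element of $A$ and contracting with $\varphi_2$ produces one side of the equation, while coassociativity of $\Delta$ modulo $a^l/a^r$ (and the pre-tensor functor axiom) converts it into the other side. Concretely each morphism $a_{V_1,V_2,V_3}$, $l_{V_1}$, $r_{V_1}$ of $\VectH$ therefore lifts to a morphism of $\DR(A)$ in the sense of \eqref{dynrep:eq:defmorphism}.

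\textbf{Third}, naturality of $a, l, r$ on $\DR(A)$ follows immediately: both sides of each naturality square are morphisms in $\VectH$ by Proposition \ref{tpdynrep:prop:tensormorphism}, and they coincide there by naturality of $a, l, r$ on $\VectH$ (Proposition \ref{prop:seth:sethtensor}); since the forgetful functor $\DR(A) \to \VectH$ is faithful, they coincide in $\DR(A)$. The pentagon and triangle axioms are inherited from $\VectH$ for the same reason. \textbf{Fourth}, I combine Steps 1--3 with Proposition \ref{tpdynrep:proposition:barotimes} and the fact that $(I_{\DR(A)}, \pi_{I_{\VectH}})$ is an object of $\DR(A)$ (Proposition \ref{dynrep:proposition:piIVectH}) to conclude the tensor category axioms.

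The main obstacle is Step 2, specifically the associator equation. The difficulty is that the three morphisms $a_{V_1,V_2,V_3}$ in $\VectH$, $a^l_{A,A,A}$ and $a^r_{A,A,A}$ in $\Alg_{(H,X)}$, and the ``triple-tensor'' pre-tensor functor structure on $D_{H,X}$ all have to fit together simultaneously, and each of $\tildeotimes$, $-\tildeotimes - \tildeotimes -$, $\barotimes$ is defined with its own quotient/subspace structure. Disentangling this requires introducing, as the authors presumably do in Section \ref{section:prtencat}, a clean formulation of pre-tensor functors and the lemma that the canonical map $D_{H,X}(V_1) \tildeotimes D_{H,X}(V_2) \tildeotimes D_{H,X}(V_3) \to D_{H,X}(V_1 \barotimes V_2 \barotimes V_3)$ induced by $\varphi_2$ is well-defined and independent of bracketing after precomposition with $a^l, a^r$.
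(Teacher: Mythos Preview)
Your plan is essentially the paper's own proof: pre-tensor functor axioms for $D_{H,X}$ (Proposition \ref{prtencat:proposition:property}) combined with the categorical-bialgebroid axioms \eqref{hxbialg:eq:catbialg1}--\eqref{hxbialg:eq:catbialg2} yield the intertwining relations for $a,l,r$, exactly as in Figures \ref{prtencat:figure:1} and \ref{prtencat:figure:2}. Two small refinements to align with the paper: first, $D_{H,X}$ is only a functor on the groupoid of isomorphisms ${\VectH}_{is}$, not on all of $\VectH$ (since $D_{H,X}(f)=m_f\circ(-)\circ m_f^{-1}$ needs $f$ invertible), which is harmless here because $a_{V_1,V_2,V_3}$, $l_{V_1}$, $r_{V_1}$ are isomorphisms; second, the paper packages your Steps 2--3 by introducing an auxiliary category $\PDR_F(A)$ whose morphisms are \emph{all} $\VectH$-morphisms (no intertwining condition), so it is trivially a tensor category, and then checks $\DR(A)$ is a tensor subcategory via a closure property (their condition (4)) that is exactly your Step 2 computation.
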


Because of Proposition \ref{ar:proposition:arhxbialg}
and Corollary \ref{ar:remark:R},
\begin{corollary}\label{tydynrep:cor:tensorcategory}
$\DR(A_\sigma)$ is a tensor category,
and so is $DR(A_R)$.
\end{corollary}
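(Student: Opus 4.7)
The plan is to deduce this corollary directly from Theorem \ref{tydynrep:theorem:tensorcategory} by substituting $A = A_\sigma$ and $A = A_R$, since all the hard work has already been done.

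First, I would invoke Proposition \ref{ar:proposition:arhxbialg}, which asserts that $A_\sigma$ is an $(H,X)$-bialgebroid in the sense of Definition \ref{hxbialg:definition:hxbialg}. Here $\sigma$ is any morphism $X\barotimes X \to X\barotimes X$ of $\VectH$; note in particular that the construction of $A_\sigma$ requires the finiteness of $X$, so that the sums in the defining relations (2) and (4) of Definition \ref{ar:definition:ar} make sense, and the generators of the ideal $I_\sigma$ together with the counit $\varepsilon$ and coproduct $\Delta$ constructed in Section \ref{section:ar} verify the axioms \eqref{hxbialg:eq:catbialg1} and \eqref{hxbialg:eq:catbialg2}.

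Next, I would simply feed $A = A_\sigma$ into Theorem \ref{tydynrep:theorem:tensorcategory}. The theorem provides that $\DR(A_\sigma)$, equipped with the tensor product $\barotimes$, unit object $I_{\DR(A_\sigma)} = (I_{\VectH}, \pi_{I_{\VectH}})$, and associativity and unit constraints $a, l, r$ pulled back from $\VectH$ as defined in Section \ref{section:tpdynrep}, is a tensor category. This establishes the first half of the corollary.

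For the second half, I would use Corollary \ref{ar:remark:R}, which states that $A_R$ is an $(H,X)$-bialgebroid whenever $R(\lambda)$ is a bijective dynamical Yang-Baxter map satisfying the weight zero condition \eqref{catlop:eq:weightzero}. Applying Theorem \ref{tydynrep:theorem:tensorcategory} again with $A = A_R$ yields that $\DR(A_R)$ is a tensor category. There is no genuine obstacle here: the corollary is a formal consequence of specializing the theorem, and I would merely need to remark that the hypotheses of the theorem (the data of an $(H,X)$-bialgebroid) are supplied by Proposition \ref{ar:proposition:arhxbialg} and Corollary \ref{ar:remark:R}, respectively.
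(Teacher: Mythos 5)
Your proposal is correct and coincides exactly with the paper's argument: the corollary is obtained by specializing Theorem \ref{tydynrep:theorem:tensorcategory} to $A=A_\sigma$ and $A=A_R$, whose $(H,X)$-bialgebroid structures are supplied by Proposition \ref{ar:proposition:arhxbialg} and Corollary \ref{ar:remark:R}, respectively. Nothing further is needed.
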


We will prove Theorem \ref{tydynrep:theorem:tensorcategory}
in the next section.
\section{Proof of Theorem
\ref{tydynrep:theorem:tensorcategory}}
\label{section:prtencat}
Before starting the proof of
Theorem
\ref{tydynrep:theorem:tensorcategory},
let us introduce the category $C_{is}$ called
the groupoid of isomorphisms
of the category $C$
\cite[XI.1.1 Example 2]{kassel}.
The notation $C_{is}$ means
the (broad) subcategory of $C$
whose objects are those of $C$
and whose morphisms are isomorphisms of $C$.
If $C$ is a tensor category, then $C_{is}$ is a tensor subcategory
of $C$;
in other words,
$C_{is}$ is a tensor category with respect to
$\otimes$, $I$, $a$, $l$, and $r$
of the tensor category $C$.

We will first explain a functor
$D_{H, X}: {\VectH}_{is}\to\Alg_{(H, X)}$,
which
plays an essential role in the proof of
Theorem
\ref{tydynrep:theorem:tensorcategory}
(For $\VectH$ and $\Alg_{(H, X)}$, see Sections \ref{section:catvecth}
and \ref{section:hxalg}).

For an object $V$ of the category ${\VectH}_{is}$,
we have already defined the object $D_{H, X}(V)$
of $\Alg_{(H, X)}$
in Proposition \ref{exphxalg:proposition:DHXV}.
Let $f: V\to W$ be a morphism of the category ${\VectH}_{is}$.
Define a $\C$-linear map $D_{H, X}(f)$ from $\End_\C(\Map(H, \C V))$
to $\End_\C(\Map(H, \C W))$
by
\[
D_{H, X}(f)(U)=m_f\circ U\circ (m_f)^{-1}
\quad(U\in \End_\C(\Map(H, \C V))).
\]
For $m_f$, see \eqref{dynrep:eq:mf}.
We note that
$(m_f)^{-1}=m_{f^{-1}}$.
Because $D_{H, X}(f)$ satisfies that
$D_{H, X}(f)(D_{H, X}(V)_{\alpha,\beta})\subset D_{H, X}(W)_{\alpha,\beta}$
$(\forall\alpha, \beta\in W_X^H)$,
we use the same symbol
$D_{H, X}(f)$ for
the restriction
$D_{H, X}(f)|_{D_{H, X}(V)}$.
By Definition \ref{hxlag:proposition:hxalghom},
\begin{proposition}\label{prtencat:proposition:DHXfunc}
$D_{H, X}(f): D_{H, X}(V)\to D_{H, X}(W)\in\Hom(\Alg_{(H, X)})$.
Furthermore,
$D_{H, X}: {\VectH}_{is}\to \Alg_{(H, X)}$ is a functor.
\end{proposition}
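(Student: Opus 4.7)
The plan has two parts. First, to show $D_{H,X}(f)$ is a morphism of $\Alg_{(H, X)}$ (Definition \ref{hxlag:proposition:hxalghom}), I must verify it is a unital $\C$-algebra homomorphism that preserves the grading and intertwines the $\mu_l, \mu_r$ maps. Since $D_{H,X}(f)(U) = m_f \circ U \circ m_f^{-1}$ is conjugation by the invertible $\C$-linear map $m_f$ (with $m_f^{-1} = m_{f^{-1}}$ because $f$ is an isomorphism in $\VectH_{is}$), the homomorphism and unit conditions are immediate:
\[
D_{H,X}(f)(UU') = m_f U U' m_f^{-1} = (m_f U m_f^{-1})(m_f U' m_f^{-1}) = D_{H,X}(f)(U)\,D_{H,X}(f)(U'),
\]
and $D_{H,X}(f)(\id) = \id$.

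The essential step is preservation of the grading $D_{H,X}(f)(D_{H,X}(V)_{\alpha,\beta}) \subset D_{H,X}(W)_{\alpha,\beta}$. For each $u \in \Hom_{\VectH}(V\barotimes\{\beta\}, \{\alpha\}\barotimes V)$, I will define
\[
u' = \id_{\{\alpha\}} \barotimes f \circ u \circ f^{-1} \barotimes \id_{\{\beta\}} \in \Hom_{\VectH}(W\barotimes\{\beta\}, \{\alpha\}\barotimes W).
\]
Then $u' \circ f \barotimes \id_{\{\beta\}} = \id_{\{\alpha\}} \barotimes f \circ u$, so Lemma \ref{tpdynrep:lem:equivcommute} (with the roles $V' \leftarrow W$, $\gamma \leftarrow \beta$) gives $\Gamma_{\alpha,\beta}^W(u') \circ m_f = m_f \circ \Gamma_{\alpha,\beta}^V(u)$, i.e.\ $D_{H,X}(f)(\Gamma_{\alpha,\beta}^V(u)) = \Gamma_{\alpha,\beta}^W(u')$, as required.

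For the $\mu_l, \mu_r$ compatibility, I will verify $m_f \circ \mu_l^{E(V)}(g) = \mu_l^{E(W)}(g) \circ m_f$ and the corresponding identity for $\mu_r^{E(\cdot)}$ by a direct computation using \eqref{hxalg:eq:muleV}, \eqref{hxalg:eq:mureV}. For $\mu_l$, one expands
\[
(m_f \circ \mu_l^{E(V)}(g))(k)(\lambda) = \sum_{v,w} w\, f(\lambda)_{wv}\, g(\lambda v)\, k(\lambda)_v,
\]
and then invokes \eqref{catvech:eq:defmorphism} for $f$ — namely $f(\lambda)_{wv} = 0$ unless $\lambda\cdot_W w = \lambda\cdot_V v$ — to replace $g(\lambda v)$ by $g(\lambda w)$ inside the nonzero terms, yielding $\mu_l^{E(W)}(g)(m_f(k))(\lambda)$. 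The $\mu_r$ identity is even simpler since $\mu_r^{E(V)}(g)$ is $\lambda$-wise scalar multiplication by $g(\lambda)$, which commutes with any $f(\lambda)$.

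For functoriality, both $D_{H,X}(\id_V) = \id_{D_{H,X}(V)}$ and $D_{H,X}(g \circ f) = D_{H,X}(g) \circ D_{H,X}(f)$ are immediate from $m_{\id_V} = \id_{\Map(H, \C V)}$ and $m_{g \circ f} = m_g \circ m_f$, together with the associativity of conjugation. I expect the only nontrivial step to be the grading preservation, and the hard part there is purely bookkeeping: once Lemma \ref{tpdynrep:lem:equivcommute} is in hand, the candidate $u'$ written above solves the problem automatically, so the main obstacle is really only making sure the bijectivity of $f$ (the restriction to $\VectH_{is}$) is used to write down $f^{-1}$ in the definition of $u'$.
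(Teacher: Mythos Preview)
Your proposal is correct and follows the same approach the paper intends: the paper simply asserts the grading inclusion $D_{H,X}(f)(D_{H,X}(V)_{\alpha,\beta})\subset D_{H,X}(W)_{\alpha,\beta}$ and then says ``By Definition~\ref{hxlag:proposition:hxalghom}'', leaving the verification to the reader, and you have supplied exactly that verification. Your use of Lemma~\ref{tpdynrep:lem:equivcommute} to handle the grading step is a clean way to package the computation and is fully consistent with the paper's tools.
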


The morphisms $\varphi_2(V, W)$
in Proposition \ref{tpdynrep:proposition:theta}
$(V, W\in \Ob(\VectH))$
are relevant to this functor $D_{H, X}$.
To be more precise,
\begin{proposition}
$\varphi_2: \tildeotimes(D_{H, X}\times D_{H, X})\to
D_{H, X}\barotimes$ is a natural transformation.
\end{proposition}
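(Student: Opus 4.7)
The goal is to show that for every morphism $(f,g):(V,W)\to(V',W')$ in ${\VectH}_{is}\times{\VectH}_{is}$, the square
\[
\begin{array}{ccc}
D_{H,X}(V)\tildeotimes D_{H,X}(W)&\xrightarrow{\varphi_2(V,W)}&D_{H,X}(V\barotimes W)\\
\downarrow{\scriptstyle D_{H,X}(f)\tildeotimes D_{H,X}(g)}&&\downarrow{\scriptstyle D_{H,X}(f\barotimes g)}\\
D_{H,X}(V')\tildeotimes D_{H,X}(W')&\xrightarrow{\varphi_2(V',W')}&D_{H,X}(V'\barotimes W')
\end{array}
\]
commutes. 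All four arrows are morphisms of $\Alg_{(H,X)}$ by Propositions \ref{prtencat:proposition:DHXfunc} and \ref{tpdynrep:proposition:theta}, so I only need to verify the equation on a generating set of $D_{H,X}(V)\tildeotimes D_{H,X}(W)$. By Proposition \ref{exphxalg:proposition:DHXV} and the construction of $\tildeotimes$, such a generating set is given by the cosets $\Gamma_{\alpha,\gamma}^V(u)\otimes\Gamma_{\gamma,\beta}^W(w)+I_2$ (with $\alpha,\beta,\gamma\in W_X^H$) together with the images of $\mu_l$ and $\mu_r$; on the latter the equality is automatic since every map in the square is an $(H,X)$-algebra homomorphism.

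The first key step is a translation lemma: for an isomorphism $f:V\to V'$ of $\VectH$, one has
\[
D_{H,X}(f)\bigl(\Gamma_{\alpha,\gamma}^V(u)\bigr)
=\Gamma_{\alpha,\gamma}^{V'}(\tilde u),\qquad
\tilde u=(\id_{\{\alpha\}}\barotimes f)\circ u\circ(f^{-1}\barotimes\id_{\{\gamma\}}).
\]
This is nothing but Lemma \ref{tpdynrep:lem:equivcommute} read in the direction $u'=\tilde u$, combined with the definition $D_{H,X}(f)(U)=m_f\circ U\circ m_{f^{-1}}$ and the injectivity of $\Gamma_{\alpha,\gamma}^{V'}$ from Proposition \ref{hxalg:proposition:unique}. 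The analogous formula holds for $g:W\to W'$ with some $\tilde w$, and it holds for $f\barotimes g:V\barotimes W\to V'\barotimes W'$ with some $\widetilde{u\boxtimes_\gamma w}$.

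The second step is to compute both paths around the square on the generator $\Gamma_{\alpha,\gamma}^V(u)\otimes\Gamma_{\gamma,\beta}^W(w)+I_2$ using Proposition \ref{tpdynrep:proposition:varphi2}. The upper-right path gives $\Gamma_{\alpha,\beta}^{V'\barotimes W'}\bigl((\id_{\{\alpha\}}\barotimes(f\barotimes g))\circ(u\boxtimes_\gamma w)\circ((f\barotimes g)^{-1}\barotimes\id_{\{\beta\}})\bigr)$, while the lower-left path gives $\Gamma_{\alpha,\beta}^{V'\barotimes W'}(\tilde u\boxtimes_\gamma\tilde w)$. By injectivity of $\Gamma_{\alpha,\beta}^{V'\barotimes W'}$, commutativity of the square reduces to the purely categorical identity
\[
\tilde u\boxtimes_\gamma\tilde w
=(\id_{\{\alpha\}}\barotimes(f\barotimes g))\circ(u\boxtimes_\gamma w)\circ\bigl((f\barotimes g)^{-1}\barotimes\id_{\{\beta\}}\bigr)
\]
in $\Hom_{\VectH}((V'\barotimes W')\barotimes\{\beta\},\{\alpha\}\barotimes(V'\barotimes W'))$.

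The remaining, and main, obstacle is verifying this last identity. It is a diagram chase inside the tensor category $\VectH$: one expands $\tilde u$, $\tilde w$, and both occurrences of $\boxtimes_\gamma$ by their definitions, and then uses naturality of the associativity constraint $a$ together with bifunctoriality of $\barotimes$ to slide the maps $f\barotimes\id$, $\id\barotimes g$ (and their inverses) past the associators, collecting them at the two ends as $(f\barotimes g)\barotimes\id$ and $\id\barotimes(f\barotimes g)$. Because the whole expression is built from $u$, $w$, $f$, $g$, and natural associators, the check is mechanical but somewhat lengthy. With that identity in hand the proof is complete.
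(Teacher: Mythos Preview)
Your proof is correct. The paper states this proposition without proof, treating it as routine; your route---using Lemma~\ref{tpdynrep:lem:equivcommute} to translate $D_{H,X}(f)$ on $\Gamma$-elements, then Proposition~\ref{tpdynrep:proposition:varphi2} to reduce the naturality square to the identity $\tilde u\boxtimes_\gamma\tilde w=(\id\barotimes(f\barotimes g))\circ(u\boxtimes_\gamma w)\circ((f\barotimes g)^{-1}\barotimes\id)$ in $\VectH$---is exactly the expected one, and the final identity is indeed a mechanical consequence of naturality of $a$ and bifunctoriality of $\barotimes$. One small remark: the cosets $\Gamma_{\alpha,\gamma}^V(u)\otimes\Gamma_{\gamma,\beta}^W(w)+I_2$ already \emph{linearly span} $D_{H,X}(V)\tildeotimes D_{H,X}(W)$ by the definition of $\tildeotimes$, so since all four arrows are $\C$-linear there is no need to treat $\mu_l,\mu_r$ separately (their images are already of this form in the $(1,1)$-component).
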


Let $V_1, V_2$, and $V_3$ be objects of the category ${\VectH}_{is}$.
The next task is to construct two families of morphisms
of the category $\Alg_{(H, X)}$:
\begin{eqnarray*}
&&
\varphi^l_3(V_1, V_2, V_3): 
D_{H, X}(V_1)\tildeotimes D_{H, X}(V_2)\tildeotimes
D_{H, X}(V_3)\to
D_{H, X}((V_1\barotimes V_2)\barotimes V_3);
\\
&&
\varphi^r_3(V_1, V_2, V_3): 
D_{H, X}(V_1)\tildeotimes D_{H, X}(V_2)
\tildeotimes D_{H, X}(V_3)\to
D_{H, X}(V_1\barotimes (V_2\barotimes V_3)).
\end{eqnarray*}

We define the map
\[
\doublebarTheta:
(D_{H, X}(V_1)\otimes_\C D_{H, X}(V_2))
\times D_{H, X}(V_3)
\to
\End(\Map(H, \C (V_1\barotimes V_2)\barotimes V_3))
\]
as follows
(For $\bar{\varphi}_2(V_1, V_2)$, see
\eqref{tpdynrep:eq:varphi2}).
\[
\doublebarTheta(w, x)
=
\bar{\varphi}_2(V_1, V_2)(w)^{(1)}\circ x^{(2)}
\ \ 
(w\in D_{H, X}(V_1)\otimes_\C D_{H, X}(V_2),
x\in D_{H, X}(V_3)).
\]
Since this map is $\C$-bilinear,
there exists a unique $\C$-linear map
\[
\bar{\varphi}^l_3: 
(D_{H, X}(V_1)\otimes_\C D_{H, X}(V_2))
\otimes_\C D_{H, X}(V_3)
\to
\End(\Map(H, \C (V_1\barotimes V_2)\barotimes V_3))
\]
such that
$\bar{\varphi}^l_3(w\otimes x)=\doublebarTheta(w, x)$.

We set $I_3^l=I_3^l(D_{H, X}(V_1), D_{H, X}(V_2), D_{H, X}(V_3))$
(See \eqref{exphxalg:eq:ABCl}).
The map $\bar{\varphi}^l_3$ satisfies
$\bar{\varphi}^l_3(I_3^l)=\{ 0\}$,
and it hence induces the following $\C$-linear map
$\varphi^l_3$.
\[
\varphi^l_3:
(D_{H, X}(V_1)\otimes_\C D_{H, X}(V_2))
\otimes_\C D_{H, X}(V_3)/I_3^l
\to
\End(\Map(H, \C (V_1\barotimes V_2)\barotimes V_3)).
\]
For any $\alpha, \beta\in W_X^H$,
\[
\varphi^l_3((D_{H, X}(V_1)\tildeotimes D_{H, X}(V_2)\tildeotimes 
D_{H, X}(V_3))_{\alpha,\beta})
\subset
D_{H, X}((V_1\barotimes V_2)\barotimes V_3)_{\alpha,\beta};
\]
and we will use the symbol $\varphi^l_3(V_1, V_2, V_3)$
for the restriction of the map $\varphi^l_3$.
\begin{eqnarray*}
&&
\varphi^l_3(V_1, V_2, V_3)
=\varphi^l_3|_{D_{H, X}(V_1)\tildeotimes D_{H, X}(V_2)\tildeotimes 
D_{H, X}(V_3)}:
\\
&&
D_{H, X}(V_1)\tildeotimes D_{H, X}(V_2)\tildeotimes 
D_{H, X}(V_3)
\to
D_{H, X}((V_1\barotimes V_2)\barotimes V_3).
\end{eqnarray*}
\begin{proposition}\label{prtencat:proposition:Theta1}
The map $\varphi^l_3(V_1, V_2, V_3)$ is
a morphism of $\Alg_{(H, X)}$.
\end{proposition}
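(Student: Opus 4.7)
The plan is to reduce the claim to three already-established results---Propositions \ref{exphxalg:proposition:tensor}, \ref{tpdynrep:proposition:theta}, and \ref{hxbialg:proposition:al}---by exhibiting $\varphi^l_3(V_1, V_2, V_3)$ as arising from a composition of known morphisms of $\Alg_{(H, X)}$ after precomposition with the associator $a^l$. Writing $A_i = D_{H, X}(V_i)$, the first step is to verify the identity
\[
\varphi^l_3(V_1, V_2, V_3) \circ a^l_{A_1, A_2, A_3}
= \varphi_2(V_1 \barotimes V_2, V_3) \circ \bigl( \varphi_2(V_1, V_2) \tildeotimes \id_{A_3} \bigr)
\]
of $\C$-linear maps. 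The check is a direct unwinding on a generating element $\bigl((U_1 \otimes U_2) + I_2\bigr) \otimes U_3 + I_2$: via the definition of $a^l$, the left side becomes $\varphi^l_3(V_1, V_2, V_3)((U_1 \otimes U_2) \otimes U_3 + I_3^l) = \bar\varphi_2(V_1, V_2)(U_1 \otimes U_2)^{(1)} \circ U_3^{(2)}$, while the right side, after applying $\varphi_2(V_1, V_2) \tildeotimes \id_{A_3}$ and then \eqref{tpdynrep:eq:varphi2} for $\varphi_2(V_1 \barotimes V_2, V_3)$, produces the same operator.

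Next, I observe that $a^l_{A_1, A_2, A_3}$ is $\C$-linearly surjective: the target $A_1 \tildeotimes A_2 \tildeotimes A_3$ is $\C$-spanned by classes $(\xi_1 \otimes \xi_2) \otimes \xi_3 + I_3^l$ with $\xi_i$ in appropriate homogeneous components, and each such class is the image under $a^l_{A_1, A_2, A_3}$ of the generator $\bigl((\xi_1 \otimes \xi_2) + I_2\bigr) \otimes \xi_3 + I_2$. Combined with the cited propositions, the right-hand side of the displayed identity is a composition of morphisms of $\Alg_{(H, X)}$, hence itself a morphism.

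Finally, the morphism structure transports to $\varphi^l_3(V_1, V_2, V_3)$. Given $y, y'$ in the domain of $\varphi^l_3(V_1, V_2, V_3)$, choose preimages $x, x'$ under $a^l_{A_1, A_2, A_3}$; multiplicativity of $a^l$ yields $y y' = a^l_{A_1, A_2, A_3}(x x')$, and the displayed identity then forces $\varphi^l_3(V_1, V_2, V_3)(y y') = \varphi^l_3(V_1, V_2, V_3)(y)\, \varphi^l_3(V_1, V_2, V_3)(y')$. Compatibility with the unit and with $\mu_l, \mu_r$ follows by the same trick, using that $\mu_l^{A_1 \tildeotimes A_2 \tildeotimes A_3}(f) = a^l_{A_1, A_2, A_3}(\mu_l^{(A_1 \tildeotimes A_2) \tildeotimes A_3}(f))$ and analogous identities, all encoded in Proposition \ref{hxbialg:proposition:al}. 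The grading containment $\varphi^l_3(V_1, V_2, V_3)\bigl((A_1 \tildeotimes A_2 \tildeotimes A_3)_{\alpha, \beta}\bigr) \subset D_{H, X}((V_1 \barotimes V_2) \barotimes V_3)_{\alpha, \beta}$ was already recorded immediately above the statement. The principal obstacle is the first step: although conceptually routine, the identity requires careful tracking of which ideal a given residue class lives in (two distinct $I_2$'s and $I_3^l$) and at which level each superscript $(1)$ or $(2)$ is applied.
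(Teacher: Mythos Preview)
Your argument is correct. The paper gives no explicit proof of this proposition, treating it as a routine direct verification analogous to Proposition~\ref{tpdynrep:proposition:theta} (presumably via an explicit formula for $\varphi^l_3$ on elements $\Gamma_{\alpha,\gamma}^{V_1}(u_1)\otimes\Gamma_{\gamma,\delta}^{V_2}(u_2)\otimes\Gamma_{\delta,\beta}^{V_3}(u_3)+I_3^l$, in the spirit of Proposition~\ref{tpdynrep:proposition:varphi2}).

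Your route is genuinely different and more conceptual: rather than a fresh element-level check, you establish the identity $\varphi^l_3\circ a^l=\varphi_2\circ(\varphi_2\tildeotimes\id)$---which is exactly the pre-tensor functor axiom \eqref{prtencat:eq:pretenfunc2}---and then transport the morphism property along the surjective $(H,X)$-algebra morphism $a^l$. This has the advantage of reusing Propositions~\ref{exphxalg:proposition:tensor}, \ref{hxbialg:proposition:al}, and \ref{tpdynrep:proposition:theta} rather than repeating their content, and it yields \eqref{prtencat:eq:pretenfunc2} as a byproduct (so in the later proof of Proposition~\ref{prtencat:proposition:property} that axiom would already be in hand). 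The paper's implicit direct approach, by contrast, keeps Proposition~\ref{prtencat:proposition:Theta1} logically prior to and independent of the pre-tensor functor axioms, which is why the paper can cite it as an input to Proposition~\ref{prtencat:proposition:property}; your ordering simply merges those two steps.
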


We apply this argument again
to obtain
the morphism $\varphi^r_3(V_1, V_2, V_3)$
of the category $\Alg_{(H, X)}$.

The definition of pre-tensor functors is similar to that of the tensor functors
\cite[Definition XI.4.1]{kassel}.
\begin{definition}\label{prtencat:definition:pretensorfunc}
Let $C=(C, \barotimes, I_C, a, l^C, r^C)$ be a tensor category,
and let 
$D=(D, \tildeotimes, -\tildeotimes -\tildeotimes -, I_D, a^l, a^r, l^D, r^D)$
be a pre-tensor category
$($Definition $\ref{hxbialg:definition:pretensorcat}$$)$.
A pre-tensor functor from $C$ to $D$ is a quintet $(F, \varphi_0,
\varphi_2, \varphi_3^l, \varphi_3^r)$ where
$F: C\to D$ is a functor,
$\varphi_0$ is a morphism from $I_D$ to $F(I_C)$,
$\varphi_2: \tildeotimes(F\times F)\to F\barotimes$ is
a natural transformation,
and
\begin{eqnarray*}
&&
\varphi_3^l(U, V, W): F(U)\tildeotimes F(V)\tildeotimes F(W)\to
F((U\barotimes V)\barotimes W),
\\
&&
\varphi_3^r(U, V, W): F(U)\tildeotimes F(V)\tildeotimes F(W)\to
F(U\barotimes (V\barotimes W))
\end{eqnarray*}
are
families of morphisms
$(U, V, W\in \Ob(C))$
such that
\begin{eqnarray}
&&\label{prtencat:eq:pretenfunc1}
F(a_{U,V,W})\circ\varphi^l_3(U, V, W)
=
\varphi^r_3(U, V, W),
\\&&\nonumber
\varphi^l_3(U, V, W)\circ a^l_{F(U), F(V), F(W)}
\\\label{prtencat:eq:pretenfunc2}
&&\qquad\qquad
=
\varphi_2(U\barotimes V, W)\circ
\varphi_2(U, V)\tildeotimes\id_{F(W)},
\\&&\nonumber
\varphi^r_3(U, V, W)\circ a^r_{F(U), F(V), F(W)}
\\\label{prtencat:eq:pretenfunc3}&&\qquad\qquad
=
\varphi_2(U, V\barotimes W)\circ
\id_{F(U)}\tildeotimes\varphi_2(V, W),
\\&&\label{prtencat:eq:pretenfunc4}
l^D_{F(U)}=
F(l^C_U)\circ\varphi_2(I_C, U)\circ
\varphi_0\tildeotimes\id_{F(U)},
\\&&\label{prtencat:eq:pretenfunc5}
r^D_{F(U)}=
F(r^C_U)\circ\varphi_2(U, I_C)\circ
\id_{F(U)}\tildeotimes \varphi_0
\end{eqnarray}
for all objects $U, V, W$ of $C$.
\end{definition}
\begin{proposition}\label{prtencat:proposition:property}
$D_{H, X}=(D_{H, X}, \varphi_0, \varphi_2, \varphi_3^l, \varphi_3^r)$
is a pre-tensor functor from ${\VectH}_{is}$
to $\Alg_{(H, X)}$.
\end{proposition}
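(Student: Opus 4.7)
The plan is as follows. Definition \ref{prtencat:definition:pretensorfunc} requires four structural data and five compatibility identities. The data are already in hand: functoriality of $D_{H, X}$ is Proposition \ref{prtencat:proposition:DHXfunc}; $\varphi_0 : I_{H,X} \to D_{H,X}(I_{\VectH})$ is a morphism of $\Alg_{(H,X)}$ by Proposition \ref{exphxalg:proposition:iota}; the naturality of $\varphi_2$ is the proposition displayed immediately above; and $\varphi_3^l(V_1,V_2,V_3)$ is a morphism by Proposition \ref{prtencat:proposition:Theta1}, with $\varphi_3^r(V_1,V_2,V_3)$ handled by the analogous construction. Thus only the five identities \eqref{prtencat:eq:pretenfunc1}--\eqref{prtencat:eq:pretenfunc5} remain to be verified.

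I would reduce each identity to a check on generators. By Proposition \ref{exphxalg:proposition:DHXV} every element of $D_{H, X}(V)$ is a sum of elements of the form $\Gamma^V_{\alpha,\beta}(u)$, and by Proposition \ref{hxalg:proposition:unique} the maps $\Gamma^V_{\alpha,\beta}$ are injective; hence two $(H, X)$-algebra morphisms into $D_{H, X}(V)$ coincide as soon as they agree on such elements. Similarly, every class in $D_{H, X}(V_1) \tildeotimes D_{H, X}(V_2) \tildeotimes D_{H, X}(V_3)$ is a sum of classes represented by triples
\[
\Gamma^{V_1}_{\alpha,\gamma}(u_1) \otimes \Gamma^{V_2}_{\gamma,\delta}(u_2) \otimes \Gamma^{V_3}_{\delta,\beta}(u_3),
\]
and it suffices to test the identities on these triples.

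Identities \eqref{prtencat:eq:pretenfunc2} and \eqref{prtencat:eq:pretenfunc3} follow from iterating Proposition \ref{tpdynrep:proposition:varphi2}: both sides applied to the above triple unfold to $\Gamma^{(V_1\barotimes V_2)\barotimes V_3}_{\alpha,\beta}((u_1 \boxtimes_\gamma u_2) \boxtimes_\delta u_3)$ and $\Gamma^{V_1\barotimes(V_2\barotimes V_3)}_{\alpha,\beta}(u_1 \boxtimes_\gamma (u_2 \boxtimes_\delta u_3))$, respectively. Identity \eqref{prtencat:eq:pretenfunc1} is then obtained by combining Lemma \ref{tpdynrep:lem:equivcommute} (which describes how conjugation by $m_{a_{V_1,V_2,V_3}}$ transports the morphism $u$ inside $\Gamma$) with the pentagon-type identity satisfied by the associator $a$ of $\VectH$. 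Identities \eqref{prtencat:eq:pretenfunc4} and \eqref{prtencat:eq:pretenfunc5} are routine once $\varphi_0$ is unwound through the definition given in Proposition \ref{exphxalg:proposition:iota}: each side reduces to the transport of scalars along the canonical bijection $\Map(H, \C(I_{\VectH} \barotimes U)) \to \Map(H, \C U)$ induced by $l_U$ (respectively $r_U$).

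The main obstacle is identity \eqref{prtencat:eq:pretenfunc1}. The left-hand side $\varphi_3^l$ is built by first applying $\varphi_2$ to the first two tensorands and then coupling with the third through the superscripts ${}^{(1)}$ and ${}^{(2)}$, whereas $\varphi_3^r$ couples the last two factors first; reconciling them requires careful combinatorial bookkeeping of sums indexed by $V_1 \barotimes V_2 \barotimes V_3$, and invokes the associator of $\VectH$ in a non-trivial way. Once this three-factor analogue of the proof of Proposition \ref{tpdynrep:proposition:varphi2} is in place, the remaining four identities are direct computations.
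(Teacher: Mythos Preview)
Your proposal is correct and follows the same overall line as the paper's proof: cite the earlier propositions for the structural data $(D_{H,X},\varphi_0,\varphi_2,\varphi_3^l,\varphi_3^r)$ and then reduce the claim to checking the five identities \eqref{prtencat:eq:pretenfunc1}--\eqref{prtencat:eq:pretenfunc5}. The paper in fact stops there, stating only that ``the detailed verification is left to the reader,'' whereas you go further and sketch how that verification runs via Proposition \ref{tpdynrep:proposition:varphi2} and Lemma \ref{tpdynrep:lem:equivcommute} on generators $\Gamma^{V_i}_{\alpha,\beta}(u_i)$.

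One small terminological point: what you call a ``pentagon-type identity'' in the argument for \eqref{prtencat:eq:pretenfunc1} is not the Mac Lane pentagon (only three objects are involved) but simply the compatibility $a_{\{\alpha\},V_1,V_2\barotimes V_3}\circ\bigl((u_1\boxtimes_\gamma u_2)\boxtimes_\delta u_3\bigr)=\bigl(u_1\boxtimes_\gamma(u_2\boxtimes_\delta u_3)\bigr)\circ a_{V_1\barotimes V_2,V_3,\{\beta\}}$ up to the obvious rebracketing, i.e.\ naturality of the associator in $\VectH$ together with the definition of $\boxtimes$. With that correction your outline is exactly the computation the paper omits.
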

\begin{proof}
On account of Propositions \ref{exphxalg:proposition:iota}
and \ref{prtencat:proposition:DHXfunc}--\ref{prtencat:proposition:Theta1}
with the family of the morphisms
$\varphi^r_3(V_1, V_2, V_3)$
of $\Alg_{(H, X)}$,
it is sufficient to show
\eqref{prtencat:eq:pretenfunc1}--\eqref{prtencat:eq:pretenfunc5}.
The detailed verification is left to the reader.
\end{proof}

We now prove
Theorem
\ref{tydynrep:theorem:tensorcategory}.
The proof is based on the concepts in category theory:
the pre-tensor category, the categorical bialgebroid,
and the pre-tensor functor.
Let $C$ be a tensor category,
$F$ a pre-tensor functor
from $C_{is}$
to a pre-tensor category
$D$,
and
$A$ a categorical bialgebroid of the category $D$.
We follow the notations of
Definitions \ref{hxbialg:definition:catbialg}
and
\ref{prtencat:definition:pretensorfunc}.

For the proof, we first introduce a tensor category
$\PDR_F(A)$,
of which $\DR(A)$ will be a tensor subcategory,
if $F=D_{H, X}: {\VectH}_{is}\to\Alg_{(H, X)}$.

An object of 
$\PDR_F(A)$
is, by definition,
a pair $(V, \pi_V)$
of $V\in \Ob(C)(=\Ob(C_{is}))$
and $\pi_V: A\to F(V)\in\Hom(D)$.

Let $\pi_V=(V, \pi_V)$ and
$\pi_W=(W, \pi_W)$
be objects of $\PDR_F(A)$.
A morphism $f: \pi_V\to \pi_W$
of $\PDR_F(A)$ is a morphism
$f: V\to W$ of the category $C$.
The objects $\pi_V$ and $\pi_W$ are called the source $s(f)$
and the target $b(f)$ of the morphism $f$, respectively.

Let $\id_{\pi_V}: \pi_V\to\pi_V$ denote
the morphism $\id_V: V\to V\in\Hom(C)$.

We define the composition $\circ$ of $\PDR_F(A)$
by that of $C$.
\begin{proposition}
$\PDR_F(A)$ is a category.
\end{proposition}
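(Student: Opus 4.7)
The plan is to verify the three axioms of a category for $\PDR_F(A)$: that the composition $g\circ f$ of composable morphisms is itself a morphism of $\PDR_F(A)$, that this composition is associative, and that the declared identity $\id_{\pi_V}$ satisfies the unit axioms. Because a morphism $f\colon \pi_V\to\pi_W$ of $\PDR_F(A)$ is \emph{defined} to be a morphism $f\colon V\to W$ of $C$ with no further compatibility condition imposed on the data $\pi_V,\pi_W$, all three verifications reduce immediately to the corresponding properties of $C$.

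First I would note that if $f\colon \pi_V\to\pi_W$ and $g\colon \pi_W\to\pi_U$ are morphisms of $\PDR_F(A)$, then $s(g)=b(f)=\pi_W$ and in particular $f$ and $g$ are composable in $C$, so $g\circ f\colon V\to U$ is defined in $C$; by the definition of morphisms of $\PDR_F(A)$, this underlying morphism of $C$ is also a morphism $\pi_V\to\pi_U$ in $\PDR_F(A)$, and the source/target assignments are respected. Associativity, $(h\circ g)\circ f=h\circ(g\circ f)$, is then inherited verbatim from the associativity of composition in $C$. Finally, setting $\id_{\pi_V}$ to be the morphism $\id_V\in\Hom(C)$ and using the identity axioms in $C$ gives $\id_{\pi_W}\circ f=f=f\circ\id_{\pi_V}$ for any morphism $f\colon \pi_V\to\pi_W$.

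There is essentially no obstacle: the construction is a purely formal decoration of the category $C$ by the choice of a morphism $\pi_V\colon A\to F(V)$ attached to each object, analogous to the definition of $\DR(A)$ in Section~\ref{section:dynrep} but without the intertwining constraint \eqref{dynrep:eq:defmorphism}. The only point worth recording in the write-up is precisely this absence of a constraint on morphisms, which is what makes $\PDR_F(A)$ strictly larger than $\DR(A)$ and explains why $\DR(A)$ will sit inside $\PDR_F(A)$ (in the case $F=D_{H,X}$) as a subcategory whose objects are the same but whose morphisms are cut out by \eqref{dynrep:eq:defmorphism}. The real work lies not in this proposition but in endowing $\PDR_F(A)$ with a tensor structure using $\varphi_2$, $\varphi_3^l$, $\varphi_3^r$, and the coproduct $\Delta$ of $A$, which is what the subsequent sections build toward in order to prove Theorem~\ref{tydynrep:theorem:tensorcategory}.
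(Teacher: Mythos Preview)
Your proposal is correct and matches the paper's approach: the paper gives no explicit proof, since the verification is immediate from the definitions, and simply remarks afterward that only the fact that $A$ is an object of $D$ (not the full categorical-bialgebroid structure) is needed---a point your argument also makes clear, since neither $\Delta$ nor $\varepsilon$ enters the check.
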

Note that we have actually proved that 
$\PDR_F(A)$ is a category, if
$A$ is an object of $D$.

The next task is to define a tensor product
$\barotimes: \PDR_F(A)\times \PDR_F(A)\to \PDR_F(A)$.
Let $(V, \pi_V)$
and $(W, \pi_W)$ be objects of $\PDR_F(A)$.
Set 
\[(V, \pi_V)\barotimes(W, \pi_W)=
(V\barotimes W, \varphi_2(V, W)\circ\pi_V\tildeotimes\pi_W\circ\Delta).
\]
For $f, g\in\Hom(\PDR_F(A))$,
we define $f\barotimes g: s(f)\barotimes s(g)\to b(f)\barotimes b(g)$
by $f\barotimes g\in\Hom(C)$.
\begin{proposition}
$\barotimes: \PDR_F(A)\times \PDR_F(A)\to \PDR_F(A)$
is a functor.
\end{proposition}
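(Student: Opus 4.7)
The plan is to verify the functor axioms for $\barotimes: \PDR_F(A) \times \PDR_F(A) \to \PDR_F(A)$ by reducing them to facts already in hand: functoriality of the tensor product on the underlying tensor category $C$, functoriality of $\tildeotimes$ on $D$, and the defining structure of the pre-tensor functor $F$. Since morphisms in $\PDR_F(A)$ are by definition simply morphisms of $C$ between the underlying objects, with composition and identity inherited from $C$, essentially all the content lies at the object level; the morphism-level claims reduce to bookkeeping about source and target.

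First I would check that the proposed object assignment really lands in $\PDR_F(A)$. Given $(V,\pi_V),(W,\pi_W)\in\Ob(\PDR_F(A))$, the pair $(V\barotimes W,\ \varphi_2(V,W)\circ\pi_V\tildeotimes\pi_W\circ\Delta)$ is an object of $\PDR_F(A)$ provided the second component is a morphism $A\to F(V\barotimes W)$ of $D$. This is a three-step composition: $\Delta\colon A\to A\tildeotimes A$ is in $\Hom(D)$ because $A$ is a categorical bialgebroid of $D$ (Definition \ref{hxbialg:definition:catbialg}); $\pi_V\tildeotimes\pi_W\colon A\tildeotimes A\to F(V)\tildeotimes F(W)$ is in $\Hom(D)$ because $\tildeotimes$ is a functor on $D\times D$; and $\varphi_2(V,W)\colon F(V)\tildeotimes F(W)\to F(V\barotimes W)$ is in $\Hom(D)$ by the definition of the pre-tensor functor $F$ (Definition \ref{prtencat:definition:pretensorfunc}). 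Composing yields a morphism of $D$, so the object assignment is well-defined.

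For the morphism assignment, given $f\colon(V,\pi_V)\to(V',\pi_{V'})$ and $g\colon(W,\pi_W)\to(W',\pi_{W'})$ in $\PDR_F(A)$, the underlying $f\colon V\to V'$ and $g\colon W\to W'$ are morphisms of $C$, so $f\barotimes g\colon V\barotimes W\to V'\barotimes W'$ is a morphism of $C$. By the definition of morphisms of $\PDR_F(A)$, this is automatically a morphism of $\PDR_F(A)$ from $(V,\pi_V)\barotimes(W,\pi_W)$ to $(V',\pi_{V'})\barotimes(W',\pi_{W'})$; no compatibility with $\pi_V,\pi_W,\pi_{V'},\pi_{W'}$ is required. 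In particular, no analogue of \eqref{dynrep:eq:defmorphism} needs to be checked here; that is the distinction between $\PDR_F(A)$ and $\DR(A)$, and the reason the present proposition is easier than the forthcoming analogue for $\DR(A)$.

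Finally, functoriality (preservation of identities and composition) follows immediately from functoriality of $\barotimes$ on $C\times C$: $\id_{(V,\pi_V)}\barotimes\id_{(W,\pi_W)}=\id_V\barotimes\id_W=\id_{V\barotimes W}=\id_{(V,\pi_V)\barotimes(W,\pi_W)}$, and $(f'\circ f)\barotimes(g'\circ g)=(f'\barotimes g')\circ(f\barotimes g)$ because composition in $\PDR_F(A)$ is defined to be that of $C$. There is no genuine obstacle; the only potential pitfall is confusing $\PDR_F(A)$ with $\DR(A)$ and trying to verify an intertwining condition that is not part of the definition of morphisms here.
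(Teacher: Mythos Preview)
Your proposal is correct. The paper states this proposition without proof, treating it as a routine verification; your argument supplies exactly that verification, and your observation that morphisms of $\PDR_F(A)$ carry no intertwining condition (so the only content is well-definedness of the object assignment plus functoriality of $\barotimes$ on $C$) is precisely the point.
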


We will explain
the unit $I_{\PDR_F(A)}$,
the associativity constraint
$a: \barotimes(\barotimes\times\id)
\to\barotimes(\id\times\barotimes)$,
the left unit constraint $l: \barotimes(I_{\PDR_F(A)}\times
\id)\to\id$,
and the right unit constraint
$r: \barotimes(\id\times I_{\PDR_F(A)})\to\id$.
Let $I_{\PDR_F(A)}$ denote the object
$(I_C, \varphi_0\circ\varepsilon)\in \Ob(\PDR_F(A))$.
Let 
$\pi_i=(V_i, \pi_i)$,
$(i=1, 2, 3)$
be objects of $\PDR_F(A)$.
We denote by $a_{\pi_1,\pi_2,\pi_3}$,
$l_{\pi_1}$, and $r_{\pi_1}$
the following morphisms of $C$:
\begin{eqnarray*}
&&
a_{\pi_1,\pi_2,\pi_3}=a_{V_1,V_2,V_3}:
(V_1\barotimes V_2)\barotimes V_3
\to 
V_1\barotimes (V_2\barotimes V_3);
\\
&&
l_{\pi_1}=l_{V_1}^C: I_{C}\barotimes V_1\to V_1;
\ 
r_{\pi_1}=r_{V_1}^C: V_1\barotimes I_{C}\to V_1.
\end{eqnarray*}
\begin{proposition}
$\PDR_F(A)$, together with
$\barotimes$, $I_{\PDR_F(A)}$, $a$, $l$, and $r$,
is a tensor category.
\end{proposition}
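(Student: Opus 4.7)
The plan is to reduce the tensor category axioms for $\PDR_F(A)$ to the same axioms for $C$ itself, exploiting the fact that the hom-sets of $\PDR_F(A)$ are literally those of $C$. The preceding proposition already supplies the bifunctor $\barotimes: \PDR_F(A) \times \PDR_F(A) \to \PDR_F(A)$, so what remains is to check that $a, l, r$ have the prescribed sources and targets, that they are natural isomorphisms, and that the pentagon and triangle coherence axioms hold.

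First I would check that the morphisms $a_{\pi_1,\pi_2,\pi_3}$, $l_{\pi_1}$, $r_{\pi_1}$ fit into $\PDR_F(A)$ with the stated source and target. Since the underlying object of $(\pi_1 \barotimes \pi_2) \barotimes \pi_3$ is $(V_1 \barotimes V_2) \barotimes V_3$ and that of $\pi_1 \barotimes (\pi_2 \barotimes \pi_3)$ is $V_1 \barotimes (V_2 \barotimes V_3)$, and any morphism of $\PDR_F(A)$ is by fiat a morphism of $C$ between the underlying objects, the $C$-morphism $a_{V_1,V_2,V_3}$ is automatically a morphism of $\PDR_F(A)$ of the required type. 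The cases of $l_{\pi_1}$ and $r_{\pi_1}$ are entirely analogous; in particular, the choice $I_{\PDR_F(A)} = (I_C, \varphi_0 \circ \varepsilon)$ guarantees that the underlying sources are $I_C \barotimes V_1$ and $V_1 \barotimes I_C$. Invertibility of $a_{\pi_1,\pi_2,\pi_3}, l_{\pi_1}, r_{\pi_1}$ in $\PDR_F(A)$ is immediate from invertibility of their counterparts in $C$, since the inverse in $C$ is simultaneously a morphism of $\PDR_F(A)$.

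Naturality of $a, l, r$ in $\PDR_F(A)$ asserts that certain squares of morphisms commute; when the morphisms in question are viewed in $C$ (which is legal, since $\Hom(\PDR_F(A)) \subset \Hom(C)$) and tensor products of $\PDR_F(A)$-morphisms are translated into tensor products of $C$-morphisms (by how $\barotimes$ of morphisms was defined), these squares coincide with the naturality squares in $C$, which commute by hypothesis. The pentagon axiom for $a$ and the triangle axiom relating $a, l, r$ reduce verbatim to the corresponding axioms in $C$ by exactly the same translation.

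The main conceptual point — and the reason that this proposition is painless — is that the pre-tensor-functor data $(\varphi_0, \varphi_2, \varphi_3^l, \varphi_3^r)$ and the categorical bialgebroid structure $(\Delta, \varepsilon)$ are used only to decorate objects and to pin down the tensor product and the unit of $\PDR_F(A)$ at the object level; they impose no condition on morphisms. Consequently there is no serious obstacle to overcome in the proof itself. The compatibility equations \eqref{prtencat:eq:pretenfunc1}--\eqref{prtencat:eq:pretenfunc5} and the bialgebroid axioms \eqref{hxbialg:eq:catbialg1}--\eqref{hxbialg:eq:catbialg2} become indispensable only at the next step, when one restricts to the subcategory $\DR(A) \subset \PDR_{D_{H,X}}(A)$ cut out by the intertwining condition \eqref{dynrep:eq:defmorphism}, where associators and unit constraints must actually intertwine the dynamical representations on the two sides and where Theorem \ref{tydynrep:theorem:tensorcategory} therefore genuinely uses the full strength of those hypotheses.
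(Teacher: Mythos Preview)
Your proposal is correct and matches the paper's treatment: the paper states this proposition without proof, precisely because (as you observe) the morphisms of $\PDR_F(A)$ carry no compatibility condition, so the associator, unit constraints, naturality, and coherence all reduce verbatim to those of $C$. Your closing paragraph correctly isolates the conceptual point and anticipates why the real work is deferred to Proposition~\ref{prtencat:proposition:DRFA}.
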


Let $\DR_F(A)$ be a subcategory of $\PDR_F(A)$
such that:
\begin{enumerate}
\item
if $\pi_V, \pi_W\in \Ob(\DR_F(A))$,
then
$\pi_V\barotimes\pi_W\in \Ob(\DR_F(A))$;
\item
if $f, g\in\Hom(\DR_F(A))$,
then
$f\barotimes g\in\Hom_{\DR_F(A)}
(s(f)\barotimes s(g), b(f)\barotimes b(g))$;
\item
$I_{\PDR_F(A)}\in \Ob(\DR_F(A))$;
\item
if $f: V\to W\in\Hom(C_{is})$
satisfies $F(f)\circ\pi_V=\pi_W$,
then
$f$ is a morphism of $\DR_F(A)$
whose source is $\pi_V$ and whose target is $\pi_W$.
\end{enumerate}
This $\DR_F(A)$ is a generalization of the category $\DR(A)$.
\begin{proposition}\label{prtencat:proposition:DRFA}
$\DR_F(A)$ is a tensor subcategory of $\PDR_F(A)$.
\end{proposition}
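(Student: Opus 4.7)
The plan is to verify the standard checklist for a tensor subcategory: closure of $\barotimes$ on objects and morphisms, containment of the unit, and the fact that the associator and the two unit constraints of $\PDR_F(A)$ are in fact morphisms of $\DR_F(A)$. Properties (1), (2), and (3) in the definition of $\DR_F(A)$ handle, by construction, the first three items and say directly that $\barotimes$ restricts to a bifunctor $\DR_F(A)\times\DR_F(A)\to\DR_F(A)$ with unit $I_{\PDR_F(A)}\in\Ob(\DR_F(A))$. Thus the real content is to show that for every triple $(\pi_1,\pi_2,\pi_3)$ (resp.~every $\pi_1$) in $\DR_F(A)$, the morphism $a_{V_1,V_2,V_3}$ (resp.~$l^C_{V_1}$, $r^C_{V_1}$) is a morphism in $\DR_F(A)$. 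By property (4), each of these is automatic once the appropriate intertwining relation of the form $F(f)\circ\pi_V=\pi_W$ is verified.

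For the associator, I would expand the target representation on the source side and reduce to applying coassociativity. Explicitly, using the definitions and naturality of $a^l$ in $D$,
\[
\pi_{(\pi_1\barotimes\pi_2)\barotimes\pi_3}
=\varphi_2(V_1\barotimes V_2,V_3)\circ(\varphi_2(V_1,V_2)\tildeotimes\id_{F(V_3)})\circ(\pi_1\tildeotimes\pi_2\tildeotimes\pi_3)\circ a^l_{A,A,A}\circ(\Delta\tildeotimes\id_A)\circ\Delta,
\]
after which \eqref{prtencat:eq:pretenfunc2} converts the left-hand pair of $\varphi_2$'s into $\varphi_3^l(V_1,V_2,V_3)\circ a^l_{F(V_1),F(V_2),F(V_3)}$, which has already been absorbed by the naturality step. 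An entirely parallel manipulation, using naturality of $a^r$ and \eqref{prtencat:eq:pretenfunc3}, gives
\[
\pi_{\pi_1\barotimes(\pi_2\barotimes\pi_3)}
=\varphi_3^r(V_1,V_2,V_3)\circ a^r_{F(V_1),F(V_2),F(V_3)}\circ(\pi_1\tildeotimes\pi_2\tildeotimes\pi_3)\circ(\id_A\tildeotimes\Delta)\circ\Delta.
\]
Coassociativity \eqref{hxbialg:eq:catbialg1} identifies the two right-hand sides up to interchanging $\varphi_3^l$ with $\varphi_3^r$, and \eqref{prtencat:eq:pretenfunc1} supplies exactly the missing factor $F(a_{V_1,V_2,V_3})$. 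Hence $F(a_{V_1,V_2,V_3})\circ\pi_{(\pi_1\barotimes\pi_2)\barotimes\pi_3}=\pi_{\pi_1\barotimes(\pi_2\barotimes\pi_3)}$, so property (4) puts $a_{\pi_1,\pi_2,\pi_3}$ in $\DR_F(A)$.

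For the unit constraints, the calculation is shorter but conceptually identical: expand $\pi_{I_{\PDR_F(A)}\barotimes\pi_V}=\varphi_2(I_C,V)\circ(\varphi_0\circ\varepsilon)\tildeotimes\pi_V\circ\Delta$, pull $\varepsilon\tildeotimes\id_A$ out via naturality, apply the counit axiom \eqref{hxbialg:eq:catbialg2}, and then invoke \eqref{prtencat:eq:pretenfunc4} to produce $F(l^C_V)\circ$(something)$=l^D_{F(V)}\circ\id_A=\pi_V$ after composing with $F(l^C_V)$. Concretely this yields $F(l^C_V)\circ\pi_{I_{\PDR_F(A)}\barotimes\pi_V}=\pi_V$, which by property (4) places $l_{\pi_V}$ in $\DR_F(A)$. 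The argument for $r_{\pi_V}$ uses \eqref{prtencat:eq:pretenfunc5} in place of \eqref{prtencat:eq:pretenfunc4}.

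The main obstacle is purely bookkeeping: several natural transformations ($\varphi_2$, $a^l$, $a^r$, $l^D$, $r^D$) together with the coproduct and counit axioms must be assembled in exactly the right order, and one must be careful that the domains and codomains match up through the identifications furnished by $\varphi_3^l$, $\varphi_3^r$, and the isomorphism $\Gamma$ from Section \ref{section:exphxalg}. Once the diagram chase is laid out, however, each step is a direct invocation of one clause of Definition \ref{prtencat:definition:pretensorfunc} or of the categorical bialgebroid axioms \eqref{hxbialg:eq:catbialg1}, \eqref{hxbialg:eq:catbialg2}, so no further hypotheses on $F$, $C$, or $A$ are needed; this is why the statement applies uniformly to $F=D_{H,X}$ and thereby completes the proof of Theorem \ref{tydynrep:theorem:tensorcategory}.
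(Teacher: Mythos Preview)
Your approach is essentially the paper's own: reduce via hypothesis (4) to the intertwining identities $F(a_{V_1,V_2,V_3})\circ((\pi_1\barotimes\pi_2)\barotimes\pi_3)=\pi_1\barotimes(\pi_2\barotimes\pi_3)$ and $F(l^C_{V_1})\circ(\pi_{I}\barotimes\pi_1)=\pi_1$, then obtain these from the pre-tensor functor axioms \eqref{prtencat:eq:pretenfunc1}--\eqref{prtencat:eq:pretenfunc5} together with coassociativity \eqref{hxbialg:eq:catbialg1} and the counit axiom \eqref{hxbialg:eq:catbialg2}; the paper records exactly this diagram chase in Figures~\ref{prtencat:figure:1} and~\ref{prtencat:figure:2}. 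One small slip: your displayed formula for $\pi_{(\pi_1\barotimes\pi_2)\barotimes\pi_3}$ does not typecheck as written (the codomain of $\pi_1\tildeotimes\pi_2\tildeotimes\pi_3$ is the triple product, not the domain of $\varphi_2(V_1,V_2)\tildeotimes\id$); the correct intermediate form is $\varphi_3^l(V_1,V_2,V_3)\circ(\pi_1\tildeotimes\pi_2\tildeotimes\pi_3)\circ a^l_{A,A,A}\circ(\Delta\tildeotimes\id_A)\circ\Delta$, which is what your surrounding text describes.
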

\begin{proof}
Let 
$\pi_i=(V_i, \pi_i)$
$(i=1, 2, 3)$
be objects of $\DR_F(A)$.
We will only show that
$a_{\pi_1,\pi_2,\pi_3},
l_{\pi_1}
\in\Hom(\DR_F(A))$.

We note that $a_{\pi_1,\pi_2,\pi_3}(=a_{V_1, V_2, V_3})$
and
$l_{\pi_1}(=l_{V_1}^C)$
are morphisms of $C_{is}$.
On account of (4) that the subcategory $\DR_F(A)$ satisfies,
it suffices to prove that:
\begin{eqnarray}
&&\label{prtencat:eq:DHXa}
F(a_{V_1, V_2, V_3})\circ(\pi_1\barotimes\pi_2)
\barotimes\pi_3
=\pi_1\barotimes(\pi_2\barotimes\pi_3);
\\&&\nonumber
F(l_{V_1}^C)\circ\pi_{I_{\DR_F(A)}}\barotimes\pi_1
=\pi_1.
\end{eqnarray}

Figures \ref{prtencat:figure:1}
and
\ref{prtencat:figure:2} yield these formulas.
For example, let us consider Figure \ref{prtencat:figure:1}.
By Definitions \ref{hxbialg:definition:pretensorcat},
\ref{hxbialg:definition:catbialg},
and
\ref{prtencat:definition:pretensorfunc},
every small diagram commutes;
so does the outside one.
This is equivalent to \eqref{prtencat:eq:DHXa}.
\end{proof}
The proof of the above proposition strongly depends on
the assumption (4) of the subcategory $\DR_F(A)$.

Let $A$ be an $(H, X)$-bialgebroid
(Definition \ref{hxbialg:definition:hxbialg}).
We set 
$F=D_{H, X}: {\VectH}_{is}\to\Alg_{(H, X)}$;
indeed,
$C=\VectH$
and $D=\Alg_{(H, X)}$.
From
Propositions
\ref{dynrep:proposition:piIVectH},
\ref{dynrep:proposition:catDRA},
\ref{tpdynrep:proposition:barotimes},
and
\eqref{dynrep:eq:defmorphism},
$\DR(A)$ is a subcategory of $\PDR_{D_{H, X}}(A)$
satisfying
(1)--(4).
By virtue of Proposition \ref{prtencat:proposition:DRFA},
$\DR(A)$ is a tensor subcategory
of $\PDR_{D_{H, X}}(A)$.
We have thus proved Theorem \ref{tydynrep:theorem:tensorcategory}.
\begin{figure}
\bigskip
\centering
\setlength{\unitlength}{.75cm}
\begin{picture}(12,13.5)\thicklines
\put(0.9,13.2){$A\tildeotimes A$}
\put(6.1,13.2){$A$}
\put(10.9,13.2){$A\tildeotimes A$}
\put(5.7,13.3){\vector(-1,0){3.5}}
\put(7,13.3){\vector(1,0){3.5}}
\put(4,13.5){$\Delta$}
\put(8.5,13.5){$\Delta$}
\put(1.4,13){\vector(0,-1){2}}
\put(11.4,13){\vector(0,-1){2}}
\put(1.6,12){$\Delta\tildeotimes\id_A$}
\put(9.8,12){$\id_A\tildeotimes\Delta$}
\put(.5,10.5){$(A\tildeotimes A)\tildeotimes A$}
\put(5.5,10.5){$A\tildeotimes A\tildeotimes A$}
\put(10.5,10.5){$A\tildeotimes(A\tildeotimes A)$}
\put(2.9,10.6){\vector(1,0){2.3}}
\put(10.3,10.6){\vector(-1,0){2.7}}
\put(3.4,10.9){$a^l_{A, A, A}$}
\put(8.3,10.9){$a^r_{A, A, A}$}
\put(1.4,10.3){\vector(0,-1){2}}
\put(6.3,10.3){\vector(0,-1){5}}
\put(11.4,10.3){\vector(0,-1){2}}
\put(1.6,9.2){$(\pi_1\tildeotimes\pi_2)\tildeotimes\pi_3$}
\put(8.8,9.2){$\pi_1\tildeotimes(\pi_2\tildeotimes\pi_3)$}
\put(6.5,8.7){$\pi_1\tildeotimes \pi_2\tildeotimes \pi_3$}
\put(0,7.7){$(F(V_1)\tildeotimes F(V_2))
\tildeotimes F(V_3)$}
\put(8.5,7.7){$F(V_1)\tildeotimes (F(V_2)
\tildeotimes F(V_3))$}
\put(2.5,7.4){\vector(1,-1){2.1}}
\put(10.8,7.4){\vector(-1,-1){2.1}}
\put(1.4,7.4){\vector(0,-1){5}}
\put(11.4,7.4){\vector(0,-1){5}}
\put(4,6.2){$a^l_{V_1,V_2,V_3}$}
\put(7.5,6.2){$a^r_{V_1,V_2,V_3}$}
\put(1.6,4.1){$\varphi_2(V_1\barotimes V_2,V_3)\circ$}
\put(1.55,3.55){$\circ\varphi_2(V_1,V_2)
\tildeotimes\id_3$}
\put(8,4.1){$\varphi_2(V_1,V_2\barotimes V_3)\circ$}
\put(8,3.55){$\circ\id_1
\tildeotimes\varphi_2(V_2,V_3)$}
\put(4.7,4.7){$F(V_1)\tildeotimes F(V_2)
\tildeotimes F(V_3)$}
\put(6,4.3){\vector(-4,-3){2.7}}
\put(6.8,4.3){\vector(4,-3){2.7}}
\put(4.55,2.9){$\varphi^l_3(V_1,V_2,V_3)$}
\put(6.2,2.33){$\varphi^r_3(V_1,V_2,V_3)$}
\put(0,1.7){$F((V_1\barotimes V_2)\barotimes V_3)$}
\put(9.7,1.7){$F(V_1\barotimes(V_2\barotimes V_3))$}
\put(3.6,1.75){\vector(1,0){5.6}}
\put(5,1.3){$F(a_{V_1,V_2,V_3})$}
\put(2,.5){$a^l_{V_1,V_2,V_3}=a^l_{F(V_1),
F(V_2),F(V_3)}$, $\id_1=\id_{F(V_1)}$,}
\put(2,-.2){$a^r_{V_1,V_2,V_3}=a^r_{F(V_1),
F(V_2),F(V_3)}$, $\id_3=\id_{F(V_3)}$.}
\end{picture}
\smallskip
\caption{$F(a_{V_1, V_2, V_3})\circ(\pi_1\barotimes\pi_2)
\barotimes\pi_3
=\pi_1\barotimes(\pi_2\barotimes\pi_3)$
\label{prtencat:figure:1}}
\end{figure}
\begin{figure}
\bigskip
\centering
\setlength{\unitlength}{.75cm}
\begin{picture}(12,9)\thicklines
\put(10.4,8.3){$A$}
\put(6.15,8.3){$A\tildeotimes A$}
\put(10.3,8.4){\vector(-1,0){3}}
\put(8.6,8.6){$\Delta$}
\put(10.6,8.1){\vector(0,-1){2}}
\put(6.6,8.1){\vector(0,-1){2}}
\put(6,8.2){\vector(-2,-1){4}}
\put(9.8,7.1){$\id_A$}
\put(6.8,7.1){$\varepsilon\tildeotimes\id_A$}
\put(1,7.1){$\pi_{I_{\VectH}}\tildeotimes\id_A$}
\put(10.4,5.6){$A$}
\put(5.9,5.6){$I_D\tildeotimes A$}
\put(0,5.6){$F(I_C)\tildeotimes A$}
\put(7.4,5.7){\vector(1,0){2.7}}
\put(5.5,5.7){\vector(-1,0){3.3}}
\put(8.6,5.9){$l_A^D$}
\put(3.8,5.9){$\varphi_0\tildeotimes\id_A$}
\put(10.6,5.4){\vector(0,-1){4.6}}
\put(6.6,5.4){\vector(0,-1){2}}
\put(6.8,4.3){$\id_{I_D}\tildeotimes\pi_1$}
\put(1.1,5.4){\vector(0,-1){2}}
\put(1.3,4.3){$\id_{F(I_C)}\tildeotimes\pi_1$}
\put(10,2.8){$\pi_1$}
\put(0,2.8){$F(I_C)\tildeotimes F(V_1)$}
\put(5.5,2.9){\vector(-1,0){2.5}}
\put(5.7,2.8){$I_D\tildeotimes F(V_1)$}
\put(6.6,2.6){\vector(3,-2){3}}
\put(8.3,1.6){$l_{F(V_1)}^D$}
\put(1.1,2.6){\vector(0,-1){2}}
\put(3.6,2.2){$\varphi_0\tildeotimes \id_{F(V_1)}$}
\put(1.3,1.6){$\varphi_2(I_C,V_1)$}
\put(9.8,0){$F(V_1)$}
\put(0,0){$F(I_C\barotimes V_1)$}
\put(2.6,0.1){\vector(1,0){6.7}}
\put(5.4,.4){$F(l_{V_1}^C)$}
\end{picture}
\caption{$F(l_{V_1}^C)\circ\pi_{I_{DR_F(A)}}\barotimes\pi_1
=\pi_1$
\label{prtencat:figure:2}}
\end{figure}
\section{Tensor categories $\Rep R$ and $\DR(A_R)$}
\label{section:isom}
Let $\sigma: X\barotimes X\to X\barotimes X$ be a morphism of
$\VectH$.
In this section, we show that the tensor categories
$\Rep \sigma$ in Proposition \ref{lop:theorem} and
$\DR(A_\sigma)$ in Corollary \ref{tydynrep:cor:tensorcategory}
are isomorphic;
in fact,
we will construct tensor functors $F: \DR(A_\sigma)\to\Rep \sigma$
and $G: \Rep \sigma\to \DR(A_\sigma)$ such that
$FG=\id_{\Rep \sigma}$
and $GF=\id_{\DR(A_\sigma)}$
as tensor functors.
In particular,
$\Rep R$ 
is isomorphic to $DR(A_R)$
(See Proposition \ref{catlop:prop:repR}
and Corollary \ref{tydynrep:cor:tensorcategory}).
As was mentioned in Section \ref{section:ar},
the set $X$ is required to be finite
in this section.

The first task is to introduce a (strict) tensor functor $F$
\cite[Definition XI.4.1]{kassel}.

Let $\pi=(V, \pi)$ be an object of $\DR(A_\sigma)$.
For $a, b\in X$,
we define the morphism $\pi(L_{ab}+I_\sigma)_{[a],[b]}:
V\barotimes\{[b]\}\to\{[a]\}\barotimes V$
of $\VectH$ by
$\pi(L_{ab}+I_\sigma)=
\Gamma_{[a],[b]}^V(\pi(L_{ab}+I_\sigma)_{[a],[b]})$.
Because of Proposition
\ref{hxalg:proposition:unique}
and the fact that
$\pi(L_{ab}+I_\sigma)\in D_{H, X}(V)_{[a],[b]}$,
this definition makes sense.

Let
$L_{F(\pi)}:
V\barotimes X\to
X\barotimes V$
denote the following morphism of $\VectH$
(See 
Proposition
\ref{vecth:proposition:morphismsum}
and \eqref{vecth:eq:injproj}).
\begin{equation}
\label{isom:eq:lfpi}
L_{F(\pi)}
=
\sum_{a, b\in X}
(i_X^{\{a\}}\circ
\iota_{\{ a\}}^{\{ [a]\}})\barotimes\id_V
\circ
\pi(L_{ab}+I_\sigma)_{[a],[b]}
\circ
\id_V\barotimes (\iota^{\{ b\}}_{\{ [b]\}}
\circ p_{\{ b\}}^{X}).
\end{equation}
For the isomorphism
$\iota_{\{ a\}}^{\{ [a]\}}: \{[a]\}\to\{ a\}$ of $\VectH$,
see Propositions
\ref{vecth:proposition:subobject},
\ref{lop:proposition:X},
\ref{hxalg:proposition:wxh},
and
\eqref{catvecth:equation:onepoint}.
\begin{proposition}
$L_{F(\pi)}$ is an isomorphism.
\end{proposition}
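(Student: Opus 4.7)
The plan is to construct an explicit two-sided inverse $M\colon X\barotimes V\to V\barotimes V$ and verify the two composition identities. The guiding observation is that $A_\sigma$ was built so that the $L_{ab}$ have formal inverses $(L^{-1})_{ab}$ satisfying $\sum_{c\in X}L_{ac}(L^{-1})_{cb}+I_\sigma=\delta_{ab}1_{A_\sigma}$ and $\sum_{c\in X}(L^{-1})_{ac}L_{cb}+I_\sigma=\delta_{ab}1_{A_\sigma}$ in $A_\sigma$ (Definition \ref{ar:definition:ar}(2)). Since $\pi$ is a homomorphism of $(H,X)$-algebras, applying $\pi$ yields the corresponding identities in $D_{H, X}(V)$, and these will translate directly into the required identities in $\VectH$.

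Concretely, because $\pi((L^{-1})_{ab}+I_\sigma)\in D_{H,X}(V)_{[b^{-1}],[a^{-1}]}$, Proposition \ref{hxalg:proposition:unique} gives unique morphisms $\pi((L^{-1})_{ab}+I_\sigma)_{[b^{-1}],[a^{-1}]}\colon V\barotimes\{[a^{-1}]\}\to\{[b^{-1}]\}\barotimes V$ in $\VectH$. Since $[a][a^{-1}]=1$ in $W_X^H$, the one-point objects $\{[a^{-1}]\}$ and $\{a\}$ have the same $H$-action in the sense of \eqref{catvecth:equation:isomonepoint}, so the isomorphisms $\iota$ of \eqref{catvecth:equation:onepoint} connect them canonically. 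I would define
\[
M=\sum_{a,b\in X}\id_V\barotimes\bigl(i_X^{\{b\}}\circ\iota_{\{b\}}^{\{[b^{-1}]\}}\bigr)\circ\pi((L^{-1})_{ab}+I_\sigma)_{[b^{-1}],[a^{-1}]}\circ\bigl(\iota_{\{[a^{-1}]\}}^{\{a\}}\circ p_{\{a\}}^{X}\bigr)\barotimes\id_V,
\]
so that $M\in\Hom_{\VectH}(X\barotimes V,V\barotimes X)$.

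To verify $M\circ L_{F(\pi)}=\id_{V\barotimes X}$, I would expand the composition as a sum over $(a,b,c,d)\in X^4$, collapse the middle injection-projection pair $p^X_{\{c\}}\circ i_X^{\{c'\}}$ via \eqref{catvecth:eq:projinj} to force $c=c'$, and then rewrite the resulting product of matrix entries using Proposition \ref{hxalg:prop:product}: products $\Gamma^V_{[a],[c]}(u)\Gamma^V_{[c^{-1}],[b^{-1}]}(v)$ become $\Gamma^V_{[ab^{-1}],1}(u*_V v)$. Summing over the middle index $c$ and invoking $\pi(\sum_c L_{ac}(L^{-1})_{cb}+I_\sigma)=\delta_{ab}\mu_l^{D_{H,X}(V)}(1_{M_H})$ together with the injectivity in Proposition \ref{hxalg:proposition:unique} yields the morphism-level identity $\sum_a i_X^{\{a\}}\circ p_{\{a\}}^{X}\barotimes\id_V$, which equals $\id_{V\barotimes X}$ by \eqref{vecth:eq:ipidentity} (valid since $X$ is finite). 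The symmetric argument with the other defining relation in Definition \ref{ar:definition:ar}(2) gives $L_{F(\pi)}\circ M=\id_{X\barotimes V}$.

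The main obstacle is the bookkeeping around the $\iota$-isomorphisms and the grading mismatch: $L_{ab}$ is graded by $([a],[b])$ whereas $(L^{-1})_{ab}$ is graded by $([b^{-1}],[a^{-1}])$, so when composing $M$ with $L_{F(\pi)}$ the one-point objects appearing in the middle must be canonically identified using the fact that $[c][c^{-1}]=1$, and one must check that the composition of the various $\iota$-morphisms reduces to the identity on singleton objects so that the reduction via $*_V$ and Proposition \ref{hxalg:prop:product} can be applied cleanly. Once this identification is handled, the rest of the argument is purely formal and is driven by the structure of $A_\sigma$.
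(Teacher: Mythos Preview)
Your candidate inverse $M$ is not well defined. The isomorphisms $\iota_{\{[a^{-1}]\}}^{\{a\}}$ and $\iota_{\{b\}}^{\{[b^{-1}]\}}$ you invoke do not exist: by \eqref{catvecth:equation:isomonepoint} such an $\iota$ requires $\lambda\cdot a=\lambda[a^{-1}]$ for all $\lambda$, i.e.\ $[a]=[a^{-1}]$ in $W_X^H$, which fails in general. Moreover, even granting the $\iota$'s, the types do not compose: the rightmost factor $\bigl(\iota\circ p_{\{a\}}^{X}\bigr)\barotimes\id_V$ lands in $\{[a^{-1}]\}\barotimes V$, whereas $\pi((L^{-1})_{ab}+I_\sigma)_{[b^{-1}],[a^{-1}]}$ has source $V\barotimes\{[a^{-1}]\}$. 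The underlying issue is that every homogeneous component of $D_{H,X}(V)$, including $\pi((L^{-1})_{ab}+I_\sigma)$, is encoded by a morphism of the shape $V\barotimes\{\cdot\}\to\{\cdot\}\barotimes V$, the \emph{same} shape as the pieces of $L_{F(\pi)}$; you cannot get a map $X\barotimes V\to V\barotimes X$ by simply reindexing. Your appeal to Proposition~\ref{hxalg:prop:product} is also misplaced: the composition $M\circ L_{F(\pi)}$ is a composition in $\VectH$, not a product in $D_{H,X}(V)$, and Proposition~\ref{hxalg:prop:product} only computes the latter.

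What the paper does to bridge this gap is to introduce a ``transpose'' operation $u\mapsto u^\vee$, sending $u\colon V\barotimes\{\alpha\}\to\{\beta\}\barotimes V$ to $u^\vee\colon\{\beta^{-1}\}\barotimes V\to V\barotimes\{\alpha^{-1}\}$. Applying $\vee$ to $\pi((L^{-1})_{ab}+I_\sigma)_{[b^{-1}],[a^{-1}]}$ yields a morphism $\{[b]\}\barotimes V\to V\barotimes\{[a]\}$, which now has the correct shape and can be assembled, via the legitimate isomorphisms $\iota_{\{a\}}^{\{[a]\}}$ and $\iota_{\{[b]\}}^{\{b\}}$, into the inverse $L_{F(\pi)}^{-1}$. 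A separate lemma (Lemma~\ref{isom:lemma:veeandcircV}) then expresses the $\VectH$-composite $u\circ v^\vee$ in terms of $u*_V v$; only through this lemma do the relations $\sum_c L_{ac}(L^{-1})_{cb}=\delta_{ab}$ in $A_\sigma$, read via Proposition~\ref{hxalg:prop:product}, translate into $L_{F(\pi)}\circ L_{F(\pi)}^{-1}=\id$. Your plan correctly identifies that Definition~\ref{ar:definition:ar}(2) is the engine, but it is missing both the $\vee$ construction and the lemma linking $\VectH$-composition to the $*_V$ product.
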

\begin{proof}
Let 
$u\in \Hom_{\VectH}(V\barotimes\{\alpha\}, \{\beta\}\barotimes V)$
$(\alpha, \beta\in W_X^H)$.
We will denote by $u^\vee: \{\beta^{-1}\}\barotimes V\to V\barotimes
\{\alpha^{-1}\}$
the following morphism of $\VectH$.
\begin{eqnarray}\nonumber
u^\vee
&=&
l_{V\barotimes\{\alpha^{-1}\}}\circ
\iota_{I_{\VectH}}^{\{\beta^{-1}\}\barotimes\{\beta\}}
\barotimes\id_{V\barotimes\{\alpha^{-1}\}}
\circ
a^{-1}_{\{\beta^{-1}\}\{\beta\}V\barotimes\{\alpha^{-1}\}}
\circ
\\\nonumber
&&
\circ
\id_{\{\beta^{-1}\}}\barotimes
(a_{\{\beta\}V\{\alpha^{-1}\}}\circ
u\barotimes\id_{\{\alpha^{-1}\}}\circ
a^{-1}_{V\{\alpha\}\{\alpha^{-1}\}}
\circ
\id_V\barotimes
\iota_{\{\alpha\}\barotimes\{\alpha^{-1}\}}^{I_{\VectH}})
\circ
\\\label{isom:eq:vee}
&&
\circ
a_{\{\beta^{-1}\}V I_{\VectH}}\circ
r_{\{\beta^{-1}\}\barotimes V}^{-1}.
\end{eqnarray}

We set $L_{F(\pi)}^{-1}: X\barotimes V\to V\barotimes X$
for
\begin{eqnarray}\nonumber
L_{F(\pi)}^{-1}&=&
\sum_{a, b\in X}
\id_V\barotimes(i_X^{\{a\}}\circ
\iota_{\{ a\}}^{\{ [a]\}})
\circ
(\pi((L^{-1})_{ab}+I_\sigma)_{[b^{-1}],[a^{-1}]})^{\vee}
\circ
\\\label{isom:eq:lfpi-1}
&&
\qquad\circ
(\iota^{\{ b\}}_{\{ [b]\}}
\circ p_{\{ b\}}^{X})\barotimes\id_V.
\end{eqnarray}
Here, $\pi((L^{-1})_{ab}+I_\sigma)_{[b^{-1}],[a^{-1}]}:
V\barotimes\{[a^{-1}]\}
\to\{[b^{-1}]\}\barotimes V$ is 
the morphism of $\VectH$ defined by
$\pi((L^{-1})_{ab}+I_\sigma)=\Gamma_{[b^{-1}],[a^{-1}]}^V
(\pi((L^{-1})_{ab}+I_\sigma)_{[b^{-1}],[a^{-1}]})$
(See Proposition \ref{hxalg:proposition:unique}).
\begin{lemma}
\label{isom:lemma:veeandcircV}
For morphisms
$u: V\barotimes\{[c]\}\to\{[a]\}\barotimes V$
and
$v: V\barotimes\{[c^{-1}]\}\to\{[b^{-1}]\}\barotimes V$
of $\VectH$
$(a, b, c\in X)$,
\begin{eqnarray*}
u\circ v^\vee
&=&
l_{\{[a]\}\barotimes V}
\circ
\iota_{I_{\VectH}}^{\{[b]\}\barotimes\{[b^{-1}]\}}
\barotimes
\id_{\{[a]\}\barotimes V}
\circ
a^{-1}_{\{[b]\}\{[b^{-1}]\}\{[a]\}\barotimes V}
\circ
\\
&&
\circ
\id_{\{[b]\}}\barotimes
\Big(
a_{\{[b^{-1}]\}\{[a]\}V}
\circ
\iota_{\{[b^{-1}]\}\barotimes\{[a]\}}^{\{[ab^{-1}]\}}
\barotimes\id_V
\circ
(u*_V v)\Big)
\circ
\\
&&
\circ
a_{\{[b]\}V\{1\}}
\circ
\id_{\{[b]\}\barotimes V}\barotimes\iota^{I_{\VectH}}_{\{1\}}
\circ
r^{-1}_{\{[b]\}\barotimes V}.
\end{eqnarray*}
For $u*_V v$, see
$\eqref{hxalg:equation:*V}$.
\end{lemma}

By taking account of 
Proposition \ref{vecth:proposition:morphismsum},
\eqref{catvecth:eq:projinj},
\eqref{vecth:eq:ipidentity},
and the above lemma,
the first generator
in Definition
\ref{ar:definition:ar}
$(2)$
induces that
$L_{F(\pi)}\circ L_{F(\pi)}^{-1}=\id_{X\barotimes V}$.

Similarly, we obtain
$L_{F(\pi)}^{-1}\circ L_{F(\pi)}=\id_{V\barotimes X}$;
therefore,
$L_{F(\pi)}$ is an isomorphism.
\end{proof}
\begin{proposition}\label{isom:prop:object}
$F(\pi)=(V, L_{F(\pi)})\in \Ob(\Rep \sigma)$.
\end{proposition}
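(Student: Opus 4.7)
The plan is to check the three defining conditions of an object of $\Rep\sigma$ from Section \ref{section:catlop}. Since $\pi = (V, \pi)$ is an object of $\DR(A_\sigma)$, we have $V \in \Ob(\VectH)$ immediately. The preceding proposition already establishes that $L_{F(\pi)}$ is an isomorphism in $\VectH$. So the real content is verifying the L-operator equation \eqref{catlop:eq:RLL=LLR} for $L_{F(\pi)}$ in the tensor category $\VectH$ with $X_C = X$.

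The starting point is relation (4) in Definition \ref{ar:definition:ar},
\[
\sum_{x,y\in X}(\sigma_{ac}^{xy}\otimes 1_{M_H})L_{yd}L_{xb}
=
\sum_{x,y\in X}(1_{M_H}\otimes \sigma_{xy}^{bd})L_{cy}L_{ax}
\quad (a,b,c,d \in X),
\]
which holds in $A_\sigma$. Because $\pi\colon A_\sigma \to D_{H,X}(V)$ is an $(H,X)$-algebra homomorphism, applying $\pi$ to this identity yields the corresponding equation in $D_{H,X}(V)$. I would then translate this algebraic equation into an equation of morphisms in $\VectH$ using the injectivity of $\Gamma^V_{\alpha,\beta}$ (Proposition \ref{hxalg:proposition:unique}): Proposition \ref{hxalg:prop:product} rewrites products such as $\pi(L_{yd}+I_\sigma)\pi(L_{xb}+I_\sigma)$ as $*_V$-products of the morphisms $\pi(L_{yd}+I_\sigma)_{[y],[d]}$ and $\pi(L_{xb}+I_\sigma)_{[x],[b]}$, while Proposition \ref{hxalg:prop:point} rewrites the prefactors $\mu_l^{D_{H,X}(V)}(\sigma_{ac}^{xy})$ and $\mu_r^{D_{H,X}(V)}(\sigma_{xy}^{bd})$ as compositions by morphisms of the form $(\sigma_{ac}^{xy})_{[ac],[xy]}\barotimes\id_V$ and $\id_V\barotimes(\sigma_{xy}^{bd})_{[xy],[bd]}$ on the appropriate side.

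To assemble the full L-operator identity, I would sum over all $a,b,c,d \in X$ after pre- and post-composing with the injections $i_X^{\{a\}}\circ\iota^{\{[a]\}}_{\{a\}}$ and projections $\iota^{\{b\}}_{\{[b]\}}\circ p^X_{\{b\}}$ used in \eqref{isom:eq:lfpi}. The completeness relation \eqref{vecth:eq:ipidentity} then lets the inner summation reconstitute the identity on the relevant copies of $X$, while the weight zero condition \eqref{catlop:eq:weightzero} ensures the morphism $\{[a]\}\barotimes\{[c]\}\to\{[ac]\}$ matches the source/target constraints of the one-point objects. In this way the sums $\sum_{x,y}\sigma_{ac}^{xy}(\lambda)\,\cdots$ reassemble into the morphism $\sigma\colon X\barotimes X \to X\barotimes X$ tensored with $\id_V$ on the correct side, and the two sides of the translated relation become precisely the two sides of \eqref{catlop:eq:RLL=LLR} applied to $L_V = L_{F(\pi)}$ and $\sigma$.

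The main obstacle is the bookkeeping. One must carefully track the associativity constraints $a_{X,X,V}^{\pm 1}$ and $a_{V,X,X}^{\pm 1}$, the one-point isomorphisms $\iota^{\{[a]\}}_{\{a\}}$ and $\iota^{\{[a]\}\barotimes\{[c]\}}_{\{[ac]\}}$ inside the definition \eqref{hxalg:equation:*V} of $*_V$, and verify that the horizontal compositions they produce match the shape of \eqref{catlop:eq:RLL=LLR}. A string-diagram picture makes the match transparent, but a formal verification is a routine if tedious diagram chase; no new ideas beyond Propositions \ref{hxalg:proposition:unique}, \ref{hxalg:prop:product} and \ref{hxalg:prop:point} are required.
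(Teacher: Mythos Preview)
Your approach is essentially the paper's: apply $\pi$ to relation (4), use Propositions \ref{hxalg:proposition:unique}, \ref{hxalg:prop:product}, \ref{hxalg:prop:point} to convert the resulting identity in $D_{H,X}(V)$ into the morphism identity \eqref{isom:eq:proof2} in $\VectH$, and then reassemble $L_{F(\pi)}$ and $\sigma$ via \eqref{catvecth:eq:projinj} and \eqref{vecth:eq:ipidentity}. One small correction: you should not invoke the weight zero condition \eqref{catlop:eq:weightzero}, since the proposition concerns a general morphism $\sigma$ of $\VectH$, not just $\sigma_R$; what is actually used is condition \eqref{catvech:eq:defmorphism} for $\sigma$, which is precisely what makes the one-point morphisms $(\sigma_{ac}^{xy})_{[ca],[yx]}$ well-defined (note also the index order $[ca]$, $[yx]$, coming from $\lambda\cdot_{X\barotimes X}(a,c)=(\lambda c)a$).
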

\begin{proof}
We will show \eqref{catlop:eq:RLL=LLR}.
From
Propositions \ref{hxalg:proposition:unique},
\ref{hxalg:prop:product}, \ref{hxalg:prop:point},
and
\eqref{ar:eq:mulmur},
the generators (4) in Definition \ref{ar:definition:ar}
induce that
\begin{eqnarray}\nonumber
&&\sum_{x, y\in X}
(\sigma_{ac}^{xy})_{[ca],[yx]}\barotimes\id_V\circ
(\pi(L_{yd}+I_\sigma)_{[y],[d]}*_V\pi(L_{xb}+I_\sigma)_{[x],[b]})
\\\nonumber
&=&
\sum_{x, y\in X}
(\pi(L_{cy}+I_\sigma)_{[c],[y]}*_V\pi(L_{ax}+I_\sigma)_{[a],[x]})
\circ
\id_V\barotimes
(\sigma_{xy}^{bd})_{[yx],[db]}
\\\label{isom:eq:proof2}
&&
\qquad\qquad\qquad\qquad
\qquad\qquad\qquad\qquad(\forall a, b, c, d\in X).
\end{eqnarray}
With the aid of Proposition \ref{vecth:proposition:morphismsum},
\eqref{hxalg:equation:*V},
and
\eqref{isom:eq:lfpi},
each summand of the left hand side of \eqref{isom:eq:proof2} is
\begin{eqnarray*}
&&(\iota^{\{ a\}\barotimes\{ c\}}_{\{[ca]\}}
\circ
(p_{\{a\}}^X\barotimes p_{\{ c\}}^X))\barotimes\id_V
\circ
\sigma\barotimes\id_V
\circ
\\
&&\circ
((i_X^{\{ x\}}\circ p_{\{ x\}}^X)\barotimes
(i_X^{\{ y\}}\circ p_{\{ y\}}^X))\barotimes\id_V
\circ
a_{X,X,V}^{-1}
\circ
\\
&&\circ
(i_X^{\{ x\}}\circ p_{\{ x\}}^X)\barotimes
((i_X^{\{ y\}}\circ p_{\{ y\}}^X)\barotimes\id_V)
\circ
\id_X\barotimes L_{F(\pi)}
\circ
\\
&&\circ
(i_X^{\{ x\}}\circ p_{\{ x\}}^X)\barotimes
(\id_V\barotimes(i_X^{\{ d\}}\circ p_{\{ d\}}^X))
\circ
a_{X,V,X}
\circ
\\
&&\circ
((i_X^{\{ x\}}\circ p_{\{ x\}}^X)\barotimes
\id_V)\barotimes(i_X^{\{ d\}}\circ p_{\{ d\}}^X)
\circ
L_{F(\pi)}\barotimes\id_X
\circ
\\
&&\circ
(\id_V\barotimes(i_X^{\{ b\}}\circ p_{\{ b\}}^X))\barotimes
(i_X^{\{ d\}}\circ p_{\{ d\}}^X)
\circ
a_{V,X,X}^{-1}
\circ
\id_V\barotimes((i_X^{\{ b\}}\barotimes i_X^{\{ d\}})\circ
\iota_{\{ b\}\barotimes\{ d\}}^{\{[db]\}}).
\end{eqnarray*}

Since the set $X$ is finite,
\eqref{catvecth:eq:projinj},
\eqref{vecth:eq:ipidentity}, and
\eqref{isom:eq:proof2} imply
\eqref{catlop:eq:RLL=LLR}.
\end{proof}
Let $f: \pi_1\to\pi_2$ be a morphism of $\DR(A_\sigma)$.
Because of \eqref{dynrep:eq:defmorphism},
\[
\pi_2(L_{ab}+I_\sigma)\circ m_f
=
m_f\circ
\pi_1(L_{ab}+I_\sigma)
\quad(\forall a, b\in X).
\]
From Lemma \ref{tpdynrep:lem:equivcommute},
this
is equivalent to that
\[
\pi_2(L_{ab}+I_\sigma)_{[a],[b]}
\circ(f\barotimes\id_{\{[b]\}})
=
(\id_{\{[a]\}}\barotimes f)\circ
\pi_1(L_{ab}+I_\sigma)_{[a],[b]}
\quad(\forall a, b\in X),
\]
which immediately induces
\eqref{catlop:eq:morphism};
hence,
\begin{proposition}\label{isom:proposition:morphism}
If $f: \pi_1\to\pi_2$ is a morphism of $\DR(A_\sigma)$,
then
$f: F(\pi_1)\to F(\pi_2)$ is a morphism of $\Rep \sigma$.
\end{proposition}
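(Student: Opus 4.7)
The plan is to transport the $\DR(A_\sigma)$-morphism condition for $f$ through to the L-operator morphism condition in $\Rep\sigma$ by specialising the defining identity to the generators $L_{ab}+I_\sigma$ of $A_\sigma$ and then reassembling $L_{F(\pi)}$ from its summands according to \eqref{isom:eq:lfpi}.

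First I would apply \eqref{dynrep:eq:defmorphism} to $a=L_{ab}+I_\sigma$ for each $a,b\in X$, yielding the family of equalities
\[
\pi_2(L_{ab}+I_\sigma)\circ m_f = m_f\circ \pi_1(L_{ab}+I_\sigma)
\]
in $\Hom_\C(\Map(H,\C V_1),\Map(H,\C V_2))$. Since $\pi_i(L_{ab}+I_\sigma)\in D_{H,X}(V_i)_{[a],[b]}$, each side is the image under $\Gamma^{V_i}_{[a],[b]}$ of the morphism $\pi_i(L_{ab}+I_\sigma)_{[a],[b]}$ used to define $L_{F(\pi_i)}$. Lemma \ref{tpdynrep:lem:equivcommute} then translates the above equality into a commutative square in $\VectH$:
\[
\pi_2(L_{ab}+I_\sigma)_{[a],[b]}\circ (f\barotimes\id_{\{[b]\}})
=(\id_{\{[a]\}}\barotimes f)\circ \pi_1(L_{ab}+I_\sigma)_{[a],[b]}.
\]

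Next I would substitute these identities into \eqref{isom:eq:lfpi}. Because $f$ acts solely on the $V$-component while the auxiliary morphisms $i_X^{\{a\}}\circ\iota_{\{a\}}^{\{[a]\}}$ and $\iota_{\{[b]\}}^{\{b\}}\circ p_{\{b\}}^X$ act solely on the $X$-component, bifunctoriality of $\barotimes$ permits sliding $f$ past the outer tensor factors summand by summand. Summing over the finite set $X\times X$ assembles the pieces into
\[
(\id_X\barotimes f)\circ L_{F(\pi_1)} = L_{F(\pi_2)}\circ (f\barotimes\id_X),
\]
which is precisely condition \eqref{catlop:eq:morphism} expressing that $f\colon F(\pi_1)\to F(\pi_2)$ is a morphism of $\Rep\sigma$.

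The only delicate point is the bookkeeping of the associativity constraints implicit in the long composition \eqref{isom:eq:lfpi}; these are handled by naturality of $a_{-,-,-}$ in $\VectH$ and reduce the verification to purely formal diagram chasing. There is no substantive obstacle once Lemma \ref{tpdynrep:lem:equivcommute} has been invoked to convert the operator-level identity on $\Map(H,\C V)$ into an intrinsic morphism identity in $\VectH$.
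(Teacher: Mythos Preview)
Your proof is correct and follows essentially the same route as the paper: specialise \eqref{dynrep:eq:defmorphism} to the generators $L_{ab}+I_\sigma$, invoke Lemma~\ref{tpdynrep:lem:equivcommute} to pass to the $\VectH$-level identity, and then deduce \eqref{catlop:eq:morphism}. The paper merely says the last step ``immediately induces'' \eqref{catlop:eq:morphism}, whereas you spell out the bifunctoriality-and-sum argument explicitly, but the approach is the same.
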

For $f\in\Hom_{\DR(A_\sigma)}(\pi_1, \pi_2)$,
we define $F(f)\in\Hom_{\Rep \sigma}(F(\pi_1), F(\pi_2))$
by
$F(f)=f$.
A direct computation shows
\begin{proposition}
$F: \DR(A_\sigma)\to\Rep \sigma$ is a strict tensor functor.
\end{proposition}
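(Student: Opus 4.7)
The plan is to verify the defining conditions of a strict tensor functor: (i) functoriality, (ii) strict preservation of the unit object, (iii) strict preservation of the tensor product on objects, (iv) strict preservation of the tensor product on morphisms, and (v) compatibility with the associativity, left-unit and right-unit constraints. Functoriality and (iv) are essentially free: since $F(f)=f$ as a morphism of $\VectH$, composition and identities are preserved, and the tensor product of morphisms in both $\DR(A_\sigma)$ and $\Rep \sigma$ is defined to be the tensor product in the ambient category ($\VectH$ and $C=\VectH$ respectively); so $F(f\barotimes g)=f\barotimes g=F(f)\boxtimes F(g)$. Similarly, the constraints $a,l,r$ on both sides are by definition those of $\VectH$, so (v) is automatic.

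For (ii), I would compute $L_{F(I_{\DR(A_\sigma)})}$ from \eqref{isom:eq:lfpi}. Since $\pi_{I_{\VectH}}=\varphi_0\circ\varepsilon$ and $\varepsilon(L_{ab}+I_\sigma)=\delta_{ab}T_{[a]}$, only the diagonal terms $a=b$ contribute and $\pi_{I_{\VectH}}(L_{aa}+I_\sigma)_{[a],[a]}$ is the canonical isomorphism $I_{\VectH}\barotimes\{[a]\}\to\{[a]\}\barotimes I_{\VectH}$. Assembling \eqref{isom:eq:lfpi} and using $\sum_{a}i_X^{\{a\}}\circ p_{\{a\}}^X=\id_X$ \eqref{vecth:eq:ipidentity}, one identifies $L_{F(I_{\DR(A_\sigma)})}$ with $r_X^{-1}\circ l_X=L_{I_{\VectH}}$, i.e.\ with $I_{\Rep \sigma}$.

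The heart of the proof is (iii). I would unwind $L_{F(\pi_V\barotimes\pi_W)}$ by first applying the coproduct
\[
\Delta(L_{ab}+I_\sigma)=\sum_{c\in X}(L_{ac}+I_\sigma)\otimes(L_{cb}+I_\sigma),
\]
then using Proposition \ref{tpdynrep:proposition:varphi2} to get
\[
(\pi_V\barotimes\pi_W)(L_{ab}+I_\sigma)_{[a],[b]}
=\sum_{c\in X}\pi_V(L_{ac}+I_\sigma)_{[a],[c]}\boxtimes_{[c]}\pi_W(L_{cb}+I_\sigma)_{[c],[b]}.
\]
Plugging this into \eqref{isom:eq:lfpi}, inserting the resolution of the identity $\sum_{c\in X}i_X^{\{c\}}\circ p_{\{c\}}^X=\id_X$ between the $V$- and $W$-factors, and using the definition of $\boxtimes_{[c]}$ in terms of associators in $\VectH$, one rewrites the result as
\[
a_{X,V,W}\circ(L_{F(\pi_V)}\barotimes\id_W)\circ a_{V,X,W}^{-1}\circ(\id_V\barotimes L_{F(\pi_W)})\circ a_{V,W,X},
\]
which is exactly $L_{F(\pi_V)}\boxtimes L_{F(\pi_W)}$ by \eqref{catlop:eq:tensor}. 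Hence $F(\pi_V\barotimes\pi_W)=F(\pi_V)\boxtimes F(\pi_W)$.

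The main obstacle I anticipate is the bookkeeping in step (iii): one must carefully pair the sums over $a,b,c\in X$ coming from \eqref{isom:eq:lfpi} with those coming from $\Delta$, translate the $(H,X)$-algebraic product $*_V$ hidden in $\boxtimes_{[c]}$ into the associator-pattern of $\boxtimes$, and repeatedly invoke \eqref{catvecth:eq:projinj} and \eqref{vecth:eq:ipidentity} to collapse the inserted projector/injector pairs. Finiteness of $X$ is essential here, exactly as remarked after Definition \ref{ar:definition:ar}. Once this matching is done, all remaining verifications are routine.
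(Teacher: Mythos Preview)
Your proposal is correct and is precisely the ``direct computation'' that the paper invokes without detail: you verify functoriality, strict preservation of the unit and tensor products, and compatibility with the constraints, using the coproduct formula for $\Delta(L_{ab}+I_\sigma)$, Proposition~\ref{tpdynrep:proposition:varphi2}, and the projector/injector identities \eqref{catvecth:eq:projinj}, \eqref{vecth:eq:ipidentity}. The paper offers no further argument beyond asserting that this computation succeeds, so your outline is in fact more detailed than the original.
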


The next task is to introduce a tensor functor
$G: \Rep \sigma\to \DR(A_\sigma)$.

Let $V\in\Ob(\VectH)$
and
$u: \{\beta^{-1}\}\barotimes V\to V\barotimes\{\alpha^{-1}\}\in
\Hom(\VectH)$
$(\alpha, \beta\in W_X^H)$.
We denote by $u^\wedge: V\barotimes\{\alpha\}\to\{\beta\}
\barotimes V$
the following morphism of $\VectH$.
\begin{eqnarray}\nonumber
u^\wedge
&=&
r_{\{\beta\}\barotimes V}
\circ
\id_{\{\beta\}\barotimes V}
\barotimes
\iota_{I_{\VectH}}^{\{\alpha^{-1}\}\barotimes\{\alpha\}}
\circ
a_{\{\beta\}\barotimes V\{\alpha^{-1}\}\{\alpha\}}
\circ
\\\nonumber
&&
\circ
\Big[
l_{(\{\beta\}\barotimes V)\barotimes\{\alpha^{-1}\}}
\circ
\iota_{I_{\VectH}}^{\{\beta\}\barotimes\{\beta^{-1}\}}
\barotimes\id_{(\{\beta\}\barotimes V)\barotimes\{\alpha^{-1}\}}
\circ
\\\nonumber
&&
\circ
a^{-1}_{\{\beta\}\{\beta^{-1}\}(\{\beta\}\barotimes V)\barotimes\{\alpha^{-1}\}}
\circ
\\\nonumber
&&
\circ
\id_{\{\beta\}}
\barotimes
\Big\{
\id_{\{\beta^{-1}\}}\barotimes
a_{\{\beta\}V\{\alpha^{-1}\}}^{-1}
\circ
a_{\{\beta^{-1}\}\{\beta\}V\barotimes\{\alpha^{-1}\}}
\circ
\\\nonumber
&&
\circ
\iota_{\{\beta^{-1}\}\barotimes\{\beta\}}^{I_{\VectH}}
\barotimes
\id_{V\barotimes\{\alpha^{-1}\}}
\circ
l_{V\barotimes\{\alpha^{-1}\}}^{-1}
\circ
u
\circ
r_{\{\beta^{-1}\}\barotimes V}
\circ
\\\nonumber
&&
\circ
a_{\{\beta^{-1}\}V I_{\VectH}}^{-1}
\circ
\id_{\{\beta^{-1}\}}\barotimes
(\id_V\barotimes
\iota^{\{\alpha\}\barotimes\{\alpha^{-1}\}}_{I_{\VectH}}
\circ
a_{V\{\alpha\}\{\alpha^{-1}\}})\Big\}
\circ
\\\nonumber
&&
\circ
a_{\{\beta\}\{\beta^{-1}\}
(V\barotimes\{\alpha\})\barotimes\{\alpha^{-1}\}}
\circ
\iota_{\{\beta\}\barotimes\{\beta^{-1}\}}^{I_{\VectH}}
\barotimes
\id_{(V\barotimes\{\alpha\})\barotimes\{\alpha^{-1}\}}
\circ
\\\nonumber
&&
\circ
l_{(V\barotimes\{\alpha\})\barotimes\{\alpha^{-1}\}}^{-1}
\Big]
\barotimes\id_{\{\alpha\}}
\circ
\\\label{isom:equation:wedge}
&&
\circ
a_{V\barotimes\{\alpha\}\{\alpha^{-1}\}\{\alpha\}}^{-1}
\circ
\id_{V\barotimes\{\alpha\}}\barotimes
\iota_{\{\alpha^{-1}\}\barotimes\{\alpha\}}^{I_{\VectH}}
\circ
r_{V\barotimes\{\alpha\}}^{-1}.
\end{eqnarray}

A simple verification shows that
$(u^\vee)^\wedge=u$
(For $u^\vee$, see \eqref{isom:eq:vee}).
Moreover, 
the map $\wedge:
\Hom_{\VectH}(\{\beta^{-1}\}\barotimes V,
V\barotimes\{\alpha^{-1}\})
\to
\Hom_{\VectH}(V\barotimes\{\alpha\},
\{\beta\}\barotimes V)$
is an injective $\C$-linear map
because of Proposition \ref{vecth:proposition:morphismsum}
and \eqref{isom:equation:wedge}.
Thus, 
\begin{proposition}
\label{isom:proposition:inverse}
The map $\wedge$ is the inverse of the map
$\vee$.
\end{proposition}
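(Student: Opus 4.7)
The plan is to derive Proposition \ref{isom:proposition:inverse} directly from the two preparatory statements placed immediately before it, namely (i) the identity $(u^\vee)^\wedge = u$ for every $u \in \Hom_{\VectH}(V\barotimes\{\alpha\}, \{\beta\}\barotimes V)$, and (ii) the injectivity of the $\C$-linear map $\wedge$. Statement (i) already tells us that $\wedge\circ\vee$ is the identity on $\Hom_{\VectH}(V\barotimes\{\alpha\}, \{\beta\}\barotimes V)$, so the only remaining task is to verify the opposite identity $\vee\circ\wedge = \id$ on $\Hom_{\VectH}(\{\beta^{-1}\}\barotimes V, V\barotimes\{\alpha^{-1}\})$.

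For this, I would take an arbitrary $v \in \Hom_{\VectH}(\{\beta^{-1}\}\barotimes V, V\barotimes\{\alpha^{-1}\})$ and apply (i) with the choice $u = v^\wedge$ to obtain
\[
((v^\wedge)^\vee)^\wedge = v^\wedge, \quad \text{i.e.,} \quad \wedge\bigl((v^\wedge)^\vee\bigr) = \wedge(v).
\]
Since $\wedge$ is injective by (ii), this forces $(v^\wedge)^\vee = v$, which is exactly $\vee\circ\wedge = \id$. Combined with $\wedge\circ\vee = \id$, this shows that $\wedge$ and $\vee$ are two-sided inverses of one another, proving the proposition.

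The genuine work is not in this short deduction but in the two preparatory facts, which the paper dispatches with brief remarks. Verifying $(u^\vee)^\wedge = u$ means unwinding the two long composites \eqref{isom:eq:vee} and \eqref{isom:equation:wedge}; the cancellations rely on inserting $\iota_{\{\gamma\}\barotimes\{\gamma^{-1}\}}^{I_{\VectH}}$-paired evaluations and canceling them via the triangle and pentagon identities together with the $l$, $r$ unit constraints of $\VectH$. The injectivity of $\wedge$ in turn uses Proposition \ref{vecth:proposition:morphismsum} to reduce the question to evaluating $v^\wedge$ matrix-coefficient-wise and observing from \eqref{isom:equation:wedge} that $v$ can be recovered from $v^\wedge$ by applying the inverse unit and associativity constraints. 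Thus the main obstacle is purely diagrammatic bookkeeping in $\VectH$; once it is performed, the proposition itself is a one-line consequence via the injectivity trick above.
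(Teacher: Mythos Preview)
Your proposal is correct and follows exactly the paper's approach: the paper states the two facts $(u^\vee)^\wedge=u$ and the injectivity of $\wedge$ immediately before the proposition and then concludes with a bare ``Thus,'' which is precisely the one-line deduction you spell out. Your remarks about where the actual work lies (the diagrammatic verification of $(u^\vee)^\wedge=u$ and the injectivity of $\wedge$) accurately reflect what the paper leaves to the reader.
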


Let $L_V=(V, L_V)$ be an object of $\Rep \sigma$.
For $f, g\in M_H$, define 
$\picheck(f, g)\in D_{H, X}(V)_{1,1}$ by
\[
\picheck(f, g)=\mu_l^{D_{H, X}(V)}(f)\mu_r^{D_{H, X}(V)}(g).
\]
Since
$\picheck$ is $\C$-bilinear,
there uniquely exists
a $\C$-linear map
$\pibar: M_H\otimes_\C M_H\to D_{H, X}(V)$
such that
$\pibar(f\otimes g)=\picheck(f, g)$
$(f, g\in M_H);$
in addition,
$\pibar$ is a $\C$-algebra homomorphism.
We denote by
$\pibar(L_{ab})_{[a][b]}: V\barotimes\{[b]\}
\to\{[a]\}\barotimes V$
and
$\pibar((L^{-1})_{ab})_{[b^{-1}][a^{-1}]}:
V\barotimes\{[a^{-1}]\}\to\{[b^{-1}]\}\barotimes V$
$(a, b\in X)$
the following morphisms of $\VectH$
(cf. \eqref{isom:eq:lfpi}
and
\eqref{isom:eq:lfpi-1}).
\begin{eqnarray*}
&&
\pibar(L_{ab})_{[a][b]}
=
(\iota^{\{ a\}}_{\{[a]\}}\circ p_{\{ a\}}^X)\barotimes\id_V
\circ L_V
\circ\id_V\barotimes (i_X^{\{ b\}}\circ\iota_{\{ b\}}^{\{[b]\}});
\\
&&
\pibar((L^{-1})_{ab})_{[b^{-1}][a^{-1}]}
=
(\id_V\barotimes(\iota^{\{ a\}}_{\{[a]\}}\circ p_{\{ a\}}^X)
\circ L_V^{-1}
\circ(i_{\{ b\}}^X\circ
\iota_{\{ b\}}^{\{[b]\}})\barotimes \id_V)^\wedge.
\end{eqnarray*}

We define the map $\pibar$
from $HX$ \eqref{ar:eq:HX}
to  $D_{H, X}(V)$
as follows:
\begin{eqnarray*}
&&
\pibar(\xi)=\pibar(\xi)\quad(\xi\in M_H\otimes_\C M_H);
\ 
\pibar(L_{ab})=\Gamma_{[a],[b]}^V(\pibar(L_{ab})_{[a][b]});
\\
&&
\pibar((L^{-1})_{ab})=\Gamma_{[b^{-1}],[a^{-1}]}^V
(\pibar((L^{-1})_{ab})_{[b^{-1}],[a^{-1}]}).
\end{eqnarray*}
There consequently exists a unique $\C$-algebra homomorphism
$\pitilde: \C\{ HX\}\to D_{H, X}(V)$
such that
$\pitilde(x)=\pibar(x)$
$(x\in HX)$.
Here, $\C\{ HX\}$ is the free $\C$-algebra on the
set $HX$.
\begin{proposition}\label{isomorph:proposition:pitildeIR}
$\pitilde(I_\sigma)=\{ 0\}$.
\end{proposition}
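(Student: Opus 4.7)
The plan is as follows. Since $\pitilde:\C\{HX\}\to D_{H,X}(V)$ is a $\C$-algebra homomorphism and $I_\sigma$ is a two-sided ideal, it suffices to verify that $\pitilde$ annihilates each of the five families of generators listed in Definition \ref{ar:definition:ar}. The generators of type (1) and (5) are handled immediately: by construction $\pibar$ restricted to $M_H\otimes_\C M_H$ factors through the $\C$-algebra homomorphism $\pibar(f\otimes g)=\mu_l^{D_{H,X}(V)}(f)\mu_r^{D_{H,X}(V)}(g)$, so the $\C$-linearity and multiplicativity relations in (1) are satisfied, and $\pitilde(\emptyset)=1_{D_{H,X}(V)}=\pibar(1_{M_H}\otimes 1_{M_H})$ because $\mu_l^{D_{H,X}(V)}$ and $\mu_r^{D_{H,X}(V)}$ are unital, killing (5).

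Next I would dispatch the generators of type (3). Since $\pitilde(L_{ab})=\Gamma_{[a],[b]}^V(\pibar(L_{ab})_{[a][b]})\in D_{H,X}(V)_{[a],[b]}$ and similarly $\pitilde((L^{-1})_{ab})\in D_{H,X}(V)_{[b^{-1}],[a^{-1}]}$, the defining property \eqref{hxalg:eq:bialgebroid} of the $(H,X)$-algebra $D_{H,X}(V)$ (Proposition \ref{exphxalg:proposition:DHXV}), together with \eqref{ar:eq:mulmur}-style identifications of $\pibar$ on $M_H\otimes M_H$ with the images of $\mu_l^{D_{H,X}(V)}$ and $\mu_r^{D_{H,X}(V)}$, forces the four commutation relations in (3) to hold in $D_{H,X}(V)$.

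For the generators of type (2), the relations reduce to $L_V\circ L_V^{-1}=\id_{X\barotimes V}$ and $L_V^{-1}\circ L_V=\id_{V\barotimes X}$. Concretely, applying $\pitilde$ to $\sum_c L_{ac}(L^{-1})_{cb}$ and using Proposition \ref{hxalg:prop:product} yields $\Gamma^V_{[ab^{-1}],1}$ of an expression built from the $*_V$-products of $\pibar(L_{ac})_{[a],[c]}$ with $\pibar((L^{-1})_{cb})_{[b^{-1}],[c^{-1}]}$; invoking Lemma \ref{isom:lemma:veeandcircV} together with Proposition \ref{isom:proposition:inverse} relating the operations $\wedge$ and $\vee$ (and hence the two definitions of $\pibar((L^{-1})_{ab})_{[b^{-1}],[a^{-1}]}$), the sum collapses via the projection–injection identity \eqref{vecth:eq:ipidentity} to the composition $L_V^{-1}\circ L_V$ sandwiched between appropriate $p^X_{\{\cdot\}}$ and $i_X^{\{\cdot\}}$. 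Invertibility of $L_V$ then gives $\delta_{ab}$ times the unit, matching $\pitilde(\delta_{ab}\emptyset)$; the other relation is handled symmetrically.

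The main obstacle is the verification of (4), which encodes the L-operator equation \eqref{catlop:eq:RLL=LLR} for $L_V$. The strategy is to reverse-engineer the argument of Proposition \ref{isom:prop:object}: combining Propositions \ref{hxalg:proposition:unique}, \ref{hxalg:prop:product}, and \ref{hxalg:prop:point} with the finiteness of $X$ and with \eqref{vecth:eq:ipidentity}, the equation \eqref{catlop:eq:RLL=LLR} for $L_V$ translates, after sandwiching with the $p^X_{\{\cdot\}}$ and $i_X^{\{\cdot\}}$ morphisms, into the system of identities \eqref{isom:eq:proof2} on the $*_V$-products of the $\pibar(L_{ab})_{[a],[b]}$ with the constants $(\sigma_{ac}^{xy})_{[ca],[yx]}$ and $(\sigma_{xy}^{bd})_{[yx],[db]}$. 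Applying $\Gamma^V$ componentwise and using Proposition \ref{hxalg:prop:point} to absorb the $\sigma_{ac}^{xy}\otimes 1_{M_H}$ and $1_{M_H}\otimes\sigma_{xy}^{bd}$ factors as multiplications by $\mu_l^{D_{H,X}(V)}(\sigma_{ac}^{xy})$ and $\mu_r^{D_{H,X}(V)}(\sigma_{xy}^{bd})$, this is precisely the image of the generator (4) under $\pitilde$. Thus $\pitilde$ vanishes on (4), completing the proof.
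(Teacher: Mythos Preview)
Your proposal is correct and follows essentially the same route as the paper: generators (1), (3), (5) are routine (the paper leaves them to the reader), generator (4) is handled by running the proof of Proposition~\ref{isom:prop:object} in reverse, and generator (2) is treated via Proposition~\ref{hxalg:prop:product} together with the $\vee$/$\wedge$ duality of Proposition~\ref{isom:proposition:inverse} and the identity \eqref{vecth:eq:ipidentity}. One small slip: for the first relation in (2), namely $\sum_c L_{ac}(L^{-1})_{cb}-\delta_{ab}\emptyset$, the resulting composition is $L_V\circ L_V^{-1}$ (not $L_V^{-1}\circ L_V$), and the paper records the needed inversion of Lemma~\ref{isom:lemma:veeandcircV} as a separate lemma expressing $u*_Vv$ in terms of $u\circ v^\vee$, whereas you invoke Lemma~\ref{isom:lemma:veeandcircV} directly; since that lemma is an equality this is harmless.
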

\begin{proof}
From Lemma \ref{isom:lemma:veeandcircV},
\begin{lemma}
For morphisms
$u: V\barotimes\{[c]\}\to\{[a]\}\barotimes V$
and
$v: V\barotimes\{[c^{-1}]\}\to\{[b^{-1}]\}\barotimes V$
of $\VectH$
$(a, b, c\in X)$,
\begin{eqnarray*}
u*_V v
&=&
l_{\{[ab^{-1}]\}\barotimes V}
\circ
\iota_{I_{\VectH}}^{\{[b^{-1}]\}\barotimes\{[b]\}}
\barotimes
\id_{\{[ab^{-1}]\}\barotimes V}
\circ
a^{-1}_{\{[b^{-1}]\}\{[b]\}\{[ab^{-1}]\}\barotimes V}
\circ
\\
&&
\circ
\id_{\{[b^{-1}]\}}\barotimes
\Big(
\id_{\{[b]\}}\barotimes
(\iota^{\{[b^{-1}]\}\barotimes\{[a]\}}_{\{[ab^{-1}]\}}
\barotimes\id_V
\circ
a^{-1}_{\{[b^{-1}]\}\{[a]\}V})
\circ
\\
&&
\circ
a_{\{[b]\}\{[b^{-1}]\}\{[a]\}\barotimes V}
\circ
\iota_{\{[b]\}\barotimes\{[b^{-1}]\}}^{I_{\VectH}}
\barotimes\id_{\{[a]\}\barotimes V}
\circ
l_{\{[a]\}\barotimes V}^{-1}
\circ
\\
&&
\circ
u\circ
v^\vee
\circ
r_{\{[b]\}\barotimes V}
\circ
\id_{\{[b]\}\barotimes V}\barotimes\iota_{I_{\VectH}}^{\{1\}}
\circ
a_{\{[b]\}V\{1\}}^{-1}
\Big)
\circ
\\
&&
\circ
a_{\{[b^{-1}]\}\{[b]\}V\barotimes\{1\}}
\circ
\iota_{\{[b^{-1}]\}\barotimes\{[b]\}}^{I_{\VectH}}
\barotimes\id_{V\barotimes
\{1\}}
\circ
l^{-1}_{V\barotimes\{1\}}.
\end{eqnarray*}
\end{lemma}
From
\eqref{catvecth:eq:projinj},
Propositions
\ref{vecth:proposition:morphismsum},
\ref{hxalg:prop:product},
\ref{isom:proposition:inverse},
and
the above lemma,
$\pitilde(\sum_{c\in X}L_{ac}(L^{-1})_{cb}-\delta_{ab}\phi)=0$.

Let $u$ denote the generator (4)
of the ideal $I_\sigma$ in Definition \ref{ar:definition:ar}.
On account of the
proof of Proposition
\ref{isom:prop:object},
we can show $\pitilde(u)=0$.
We leave it to the reader to verify that
$\pitilde(u)=0$
for the other generators $u$ of $I_\sigma$.
\end{proof}
From Proposition \ref{isomorph:proposition:pitildeIR},
there uniquely exists
a $\C$-algebra homomorphism
$\pi: A_\sigma\to D_{H, X}(V)$
such that
$\pi(u+I_\sigma)=\pitilde(u)$
$(u\in \C\{ HX\})$.
\begin{proposition}\label{isomorph:proposition:GLV}
$G(L_V):=(V, \pi)\in \Ob(\DR(A_\sigma))$.
\end{proposition}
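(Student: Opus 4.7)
The plan is to verify the three remaining conditions in Definition~\ref{hxlag:proposition:hxalghom}, since the construction prior to the statement already gives that $\pi$ is a well-defined $\C$-algebra homomorphism from $A_\sigma$ to $D_{H,X}(V)$ (its well-definedness having been settled by Proposition~\ref{isomorph:proposition:pitildeIR}). Concretely, I have to check (i) the grading condition $\pi((A_\sigma)_{\alpha,\beta}) \subset D_{H,X}(V)_{\alpha,\beta}$ for all $\alpha,\beta \in W_X^H$, (ii) $\pi\circ\mu_l^{A_\sigma} = \mu_l^{D_{H,X}(V)}$, and (iii) $\pi\circ\mu_r^{A_\sigma} = \mu_r^{D_{H,X}(V)}$. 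Once these are in hand, $(V,\pi)$ is a dynamical representation by definition.

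For (ii) and (iii), I would unpack the definitions. By \eqref{ar:eq:mulmur}, $\mu_l^{A_\sigma}(f) = f\otimes 1_{M_H} + I_\sigma$, and by the construction of $\pibar$ on $M_H\otimes_\C M_H$ one gets
\[
\pi(\mu_l^{A_\sigma}(f)) = \pibar(f\otimes 1_{M_H}) = \picheck(f,1_{M_H}) = \mu_l^{D_{H,X}(V)}(f)\,\mu_r^{D_{H,X}(V)}(1_{M_H}).
\]
A direct look at \eqref{hxalg:eq:mureV} shows $\mu_r^{E(V)}(1_{M_H})$ acts as the identity on $\Map(H,\C V)$, so the right factor is the unit of $D_{H,X}(V)$ and (ii) follows. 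The verification of (iii) is symmetric, using \eqref{hxalg:eq:muleV}.

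For (i), it is enough to check that each generator $x \in HX$ is mapped to the component indexed by $(w_l(x), w_r(x))$: for $\xi \in M_H\otimes_\C M_H$ (where $w_l(\xi)=w_r(\xi)=1$), $\pibar(\xi)$ is a product of elements of $D_{H,X}(V)_{1,1}$ by Definition~\ref{hxalg:definition:hxalg}(2) and (4) combined with (ii), (iii) just established; for $L_{ab}$ (resp.\ $(L^{-1})_{ab}$), $\pibar$ is defined as $\Gamma_{[a],[b]}^V(\cdots)$ (resp.\ $\Gamma_{[b^{-1}],[a^{-1}]}^V(\cdots)$), which lands in $D_{H,X}(V)_{[a],[b]}$ (resp.\ $D_{H,X}(V)_{[b^{-1}],[a^{-1}]}$) by Proposition~\ref{exphxalg:proposition:DHXV}. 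Propagating along products via Proposition~\ref{hxalg:prop:product} and the homomorphism property of $\pitilde$, a straightforward induction on word length in $\widetilde{HX}$ shows that a word $w$ with $w_l(w)=\alpha$, $w_r(w)=\beta$ has $\pitilde(w) \in D_{H,X}(V)_{\alpha,\beta}$. This transfers to $A_\sigma$ via the quotient and gives (i).

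I do not expect any genuine obstacle here; the only nontrivial input, namely $\pitilde(I_\sigma)=\{0\}$, has been absorbed into Proposition~\ref{isomorph:proposition:pitildeIR}, and everything else is bookkeeping with the gradings $w_l, w_r$ and the two unit maps. The mildly delicate step is recognising that $\mu_r^{D_{H,X}(V)}(1_{M_H}) = \id$, without which (ii) would not reduce on the nose; this is why the construction of $\pibar$ on $M_H\otimes_\C M_H$ is phrased via $\picheck(f,g) = \mu_l^{D_{H,X}(V)}(f)\mu_r^{D_{H,X}(V)}(g)$ rather than a more naive assignment.
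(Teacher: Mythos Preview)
Your proposal is correct and follows essentially the same route as the paper: the paper records precisely the equivalence you use, namely that $\pi\in\Hom(\Alg_{(H,X)})$ reduces to checking the two moment-map identities together with $\pi(L_{ab}+I_\sigma)\in D_{H,X}(V)_{[a],[b]}$ and $\pi((L^{-1})_{ab}+I_\sigma)\in D_{H,X}(V)_{[b^{-1}],[a^{-1}]}$, and then dismisses the rest as a direct computation. Your explicit induction on word length and the observation $\mu_r^{D_{H,X}(V)}(1_{M_H})=\id$ are exactly the content of that ``direct computation''.
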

\begin{proof}
By Definition \ref{hxlag:proposition:hxalghom},
the following two conditions are equivalent.
\begin{enumerate}
\item
$\pi: A_\sigma\to D_{H, X}(V)\in\Hom(\Alg_{(H, X)})$.
\item
$\pi: A_\sigma\to D_{H, X}(V)$ is a $\C$-algebra homomorphism
such that:
\begin{enumerate}
\item
$\pi\circ\mu_l^{A_\sigma}=\mu_l^{D_{H, X}(V)}$,
and
$\pi\circ\mu_r^{A_\sigma}=\mu_r^{D_{H, X}(V)};$
\item
$\pi(L_{ab}+I_\sigma)\in D_{H, X}(V)_{[a],[b]}\quad(\forall a, b\in  X);$
\item
$\pi((L^{-1})_{ab}+I_\sigma)\in D_{H, X}(V)_{[b^{-1}],[a^{-1}]}
\quad(\forall a, b\in  X)$.
\end{enumerate}
\end{enumerate}
We note that, by virtue of \eqref{ar:eq:mulmur},
the above condition (a) in $(2)$ implies
\[
\pi(\xi+I_\sigma)\in D_{H, X}(V)_{1,1}
\quad(\xi\in M_H\otimes_\C M_H).
\]

A direct computation shows $(2)$, and the proposition follows.
\end{proof}
By taking account of the proof of Proposition
\ref{isom:proposition:morphism},
\begin{proposition}
Every $f: L_V\to L_W\in\Hom(\Rep \sigma)$
is a morphism of $\DR(A_\sigma)$ whose source is $G(L_V)$ 
and whose target is
$G(L_W)$.
\end{proposition}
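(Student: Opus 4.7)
The plan is to reduce the intertwining condition \eqref{dynrep:eq:defmorphism} to a check on generators of $A_\sigma$, and then translate each generator case into an identity in $\VectH$ that follows from \eqref{catlop:eq:morphism} together with Lemma \ref{tpdynrep:lem:equivcommute} and Proposition \ref{isom:proposition:inverse}.

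First I would observe that, because $G(L_V)=(V,\pi_V)$ and $G(L_W)=(W,\pi_W)$ are defined through $\C$-algebra homomorphisms $\pi_V, \pi_W$, and because $m_{f}$ is $\C$-linear, the required equality $\pi_W(a)\circ m_f=m_f\circ\pi_V(a)$ is automatic for $a\in A_\sigma$ as soon as it holds on a set of generators of $A_\sigma$ as a $\C$-algebra. By construction (see Proposition \ref{isomorph:proposition:GLV} and the definition of $\pitilde$), $A_\sigma$ is generated by the images of $f'\otimes g'+I_\sigma$ $(f',g'\in M_H)$, $L_{ab}+I_\sigma$, and $(L^{-1})_{ab}+I_\sigma$; so it suffices to treat these three families.

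For $f'\otimes g'+I_\sigma$, both $\pi_V$ and $\pi_W$ send this element to $\mu_l^{D_{H,X}(-)}(f')\mu_r^{D_{H,X}(-)}(g')$, and a direct inspection of \eqref{hxalg:eq:muleV}--\eqref{hxalg:eq:mureV} together with the definition \eqref{dynrep:eq:mf} of $m_f$ and condition \eqref{catvech:eq:defmorphism} for the morphism $f$ shows that $m_f$ intertwines these operators. For $L_{ab}+I_\sigma$, Lemma \ref{tpdynrep:lem:equivcommute} reduces the intertwining
\[
\Gamma^W_{[a],[b]}(\pibar_W(L_{ab})_{[a][b]})\circ m_f = m_f\circ\Gamma^V_{[a],[b]}(\pibar_V(L_{ab})_{[a][b]})
\]
to the identity
\[
\pibar_W(L_{ab})_{[a][b]}\circ(f\barotimes\id_{\{[b]\}})
=(\id_{\{[a]\}}\barotimes f)\circ\pibar_V(L_{ab})_{[a][b]}
\]
in $\VectH$. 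Unwinding the definition of $\pibar(L_{ab})_{[a][b]}$ given just before Proposition \ref{isomorph:proposition:pitildeIR}, this identity is obtained by pre- and post-composing $(\id_X\barotimes f)\circ L_V=L_W\circ(f\barotimes\id_X)$, which is exactly \eqref{catlop:eq:morphism}, with $(\iota^{\{a\}}_{\{[a]\}}\circ p^X_{\{a\}})\barotimes\id_W$ on the left and $\id_V\barotimes(i^{\{b\}}_X\circ\iota_{\{b\}}^{\{[b]\}})$ on the right.

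The main obstacle is the $(L^{-1})_{ab}+I_\sigma$ case, where the definition of $\pibar((L^{-1})_{ab})_{[b^{-1}],[a^{-1}]}$ involves the auxiliary operation $\wedge$ from \eqref{isom:equation:wedge}. My strategy here is to first derive, from \eqref{catlop:eq:morphism} and the invertibility of $L_V$ and $L_W$, the dual intertwining
\[
(f\barotimes\id_X)\circ L_V^{-1}=L_W^{-1}\circ(\id_X\barotimes f),
\]
which gives the analogue of the previous step before the $\wedge$ is applied. Then, using that $\wedge$ is $\C$-linear and is the inverse of $\vee$ (Proposition \ref{isom:proposition:inverse}), and that pre/post-composition with $f\barotimes\id$ resp.\ $\id\barotimes f$ commutes with the structural morphisms $a,l,r,\iota$ appearing in \eqref{isom:equation:wedge} by naturality in $\VectH$, this dual identity transports through $\wedge$ to yield
\[
\pibar_W((L^{-1})_{ab})_{[b^{-1}][a^{-1}]}\circ(f\barotimes\id_{\{[a^{-1}]\}})
=(\id_{\{[b^{-1}]\}}\barotimes f)\circ\pibar_V((L^{-1})_{ab})_{[b^{-1}][a^{-1}]},
\]
which via Lemma \ref{tpdynrep:lem:equivcommute} delivers the required intertwining. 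Finally I would note that $F(f)=f$ is by definition the underlying $\VectH$-morphism, so $s(f)=G(L_V)$ and $b(f)=G(L_W)$ is immediate from the construction, completing the proof.
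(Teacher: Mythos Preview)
Your proposal is correct and follows essentially the same approach as the paper: the paper's proof is the one-line remark ``By taking account of the proof of Proposition \ref{isom:proposition:morphism},'' which amounts to reversing the equivalence via Lemma \ref{tpdynrep:lem:equivcommute} on the generators $L_{ab}+I_\sigma$ and leaving the remaining generators to the reader. Your version simply makes this explicit for all three generator families, including the $(L^{-1})_{ab}$ case via the naturality of $\wedge$ (equivalently of $\vee$) in $V$, which is implicit in the paper's formulas \eqref{isom:eq:vee} and \eqref{isom:equation:wedge}.
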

For $f: L_V\to L_W\in\Hom(\Rep \sigma)$, 
we set $G(f)=f: G(L_V)\to G(L_W)\in\Hom(\DR(A_\sigma))$.
\begin{proposition}
$G: \Rep \sigma\to \DR(A_\sigma)$ is a
strict tensor functor.
\end{proposition}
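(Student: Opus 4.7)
The plan is to verify the four defining properties of a strict tensor functor: functoriality, and strict preservation of the unit object, the tensor product, and the structural constraints $a$, $l$, $r$. Functoriality of $G$ is immediate, because $G(f) = f$ as a morphism of $\VectH$, so $G(\id_{L_V}) = \id_V = \id_{G(L_V)}$ and $G(g \circ f) = G(g) \circ G(f)$ reduce to the corresponding identities in $\VectH$. Strict preservation of the constraints is equally immediate: by Proposition~\ref{lop:theorem} and Theorem~\ref{tydynrep:theorem:tensorcategory}, the morphisms $a$, $l$, $r$ of both $\Rep \sigma$ and $\DR(A_\sigma)$ are literally those of $\VectH$, so $G$ sends each such constraint to itself.

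The substantive content is the preservation of the unit object and of the tensor product on objects. For the unit object, both $G(I_{\Rep \sigma})$ and $I_{\DR(A_\sigma)}$ carry $I_{\VectH}$ underneath, so it suffices to check that the $\Alg_{(H, X)}$-morphism $\pi$ attached to $L_{I_C} = r_{X}^{-1} \circ l_{X}$ agrees with $\varphi_0 \circ \varepsilon: A_\sigma \to D_{H, X}(I_{\VectH})$. Since $A_\sigma$ is generated by $HX$ modulo $I_\sigma$, one checks the equality on each generator, using \eqref{hxalg:eq:muleV}--\eqref{hxalg:eq:mureV} for the $M_H \otimes_\C M_H$ case and unwinding the construction of $\pibar(L_{ab})_{[a][b]}$, $\pibar((L^{-1})_{ab})_{[b^{-1}][a^{-1}]}$ against the definition of $\varepsilon_{HX}$ on $L_{ab}$ and $(L^{-1})_{ab}$ for the remaining cases.

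For the tensor product on objects, we must establish the equality
\[
\pi_{L_V \boxtimes L_W} = \varphi_2(V, W) \circ (\pi_{L_V} \tildeotimes \pi_{L_W}) \circ \Delta
\]
of $\C$-algebra homomorphisms $A_\sigma \to D_{H, X}(V \barotimes W)$. Again it is enough to check this on the generators in $HX$. The $M_H \otimes_\C M_H$ case follows directly from the definitions of $\mu_l^{A_\sigma \tildeotimes A_\sigma}$, $\mu_r^{A_\sigma \tildeotimes A_\sigma}$ and \eqref{tpdynrep:eq:varphi2}. The crucial case is $L_{ab}$: by the definition of $\Delta$, the right-hand side equals $\sum_{c \in X} \varphi_2(V, W)(\pi_{L_V}(L_{ac} + I_\sigma) \otimes \pi_{L_W}(L_{cb} + I_\sigma) + I_2)$, which by Proposition~\ref{tpdynrep:proposition:varphi2} simplifies to $\Gamma_{[a], [b]}^{V \barotimes W}\bigl(\sum_c \pibar(L_{ac})_{[a],[c]} \boxtimes_{[c]} \pibar(L_{cb})_{[c],[b]}\bigr)$. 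The left-hand side is $\Gamma_{[a], [b]}^{V \barotimes W}$ applied to the morphism constructed from $L_V \boxtimes L_W$ via sandwiching by $p^X_{\{a\}}$, $i_X^{\{b\}}$, etc. By injectivity of $\Gamma_{[a], [b]}^{V \barotimes W}$ (Proposition~\ref{hxalg:proposition:unique}), these two elements of $D_{H, X}(V \barotimes W)$ coincide if and only if the underlying morphisms of $\VectH$ agree, which follows from a diagram chase using \eqref{catvecth:eq:projinj}, \eqref{vecth:eq:ipidentity}, naturality of the associator, and the definition \eqref{catlop:eq:tensor} of $L_V \boxtimes L_W$; in particular, the finiteness of $X$ allows $\sum_c i_X^{\{c\}} \circ p_{\{c\}}^X = \id_X$ to collapse the middle index in the summation. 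The $(L^{-1})_{ab}$ case is parallel, with Proposition~\ref{isom:proposition:inverse} relating $\vee$ and $\wedge$. Preservation of the tensor product on morphisms is then automatic: $G(f \boxtimes g) = f \boxtimes g = f \barotimes g = G(f) \barotimes G(g)$ as morphisms of $\VectH$. The main obstacle is the diagram chase for $L_{ab}$, where one must align the $\boxtimes_{[c]}$ construction (arising from $\varphi_2$ and $\Delta$) with the $\boxtimes$ construction of L-operators (arising from $L_V \boxtimes L_W$) across the isomorphisms $\iota_{\{a\}}^{\{[a]\}}$ and injection/projection pairs.
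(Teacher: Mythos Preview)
Your proposal is correct and follows the only reasonable route: reduce everything to the generators of $A_\sigma$ and use Propositions~\ref{hxalg:proposition:unique} and~\ref{tpdynrep:proposition:varphi2} together with \eqref{vecth:eq:ipidentity} to match the two descriptions of the tensor product. The paper itself gives no proof of this proposition at all---it is simply stated and immediately followed by the verification that $FG=\id_{\Rep\sigma}$ and $GF=\id_{\DR(A_\sigma)}$---so your outline is considerably more explicit than what appears there.
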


We can check that
$FG=\id_{\Rep \sigma}$ and
$GF=\id_{\DR(A_\sigma)}$
as tensor functors;
therefore,
\begin{theorem}
The tensor categories $\Rep \sigma$ and
$\DR(A_\sigma)$ are isomorphic.
\end{theorem}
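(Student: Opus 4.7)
The functors $F:\DR(A_\sigma)\to\Rep\sigma$ and $G:\Rep\sigma\to\DR(A_\sigma)$ have already been constructed and shown to be strict tensor functors, and each acts as the identity on the underlying object of $\VectH$ and on morphisms. Consequently the round trips $FG$ and $GF$ are also the identity on underlying $\VectH$-objects and on morphisms. The plan is therefore to verify the two equalities of additional structure: (i) $L_{F(G(L_V))} = L_V$ for every $(V,L_V)\in\Ob(\Rep\sigma)$, and (ii) $G(F(\pi))_2 = \pi$ as an $(H,X)$-algebra homomorphism for every $(V,\pi)\in\Ob(\DR(A_\sigma))$. Strictness of the tensor functor structures then yields $FG=\id_{\Rep\sigma}$ and $GF=\id_{\DR(A_\sigma)}$ as tensor functors, which gives the stated isomorphism.

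For (i), fix $L_V=(V,L_V)$ and write $G(L_V)=(V,\pi)$, so that $\pi(L_{ab}+I_\sigma)_{[a],[b]}=\pibar(L_{ab})_{[a][b]}$ by construction. Substituting this into formula \eqref{isom:eq:lfpi} for $L_{F(G(L_V))}$, each summand becomes a composition in which the outer $(i_X^{\{a\}}\circ\iota_{\{a\}}^{\{[a]\}})\barotimes\id_V$ and inner $(\iota^{\{a\}}_{\{[a]\}}\circ p^X_{\{a\}})\barotimes\id_V$ telescope via \eqref{catvecth:eq:projinj}, and similarly on the right. Because $X$ is finite, summing over $a,b\in X$ and applying \eqref{vecth:eq:ipidentity} on both factors collapses the projections and inclusions to $\id_X\barotimes\id_V$ on each side of $L_V$, recovering $L_V$ itself. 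For the generator $(L^{-1})_{ab}$ one uses Proposition~\ref{isom:proposition:inverse} (that $\wedge$ inverts $\vee$) to see that the analogous formula for $L_{F(G(L_V))}^{-1}$, built via \eqref{isom:eq:lfpi-1}, reduces to $L_V^{-1}$.

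For (ii), let $\pi'=G(F(\pi))_2:A_\sigma\to D_{H,X}(V)$. Both $\pi$ and $\pi'$ are $\C$-algebra homomorphisms (Proposition~\ref{isomorph:proposition:GLV}), and the set $HX$ in \eqref{ar:eq:HX} generates $A_\sigma$ as a $\C$-algebra, so it suffices to check the agreement $\pi'=\pi$ on images of $HX$ modulo $I_\sigma$. On $M_H\otimes_\C M_H$ the two coincide because $F$ and $G$ are defined to preserve $\mu_l$ and $\mu_r$ (see \eqref{ar:eq:mulmur} and the definition of $\pibar$). For a generator $L_{ab}$, expanding $\pibar(L_{ab})_{[a][b]}$ in terms of $L_{F(\pi)}$ from \eqref{isom:eq:lfpi} and applying \eqref{catvecth:eq:projinj} to cancel the inclusions against the projections produces precisely $\pi(L_{ab}+I_\sigma)_{[a],[b]}$; the injectivity of $\Gamma_{[a],[b]}^V$ (Proposition~\ref{hxalg:proposition:unique}) then yields $\pi'(L_{ab}+I_\sigma)=\pi(L_{ab}+I_\sigma)$. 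The corresponding equality on $(L^{-1})_{ab}$ follows by the same reduction together with Proposition~\ref{isom:proposition:inverse}.

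The main obstacle is the bookkeeping for the $(L^{-1})_{ab}$ generators: the operations $\vee$ and $\wedge$ of \eqref{isom:eq:vee} and \eqref{isom:equation:wedge} are long compositions involving associativity constraints, left and right unit constraints, and the one-point isomorphisms $\iota$, so matching the placements of $\iota$, $p^X_{\{b\}}$, and $i_X^{\{a\}}$ through the two round trips requires care. Once Proposition~\ref{isom:proposition:inverse} is invoked, however, the rest of the verification is a direct, if tedious, diagram chase parallel to the invertible case in (i). Finally, strictness of both tensor functors, together with the observation that $I_{\DR(A_\sigma)}$ and $I_{\Rep\sigma}$ correspond to the same underlying object $I_{\VectH}$, ensures $FG$ and $GF$ preserve the units and tensor products on the nose, completing the proof.
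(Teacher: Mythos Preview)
Your proposal is correct and follows exactly the approach of the paper: the paper's proof consists of the single sentence ``We can check that $FG=\id_{\Rep\sigma}$ and $GF=\id_{\DR(A_\sigma)}$ as tensor functors,'' and your write-up supplies precisely the details of that check on generators, using \eqref{catvecth:eq:projinj}, \eqref{vecth:eq:ipidentity}, and Proposition~\ref{isom:proposition:inverse} as the paper intends.
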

\begin{corollary}\label{isomorph:theorem:theorem}
$\Rep R$
is isomorphic to $DR(A_R)$.
\end{corollary}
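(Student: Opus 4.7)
The corollary is a direct instance of the preceding theorem with $\sigma=\sigma_R$. To invoke that theorem I need only observe three things: by Proposition \ref{lop:proposition:sigmaR}, $\sigma_R$ is a morphism $X\barotimes X\to X\barotimes X$ of $\VectH$ (in fact a Yang-Baxter operator); by Proposition \ref{catlop:prop:repR}, the tensor category $\Rep R$ is by definition $\Rep \sigma_R$; and by the construction in Section \ref{section:ar}, $A_R=A_{\sigma_R}$. The conclusion of the theorem then reads $\Rep R\cong\DR(A_R)$ as tensor categories, which is precisely the corollary.

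The content of the corollary thus resides entirely in the theorem. For that theorem my plan is to produce mutually inverse strict tensor functors $F:\DR(A_\sigma)\to\Rep \sigma$ and $G:\Rep \sigma\to\DR(A_\sigma)$, each acting as the identity on underlying morphisms of $\VectH$. The functor $F$ assembles an L-operator $L_{F(\pi)}:V\barotimes X\to X\barotimes V$ out of the components $\pi(L_{ab}+I_\sigma)_{[a],[b]}:V\barotimes\{[b]\}\to\{[a]\}\barotimes V$ extracted via the injectivity of $\Gamma_{[a],[b]}^V$ (Proposition \ref{hxalg:proposition:unique}) and the injections/projections \eqref{vecth:eq:injproj}. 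Conversely, $G$ starts from an L-operator $L_V$, defines values $\pibar(L_{ab})$ via projections of $L_V$, $\pibar((L^{-1})_{ab})$ via the inverse $L_V^{-1}$ together with the $\wedge$ operation \eqref{isom:equation:wedge}, and $\pibar(\xi)$ for $\xi\in M_H\otimes_\C M_H$ through $\mu_l^{D_{H,X}(V)}$, $\mu_r^{D_{H,X}(V)}$; these extend to a $\C$-algebra map on the free algebra $\C\{HX\}$ which then must descend to $A_\sigma$.

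The main technical obstacles I expect are three well-definedness checks. First, that $L_{F(\pi)}$ is invertible, with inverse built from the $\pi((L^{-1})_{ab})$ via \eqref{isom:eq:lfpi-1}; this needs Lemma \ref{isom:lemma:veeandcircV}, which converts the $*_V$ product of Proposition \ref{hxalg:prop:product} into a composition involving the $\vee$ operation, and uses the defining relations (2) of Definition \ref{ar:definition:ar}. Second, that $L_{F(\pi)}$ satisfies the L-operator equation \eqref{catlop:eq:RLL=LLR}; this should come from the relations (4) of Definition \ref{ar:definition:ar} by translating products of $L$-generators through Proposition \ref{hxalg:prop:product} and the closure-under-identity formula \eqref{vecth:eq:ipidentity} (crucially using that $X$ is finite). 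Third, and symmetrically for $G$, that $\pitilde(I_\sigma)=\{0\}$; the delicate cases are again the inverse relations and the Yang-Baxter relation, handled by the same $\vee/\wedge$ formalism.

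Once $F$ and $G$ are in place, it remains to verify $FG=\id_{\Rep\sigma}$ and $GF=\id_{\DR(A_\sigma)}$ and that both are strict with respect to $\barotimes$; this is a mechanical unpacking of \eqref{isom:eq:lfpi}, \eqref{isom:equation:wedge}, and the coproduct formulas on the generators $L_{ab}$, $(L^{-1})_{ab}$, and $M_H\otimes_\C M_H$ from Section \ref{section:ar}. Specialising the resulting isomorphism at $\sigma=\sigma_R$ yields the corollary.
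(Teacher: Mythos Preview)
Your proposal is correct and matches the paper's approach exactly: the corollary is deduced immediately from the preceding theorem by specialising $\sigma=\sigma_R$, using Proposition~\ref{lop:proposition:sigmaR}, Proposition~\ref{catlop:prop:repR}, and the definition $A_R=A_{\sigma_R}$. Your sketch of the theorem's proof via the mutually inverse strict tensor functors $F$ and $G$ also tracks the paper's argument in Section~\ref{section:isom} closely.
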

\begin{remark}\label{isomorph:remark:same}
If $\sigma$ is a Yang-Baxter operator on $X$ in $\VectH$
(Definition \ref{catlop:definition:YBop}),
then
the objects $(X, \sigma)$ of $\Rep \sigma$
and $(X, \pi_\sigma)$
of $DR(A_\sigma)$
satisfy
$G(X, \sigma)=(X, \pi_\sigma)$
(See Propositions \ref{catlop:proposition:equivYBop},
\ref{lop:proposition:X},
and
\ref{dynrep:proposition:DHXXpiR}).
\end{remark}
}
\section*{Acknowledgements}
The first author wishes to express his thanks to Professor
Mikhail Olshanetsky for drawing his attention to
L-operators for the dynamical Yang-Baxter map.
The first and the second authors were supported in part by KAKENHI
(19540001)
and (18540008), respectively.

\end{document}